\newcommand{\scrE}{{\mathscr{E}}}
\newcommand{\sfF}{{\mathsf{F}}}
\newcommand{\sfV}{{\mathsf{V}}}
\numberwithin{equation}{section}
\newcommand{\scrExt}{{\scrE}xt}
\newcommand{\fg}{{\mathfrak{g}}}
\newcommand{\fl}{{\mathfrak{l}}}
\newcommand{\fo}{{\mathfrak{o}}}
\newcommand{\fs}{{\mathfrak{s}}}
\newcommand{\sX}{{\mathscr X}}
\newcommand{\E}{\mathcal{E}}
\newcommand{\cC}{\mathcal{C}}
\newcommand{\cF}{\mathcal{F}}
\newcommand{\cG}{\mathcal{G}}
\newcommand{\cH}{\mathcal{H}}
\newcommand{\cL}{\mathcal{L}}
\newcommand{\cN}{\mathcal{N}}
\newcommand{\cO}{\mathcal{O}}
\newcommand{\cT}{\mathcal{T}}
\newcommand{\dbQ}{{\mathbb{Q}}}
\newcommand{\dbZ}{{\mathbb{Z}}}
\newcommand{\BA}{{\mathbb{A}}}
\newcommand{\BG}{{\mathbb{G}}}
\newcommand{\BF}{{\mathbb{F}}}
\newcommand{\BN}{{\mathbb{N}}}
\newcommand{\BP}{{\mathbb{P}}}
\newcommand{\BQ}{{\mathbb{Q}}}
\newcommand{\BZ}{{\mathbb{Z}}}
\newcommand{\fp}{{\mathfrak p}}
\newcommand{\tX}{{\tilde X}}
\newcommand{\BQbar}{\overline{\dbQ}}
\newcommand{\BZbar}{\overline{\dbZ}}
\newcommand{\iso}{\buildrel{\sim}\over{\longrightarrow}}
\newcommand{\epi}{\twoheadrightarrow}
\newcommand{\mono}{\hookrightarrow}
\DeclareMathOperator{\ab}{{ab}}
\DeclareMathOperator{\Ad}{{Ad}}
\DeclareMathOperator{\cont}{{cont}}
\DeclareMathOperator{\Br}{{Br}}
\DeclareMathOperator{\Cone}{{Cone}}
\DeclareMathOperator{\Cocone}{{Cocone}}
\DeclareMathOperator{\Crys}{{Crys}}
\DeclareMathOperator{\crys}{{crys}}
\DeclareMathOperator{\Forg}{{Forg}}
\DeclareMathOperator{\Frac}{{Frac}}
\DeclareMathOperator{\Spec}{{Spec}}
\DeclareMathOperator{\Gal}{{Gal}}
\DeclareMathOperator{\et}{{et}}
\DeclareMathOperator{\Hom}{{Hom}}
\DeclareMathOperator{\Hyp}{{Hyp}}
\DeclareMathOperator{\Isoc}{{Isoc}}
\DeclareMathOperator{\FIsoc}{\mathit{F}-Isoc}
\DeclareMathOperator{\FIsocd}{\mathit{F}-Isoc^{\dagger}}
\DeclareMathOperator{\Lat}{{Lat}}
\DeclareMathOperator{\Lie}{{Lie}}
\DeclareMathOperator{\Newt}{{Newt}}
\DeclareMathOperator{\Norm}{{Norm}}
\DeclareMathOperator{\Rep}{{Rep}}
\DeclareMathOperator{\s}{{ss}}
\DeclareMathOperator{\Sat}{{Sat}}
\DeclareMathOperator{\smooth}{{smooth}}
\DeclareMathOperator{\Stab}{{Stab}}
\newcommand{\HOM}{\underline{\operatorname{Hom}}}
\DeclareMathOperator{\Mor}{{Mor}}
\DeclareMathOperator{\End}{{End}}
\DeclareMathOperator{\Ext}{{Ext}}
\DeclareMathOperator{\Aut}{{Aut}}
\DeclareMathOperator{\Ker}{{Ker}}
\DeclareMathOperator{\Fr}{{Fr}}
\DeclareMathOperator{\im}{{Im}}
\DeclareMathOperator{\perf}{{perf}}
\DeclareMathOperator{\Vect}{{Vec}}
\newtheorem{cor}[subsubsection]{Corollary}
\newtheorem{lem}[subsubsection]{Lemma}
\newtheorem{prop}[subsubsection]{Proposition}
\newtheorem{theorem}[subsubsection]{Theorem}
\newtheorem{quest}[subsubsection]{Question}
\theoremstyle{remark}
\newtheorem{rem}[subsubsection]{Remark}
\theoremstyle{definition}
\newtheorem{ex}[subsubsection]{Example}
\newcommand{\SupSet}{\raise1.75pt
     \hbox{$\,\,\scriptstyle\supset\,$}}
\newcommand{\SubSet}{\raise1.75pt
     \hbox{$\,\,\scriptstyle\subset\,$}}
\renewcommand{\over}{\@@over}
\renewcommand{\atop}{\@@atop}
\renewcommand{\above}{\@@above}
\renewcommand{\overwithdelims}{\@@overwithdelims}
\renewcommand{\atopwithdelims}{\@@atopwithdelims}
\renewcommand{\abovewithdelims}{\@@abovewithdelims}
\begin{document} 
\title[Slopes of indecomposable $F$-isocrystals]{Slopes of indecomposable $F$-isocrystals}
\author{Vladimir Drinfeld and Kiran S.~Kedlaya}
%\address{University of Chicago, Department of Mathematics, Chicago, IL 60637}
%\email{drinfeld@math.uchicago.edu}
\dedicatory{To Yuri Ivanovich Manin with gratitude}

\begin{abstract}
We prove that for an indecomposable convergent or overconvergent $F$-isocrystal on a smooth irreducible variety over a perfect field of characteristic $p$, the gap between consecutive slopes at the generic point cannot exceed 1. (This may be thought of as a crystalline analogue of the following consequence of Griffiths transversality: for an indecomposable variation of complex Hodge structures, there cannot be a gap between non-zero Hodge numbers.) As an application, we deduce a refinement of a result of V.~Lafforgue on the slopes of Frobenius of an $\ell$-adic local system. 

We also prove similar statements for $G$-local systems (crystalline and $\ell$-adic ones), where $G$ is a reductive group.

We translate our results on local systems into properties of the $p$-adic absolute values of the Hecke eigenvalues of a cuspidal automorphic representation of a reductive group over the adeles of a global field of characteristic $p>0$.
\end{abstract}

\keywords{$F$-isocrystal, local system, slope, Newton polygon, Frobenius, hypergeometric sheaf}
\subjclass[2010]{Primary 14F30, 11F80; secondary 11F70, 14G15}

%\thanks{Partially supported by NSF grants DMS-1303100 (Drinfeld) and DMS-1501214 (Kedlaya).}
\maketitle

\tableofcontents

\section{Introduction and main results}
Let $k$ be a perfect field of characteristic $p>0$. Let $X$ be a smooth irreducible quasi-compact scheme over $k$. Let $|X|$ denote the set of closed points of $X$. Let $\eta\in X$ be the generic point.

\subsection{The main theorem}
\subsubsection{Convergent and overconvergent $F$-isocrystals}   \label{sss:conv vs overconv}
To varieties over $k$, one associates a family of Weil cohomology theories indexed by primes $\ell$, consisting of $\ell$-adic \'etale cohomology for $\ell \neq p$  and Berthelot's \emph{rigid cohomology} \cite{LeS} for $\ell = p$.
For $X$ as above, the category of $\BQ_{\ell}$-local systems has not one but two $p$-adic analogues;
these are the categories of \emph{convergent $F$-isocrystals} and \emph{overconvergent $F$-isocrystals}\footnote{An overview of the theory of both types of $F$-isocrystals is given in \cite{Ke6}. The precise definitions of  $\FIsoc(X)$ and $\FIsocd (X)$ can be found in \cite[\S 2]{O} and  \cite[\S 2.3]{Ber}, respectively. Let us note that in the word ``$F$-isocrystal" the letter $F$ stands for the Frobenius corresponding to $\BF_p$ (i.e., raising to the power of $p$). Let us add that $\FIsoc( k)=\FIsoc(\Spec k)=\FIsocd (\Spec k)$ is just the category of finite-dimensional vector spaces $V$ over $\Frac W(k)$ equipped with a $\sigma$-linear isomorphism $F:V\iso V$, where $\sigma\in\Aut W(k)$ is the unique automorphism such that $\sigma (x)\equiv x^p \mbox { mod } p$.} on $X$, respectively denoted by $\FIsoc(X)$ and $\FIsocd (X)$. 
Roughly speaking, convergent $F$-isocrystals are defined (locally) using the Raynaud generic fiber of a $p$-adic lift of $X$ (e.g., for $X = \BA^1_k$, take the closed unit disc over $\Frac W(k)$), whereas overconvergent $F$-isocrystals are defined on some slightly larger region (e.g., a disc of radius greater than 1).

In particular, there is a canonical restriction functor $\FIsocd(X)\to \FIsoc (X)$. It  is known to be \emph{fully faithful} (the proof of this fact is not straightforward, see  \cite[Theorem~1.1]{Ke2} or Theorem~\ref{T:fully faithful1} herein); we thus view $\FIsocd(X)$ as a full subcategory of $\FIsoc (X)$. 

\begin{rem}
An object of $\FIsocd (X)$ is indecomposable in $\FIsocd (X)$ if and only if it is indecomposable in $\FIsoc (X)$. This is a particular case of the following lemma.
\end{rem}

\begin{lem}   \label{l:indecomposability}
Let $F:\cC\to\cC'$ be a fully faithful functor between abelian categories. Then an object $M\in\cC$ is indecomposable if and only if $F(M)$ is.
\end{lem}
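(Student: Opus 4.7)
The plan is to translate indecomposability into a purely ring-theoretic property of the endomorphism ring, and then invoke full faithfulness.

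First I would reduce the lemma to a statement about idempotents. Recall that any abelian category is Karoubian: if $e\in\End(N)$ satisfies $e^2=e$, then $\id-e$ is also idempotent, and taking images (which exist in any abelian category) yields a canonical direct sum decomposition $N=\im(e)\oplus\im(\id-e)$. Conversely, any direct sum decomposition $N=N_1\oplus N_2$ produces the projection $e\colon N\twoheadrightarrow N_1\hookrightarrow N$, which is an idempotent in $\End(N)$ equal to $0$ iff $N_1=0$ and to $\id$ iff $N_2=0$. Thus $N$ is indecomposable if and only if $\End(N)$ has no idempotents other than $0$ and $\id$.

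Next, since $F$ is fully faithful (and additive, as is implicit for functors between abelian categories), the natural map $\End_{\cC}(M)\to\End_{\cC'}(F(M))$ is an isomorphism of (unital) rings; in particular it carries idempotents bijectively to idempotents and preserves $0$ and $\id$. Combining this with the previous paragraph gives the equivalence: $M$ is indecomposable iff $\End_{\cC}(M)$ has no nontrivial idempotents iff $\End_{\cC'}(F(M))$ has no nontrivial idempotents iff $F(M)$ is indecomposable.

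There is really no serious obstacle here; the only point worth double-checking is the Karoubian property of abelian categories, which is a standard consequence of the existence of kernels and images. If one prefers to avoid any tacit additivity hypothesis, one can instead argue directly: a nontrivial decomposition $M=M_1\oplus M_2$ in $\cC$ maps under the additive functor $F$ to a nontrivial decomposition of $F(M)$ (non-triviality being preserved because a faithful additive functor sends nonzero objects to nonzero objects, as $\id_{M_i}\neq 0$), and conversely a nontrivial decomposition $F(M)=N_1\oplus N_2$ produces a nontrivial idempotent in $\End_{\cC'}(F(M))$, which lifts via full faithfulness to a nontrivial idempotent in $\End_{\cC}(M)$ and splits there by the Karoubian property.
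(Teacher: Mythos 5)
Your proof is correct and follows the same route as the paper: indecomposability is translated into the absence of non-trivial idempotents in the endomorphism ring, which is then transported across the isomorphism $\End M\simeq\End F(M)$ given by full faithfulness. You simply spell out the Karoubian splitting of idempotents in an abelian category, which the paper's two-line proof takes for granted.
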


\begin{proof}
Indecomposability of $M$ (resp.~$F(M)$) means that $\End M$ (resp.~$\End F(M)$) has no non-trivial idempotents. On the other hand, $\End F(M)\simeq\End M$ by full faithfulness.
\end{proof}

\subsubsection{Slopes}   \label{sss:Slopes}
Let $M\in \FIsoc(X)$ have rank $n$. Then for any $x\in X$ one has numbers $a_i^x(M)\in\BQ$, $1\le i\le n$, called the \emph{slopes} of $M$ at $x$ (see \cite[\S 1.3]{K}). We order them so that $a_i^x(M)\ge a_{i+1}^x(M)$. One can think of the collection of slopes at a fixed $x\in X$ as a dominant rational coweight of the group $GL(n)$. 

Let us recall the definition of slopes. For any $x\in X$, let $x_{\perf}$ denote the spectrum of the perfection of the residue field of $x$. Then $\FIsoc (x_{\perf})$ has a canonical $\BQ$-grading. In particular, the pullback of 
$M\in\FIsoc (X)$ to $x_{\perf}$ is $\BQ$-graded. Let $d_r$ denote the rank of its component of degree $r\in\BQ$. The numbers $a_i^x(M)$ are characterized by the following property: each $r\in\BQ$ occurs among them $d_r$ times.

\medskip

Here is our main result. 

\begin{theorem}  \label{t:main}
Let $\eta\in X$ be the generic point. Let $M\in\FIsoc(X)$ be indecomposable and of rank~$n$. Then $a_i^\eta (M)-a_{i+1}^\eta (M)\le 1$ for all $i\in \{ 1,\ldots , n-1\}$.
\end{theorem}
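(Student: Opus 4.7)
My plan is to prove the contrapositive of Theorem~\ref{t:main}. Suppose, toward a contradiction, that some consecutive pair of generic slopes of $M$ differ by more than~$1$; I will produce a non-trivial direct sum decomposition of $M$, contradicting Lemma~\ref{l:indecomposability}. By Grothendieck's specialization theorem for Newton polygons, I may restrict $X$ to the open dense subscheme on which the Newton polygon is constant, equal to that at $\eta$. There, Katz's slope filtration theorem produces a canonical increasing filtration $0=M_0\subset M_1\subset\cdots\subset M_r=M$ in $\FIsoc(X)$ with isoclinic successive quotients of strictly increasing slopes $\mu_1<\cdots<\mu_r$. The hypothesis $\mu_{j+1}-\mu_j>1$ for some $j$ isolates one step $0\to M' \to M\to M''\to 0$ of the filtration, in which every slope of $M'$ lies more than $1$ below every slope of $M''$.

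The heart of the proof is to show that this extension splits. This is the crystalline counterpart of the fact invoked in the abstract: in an indecomposable variation of Hodge structures, Griffiths transversality --- the connection shifts the Hodge filtration by at most one step --- forces non-zero Hodge numbers to be consecutive. The analogous assertion I want here is that the connection on a convergent $F$-isocrystal shifts the slope filtration by at most one unit, so that any slope gap exceeding~$1$ produces a canonical splitting. I would make this precise by working locally on a smooth formal lift of $X$: the extension class lives in an $\Ext^1$ group of $(\varphi,\nabla)$-modules, and the compatibility of $\varphi$ with $\nabla$ produces a Frobenius-conjugation operator on this $\Ext^1$ whose $p$-adic norm is strictly less than $1$ precisely when the slope gap exceeds~$1$. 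Iterating this contraction forces the extension class to vanish.

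Once $M\simeq M'\oplus M''$ is established on the open subscheme of constant Newton polygon, the splitting extends across the complement: the corresponding idempotent is a section of the endomorphism sheaf of $M$, which for a convergent $F$-isocrystal on a smooth scheme is reflexive, so a section defined on a dense open extends uniquely. The full faithfulness of $\FIsocd(X)\to\FIsoc(X)$ recorded in Theorem~\ref{T:fully faithful1} then handles the overconvergent case together with the convergent one. The decomposition is non-trivial because $M'$ and $M''$ are both non-zero by construction, contradicting the indecomposability of $M$.

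The main obstacle is the splitting step. The ``crystalline Griffiths transversality'' is not built into the definition of $\FIsoc(X)$: it must be extracted from the interplay of the Frobenius $\varphi$ with the integrable connection $\nabla$, and the numerology (a shift by exactly~$1$) has to be sharp, since the theorem itself is sharp --- indecomposable $F$-isocrystals with slope gap equal to~$1$ do exist, e.g.\ those coming from ordinary abelian varieties. A secondary difficulty is that Katz's slope filtration is a priori only defined on the constant-Newton-polygon locus, so the reflexive extension argument in the globalization step needs to be reconciled with the possible jumping of the slope filtration over the boundary.
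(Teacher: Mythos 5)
Your overall strategy coincides with the paper's: restrict to the locus $U$ of constant Newton polygon, use the slope filtration there, and reduce everything to the vanishing of $\Ext^1$ between isoclinic pieces whose slopes differ by more than $1$. The genuine gap is in your globalization step. The complement of $U$ in $X$ is in general a \emph{divisor} (think of the supersingular locus in a family of elliptic curves), so reflexivity of the endomorphism sheaf buys nothing: sections of a reflexive, or even locally free, sheaf over a dense open whose complement has codimension one need not extend, and in any case what you must extend is a horizontal, Frobenius-equivariant section, which is not a formal consequence of any coherent-sheaf statement. What your contrapositive really needs is that $\End(M)\to\End(M|_U)$ be bijective, i.e.\ the full faithfulness of the restriction functor $\FIsoc(X)\to\FIsoc(U)$; this is Theorem~\ref{t:fully faithful}, a deep result obtained by combining \cite[Theorem~5.2.1]{Ke4} with \cite{Ke2}, \cite{Ke5}, and it is exactly what the paper invokes (together with Lemma~\ref{l:indecomposability}) to know that indecomposability survives restriction to $U$. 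You do cite Theorem~\ref{T:fully faithful1}, but only for the convergent/overconvergent comparison, which does not address this step; with only the reflexivity claim your argument does not close.

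The step you yourself flag as the heart --- splitting the extension when the gap exceeds $1$ --- also remains a sketch rather than a proof. You identify the right statement (in the paper, Proposition~\ref{p:Ext-vanishing}) and the right mechanism, but a contraction cannot be run directly on ``$\Ext^1$ of $(\varphi,\nabla)$-modules'': over a non-proper base the relevant crystalline cohomology groups are infinite-dimensional over $\BQ_p$ and carry no a priori slope theory, so ``$p$-adic norm $<1$'' has no meaning without an integral structure. The paper makes this precise by identifying $\Ext^1(M_1,M_2)$ with $H^1$ of $\Cocone\bigl(R\Gamma_{\crys}(X,N)\xrightarrow{\,F-1\,}R\Gamma_{\crys}(X,N)\bigr)$ for $N=M_1^*\otimes M_2$ (Lemma~\ref{l:Ext via cocone}), realizing this on a lattice over a smooth formal lift, and applying Mazur's theorem via the divided Frobenius $F:\tilde C^{\bullet}\to C^{\bullet}$ in the style of Berthelot--Ogus (Lemmas~\ref{l:BO1}--\ref{l:BO2}) to obtain vanishing in degrees $<v(\gamma)$. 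The sharp shift by $1$ is precisely the divisibility of Frobenius by $p^j$ on $j$-forms; that is where your ``crystalline Griffiths transversality'' actually lives, and it is the content you would still have to supply.
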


The proof will be given in \S\ref{s:proof of main}.  In fact, it will be shown that Theorem~\ref{t:main} easily follows from full faithfulness of the restriction functor $\FIsoc(X)\to\FIsoc(U)$, where $U\subset X$ is a dense open subset. The latter statement (Theorem~\ref{t:fully faithful}) immediately follows from previously known results; however, the proofs of these results are difficult. (Hopefully, J.~Kramer-Miller's theory of $F$-isocrystals with logarithmic decay~\cite{KM} will provide an easier proof of Theorem 1.1.5, which bypasses some of these difficult results.)

\begin{rem}
Theorem~\ref{t:main} may be viewed as an analogue for $F$-isocrystals of the following consequence of Griffiths transversality: for an indecomposable variation of complex Hodge structures, there cannot be a gap between non-zero Hodge numbers. 
The local version of this observation is an unpublished result from the second author's PhD thesis \cite[\S 5]{Ke}.
\end{rem}

The number $\sum\limits_{i=1}^n a_i^x(M)$ is the slope of $\det M$ at $x$; it is well known that this number does not depend on $x\in X$.
Set $A(M):=\frac{1}{n}\cdot\sum\limits_{i=1}^n a_i^x(M)$.

\begin{cor}   \label{c:any x}
In the situation of Theorem~\ref{t:main}, for all $x\in X$ one has
\begin{equation}   \label{e:any x}
\sum\limits_{i=1}^r a_i^x(M)-rA(M)\le r(n-r)/2 \quad \mbox{ for all } r\in \{ 1,\ldots , n-1\}.
\end{equation}
\end{cor}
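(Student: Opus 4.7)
My plan is to reduce to the case $x = \eta$ and then deduce (\ref{e:any x}) from Theorem~\ref{t:main} by a short combinatorial computation.

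For the reduction, I would invoke the Grothendieck--Katz specialization theorem for Newton polygons of $F$-isocrystals, which asserts that the Newton polygon at a specialization point lies on or above the Newton polygon at the generic point, while having the same endpoints. With the decreasing slope convention of \S\ref{sss:Slopes}, the Newton polygon is the convex function on $[0,n]$ whose slopes (read left to right) are $a_n^x(M), a_{n-1}^x(M), \ldots, a_1^x(M)$, so specialization translates into
$$\sum_{i=1}^s a_i^x(M) \le \sum_{i=1}^s a_i^\eta(M), \qquad s = 1, \ldots, n-1.$$
Since $A(M)$ is independent of the base point, it suffices to verify~(\ref{e:any x}) at $x = \eta$.

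At the generic point, I would use the elementary identity
$$\sum_{i=1}^r a_i^\eta(M) - rA(M) = \frac{1}{n}\sum_{i=1}^r \sum_{j=r+1}^n \bigl(a_i^\eta(M) - a_j^\eta(M)\bigr),$$
which is a rearrangement using $\sum_{i=1}^n a_i^\eta(M) = nA(M)$. Telescoping Theorem~\ref{t:main} yields $a_i^\eta(M) - a_j^\eta(M) \le j - i$ whenever $i \le j$, so
$$\sum_{i=1}^r a_i^\eta(M) - rA(M) \le \frac{1}{n}\sum_{i=1}^r \sum_{j=r+1}^n (j-i) = \frac{1}{n}\cdot\frac{nr(n-r)}{2} = \frac{r(n-r)}{2}.$$
The argument is entirely formal once Theorem~\ref{t:main} is in hand, so no serious obstacle arises; the only point requiring attention is that the direction of Grothendieck's inequality is compatible with the paper's decreasing slope convention. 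Equality in~(\ref{e:any x}) is attained precisely when the slopes at $\eta$ form an arithmetic progression with common difference $-1$, which is the extremal configuration allowed by Theorem~\ref{t:main}.
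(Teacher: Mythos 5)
Your proof is correct and follows essentially the same route as the paper: reduce to $x=\eta$ via semicontinuity/specialization of the Newton polygon (the paper cites \cite[Cor.~2.3.2]{K}, which is exactly the Grothendieck--Katz statement you invoke, with the direction of the inequality matching the decreasing slope convention), and then bound $n\bigl(\sum_{i=1}^r a_i^\eta(M)-rA(M)\bigr)=\sum_{i=1}^r\sum_{j=r+1}^n\bigl(a_i^\eta(M)-a_j^\eta(M)\bigr)$ by telescoping Theorem~\ref{t:main}, which is the same rearrangement the paper performs with the shifted quantities $b_i=a_i^\eta(M)-A(M)-c_i$. The closing remark on the equality case is an unneeded aside (and as phrased it is accurate only for $x=\eta$), but it does not affect the argument.
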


\begin{rem} \label{r:r(n-r)}
The meaning of $r(n-r)/2$ is as follows: $r(n-r)/2=\sum\limits_{i=1}^r c_i$, where $c_1,\ldots, c_n$ are the numbers such that
$\sum\limits_{i=1}^n c_i=0$ and $c_i-c_{i+1}=1$ for $i\in \{ 1,\ldots , n-1\}$. In fact, $c_i=\frac{n+1}{2}-i$.
\end{rem}

\begin{proof}[Proof of Corollary~\ref{c:any x}]
The function $x\mapsto \sum\limits_{i=1}^r a_i^x(M)$ is known to be lower semicontinuous (by semicontinuity of the Newton polygon, see \cite[Cor.~2.3.2]{K}). So it suffices to check \eqref{e:any x} for $x=\eta$. 

Set $b_i:=a_i^\eta (M)-A(M)-c_i$, where $c_1\, ,\ldots, c_n$ are as in Remark~\ref{r:r(n-r)}. We have
to check that $\sum\limits_{i=1}^r b_i\le 0$. It is clear that $\sum\limits_{i=1}^n b_i=0$, and Theorem \ref{t:main} tells us that $b_i\le b_{i+1}$\,. So $n\sum\limits_{i=1}^r b_i=n\sum\limits_{i=1}^r b_i-r\sum\limits_{i=1}^n b_i=\sum\limits_{i=1}^r\sum\limits_{j=r+1}^n (b_i-b_j) \le 0$.
\end{proof}

\begin{rem}
Let $\check\omega^x (M)$ denote the dominant rational coweight of $SL(n)$ corresponding to the numbers $a_i^x(M)-A(M)$, $1\le i\le n$.
Let $\check\rho$ denote the sum of the fundamental coweights of $SL(n)$. Corollary~\ref{c:any x} says that $\check\rho-\check\omega^x (M)$ belongs to the ``positive cone" (i.e., the convex cone generated by the simple coroots). Theorem~\ref{t:main} says that $\check\rho-\check\omega^\eta (M)$ belongs to the dominant cone (which is \emph{strictly} contained in the positive cone if $n\ge 3$).
\end{rem}

\subsection{Counterexamples}   \label{ss:counterex}
One can ask whether in the situation of Theorem~\ref{t:main} the inequality 
$a_i^x (M)-a_{i+1}^x (M)\le 1$ holds for \emph{all} $x\in X$ and $i\in \{ 1,\ldots , n-1\}$. If $n=2$ the answer is ``yes" by Corollary~\ref{c:any x} because in this case $a_1^x (M)-a_2^x (M)=2(a_1^x (M)-A(M))$. In general, the answer is \emph{no}. In Appendix~\ref{s:Dwork} we construct 
counterexamples for $n\in\{3,4\}$ using hypergeometric local systems in the sense of N.~Katz. 

\subsection{An application to $\ell$-adic local systems} \label{ss:l-adic}
Now assume that the ground field $k$ is \emph{finite}. Let $|X|$ denote the set of closed points of $X$. For $x\in |X|$ let $\deg x$ denote the degree over $\BF_p$ of the residue field of $x$. 

Fix an algebraic closure $\BQbar_{\ell}\supset\BQ_{\ell}$, and let $\BQbar$ denote the algebraic closure of $\BQ$ in $\BQbar_{\ell}$. 

\subsubsection{Algebraicity for $\BQbar_{\ell}$-sheaves}  \label{sss:algebraicity}
If $\E$ is a $\BQbar_{\ell}$-sheaf on $X$, $x\in |X|$, and $\bar x$ is a geometric point of $X$ with image $x$, then one can consider the eigenvalues of the geometric Frobenius acting on the stalk $\E_{\bar x}$; for brevity, we will call them 
``Frobenius eigenvalues of $\E_x$" (or ``Frobenius eigenvalues of $\E$ at $x$").

We say that $\E$ is \emph{algebraic} if the Frobenius eigenvalues of $\E_x$ are in $\BQbar$ for every $x$. It is known that any indecomposable   $\BQbar_{\ell}$-sheaf becomes algebraic after tensoring by the pullback of some local system on $\Spec\BF_p$ (for lisse sheaves this is \cite[Cor.~ VII.8]{La}; in general see, e.g., \cite[Cor.~B.8]{Dr}). So algebraicity is a mild assumption.

\subsubsection{Slopes for algebraic lisse $\BQbar_{\ell}$-sheaves}  \label{sss:l-adic slopes}
Fix a valuation $v:\BQbar^\times\to\BQ$ such that $v(p)=1$. Let $\E$ be an algebraic lisse $\BQbar_{\ell}$-sheaf on $X$ of rank $n$.

By assumption, for each $x\in |X|$ the Frobenius eigenvalues of $\E_x$ are in $\BQbar^\times$.
Applying to them the map $v:\BQbar^\times\to\BQ$ and dividing by $\deg x$, one gets $n$ rational numbers. We call them the \emph{slopes} of $\E$ at $x$. We denote them by $a_i^x(\E )$; as before, we order them so that $a_i^x(\E )\ge a_{i+1}^x(\E )$. 

The number $A(\E):=\frac{1}{n}\cdot\sum\limits_{i=1}^n a_i^x(\E)$ does not depend on $x\in |X|$: indeed, by \cite[Prop.~1.3.4(i)]{De} there exists $m\in\BN$ such that $(\det\E)^{\otimes m}$ is a pullback of a rank 1 local system on $\Spec\BF_p$.

\begin{theorem}  \label{t:l-adic}
Let $\E$ be a  lisse $\BQbar_{\ell}$-sheaf on $X$ of rank $n$, which is algebraic in the sense of \S\ref{sss:algebraicity}. 

(i) There exists a unique $n$-uple of rational numbers $a_1^\eta (\E )\ge a_2^\eta (\E )\ge\ldots \ge a_n^\eta (\E )$ with the following property: let $U$ denote the set of all $x\in |X|$ such that $a_i^x (\E )= a_i^\eta (\E )$ for all $i$, then $U$ is non-empty, and for any curve $C\subset X$ the subset $U\cap |C|$ is open in $|C|$.

(ii)  For all  $x\in |X|$ and  $r\in \{ 1,\ldots ,n\}$ one has
\[
\sum\limits_{i=1}^r a_i^x(\E )\le \sum\limits_{i=1}^r a_i^\eta (\E ).
\]

(iii) If $\E$ is indecomposable then $a_i^\eta (\E )-a_{i+1}^\eta (\E )\le 1$ for all $i\in \{ 1,\ldots , n-1\}$.
\end{theorem}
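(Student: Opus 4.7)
The plan is to deduce Theorem~\ref{t:l-adic} from its $F$-isocrystal analogues (Theorem~\ref{t:main} and Grothendieck-Katz semicontinuity of the Newton polygon, \cite[Cor.~2.3.2]{K}) via the crystalline companion theorem. The key input is the existence, for the given algebraic lisse $\BQbar_\ell$-sheaf $\E$ on $X$, of a companion overconvergent $F$-isocrystal $M\in\FIsocd(X)$ whose Frobenius characteristic polynomials agree with those of $\E$ at every $x\in |X|$; this is Abe's theorem on smooth curves, together with its higher-dimensional extensions (Abe--Esnault, Kedlaya, and others), possibly after enlarging the $p$-adic coefficient field (which is harmless for slope statements). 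Granting this, $a_i^x(\E)=a_i^x(M)$ for all $x\in |X|$ and all $i$.

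For parts (i) and (ii), the slopes of $\E$ coincide with those of its semisimplification $\E^{\mathrm{ss}}$ at every closed point, so I may take $M$ to be semisimple (as the direct sum of companions of the Jordan--H\"older factors of $\E$). Applying Grothendieck-Katz semicontinuity to $M$ yields a non-empty open subset $U_0\subseteq X$ on which the Newton polygon of $M$ is constant and equal to its generic value. Setting $a_i^\eta(\E):=a_i^\eta(M)$ and $U:=U_0\cap|X|$, the conditions of (i) are immediate, with uniqueness of the generic tuple provided by the density of $U$ together with the slope matching at closed points. Part (ii) then transcribes the semicontinuity inequality for $M$ into the stated inequality for $\E$.

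For part (iii), under the hypothesis that $\E$ is indecomposable, I would arrange for the companion $M$ to itself be indecomposable, so that Theorem~\ref{t:main} applies directly to give $a_i^\eta(M)-a_{i+1}^\eta(M)\le 1$, which coincides with the desired bound on $\E$ via the slope matching. This preservation of indecomposability is expected from the fact that the companion correspondence is fully faithful on an appropriate category, so that $\End\E \simeq \End M$ as rings and indecomposability transfers by Lemma~\ref{l:indecomposability}. An alternative route is a Bertini-style reduction: restricting to a sufficiently general smooth curve $C\subseteq X$ through a point where the Newton polygon of $\E$ is generic, $\E|_C$ remains indecomposable with the same generic slopes, and on curves Abe's companion theorem applies without further input.

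The main obstacle is precisely this indecomposability preservation step. Parts (i) and (ii) only need a semisimple companion, essentially automatic from the irreducible case of the companion theorem, but part (iii) requires either the full faithfulness of the companion functor in the needed generality, or a careful Bertini reduction to a curve. Establishing one of these in sufficient generality, and checking that it is compatible with the slope-matching at the generic point, is the principal technical ingredient of the proof.
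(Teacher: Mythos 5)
Your strategy for parts (i)--(ii) is fine in outline (it is what the paper does, except that the paper deliberately avoids higher-dimensional companions: it reduces to curves via Hilbert irreducibility and the ``curve topology'' of Lemma~\ref{l:2irreducibility}, using the lemmas of \S\ref{ss:arbitrary}, and accordingly only claims curve-openness of $U$ in (i)). But part (iii) contains a genuine gap, and it is exactly the step you flag as the ``main obstacle''. There is no companion correspondence for indecomposable non-irreducible sheaves: companions are constructed (by Abe on curves, and by Abe--Esnault/Kedlaya in higher dimension) only for \emph{irreducible} objects, and the correspondence is pinned down by Frobenius characteristic polynomials at closed points, which by Chebotarev only determine the semisimplification. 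Consequently a non-semisimple $\E$ has no canonical companion $M$, there is no functor on the full category of lisse sheaves, and no statement of the form $\End\E\simeq\End M$ is available; the only object you can attach to an indecomposable non-irreducible $\E$ is the (decomposable) direct sum of companions of its Jordan--H\"older factors, which cannot detect indecomposability. Your Bertini fallback does not repair this: restricting to a curve $C$ with $\E|_C$ indecomposable (which is indeed how the paper reduces $\dim X>1$ to curves, via \cite[Prop.~2.17]{Dr}) still leaves you on the curve with an indecomposable but possibly reducible sheaf, to which Abe's theorem does not apply ``without further input''.

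The paper closes precisely this hole on the $\ell$-adic side rather than the crystalline side: it proves an $\ell$-adic analogue of the $\Ext$-vanishing used for Theorem~\ref{t:main}, namely Corollary~\ref{c:slope estimates}, deduced from Deligne's Weil~II slope estimates for $R^i\pi_*$ (Proposition~\ref{p:slope estimates}). Given an indecomposable $\E$ on a curve whose generic slopes had a gap $>1$, the irreducible case (already settled via Abe's companions and Theorem~\ref{t:main}) forces each Jordan--H\"older constituent to have all its generic slopes on one side of the gap; grouping constituents accordingly, the relevant $\Ext^1$ groups vanish because all slopes of the internal Hom are $<-1$ or $>0$ on a dense open, so $\E$ would split --- a contradiction. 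To make your proof complete you would need either this kind of $\ell$-adic extension-splitting argument, or some genuinely new input replacing the nonexistent ``fully faithful companion functor''; as written, (iii) is not proved.
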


We will prove Theorem~\ref{t:l-adic} in \S\ref{s:proof of l-adic} by combining Theorem~\ref{t:main} with the existence of crystalline companions (a.k.a. ``petits camarades cristallins") proved by T.~Abe \cite{A}.

\begin{rem}
In statement (i) uniqueness is easy (it follows from Lemma~\ref{l:2irreducibility} below).
\end{rem}

\begin{rem}
See \cite{Ke7} for some stronger assertions about the set $U$. In particular, $U$ is open.
\end{rem}

\begin{rem}   \label{r:weak estimate}
Similarly to the proof of Corollary~\ref{c:any x}, statements (ii) and (iii) imply that if $\E$ is indecomposable then
\begin{equation}   \label{e:2any x}
\sum\limits_{i=1}^r a_i^x(\E )-rA(\E )\le r(n-r)/2 
\end{equation}
for all $x\in |X|$ and $r\in \{ 1,\ldots , n-1\}$. At least for irreducible $\E$, this inequality was proved by V.~Lafforgue without using isocrystals, see \cite[Cor.~2.2]{VLa}; we recall his proof in \S\ref{sss:Vincent's proof}. 
A weaker inequality had been conjectured by Deligne, see Conjecture~1.2.10(iv) of~\cite{De}.
\end{rem}

\begin{rem}   \label{r:not guaranteed}
As in \S\ref{ss:counterex}, in the situation of Theorem~\ref{t:l-adic}(iii)
it can happen that 
$a_i^x (\E )-a_{i+1}^x (\E ) > 1$ for some $x\in |X|$ and $i\in \{ 1,\ldots , n-1\}$. Examples (with $\E$ irreducible and $n\in\{ 3,4\}$) are given in Appendix~\ref{s:Dwork}.
\end{rem}

\subsection{Generalization to arbitrary reductive groups}
Theorems~\ref{t:main} and \ref{t:l-adic} are about $GL(n)$-local systems (crystalline and $\ell$-adic ones).
We deduce from them similar statements for $G$-local systems, where $G$ is a reductive group (see Proposition~\ref{p:semicontinuity}, Theorem~\ref{t:2l-adic}, and Theorem~\ref{t:G-version of main}). We allow $G$ to be disconnected; this is convenient for applications to automorphic representations in \S\ref{s:2automorphic} (where $G$ appears as the Langlands dual of a given connected reductive group).

\subsection{An application to automorphic representations}   \label{sss:application automorphic}
V.~Lafforgue \cite{VLa} used automorphic representations and the Langlands correspondence to prove \eqref{e:2any x}. 
Similarly, we use the Langlands correspondence to translate Theorem~\ref{t:main} into properties of the $p$-adic absolute values of the Hecke eigenvalues of a cuspidal automorphic representation of $GL(n,\BA_F)$, where $\BA_F$ is the ring of adeles of a global field $F$ of characteristic $p>0$, see Theorem~\ref{t:automorphic}, \S\ref{sss:Hecke reformulation}, and Example~\ref{e:PGL(3)}. We do not know whether these properties can be proved directly (i.e., without passing to $F$-isocrystals).

A part of Theorem~\ref{t:automorphic} generalizes to automorphic representations of $G(\BA_F )$, where 
$F$ is as above and $G$ is any reductive group over $F$, see Theorem~\ref{t:2automorphic}(i-ii). We are unable to generalize to arbitrary reductive groups the other part of Theorem~\ref{t:automorphic} (namely, the estimate for the generic slope of automorphic representations). However, Theorem~\ref{t:2automorphic}(iii) says that such a generalization would follow from Conjecture~12.7 of \cite{VLa2} (which goes back to J.~Arthur). The proof of Theorem~\ref{t:2automorphic} uses the main theorem of V.~Lafforgue's article \cite{VLa2}.

\subsection{Organization of the article}
In \S\ref{s:fully faithful} we combine some statements from the literature to show that for any open dense $U\subset X$, the restriction functor $\FIsoc (X)\to \FIsoc (U)$ is fully faithful. This result 
plays a crucial role in the proof of our main Theorem~\ref{t:main}.

In \S\ref{s:proof of main} we prove Theorem~\ref{t:main}. 
In \S\ref{s:reformulations} we discuss some equivalent reformulations of Theorem~\ref{t:main}.
In~\S\ref{s:proof of l-adic} we prove Theorem~\ref{t:l-adic}.
In \S\ref{s:automorphic} we discuss the application to automorphic representations of $GL(n)$ mentioned in 
\S\ref{sss:application automorphic}. 

In \S\ref{s:alggroups} we prove some lemmas on algebraic groups. In \S\ref{s:l-adic for G}-\ref{s:FIsoc_G} they are used to prove the generalizations of Theorems~\ref{t:main} and~\ref{t:l-adic} to arbitrary reductive groups.
In \S\ref{s:2automorphic} we treat the slopes of automorphic representations of arbitrary reductive groups by combining the results of \S\ref{s:l-adic for G} with the main theorem of \cite{VLa2}.

In Appendix~\ref{s:Dwork} we provide the counterexamples promised in \S\ref{ss:counterex}.

In Appendix~\ref{s:Crew} we recall R.~Crew's results on $\FIsoc (X)$ and $\FIsocd (X)$ as Tannakian categories.

\subsection{Acknowledgements}
We thank T.~Abe, A.~Beilinson, D.~Caro, H.~Esnault, K.~Kato, N.~Katz, L.~Illusie, A.~Ogus, A.~Petrov,  P.~Scholze, and V.~Vo\-logodsky for valuable advice and references.

Our research was partially supported by NSF grants DMS-1303100 (V.D.) and
DMS-1501214 (K.K.).

\section{Full faithfulness of restriction functors} \label{s:fully faithful}

We reprise part of the discussion in \cite[\S 5]{Ke6} around the full faithfulness of various restriction functors.

\subsection{Partial overconvergence}
 In addition to the two categories of isocrystals considered so far, we will need a third one: for $U$ an open dense subset of $X$,
let $\FIsoc (U,X)$ denote the category of $F$-isocrystals on $U$ overconvergent within $X$.
In particular, we have $\FIsoc(U,X) = \FIsoc(U)$ if $U = X$ and $\FIsoc(U,X) = \FIsocd(U)$
if $X$ is proper over $k$.

\subsection{Full faithfulness}

\begin{theorem} \label{T:fully faithful0}
For any open dense $U\subset X$, the restriction functor $\FIsoc (X)\to \FIsoc (U,X)$ is fully faithful.
\end{theorem}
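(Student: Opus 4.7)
The plan is to reduce the full-faithfulness statement to an extension statement for Frobenius-invariant horizontal global sections, and then to invoke the deep extension results for overconvergent $F$-isocrystals that are already in the literature.

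First, I would pass to internal Homs: for any $M, N \in \FIsoc(X)$, set $E := \HOM(M, N) \in \FIsoc(X)$. Both $\Hom_{\FIsoc(X)}(M, N)$ and $\Hom_{\FIsoc(U,X)}(M|_U, N|_U)$ are then naturally identified with spaces of $F$-invariant horizontal global sections of $E$, the former on $X$ itself and the latter on $U$ subject to the condition that they be overconvergent within $X$. So the theorem reduces to showing that for every $E \in \FIsoc(X)$, restriction from $X$ to $(U,X)$ induces a bijection between these two spaces of $F$-invariant horizontal sections.

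Injectivity of this restriction map is immediate: a horizontal section of a convergent $F$-isocrystal on $X$ is determined by its restriction to the dense open subset $U$, by the uniqueness of analytic continuation of flat sections (which applies pointwise on a suitable lift of $X$).

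The substantive step is surjectivity, that is, the extension of an $F$-invariant horizontal section across the boundary $Z := X \setminus U$. Given such a section $s$ on $U$ that is overconvergent within $X$, I would attack its extension locally near each point of $Z$. After shrinking and passing to an \'etale local chart, I would reduce to the case where $Z$ is a smooth divisor; then, by slicing with smooth curves transverse to $Z$ and using that $F$-isocrystals are detected on curves through each point, I would reduce further to the case in which $X$ is a smooth curve and $Z$ consists of a single closed point. In that one-dimensional setting the claim becomes the following $p$-adic analytic statement: a horizontal $F$-invariant section of a convergent $F$-isocrystal on a punctured open disc that is overconvergent into the disc extends to a horizontal section on the full disc.

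The main obstacle is exactly this last extension in the curve case. It is essentially the content of the fully faithful embedding $\FIsocd(X) \hookrightarrow \FIsoc(X)$ of \cite[Thm.~1.1]{Ke2} (adapted to the partially overconvergent setting of $\FIsoc(U,X)$), and its proof relies on Tsuzuki's theory of unit-root $F$-isocrystals together with the Frobenius structure theorem for $p$-adic differential equations. As the authors indicate, Kramer-Miller's theory of $F$-isocrystals with logarithmic decay \cite{KM} may in the future provide a more self-contained route to the same conclusion.
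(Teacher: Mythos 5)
Your reduction to internal Homs and to the extension of $F$-invariant horizontal sections across $Z=X\setminus U$ is reasonable, and injectivity is indeed immediate; but the step you call the main obstacle is deferred to the wrong theorem, and that is a genuine gap. The result \cite[Theorem~1.1]{Ke2} (full faithfulness of $\FIsocd(X)\to\FIsoc(X)$) is the content of Theorem~\ref{T:fully faithful1}, i.e.\ of the \emph{other} restriction functor $\FIsoc(U,X)\to\FIsoc(U)$: its local content is that a merely convergent horizontal $F$-invariant section of an \emph{overconvergent} object on $U$ automatically extends outward to a strict neighborhood of the tube of $U$. What Theorem~\ref{T:fully faithful0} requires is different, and in a sense complementary: here $M,N$ (hence $E=\HOM(M,N)$) are convergent on all of $X$, the section is given on a strict neighborhood of $]U[$, and one must extend it inward across the tube $]Z[$, where the crucial hypothesis is precisely that $E$ carries a convergent connection on $]Z[$. ``Adapting'' \cite[Theorem~1.1]{Ke2} does not produce this statement, and invoking it here would moreover risk circularity with the way the paper assembles Theorem~\ref{t:fully faithful} from Theorems~\ref{T:fully faithful0} and~\ref{T:fully faithful1}. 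The reference that actually covers the statement you need is \cite[Theorem~5.2.1]{Ke4}, which is exactly what the paper cites; also note that your local formulation omits the hypothesis that the isocrystal itself is convergent on the full disc, without which the extension problem is not even well posed. (Likewise, the remark about Kramer-Miller's logarithmic-decay theory concerns the hard ingredient behind Theorem~\ref{T:fully faithful1}, not this one.)

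A secondary problem is the dimension reduction: you propose to slice by curves transverse to $Z$ and to ``detect'' the extension on curves, but extending a rigid-analytic horizontal section over the tube of a higher-dimensional $Z$ cannot be done curve by curve without an argument that the fibrewise extensions glue to an analytic section of $E$ on $]Z[$; no such argument is given. The natural repair, if you want an argument rather than the citation, is to work directly on the tube: after localizing so that $Z$ is cut out nicely, convergence of $E\in\FIsoc(X)$ furnishes a parallel-transport (Taylor) isomorphism on $]Z[$, so a horizontal section on the overlap of $]Z[$ with the strict neighborhood of $]U[$ has constant coefficients in a horizontal frame and extends over $]Z[$; Frobenius equivariance of the extension then follows from uniqueness. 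As written, however, the proposal neither proves the key local extension nor cites a result that does, so it does not establish the theorem.
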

\begin{proof}
See \cite[Theorem~5.2.1]{Ke4}.
\end{proof}

\begin{theorem} \label{T:fully faithful1}
For any open dense $U\subset X$, the restriction functor $\FIsoc (U,X)\to \FIsoc (U)$ is fully faithful.
(This remains true even if $X$ is not required to be smooth.)
\end{theorem}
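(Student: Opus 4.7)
The plan is to reduce the statement to the full faithfulness of $\FIsocd(Y) \to \FIsoc(Y)$ for $Y$ smooth and proper, which is the main theorem of \cite{Ke2}.

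First, by the standard Tannakian maneuver (replacing a pair $(M, N)$ by $(\mathbf{1}, M^\vee \otimes N)$), full faithfulness is equivalent to the following: for every $M \in \FIsoc(U, X)$, the natural inclusion
\[
\Hom_{\FIsoc(U, X)}(\mathbf{1}, M) \hookrightarrow \Hom_{\FIsoc(U)}(\mathbf{1}, M|_U)
\]
is a bijection.

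Next, using Nagata compactification and de Jong's alteration theorem, I would construct a proper generically étale surjection $g : Y \to \bar X$, where $\bar X$ is a proper compactification of $X$ and $Y$ is smooth and projective, such that $Y \setminus g^{-1}(U)$ is a strict normal crossings divisor. Set $V := g^{-1}(U)$; after shrinking $U$ to a suitable dense open $U_0 \subseteq U$, I may further assume that $V_0 := g^{-1}(U_0) \to U_0$ is a finite étale Galois cover with Galois group $H$. Since $Y$ is proper, $\FIsoc(V_0, Y) = \FIsocd(V_0)$, and \cite[Theorem~1.1]{Ke2} gives full faithfulness of $\FIsocd(V_0) \to \FIsoc(V_0)$.

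I would then invoke Tsuzuki's finite étale descent theorem for $F$-isocrystals, in its versions for both the convergent and the partially overconvergent categories, to identify $\FIsoc(U_0, X)$ and $\FIsoc(U_0)$ with the $H$-equivariant full subcategories of $\FIsoc(V_0, Y)$ and $\FIsoc(V_0)$, respectively. Full faithfulness over $U_0$ then follows by passing to $H$-invariants in the fully faithful functor from \cite{Ke2} (taking $H$-invariants preserves full faithfulness, since the $H$-equivariance condition on a morphism in the source category matches the $H$-equivariance condition on its image in the target). Finally, I would pass from $U_0$ back to $U$ using an analogue of Theorem~\ref{T:fully faithful0} asserting that $\FIsoc(U, X) \to \FIsoc(U_0, X)$ is itself fully faithful for $U_0 \subseteq U$ dense open.

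The main obstacle, as I see it, is the compatibility of the partial overconvergence with the alteration: one must verify that the pullback of an object of $\FIsoc(U_0, X)$ to $V_0$ really lands in $\FIsocd(V_0) = \FIsoc(V_0, Y)$ rather than merely in $\FIsoc(V_0, g^{-1}(X))$, i.e.\ that the overconvergence within $X$ is automatically upgraded to overconvergence within all of $Y$ after the finite étale cover. This is a nontrivial point precisely because $Y \setminus g^{-1}(X)$ is nonempty when $X$ is not proper in $\bar X$, and its contribution must be controlled using the SNC structure of the boundary divisor and the fact that $V_0 \to U_0$ is étale. A related technical point is verifying that Tsuzuki's descent formalism applies verbatim to the partially overconvergent category $\FIsoc(-, Y)$; this should follow from the functoriality of the overconvergent rigid site under étale morphisms, but must be spelled out with care.
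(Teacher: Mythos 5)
There is a genuine gap, and it sits exactly at the point you flag as the ``main obstacle'' --- but it is not a technical verification to be supplied; the needed statement is false. The pullback of an object of $\FIsoc (U_0,X)$ along $V_0\to U_0$ is overconvergent only along $g^{-1}(X\setminus U_0)$, i.e.\ it lies in $\FIsoc (V_0,g^{-1}(X))$, and no amount of SNC structure or \'etaleness upgrades this to overconvergence along $Y\setminus g^{-1}(X)$. Already in the simplest instance $X=\BA^1$, $U_0=\BG_m$, $\bar X=Y=\BP^1$, $g=\mathrm{id}$, the upgrade would say that every convergent $F$-isocrystal on $\BG_m$ that is overconvergent along $\{0\}$ is also overconvergent at $\infty$; this fails, e.g.\ for the unit-root object attached via Crew's equivalence \cite{Cr} to a continuous character $\pi_1(\BA^1)\to\BZ_p^\times$ of infinite order (it is the restriction of an object of $\FIsoc (\BA^1)$, hence overconvergent along $\{0\}$, but its local monodromy at $\infty$ is infinite, so it is not overconvergent there). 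Consequently the proposed identification of $\FIsoc (U_0,X)$ with $H$-equivariant objects of $\FIsoc (V_0,Y)=\FIsocd (V_0)$ is wrong --- \'etale descent only identifies it with $H$-equivariant objects of $\FIsoc (V_0,g^{-1}(X))$ --- and \cite[Theorem~1.1]{Ke2} cannot be invoked. What your argument actually requires is full faithfulness of $\FIsoc (V_0,g^{-1}(X))\to\FIsoc (V_0)$, which is the very statement being proved (for the smooth open $g^{-1}(X)\subset Y$); the alteration improves the boundary $X\setminus U$, but the whole difficulty of the non-proper case lives at the boundary at infinity of $X$, about which it does nothing. So the reduction is circular.

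For comparison, the paper does not reprove the theorem: it quotes \cite[Theorem~1.1]{Ke2} when $X$ is proper (where indeed $\FIsoc (U,X)=\FIsocd (U)$, as in your setup) and \cite[Theorem~4.2.1]{Ke5} for general $X$; the passage from the proper case to the general one in \cite{Ke5} is made by a different (semistable-reduction, valuation-theoretic) argument, not by forcing full overconvergence after an alteration. The peripheral parts of your plan --- the Tannakian reduction to $\Hom(\mathbf 1,-)$, taking $H$-invariants in a fully faithful functor, and the shrinking step via an analogue of Theorem~\ref{T:fully faithful0} --- are fine; the missing ingredient is a substitute for the step discussed above.
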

\begin{proof}
In the case where $X$ is proper over $k$,
this becomes the statement that $\FIsoc^\dagger(U) \to \FIsoc(U)$ is fully faithful,
which is \cite[Theorem~1.1]{Ke2}.
For the general case, see \cite[Theorem~4.2.1]{Ke5}.
\end{proof}

By combining the preceding results, we obtain the following.
\begin{theorem}   \label{t:fully faithful}
For any open dense $U\subset X$, the restriction functor $\FIsoc (X)\to \FIsoc (U)$ is fully faithful.
\end{theorem}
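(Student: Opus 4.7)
The plan is very short: I would simply factor the restriction functor $\FIsoc(X)\to\FIsoc(U)$ through the intermediate category $\FIsoc(U,X)$ of partially overconvergent $F$-isocrystals, and then invoke the two preceding theorems.

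More explicitly, any $F$-isocrystal on $X$ gives, by restriction, an $F$-isocrystal on $U$ which is automatically overconvergent within $X$ (the overconvergence condition ``within $X$'' is implied by being defined on all of $X$). This yields a commutative diagram of restriction functors
\[
\FIsoc(X)\longrightarrow \FIsoc(U,X)\longrightarrow \FIsoc(U),
\]
whose composition is the functor we wish to analyze. By Theorem~\ref{T:fully faithful0}, the first arrow is fully faithful; by Theorem~\ref{T:fully faithful1}, so is the second. Since the composition of two fully faithful functors is fully faithful, the claim follows.

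There is essentially no obstacle: the genuine difficulty is buried in Theorems~\ref{T:fully faithful0} and~\ref{T:fully faithful1}, which are imported from \cite{Ke2,Ke4,Ke5}. The only thing one needs to verify by hand is the (essentially tautological) fact that the restriction from $\FIsoc(X)$ to $\FIsoc(U)$ does factor through $\FIsoc(U,X)$, which is immediate from the definition of partial overconvergence recalled at the beginning of \S\ref{s:fully faithful}.
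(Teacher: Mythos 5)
Your proof is correct and is exactly the argument in the paper: the paper also factors the restriction functor as $\FIsoc(X)\to\FIsoc(U,X)\to\FIsoc(U)$ and cites Theorems~\ref{T:fully faithful0} and~\ref{T:fully faithful1} for full faithfulness of the two arrows.
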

\begin{proof}
Write the functor as a composition $\FIsoc (X)\to\FIsoc (U,X)\to\FIsoc (U)$. These functors are fully faithful by Theorem~\ref{T:fully faithful0} and Theorem~\ref{T:fully faithful1}, respectively. This completes the proof.
\end{proof}

\section{Proof of Theorem~\ref{t:main}} \label{s:proof of main}
\subsection{Reduction of Theorem~\ref{t:main} to Proposition~\ref{p:coh-vanishing}(a)}

By Theorem~\ref{t:fully faithful} and Lemma~\ref{l:indecomposability}, if $M \in \FIsoc(X)$ is indecomposable, then so is its restriction to any non-empty open subset of $X$. By semicontinuity of the Newton polygon, we may reduce Theorem~\ref{t:main} to the case where the Newton polygon of $M$ is the same at all $x\in X$.  In this case $M$ admits a slope filtration\footnote{E.g., see \cite[Thm.~4.1 and Cor.~4.2]{Ke6} as well as \cite[Thm.~5.1 and Rem.~5.2]{Ke6}. In the case that $\dim X=1$ the slope filtration goes back to N.~Katz \cite[Corollary~2.6.3]{K}.}, so Theorem~\ref{t:main} reduces to the following statement in the spirit of  \cite[Theorem~5.2.1]{Ke}.
 
 \begin{prop}   \label{p:Ext-vanishing}
 Let $M_1,M_2\in\FIsoc (X)$ and $s_1,s_2\in\BQ$. Suppose that $M_i$ is isoclinic of slope $s_i$ at each point of $X$. If $s_1-s_2>1$, then $\Ext^1 (M_1,M_2)=0$.
 \end{prop}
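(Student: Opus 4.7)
\medskip

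\noindent\textbf{Plan.} I would combine Theorem~\ref{t:fully faithful} with a $p$-adic convergence argument driven by the slope gap $s_1-s_2>1$.

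The first step is a reduction to a favorable local model. By Theorem~\ref{t:fully faithful}, an extension class in $\Ext^1_{\FIsoc(X)}(M_1,M_2)$ is trivial iff its restriction to some non-empty open $U\subset X$ is trivial: a splitting $\sigma_U\colon M_1|_U\to M|_U$ extends uniquely to $\sigma\colon M_1\to M$ in $\FIsoc(X)$ by full faithfulness, and the identity $p\circ\sigma=\id_{M_1}$, checked on $U$, propagates to $X$ by full faithfulness on $\End(M_1)$. So I may replace $X$ by an arbitrarily small affine open admitting a smooth $W(k)$-lift $\tilde U$ with étale coordinates and a Frobenius lift $\phi$.

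The second step is to translate the splitting problem into a fixed-point equation for a Frobenius-type operator. Fix an isocrystal-level splitting $\tilde r\colon M\to M_2$ (which exists after further shrinking $U$). The Frobenius isomorphisms on $M$ and $M_2$ induce the conjugation operator $T(f):=\Phi_{M_2}\circ\phi^*(f)\circ\Phi_M^{-1}$ on $\Hom_{\mathrm{isoc}}(M,M_2)$. Because $T$ acts as the identity on $\Hom(M_2,M_2)\subset\Hom(M,M_2)$, the defect $\delta:=T(\tilde r)-\tilde r$ factors through $M_1$ and hence defines an element of $\Hom(M_1,M_2)$. A Frobenius-compatible splitting $r=\tilde r+s$ exists iff we can solve $(1-T)s=\delta$ with $s\in\Hom(M_1,M_2)$; equivalently, setting $N:=\scrHom(M_1,M_2)\in\FIsoc(U)$, the class $[\delta]\in\coker(1-T)$ must vanish. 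One can package this more cleanly as the statement that $\Ext^1_{\FIsoc(U)}(\mathbf{1},N)$ fits into an exact sequence
\[
H^0(U,N)\xrightarrow{1-F}H^0(U,N)\to\Ext^1_{\FIsoc(U)}(\mathbf{1},N)\to H^1(U,N)\xrightarrow{1-F}H^1(U,N),
\]
so that the vanishing of $\Ext^1$ amounts to the surjectivity of $1-F$ on $H^0(U,N)$ and its injectivity on $H^1(U,N)$.

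The final step is to invert $1-T$ (equivalently, $1-F$) using the slope hypothesis. Since $N$ is isoclinic of constant slope $\lambda:=s_2-s_1<-1$, the Dieudonné--Manin picture says that $T^{-1}$ is divisible by $p^{-\lambda}>p$, so formally $(1-T)^{-1}=-\sum_{n\ge 1}T^{-n}$. On $H^0(U,N)$ (horizontal sections) this is immediate because $F$ acts with slope $\lambda<0$. The main obstacle is the corresponding statement on $H^1(U,N)$: one must show that the $F$-slopes appearing on $H^1$ are still strictly negative. In the proper smooth case this is Mazur's inequality, which bounds slopes by $\lambda+1<0$; in the present convergent-affine setting one expects the same bound, reflecting the fact that the $p$-divisibility of $\phi^*$ on $\Omega^1$ contributes at most one unit of slope when passing from $H^0$ to $H^1$. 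Making this rigorous on the infinite-dimensional $p$-adic Banach space of sections of $N$ over $\tilde U$ is the technical heart of the argument: the strict inequality $\lambda<-1$ (rather than $\lambda<0$) is exactly what is needed so that, after absorbing the $+1$ from the connection, the resulting slope remains negative and the geometric series $\sum F^{-n}$ converges in operator norm.
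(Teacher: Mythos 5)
Your skeleton coincides with the paper's: reduce to an affine open with a formal lift and a Frobenius lift, identify $\Ext^1(M_1,M_2)$ with the first cohomology of $\Cocone\bigl(R\Gamma_{\crys}(U,N)\xrightarrow{\,F-1\,}R\Gamma_{\crys}(U,N)\bigr)$ for $N=M_1^*\otimes M_2$ (your exact sequence is exactly Lemma~\ref{l:Ext via cocone} in the affine case), and then kill this group using the slope gap. Two remarks on the setup. First, the claim that an isocrystal-level splitting $\tilde r$ exists after shrinking $U$ is false in general: extensions in $\Isoc(U)$ are governed by $H^1_{\crys}(U,N)\otimes\BQ$, which is typically nonzero (and infinite-dimensional) on affines, so there is no such local splitting; fortunately your exact-sequence formulation does not need it, and that is the formulation you should keep. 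Second, the reduction to affine $U$ via Theorem~\ref{t:fully faithful} is legitimate (a splitting over $U$ extends by full faithfulness), though the heavy theorem is not needed for this step.

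The genuine gap is exactly where you say "the technical heart" lies: the vanishing of $\ker\bigl(1-F\colon H^1_{\crys}(U,N)\to H^1_{\crys}(U,N)\bigr)$, equivalently Proposition~\ref{p:coh-vanishing}(a) in degree $1$. Appealing to "Mazur's inequality" and to slopes of $F$ on $H^1$ does not work as stated: for a convergent (non-overconvergent) isocrystal on an affine variety, $H^1_{\crys}(U,N)$ is an infinite-dimensional $\BQ_p$-space with no Dieudonn\'e--Manin theory, so "the $F$-slopes on $H^1$ are $\le\lambda+1$" is not even well defined a priori, and $F^{-1}$ is not an everywhere-defined bounded operator whose powers you can sum. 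The paper supplies the missing content at the chain level rather than on cohomology: after twisting, $N$ becomes unit-root with $F$ replaced by $F-\gamma$, $v(\gamma)=s_1-s_2>1$; by Crew's theorem \cite{Cr} the unit-root object is a lisse sheaf, which yields an integral lattice and an integral de Rham complex $C^{\bullet}$ on the formal lift (\S\ref{sss:concrete}); the divisibility $F(p^{-j}C^j)\subset C^j$ together with Mazur's theorem \cite{Maz} (mod $p$, the map $\tilde C^{\bullet}\to C^{\bullet}$ is the inverse Cartier isomorphism, so it is a quasi-isomorphism, Lemma~\ref{l:BO1}) and a truncation-plus-reduction-mod-$m_K$ argument in the style of Berthelot--Ogus \cite{BO} (Lemma~\ref{l:BO2}) gives the vanishing in all degrees $<v(\gamma)$. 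Without this integral/divisibility input your argument has no mechanism to "absorb the $+1$ from the connection", so the proof is incomplete at its decisive step. (The $H^0$ part of your argument is fine: horizontal sections inject $F$-equivariantly into a fiber, so $1-F$ is bijective there since all slopes are negative.)
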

 
A proof of Proposition~\ref{p:Ext-vanishing} is given below. A slightly different proof, more directly based on \cite{Ke}, is given in \cite[Appendix~A]{Ke6}.
 
It is known\footnote{See \cite[\S 2.4]{Ber}. The main point is that $F$-equivariant isocrystals are automatically convergent; this is proved in \cite{Ber} using an argument which goes back to Dwork.} that $\FIsoc(X)$ identifies with the category of Frobenius-equivariant objects in the category $\Isoc (X):=\Crys (X)\otimes\BQ$, where $\Crys  (X)$ is the category of crystals of coherent sheaves on $X$. For $M\in\Isoc (X)$ we set $R\Gamma_{\crys} (X,M):=R\Gamma_{\crys} (X,M_0)\otimes\BQ$, where $M_0$ is any object of $\Crys (X)$ equipped with an isomorphism $M_0\otimes\BQ\iso M$. If $M\in\FIsoc(X)$ then the complex of $\BQ_p$-vector spaces $R\Gamma_{\crys} (X,M)$ is equipped with an action of the Frobenius endo\-mor\-phism~$F$.

\begin{lem}   \label{l:Ext via cocone}
$\Ext^1 (M_1,M_2)$ is canonically isomorphic to the first cohomology of the complex
\begin{equation}  \label{e:Cocone}
\Cocone (R\Gamma_{\crys} (X,N)\overset{F-1}\longrightarrow R\Gamma_{\crys} (X,N)),
\end{equation}
where $N:=\HOM (M_1,M_2)=M_1^*\otimes M_2$ and $\Cocone:=\Cone [-1]$. \qed
\end{lem}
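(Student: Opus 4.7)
My plan is to reduce the statement to the computation of $\Ext^1_{\FIsoc(X)}(\mathbf{1},N)$ via tensor--Hom adjunction, and then to describe $F$-equivariant extensions in terms of extensions in $\Isoc(X)$ together with a lift of the Frobenius action. The category $\FIsoc(X)$ is rigid symmetric monoidal with unit $\mathbf{1}$ and internal Hom $\HOM(M_1,M_2)=N$, so $\Ext^i_{\FIsoc(X)}(M_1,M_2)\simeq\Ext^i_{\FIsoc(X)}(\mathbf{1},N)$ for every $i$, and it suffices to treat the case $M_1=\mathbf{1}$, $M_2=N$.

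For this, I would write an $F$-isocrystal as a pair $(M,\phi_M\colon F^*M\iso M)$, so that an element of $\Ext^1_{\FIsoc(X)}(\mathbf{1},N)$ is represented by an extension $0\to N\to E\to\mathbf{1}\to 0$ in $\Isoc(X)$ equipped with a Frobenius structure $\phi_E$ extending $\phi_N$ and inducing $\phi_{\mathbf{1}}$. Forgetting $\phi_E$ gives a class in $\Ext^1_{\Isoc(X)}(\mathbf{1},N)=H^1_{\crys}(X,N)$ which is automatically $F$-fixed, since $F^*E\simeq E$ as extensions. Conversely, any $F$-fixed class admits a lift, and the ambiguity in the lift (modulo equivalence) is exactly $H^0_{\crys}(X,N)/(F-1)H^0_{\crys}(X,N)$: two choices of $\phi_E$ differ by a morphism $\mathbf{1}\to N$ in $\Isoc(X)$, i.e.\ by an element of $H^0_{\crys}(X,N)$, and two $F$-isocrystal extensions are equivalent iff the corresponding $\phi_E$'s differ by an element in the image of $F-1$. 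This yields a natural short exact sequence
\[
0\to H^0_{\crys}(X,N)/(F-1)\to\Ext^1_{\FIsoc(X)}(\mathbf{1},N)\to H^1_{\crys}(X,N)^{F=1}\to 0.
\]

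On the other side, the defining distinguished triangle $\Cocone(F-1)\to R\Gamma_{\crys}(X,N)\xrightarrow{F-1}R\Gamma_{\crys}(X,N)$ induces the same short exact sequence, but with $H^1(\Cocone)$ in the middle; matching the two descriptions proves the lemma. The main obstacle is precisely this matching, i.e.\ verifying that the boundary maps in the two exact sequences agree, so that the isomorphism is canonical and not merely abstract. Conceptually this is the general fact that $R\Hom_{\FIsoc(X)}(\mathbf{1},-)$ is the homotopy fiber of $F-1$ acting on $R\Hom_{\Isoc(X)}(\mathbf{1},-)=R\Gamma_{\crys}(X,-)$, a standard statement about equivariant objects under an autoequivalence; in practice I would write out the Yoneda cocycle for $\Ext^1$ explicitly and check the matching on it.
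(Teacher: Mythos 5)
Your overall strategy is the same as the paper's: reduce to $\Ext^1(\cO,N)$ by rigidity, then describe $F$-equivariant extensions as extensions in $\Isoc(X)$ together with a compatible Frobenius structure. However, as written the argument has two gaps, one of which you flag yourself. First, the identification $\Ext^1_{\Isoc(X)}(\cO,N)\simeq H^1_{\crys}(X,N)$ is asserted, not proved, and it is not a definition: $\Isoc(X)=\Crys(X)\otimes\BQ$ is an isogeny category, while $R\Gamma_{\crys}(X,N)$ is defined through a choice of lattice. The paper spends the corresponding part of its proof exactly here, writing the groupoid of extensions in $\Isoc(X)$ as a filtered colimit over lattices $L\in\Lat(N)$ of extensions in $\Crys(X)$, and identifying an extension of $\cO$ by $L$ with an $L$-torsor on the crystalline site; this is what ties Yoneda extensions to crystalline cohomology.

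Second, and more seriously, producing the two short exact sequences
\[
0\to H^0_{\crys}(X,N)/(F-1)\to\Ext^1_{\FIsoc(X)}(\cO,N)\to H^1_{\crys}(X,N)^{F=1}\to 0
\]
and its analogue with $H^1(\Cocone(F-1))$ in the middle does not by itself prove the lemma: two extensions of the same pair of abelian groups need not have isomorphic middle terms, and even when they do the isomorphism is not canonical. What is needed is a canonical map between the middle terms compatible with the two sequences, and constructing that map is the real content; you acknowledge this (``the matching'') and propose an explicit Yoneda cocycle verification, which would work but is precisely the computation left undone. The paper sidesteps it by upgrading from $\Ext^1$ to Picard groupoids: $\scrExt_{\Isoc}(\cO,N)$ corresponds, in the sense of SGA4 XVIII \S 1.4, to $\tau^{\le 0}R\Gamma_{\crys}(X,N[1])$, and passing to Frobenius-equivariant objects of a Picard groupoid corresponds on complexes to forming $\Cocone(F-1)$; this yields the canonical isomorphism in one stroke, with no comparison of boundary maps. (Note also the paper's remark that for its application one only needs the statement for affine $X$, where everything can be checked directly on a smooth formal lift.)
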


The lemma is well known. We give a proof for completeness. Note that for our purposes it is enough to know that $\Ext^1(M_1,M_2)$ is isomorphic to the first cohomology of (3.1) in the case that $X$ is \emph{affine}, and this weaker statement can be easily checked by choosing a lift of $X$ to a smooth formal scheme over~$W(k)$.

\begin{proof}
Let $\cO$ denote the unit object of the tensor category $\FIsoc (X)$. (Later we will use the same symbol for the unit objects of some other tensor categories.)
The composition $$\Ext^1 (\cO,N)\to\Ext^1 (M_1,M_1\otimes N)\to\Ext^1 (M_1,M_2)$$ is an isomorphism: its inverse is the composition
$$\Ext^1 (M_1,M_2)\to\Ext^1 (M_1\otimes M_1^*,M_2\otimes M_1^*)=\Ext^1 (M_1\otimes M_1^*,N)\to\Ext^1 (\cO,N).$$ So it remains to compute 
$\Ext^1 (\cO,N)$.

Let $\Isoc (X)$ denote the category of isocrystals on $X$. Let $\scrExt_{\FIsoc} (\cO ,N)$ (resp.~$\scrExt_{\Isoc} (\cO ,N)$) denote the Picard groupoid of extensions of $\cO$ by $N$ in the category $\FIsoc (X)$ (resp.~$\Isoc (X)$).

Let us first compute $\scrExt_{\Isoc} (\cO ,N)$.
By definition, $\Isoc (X)=\Crys (X)\otimes\BQ=\Crys_{\BZ_p{\mbox{-flat}}} (X)\otimes\BQ$, where $\Crys (X)$ is the category of crystals of 
coherent sheaves on $X$ and $\Crys_{\BZ_p{\mbox{-flat}}} (X)$ is the full subcategory of those objects of $\Crys (X)$ that have no non-zero subcrystals killed by $p$. The fiber over $N$ of the functor $\Crys_{\BZ_p{\mbox{-flat}}} (X)\to\Isoc (X)$ is the poset of \emph{lattices} in $N$, denoted by $\Lat (N)$. One has
\[
\scrExt_{\Isoc} (\cO ,N)=\underset{L\in \Lat (N)}{\underset{\longrightarrow}\lim} \scrExt_{\Crys} (\cO ,L)
\]
(in the right-hand side $\cO$ denotes the unit object of $\Crys (X)$). 
An object of $\scrExt_{\Crys} (\cO ,L)$ is the same as an $L$-torsor on the crystalline site of $X$, so the Picard groupoid 
$\scrExt_{\Crys} (\cO,L)$ corresponds (in the sense of \cite[Expos\'e XVIII, \S 1.4]{SGA4}) to the complex $\tau^{\le 0}R\Gamma_{\crys} (X,L[1])$. Therefore $\scrExt_{\Isoc} (\cO ,N)$ corresponds to $\tau^{\le 0}R\Gamma_{\crys} (X,N[1])$.

The Picard groupoid $\scrExt_{\FIsoc} (\cO ,N)$ is the groupoid of Frobenius-equivariant objects of $\scrExt_{\Isoc} (\cO ,N)$, so it corresponds to the complex
\[
\tau^{\le 0}\Cocone (R\Gamma_{\crys} (X,N[1])\overset{F-1}\longrightarrow R\Gamma_{\crys} (X,N[1])).
\]
Therefore $\Ext^1 (\cO,N)$ is the $0$-th cohomology of this complex or equivalently, the first cohomology of the complex \eqref{e:Cocone}.
\end{proof}

Lemma~\ref{l:Ext via cocone} shows that Proposition~\ref{p:Ext-vanishing} is a particular case of the following statement in the spirit of  \cite[\S 5.4]{Ke3}.

\begin{prop}  \label{p:0coh-vanishing}
Suppose that $N\in\FIsoc (X)$ is isoclinic of slope $s$ at each point of $X$. Then the $i$-th cohomology of the complex \eqref{e:Cocone} vanishes for all $i<-s$.
\end{prop}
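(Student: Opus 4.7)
The distinguished triangle
\[ \Cocone(F-1) \to R\Gamma_{\crys}(X,N) \xrightarrow{F-1} R\Gamma_{\crys}(X,N) \]
yields a long exact sequence showing that the vanishing of $H^i(\Cocone)$ is equivalent to the surjectivity of $F-1$ on $H^{i-1}_{\crys}(X,N)$ together with its injectivity on $H^i_{\crys}(X,N)$. Consequently, my plan is to reduce Proposition~\ref{p:0coh-vanishing} to the stronger assertion that $F-1$ acts \emph{bijectively} on $H^j_{\crys}(X,N)$ for every integer $j<-s$. Since crystalline cohomology vanishes in negative degrees, the non-trivial range is $0\le j<-s$, which is empty unless $s<0$; in that case we retain only the finitely many degrees $j\in\{0,1,\ldots,\lceil-s\rceil-1\}$.

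For such $j$, the plan is to exhibit an integral structure on $H^j_{\crys}(X,N)$ on which $F^{-1}$ (which exists rationally since $N$ is an $F$-isocrystal) is topologically nilpotent; once this is established, the geometric series $\sum_{k\ge 0}F^{-k}$ converges $p$-adically to an inverse of $1-F^{-1}$, so $1-F=-F(1-F^{-1})$ is bijective. To produce such a structure, I would work \v{C}ech-locally, representing $R\Gamma_{\crys}$ on each affine open by the de Rham complex $\Omega^\bullet\otimes\cN$ attached to a smooth $p$-adic lift and an integral lattice $\cN\subset N$. Two divisibility inputs then combine on the degree-$j$ piece of this complex: first, Frobenius on $\Omega^q$ is divisible by $p^q$ (from the identity $\varphi^\ast(df)\in p\,\Omega^1$); second, the isoclinicity of $N$ of slope $s$ forces Frobenius on $\cN$ to have divisibility $s$ in a suitable sense, after, if $s$ is not integral, passing to a power of Frobenius to clear the denominator of $s$. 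Multiplying these yields divisibility $j+s<0$ of Frobenius on $\Omega^j\otimes\cN$, equivalently $F^{-1}$ divisible by $p^{-(j+s)}>0$, which is the required topological nilpotence.

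The main obstacle I anticipate is handling the non-properness of $X$: the cohomology $H^j_{\crys}(X,N)$ need not be finitely generated over $W(k)$, so the ``integral structure'' on it must be interpreted with some care (e.g.\ via the Banach or filtered structure inherited from the \v{C}ech--de Rham totalization). The rescue is that topological nilpotence lives at the level of the complex and is preserved by taking cohomology as a subquotient, so the $p$-adic geometric series can be applied directly there and then transported to $H^j$. A secondary fiddly point is the descent from $F^m-1$ to $F-1$ after the passage to a power of Frobenius, which one arranges via the factorization $F^m-1=(F-1)(F^{m-1}+\cdots+1)$ together with a separate analysis of the cofactor; this is harmless but deserves attention to avoid circularity.
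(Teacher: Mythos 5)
Your reduction (bijectivity of $F-1$ on $H^j_{\crys}(X,N)$ in low degrees) and your two sources of divisibility are the right heuristic, and the coefficient input (a lattice on which Frobenius acts integrally, via the unit-root twist and Crew's theorem) matches what the paper uses. But the step ``Frobenius on $\Omega^q$ is divisible by $p^q$ \dots\ Multiplying these yields divisibility $j+s<0$ of Frobenius on $\Omega^j\otimes\cN$, equivalently $F^{-1}$ divisible by $p^{-(j+s)}>0$'' contains the gap, and it is exactly where the real theorem lives. First, $F$ is not bijective on the terms of the de Rham complex (already $\phi^*$ on functions is not surjective, even rationally), so there is no ``$F^{-1}$ at the level of the complex'' to which a divisibility could be attached; your proposed rescue of the completeness problem by working on the complex therefore does not get off the ground. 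Second, on cohomology, where $F^{-1}$ does exist, divisibility of $F$ by $p^{j}$ on the degree-$j$ term gives only the lower bound $F^{-1}(\Lambda)\supseteq p^{-j}\Lambda$ for an integral structure $\Lambda$; topological nilpotence requires the opposite bound $F^{-1}(\Lambda)\subseteq p^{-(j+s)}\Lambda$, i.e.\ that the image of $F$ on $H^j$ contains $p^{j+s}\Lambda$. That surjectivity-with-controlled-denominators statement is precisely Mazur's theorem (the inverse Cartier isomorphism), i.e.\ the paper's Lemma~\ref{l:BO1} from Berthelot--Ogus: $F$ maps the rescaled complex $\tilde C^{\bullet}$ with $\tilde C^j=p^{-j}C^j$ quasi-isomorphically to $C^{\bullet}$. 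Your sketch never invokes it, and without it the argument cannot even prove injectivity of $F-1$ on $H^1$, which is the case needed for Proposition~\ref{p:Ext-vanishing}.

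Even granting Mazur, your route through cohomology still has to sum a geometric series on $H^j_{\crys}(X,N)$, which requires a $p$-adically complete and separated integral structure there; since $X$ is affine these groups can be infinite-dimensional and the image of integral cohomology need not be closed, so this is not automatic (it is the obstacle you flagged, but the fix you propose relies on the nonexistent termwise $F^{-1}$). The paper avoids inverting Frobenius on cohomology altogether: it works with complexes of topologically free $\fo_K$-modules, passes to the stupid truncation in degrees $\le i$, observes that for $v(\gamma)>i$ one has $\gamma\,\tilde C^{\bullet}_{\le i}\subset m_K\,C^{\bullet}_{\le i}$ so that $F-\gamma\equiv F$ modulo $m_K$, and then quotes Mazur to see that the reduction of $\Cocone(\tilde C^{\bullet}_{\le i}\to C^{\bullet}_{\le i})$ is concentrated in degree $i+1$; topological freeness transfers this to the integral and then rational complexes. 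If you want to pursue your version, you should (a) import Lemma~\ref{l:BO1} as the key input replacing the false ``equivalently'' step, and (b) either prove the needed completeness of your integral structure on $H^j$ or restructure the argument at the level of complexes as the paper does.
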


Proposition~\ref{p:0coh-vanishing} is equivalent to part (a) of the following one.

\begin{prop}  \label{p:coh-vanishing}
Let $K\supset\BQ_p$ be a finite extension and $\gamma\in K^\times$. Suppose that $N\in\FIsoc (X)\otimes_{\BQ_p}K$ is unit-root. Then 

(a) the $i$-th cohomology of the complex
 \begin{equation}  \label{e:2Cocone}
\Cocone (R\Gamma_{\crys} (X,N)\overset{F-\gamma}\longrightarrow R\Gamma_{\crys} (X,N))
\end{equation}
vanishes for all $i<v(\gamma )$;

(b) if $X$ is affine then the $i$-th cohomology of \eqref{e:2Cocone} also vanishes for all $i>v(\gamma )+1$;

(c) if $v(\gamma )<0$ then all cohomology groups of \eqref{e:2Cocone} vanish;

(d) if $v(\gamma )>\dim X$ then all cohomology groups of \eqref{e:2Cocone} vanish.

\end{prop}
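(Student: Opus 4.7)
The plan is to combine the existence of a Frobenius-stable integral lattice (which characterizes unit-root isocrystals) with slope bounds on crystalline cohomology, and then invert $F-\gamma$ whenever $v(\gamma)$ lies outside the range of Newton slopes of $F$.

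Since $N$ is unit-root, we choose an integral $F$-stable lattice $N_0$: an object of $\Crys(X)\otimes_{\BZ_p}\cO_K$ without $p$-torsion whose associated isocrystal is $N$, equipped with an isomorphism $F\colon F^*N_0\iso N_0$ refining the given one on $N$. Then $R\Gamma_{\crys}(X,N_0)$ is an integral subcomplex of $R\Gamma_{\crys}(X,N)$ preserved by the $\sigma$-linear $F$. The key estimate is: for unit-root $N$ on smooth $X$ of dimension $d$, the Newton slopes of $F$ on $H^i_{\crys}(X,N)$ lie in $[\max(0,i-d),\min(i,d)]$. The lower bound $\geq 0$ reflects integrality of $F$ on $H^i(R\Gamma_{\crys}(X,N_0))$ modulo torsion; the upper bound $\leq i$ is the standard Hodge-Newton estimate, proved locally by choosing a smooth formal lift $\mathfrak{X}$ of $X$ over $W(k)$ and a lift $\varphi$ of Frobenius (since $\varphi^*\Omega^j_{\mathfrak{X}}$ is divisible by $p^j$, the de Rham-Frobenius spectral sequence shifts slopes in degree $j$ by $j$); the symmetric bounds $\geq i-d$ and $\leq d$ follow from Poincar\'e duality with $R\Gamma_{c,\crys}(X,N^\vee(d))$ together with the fact that $N^\vee$ is also unit-root.

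By the Dieudonn\'e-Manin classification, a $\sigma$-linear $F$ on a finite-dimensional $K$-vector space with invariant lattice decomposes over an algebraic closure into isotypic pieces indexed by Newton slope $\lambda$; on each piece $F-\gamma$ is bijective whenever $v(\gamma)\neq\lambda$, inverted by a geometric series in $\gamma^{-1}F$ when $v(\gamma)>\lambda$ or in $\gamma F^{-1}$ when $v(\gamma)<\lambda$ (the latter uses invertibility of $F$, which holds since $N$ is unit-root). Combined with the slope bound, this yields parts (a), (c), (d) at once: for (a) with $i<v(\gamma)$, slopes on $H^{i-1}$ and $H^i$ are at most $i-1,\,i<v(\gamma)$; for (c) with $v(\gamma)<0$, slopes are $\geq 0>v(\gamma)$ on every $H^j$; for (d) with $v(\gamma)>d$, slopes are $\leq d<v(\gamma)$ on every $H^j$; in each case $F-\gamma$ is bijective on all relevant cohomology groups, so $H^i(\Cocone)=0$ via the long exact sequence. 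Part (b) is handled through Poincar\'e duality: for $X$ affine smooth of dimension $d$, $H^j_{\crys}(X,N)=0$ for $j>d$, handling $i>d+1$ trivially; the remaining case $v(\gamma)+1<i\leq d+1$ reduces to the analogue of (a) for $R\Gamma_{c,\crys}(X,N^\vee)$ with twisted parameter $p^d\gamma^{-1}$ (of valuation $d-v(\gamma)$), using that $R\Gamma_{c,\crys}(X,N^\vee)$ is concentrated in degrees $[d,2d]$ for $X$ affine.

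The main obstacle will be establishing the slope bounds rigorously in the non-proper setting: $R\Gamma_{\crys}(X,N_0)$ is not generally a perfect complex when $X$ is open, so justifying the Hodge-Newton estimate and making sense of ``Newton slopes of $F$'' on each cohomology group require careful patching via a (possibly only local) de Rham model. The Poincar\'e duality reduction in part (b) also requires carefully matching the action of $F-\gamma$ with its dual under the pairing, tracking the Tate twist, and verifying the analogue of the slope bound for compactly supported cohomology.
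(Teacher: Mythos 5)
Your plan stalls exactly at the point you flag as ``the main obstacle,'' and that obstacle is not a technicality to be patched later: it is the entire content of the proposition. For open $X$ (the relevant case is $X$ affine) the \emph{convergent} cohomology groups $H^i_{\crys}(X,N)$ are in general infinite-dimensional over $K$ --- see the example at the end of \S\ref{ss:refinement}, where for $X=(\BG_m)^n$ and trivial $N$ even $H^{r+1}$ of the cocone has infinite dimension over $\BQ_p$. Consequently there is no Dieudonn\'e--Manin decomposition, no well-defined ``Newton slopes of $F$ on $H^i$,'' and no way to conclude bijectivity of $F-\gamma$ on cohomology from a slope comparison; even the geometric series $1+\gamma^{-1}F+(\gamma^{-1}F)^2+\cdots$ cannot be run on $H^i$, since the image there of integral cohomology need not be $p$-adically separated or complete (cohomology of a complex of complete modules need not be complete). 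Moreover the Hodge--Newton estimate you quote is a theorem (Berthelot--Ogus, Katz--Messing) only for proper smooth $X$; its correct substitute in the open case is a chain-level statement, and that is precisely what the paper proves: after reducing to $X$ affine and realizing $R\Gamma_{\crys}(X,N)$ by the integral de Rham complex $C^{\bullet}$ of a unit-root coefficient on a formal lift (via Crew's equivalence), one has $F(C^j)\subset p^jC^j$ and Mazur's theorem that $F:\tilde C^{\bullet}\to C^{\bullet}$, $\tilde C^j:=p^{-j}C^j$, is a quasi-isomorphism (Lemma~\ref{l:BO1}); parts (a) and (d) follow by truncating and reducing modulo $m_K$ (Lemma~\ref{l:BO2}), and (b), (c) by inverting $F-\gamma$ termwise in degrees $j>v(\gamma)$. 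Your proposal defers exactly this, so as written it does not constitute a proof.

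The route you propose for (b) is also not viable in this setting. Poincar\'e duality and the concentration of $R\Gamma_{c,\crys}(X,N^{\vee})$ in degrees $[d,2d]$ for $X$ affine are statements about rigid cohomology with \emph{overconvergent} coefficients; for merely convergent $F$-isocrystals on an open variety (which is the hypothesis here) no such duality or Artin-type vanishing is available, again because of the infinite-dimensionality phenomena above. In the paper the affineness hypothesis enters for a much more elementary reason: $R\Gamma_{\crys}(X,N)$ is then computed by the global de Rham complex sitting in degrees $0,\dots,\dim X$, and on its $j$-th term with $j\ge i-1>v(\gamma)$ the operator $F-\gamma=-\gamma(1-\gamma^{-1}F)$ is invertible because $\gamma^{-1}F(C^j)\subset m_K\cdot C^j$ and $C^j$ is topologically free --- no duality is needed. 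Your observation that $H^j_{\crys}(X,N)=0$ for $j>\dim X$ when $X$ is affine is correct for this same reason, but it only disposes of $i>\dim X+1$; the remaining range $v(\gamma)+1<i\le\dim X+1$ of (b) cannot be reached by the duality argument you sketch.
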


(As usual, $\FIsoc (X)\otimes_{\BQ_p}K$ denotes the category of objects of $\FIsoc (X)$ equipped with $K$-action.)

If $v(\gamma )$ is a non-negative integer then Proposition~\ref{p:coh-vanishing} just says that the complex \eqref{e:2Cocone} is concentrated in degrees $v(\gamma )$ and $v(\gamma )+1$; for a more precise statement, see Proposition~\ref{p:essentially single degree} below.

Let us note that if $v(\gamma )>\dim X +1$ then Proposition~\ref{p:coh-vanishing}(d) immediately follows from Proposition~\ref{p:coh-vanishing}(a).

\subsection{Proof of Proposition~\ref{p:coh-vanishing}}
It suffices to prove Proposition~\ref{p:coh-vanishing} if $X$ is affine. From now on we assume this.

\subsubsection{A concrete realization of $R\Gamma_{\crys} (X,N)$}   \label{sss:concrete}

We fix a pair $(\sX ,\phi )$, where $\sX$ is a smooth formal scheme over the Witt ring $W(k)$ with special fiber $X$ and $\phi :\sX\to\sX$ is a lift of the absolute Frobenius of $X$. Let $\sX_n$ denote the reduction of $\sX$ modulo~$p^n$.

By \cite[Thm.~2.1]{Cr}, a unit-root object $N\in\FIsoc (X)\otimes_{\BQ_p}K$ is ``the same as" a lisse $K$-sheaf $\cN$  on $\sX_{\et}$ (i.e., a $\BQ_p$-sheaf equipped with an action of $K$). Let $\fo_K\subset K$ be the ring of integers. 
We have $\cN=\cN_0\otimes_{\fo_K}K$ for some torsion-free lisse $\fo_K$-sheaf $\cN_0$ on 
$X_{\et}$. 

Tensoring $\cN_0/p^n\cN_0$ by the structure sheaf $\cO_{\sX_n}$ (viewed as a sheaf on $X_{\et}$), one gets a vector bundle $L_n$ on 
$\sX_n$. The vector bundles $L_n$ on $\sX_n$ define a vector bundle $L$ on $\sX$ equipped with an integrable connection $\nabla$, an action of $\fo_K$, and an action of $\phi$ (i.e., a $\phi$-linear endomorphism of $H^0(X,N)$).

Let $C^{\bullet}$ denote the de Rham complex of $(L,\nabla )$. This is a complex of topologically free $\fo_K$-modules equipped with an endomorphism $F$ (the latter comes from the action of $\phi$). The complex $C^{\bullet}\otimes\BQ$ is a concrete realization of $R\Gamma_{\crys} (X,N)$.

\subsubsection{Lemmas in the spirit of Berthelot-Ogus}
The terms of the complex $C^{\bullet}$ are denoted by $C^j$. Let $\tilde C^{\bullet}\subset C^{\bullet}\otimes\BQ$ denote the subcomplex whose $j$-th term equals $\tilde C^j:=p^{-j}\cdot C^j\subset C^j\otimes\BQ$. It is clear that the morphism 
$F:C^{\bullet}\otimes\BQ\to C^{\bullet}\otimes\BQ$ maps $\tilde C^{\bullet}$ to $C^{\bullet}$.

The following lemmas and their proofs date back to  \cite{BO} (see \cite[Lemma~1.4]{BO} and  \cite[Props~1.5-1.7]{BO}).

\begin{lem}  \label{l:BO1}
(i) The complex $\tilde C^{\bullet}/p\tilde C^{\bullet}$ has zero differential.

(ii) The morphism $F:\tilde C^{\bullet}\to C^{\bullet}$ is a quasi-isomorphism.
\end{lem}

\begin{proof}
Statement (i) is clear. The terms of the complexes $\tilde C^{\bullet}$ and $C^{\bullet}$ are topologically free. So to prove~(ii), it suffices to check that the morphism $\tilde C^{\bullet}/p\tilde C^{\bullet}\to C^{\bullet}/pC^{\bullet}$ induced by $F$ is a quasi-isomorphism. This is a well-known interpretation of the inverse of the Cartier isomorphism due to Mazur \cite{Maz}.
\end{proof}

\begin{lem}   \label{l:BO2}
Let $\gamma\in K$, $i\in\BZ$, $i< v(\gamma )$. Let $C^{\bullet}_{\le i}$ (resp.~$\tilde C^{\bullet}_{\le i}$) denote the complex obtained from 
$C^{\bullet}$ (resp.~$\tilde C^{\bullet}$) by replacing the terms of degree $>i$ with zeros. Then
   
  (i) the morphism $F-\gamma :\tilde C^{\bullet}_{\le i}\otimes\BQ\to C^{\bullet}_{\le i}\otimes\BQ$ maps $\tilde C^{\bullet}_{\le i}$ to $C^{\bullet}_{\le i}$;
  
  (ii) the cohomology of the complex
  $\Cocone (\tilde C^{\bullet}_{\le i}\overset{F-\gamma}\longrightarrow C^{\bullet}_{\le i})$ is concentrated in degree $i+1$; this cohomology is zero if $i\ge\dim X$.
\end{lem}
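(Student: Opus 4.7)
I plan to prove part (i) by a direct degreewise check. For $j\le i$, the containment $F(\tilde C^j)\subseteq C^j$ is built into the setup used in Lemma~\ref{l:BO1}, while
\[
\gamma\cdot\tilde C^j=\gamma\cdot p^{-j}C^j=p^{v(\gamma)-j}\cdot(\text{unit})\cdot C^j\subseteq p\, C^j\subseteq C^j
\]
since $v(\gamma)-j\ge v(\gamma)-i>0$. Hence $F-\gamma$ carries $\tilde C^\bullet_{\le i}$ into $C^\bullet_{\le i}$, as asserted.

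For part (ii) I would exploit the long exact cohomology sequence of the exact triangle $\Cocone(F-\gamma)\to\tilde C^\bullet_{\le i}\xrightarrow{F-\gamma}C^\bullet_{\le i}$. Since both right-hand complexes vanish in degrees $>i$, one has $H^n(\Cocone)=0$ automatically for $n>i+1$, and $H^{i+1}(\Cocone)$ is identified with the cokernel of $F-\gamma$ on $H^i$. The content of the lemma thus reduces to showing $H^n(\Cocone)=0$ for all $n\le i$, together with, under the additional hypothesis $i\ge\dim X$, vanishing also at $n=i+1$.

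The crucial observation is that modulo $p$ the map $\gamma$ vanishes on these truncations: by part (i), $\gamma\cdot\tilde C^j\subseteq p\,C^j$ for every $j\le i$, so $F-\gamma\equiv F\pmod p$ as maps $\tilde C^\bullet_{\le i}/p\to C^\bullet_{\le i}/p$. The Mazur inverse Cartier isomorphism (the engine of Lemma~\ref{l:BO1}) then shows that $F\bmod p$ induces isomorphisms on $H^n$ for $n<i$ and an injection on $H^i$ whose cokernel is $\im(d\colon C^i\to C^{i+1})\bmod p$---vanishing precisely when $i\ge\dim X$. A direct LES chase then shows that $\Cocone(F-\gamma)/p$ is concentrated in degree $i+1$, and vanishes entirely when $i\ge\dim X$.

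The final step is a $p$-adic devissage to lift this conclusion from mod $p$ to the integral level. From the short exact sequences $0\to\Cocone/p^{n-1}\xrightarrow{p}\Cocone/p^n\to\Cocone/p\to 0$ one deduces by induction on $n$ that $\Cocone/p^n$ is concentrated in degree $i+1$ for all $n\ge 1$; then the $p$-adic completeness of the terms of $\Cocone$ (inherited from topological freeness of $C^\bullet$ and $\tilde C^\bullet$) lets one pass to the inverse limit and conclude that $\Cocone$ itself is concentrated in degree $i+1$. The main technical obstacle will be managing this passage to the inverse limit cleanly---controlling the relevant derived inverse-limit terms---which in the Berthelot--Ogus framework is handled by the topological analysis in \cite[Props.~1.5--1.7]{BO} to which the authors explicitly defer.
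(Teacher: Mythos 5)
Your overall strategy is the same as the paper's: prove (i) degreewise, reduce (ii) to the residue field where $F-\gamma$ becomes $F$ and Mazur's inverse Cartier interpretation (Lemma~\ref{l:BO1}) applies, and then lift the conclusion using topological freeness and completeness of the terms. However, one step fails as written: the containment $\gamma\cdot\tilde C^j\subseteq p\,C^j$, and hence the congruence $F-\gamma\equiv F \pmod p$, is not true in general. The hypothesis is only $i<v(\gamma)$ with $\gamma$ in a finite, possibly ramified, extension $K$ of $\BQ_p$, so $v(\gamma)-j$ can be any positive element of $\frac{1}{e}\BZ$ (for instance $v(\gamma)=i+\frac12$); then $\gamma p^{-i}$ lies in the maximal ideal $m_K$ of $\fo_K$ but not in $p\,\fo_K$, so $\gamma\tilde C^i\not\subseteq p\,C^i$. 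For part (i) this is harmless, since membership in $C^j$ only requires $\gamma p^{-j}\in\fo_K$, but for part (ii) your mod-$p$ reduction does not actually kill $\gamma$.

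The repair is exactly what the paper does: from $v(\gamma)>i$ one gets $\gamma(\tilde C^{\bullet}_{\le i})\subseteq m_K\cdot C^{\bullet}_{\le i}$, so reduce modulo $m_K$ (i.e.\ modulo a uniformizer) rather than modulo $p$. Modulo $m_K$ the map is induced by $F$, and your long-exact-sequence/Cartier analysis goes through verbatim over the residue field, giving concentration of $\Cocone(\tilde C^{\bullet}_{\le i}\overset{F-\gamma}\longrightarrow C^{\bullet}_{\le i})\otimes_{\fo_K}\fo_K/m_K$ in degree $i+1$, and its vanishing when $i\ge\dim X$ (since then $C^{i+1}=0$). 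Your final devissage should then be run $\pi$-adically (or note that $\fo_K/p$ carries a finite filtration with graded quotients $\fo_K/m_K$ and the terms are torsion-free), after which the passage to the limit is unproblematic: the transition maps are termwise surjective and the complex is bounded above in degree $i+1$, so the derived inverse-limit terms you worry about do not intervene --- this is the point the paper compresses into the remark that the Cocone is a complex of topologically free $\fo_K$-modules. With this one correction your argument coincides with the paper's proof.
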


\begin{proof}
Since $v(\gamma )>i$ we have
\begin{equation}   \label{e:zero mod m}
\gamma (\tilde C^{\bullet}_{\le i})\subset m_K\cdot C^{\bullet}_{\le i}\,,
\end{equation}
where $m_K$ is the maximal ideal of $\fo_K$. Statement (i) follows from \eqref{e:zero mod m} and the inclusion $F(\tilde C^{\bullet}_{\le i})\subset C^{\bullet}_{\le i}$.

$\Cocone (\tilde C^{\bullet}_{\le i}\overset{F-\gamma}\longrightarrow C^{\bullet}_{\le i})$ is a complex of topologically free $\fo_K$-modules. By Lemma~\ref{l:BO1} and formula~\eqref{e:zero mod m}, the cohomology of its reduction modulo $m_K$ is concentrated in degree $i+1$, and if $i\ge\dim X$ this cohomology is zero. Statement (ii) follows. \end{proof}

\subsubsection{End of the proof}
Let $\gamma\in K$, $i\in\BZ$, $i< v(\gamma )$. By Lemma~\ref{l:BO2}(ii), the cohomology of the complex
$\Cocone (\tilde C^{\bullet}_{\le i}\otimes\BQ\overset{F-\gamma}\longrightarrow C^{\bullet}_{\le i}\otimes\BQ)$ is concentrated in degree $i+1$. So the cohomology of the complex $\Cocone (\tilde C^{\bullet}\otimes\BQ\overset{F-\gamma}\longrightarrow C^{\bullet}\otimes\BQ)$ is concentrated in degrees $>i$. This is equivalent to Proposition~\ref{p:coh-vanishing}(a).

Now suppose that $i> v(\gamma )+1$ and $j\ge i-1$. Then $\gamma^{-1}F(C^j)\subset m_K\cdot C^j$. So the operator $1-\gamma^{-1}F:C^j\to C^j$ is invertible: its inverse equals $1+\gamma^{-1}F+(\gamma^{-1}F)^2+\ldots \;$. So 
$F-\gamma:C^j\otimes\BQ\to C^j\otimes\BQ$ is invertible for all $j\ge i-1$. Therefore $\Cocone (C^{\bullet}\otimes\BQ\overset{F-\gamma}\longrightarrow C^{\bullet}\otimes\BQ)$ is quasi-isomorphic to the complex $\Cocone (C^{\bullet}_{<i-1}\otimes\BQ\overset{F-\gamma}\longrightarrow C^{\bullet}_{<i-1}\otimes\BQ)$. The latter complex is concentrated in degrees $<i$, and if $i=1$ the complex is zero. This implies Proposition~\ref{p:coh-vanishing}(b-c).

Proposition~\ref{p:coh-vanishing}(d) follows from the second part of Lemma~\ref{l:BO2}(ii) applied for $i=\dim X$.
\qed

The next subsection is not used in the rest of the article.

\subsection{A refinement of Proposition~\ref{p:coh-vanishing}}   \label{ss:refinement}
If $v(\gamma )$ is a non-negative integer then Proposition~\ref{p:coh-vanishing} says that the complex \eqref{e:2Cocone} is concentrated in degrees $v(\gamma )$ and $v(\gamma )+1$. Here is a more precise statement, whose proof given below was explained to us by L.~Illusie and K.~Kato.

\begin{prop} \label{p:essentially single degree}
Let $r\in\BZ$, $r\ge 0$. Suppose that in the situation of Proposition~\ref{p:coh-vanishing} one has $v(\gamma )=r$. Then there exists a projective system 
\[
\ldots\to \cG_3\to \cG_2\to \cG_1
\]
of sheaves of abelian groups on $X_{\et}$ such that

(i) each $\cG_n$ is a flat sheaf of $(\BZ/p^n\BZ)$-modules, and the morphism $\cG_{n+1}\to \cG_n$ identifies $\cG_n$ with $\cG_{n+1}/p^n \cG_{n+1}$;

(ii) if $X'$ is any scheme etale over $X$ and $N'\in\FIsoc (X')\otimes_{\BQ_p}K$ is the pullback of $N$ then one has a canonical isomorphism
\begin{equation}  \label{e:3Cocone}
\Cocone (R\Gamma_{\crys} (X',N')\overset{F-\gamma}\longrightarrow R\Gamma_{\crys} (X',N'))\simeq\underset{n}{\underset{\longleftarrow}\lim} R\Gamma (X'_{\et}, \cG_n)[-r]\otimes_{\BZ_p}\BQ_p \, .
\end{equation}

\end{prop}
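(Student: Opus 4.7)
The plan is to sheafify the concrete realization from \S\ref{sss:concrete} and extract $\cG_n$ from a sheafy cocone concentrated in a single cohomological degree. Let $\cC^\bullet$ and $\tilde\cC^\bullet$ denote the complexes of sheaves on $X_{\et}$ (equivalently $\sX_{\et}$) whose global sections on the affine formal lift $\sX$ recover $C^\bullet$ and $\tilde C^\bullet$; in particular $\tilde\cC^j = p^{-j}\cC^j$. The sheafy analogues of Lemma~\ref{l:BO1} and Lemma~\ref{l:BO2}(ii) hold verbatim since their proofs are local: thus with $i = r-1 < v(\gamma)$ the sheafy complex
\[
Q_n^\bullet := \Cocone\bigl(\tilde\cC^\bullet_{\le r-1}/p^n \xrightarrow{F-\gamma} \cC^\bullet_{\le r-1}/p^n\bigr)
\]
has cohomology sheaves concentrated in degree $r$. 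I define $\cG_n$ to be this unique nonzero cohomology sheaf, with transition maps $\cG_{n+1} \to \cG_n$ induced from reduction modulo $p^n$; moreover for any étale $X' \to X$ one has $R\Gamma_{\crys}(X',N') = R\Gamma(X'_{\et}, \cC^\bullet) \otimes_{\BZ_p} \BQ_p$.

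Property (i) then follows by a standard long-exact-sequence argument. The terms of $\tilde\cC^\bullet_{\le r-1}$ and $\cC^\bullet_{\le r-1}$ are locally free $\fo_K$-modules, so tensoring the short exact sequence $0 \to \BZ/p \xrightarrow{\cdot p^n} \BZ/p^{n+1} \to \BZ/p^n \to 0$ with these complexes gives a short exact sequence of complexes $0 \to Q_1^\bullet \to Q_{n+1}^\bullet \to Q_n^\bullet \to 0$. Single-degree concentration forces the associated long exact sequence in cohomology to collapse to $0 \to \cG_1 \to \cG_{n+1} \to \cG_n \to 0$, yielding $\cG_{n+1}/p^n \simeq \cG_n$ at once; flatness of $\cG_n$ over $\BZ/p^n$ then follows by induction, the base case $\cG_1$ being a sheaf of $\BF_p$-vector spaces and therefore automatically flat.

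For property (ii), compute
\[
\Cocone\bigl(R\Gamma_{\crys}(X',N') \xrightarrow{F-\gamma} R\Gamma_{\crys}(X',N')\bigr) \simeq R\Gamma\bigl(X'_{\et},\, \Cocone(\cC^\bullet \xrightarrow{F-\gamma} \cC^\bullet)\bigr) \otimes_{\BZ_p} \BQ_p.
\]
The crux is to identify the rational sheafy cocone $\Cocone(\cC^\bullet\otimes\BQ \xrightarrow{F-\gamma} \cC^\bullet\otimes\BQ)$ with $(\cG\otimes\BQ)[-r]$, where $\cG := \varprojlim_n \cG_n$. This proceeds in three steps: (a) the identification $\tilde\cC^\bullet\otimes\BQ = \cC^\bullet\otimes\BQ$ rewrites the full cocone as $\Cocone(\tilde\cC^\bullet\otimes\BQ \to \cC^\bullet\otimes\BQ)$; (b) topological nilpotence of $\gamma^{-1}F$ on $\tilde\cC^j$ for $j > r$ makes $F-\gamma$ rationally invertible in those degrees, so the contribution from degrees $>r$ is acyclic and the cocone reduces to $\Cocone(\tilde\cC^\bullet_{\le r}\otimes\BQ \to \cC^\bullet_{\le r}\otimes\BQ)$; (c) a boundary analysis in degree $r$, invoking the unit-root hypothesis together with the Mazur-type description of the inverse Cartier isomorphism modulo $p$, further reduces this rationally to $\Cocone(\tilde\cC^\bullet_{\le r-1}\otimes\BQ \to \cC^\bullet_{\le r-1}\otimes\BQ)$, which equals $(\cG_n\otimes\BQ)[-r]$ by the definition of $\cG_n$. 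Mittag-Leffler (justified by the surjective transitions from property (i)) then gives $R\Gamma(X'_{\et},\cG) \simeq \varprojlim_n R\Gamma(X'_{\et},\cG_n)$, completing the identification.

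The hardest step is (c): showing that the rational sheafy cocone does not acquire extra cohomology in degree $r+1$ coming from the boundary term $\tilde\cC^r \xrightarrow{F-\gamma} \cC^r$, even though $F-\gamma$ on $\cC^r$ is not in general a sheafy isomorphism. This rests on the specific form that the unit-root structure imposes on the divided Frobenius $F/p^r$ and its relationship to the inverse Cartier isomorphism, in the spirit of the Berthelot-Ogus-style arguments invoked in \S3.
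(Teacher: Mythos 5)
The decisive problem is your step (c), and it is not merely unproved: it is false. By the very concentration statement you invoke (the sheafified Lemma~\ref{l:BO2}(ii) with $i=r-1$), your $Q_n^\bullet$ is quasi-isomorphic to $\cG_n[-r]$, so for $X'$ affine your proposed right-hand side of \eqref{e:3Cocone} is $\Cocone (\tilde C^{\bullet}_{\le r-1}\otimes\BQ\overset{F-\gamma}{\longrightarrow} C^{\bullet}_{\le r-1}\otimes\BQ)$, concentrated in degree $r$. The left-hand side is not concentrated in degree $r$: Proposition~\ref{p:coh-vanishing} only places it in degrees $r$ and $r+1$, and the Example at the end of \S\ref{ss:refinement} ($X=(\BG_m)^n$, $N$ the unit object, $\gamma=p^r$, $0\le r<n$) has $H^{r+1}\ne 0$, in fact infinite-dimensional. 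The discrepancy comes precisely from the degree-$r$ term you discard when passing from the truncation $\le r$ to $\le r-1$: the cokernel of $F-\gamma$ on $\cC^r\otimes\BQ$ contributes a genuine $H^{r+1}$, and no unit-root/Cartier argument makes it vanish rationally. The case $r=0$ makes the failure transparent: your truncation $\le r-1$ is the zero complex, so $\cG_n=0$, whereas for trivial $N$ the correct $\cG_n$ is the constant sheaf $\BZ/p^n\BZ$ (Artin--Schreier--Witt) and the left-hand side of \eqref{e:3Cocone} is the rationalized \'etale cohomology, which is nonzero in general. So defining $\cG_n$ as the single cohomology sheaf of a cocone truncated below degree $r$ cannot satisfy (ii).

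The paper's proof takes a different route, which is essentially forced by the above. After reducing to $\gamma=p^r$ by a twist, it sets $\cG_n:=\cN_0\otimes_{\BZ_p}W_n\Omega^r_{X,\log}$, where $\cN_0$ is the $\fo_K$-lattice of the lisse sheaf attached to the unit-root $N$ by Crew's theorem; property (i) is the Colliot-Th\'el\`ene--Sansuc--Soul\'e lemma. For (ii) one replaces the de Rham complex by the de Rham--Witt complex (Illusie) and eliminates the degrees $i\ne r$ only up to $p$-power torsion bounded independently of $n$, using the operators $\sfF$ and $\sfV$ (write $F-p^r=p^r(p^{i-r}\sfF-1)$ for $i>r$ and $F-p^r=F(1-p^{r-i-1}\sfV)$ for $i<r$); the degree-$r$ term is retained, and it is exactly this term that produces the sheaf, via $\Cocone (W_n\Omega^r_X\overset{\sfF-1}{\longrightarrow}W_n\Omega^r_X/\sfF (A_n^r))=W_n\Omega^r_{X,\log}$. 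If you try to repair your construction by truncating at $\le r$ instead of $\le r-1$, the cocone is no longer concentrated in a single degree, so $\cG_n$ can no longer be defined as one cohomology sheaf of a coherent-type complex; some finer, non-coherent invariant of the degree-$r$ Frobenius structure is needed, and that is precisely the role of the logarithmic de Rham--Witt sheaves.
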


We will see that the sheaves $\cG_n$ are not constructible if $r>0$.

\begin{rem}   \label{r:gamma=p^r}
It suffices to prove Proposition~\ref{p:essentially single degree} if $\gamma=p^r$ (otherwise twist $N$ by a suitable unit-root object of $\FIsoc (\Spec k)\otimes_{\BQ_p}K$).
\end{rem}

\subsubsection{Constructing the sheaves $\cG_n$}
We will assume that $\gamma =p^r$ (see Remark~\ref{r:gamma=p^r}). Under this assumption, the sheaves $\cG_n$ from 
Proposition~\ref{p:essentially single degree} are constructed as follows. 

On $X_{\et}$ we have the projective system of de Rham-Witt complexes $W_n\Omega^{\bullet}_X$. For each $n$ and $r$ we have the ``logarithmic" subsheaf
$W_n\Omega^r_{X,\log}\subset W_n\Omega^r_X$ defined in \S 5.7 of Ch. I of Illusie's article \cite{Il} (p.~596-597). This is a sheaf of $(\BZ/p^n\BZ)$-modules.
For fixed $r$ and variable $n$ the sheaves $W_n\Omega^r_{X,\log}$ form a projective system. It is known that the sheaf $W_n\Omega^r_{X,\log}$ is flat over
$\BZ/p^n\BZ$ and the morphism $W_{n+1}\Omega^r_{X,\log}\to W_n\Omega^r_{X,\log}$ identifies $W_n\Omega^r_{X,\log}$ with $W_{n+1}\Omega^r_{X,\log}/p^n W_{n+1}\Omega^r_{X,\log}$ (see Lemma 3 on p.~779 of \cite{CSS}). 

Let us note that $W_n\Omega^r_{X,\log}$ has a $K$-theoretic description: by Theorem~5.1 of \cite{Mor} (which is due to many authors), $W_n\Omega^r_{X,\log}$ identifies with the sheaf $K_r/p^nK_r$ on $X_{\et}$ and also with $K^M_r/p^nK^M_r$, where $K^M_r$ is the sheaf of  
Milnor  $K$-groups\footnote{Theorem~5.1 of \cite{Mor} involves the ``improved" Milnor $K$-groups rather than the usual ones. However, for local rings with \emph{infinite} residue fields the ``improved" Milnor $K$-groups are equal to the usual ones, and the residue field of each stalk of $\cO_{X_{\et}}$ is infinite (because it is separably closed).}. For instance, $W_n\Omega^0_{X,\log}$ is the constant sheaf with fiber $\BZ/p^n\BZ$, and $W_n\Omega^1_{X,\log}=\cO_{X_{\et}}^\times/(\cO_{X_{\et}}^\times)^{p^n}$.

Now set
\begin{equation}   \label{e:defining G_n}
\cG_n:=\cN_0\otimes_{\BZ_p}W_n\Omega^r_{X,\log}\; ,
\end{equation}
where $\cN_0$ is as in \S\ref{sss:concrete}.

\subsubsection{Constructing the isomorphism~\eqref{e:3Cocone}}
As before, we assume that $\gamma=p^r$ and $\cG_n$ is defined by \eqref{e:defining G_n}. We will also assume that the scheme $X'$ from Proposition~\ref{p:essentially single degree}(ii) equals $X$.

According to \cite{Il}, the de Rham complex $C^{\bullet}$ introduced in \S\ref{sss:concrete} is canonically quasi-isomorphic to
$\BQ\otimes\underset{n}{\underset{\longleftarrow}\lim} R\Gamma (X_{\et},\cN_0\otimes_{\BZ_p}W_n\Omega^{\bullet}_X)$. So the problem is to compute the complex
\begin{equation}   \label{e:complex to compute}
\BQ\otimes\underset{n}{\underset{\longleftarrow}\lim} R\Gamma (X_{\et},\cN_0\otimes_{\BZ_p}\Cocone (W_n\Omega^{\bullet}_X\overset{F-p^r}\longrightarrow W_n\Omega^{\bullet}_X )).
\end{equation}

In addition to $F$, we have the ``de Rham-Witt" operators $\sfV$ and $\sfF$ satisfying $\sfV\sfF=\sfF\sfV=p$ (see~\cite{Il}). Unlike $F$, they do \emph{not} commute with the differential $d$. Note that while $\sfV$ is an endomorphism of $W_n\Omega^i_X$, the operator $\sfF$ is a morphism from 
$W_{n+1}\Omega^i_X$ to its quotient $W_n\Omega^i_X$. However, the operator $p\sfF: W_n\Omega^i_X\to W_n\Omega^i_X$ is well-defined and nilpotent. Moreover, if $i=0$ then $\sfF: W_n\Omega^i_X\to W_n\Omega^i_X$ is well-defined. Recall that the morphism $F:W_n\Omega^i_X\to W_n\Omega^i_X$ equals $p^i\sfF$.

\begin{lem}
If $i\ne r$ then the kernel and cokernel of $F-p^r: W_n\Omega^i_X\to W_n\Omega^i_X$ are killed by a power of $p$ independent of $n$.
\end{lem}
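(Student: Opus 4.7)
The plan is to split into the two cases $i>r$ and $i<r$, factoring $F-p^r$ in each so as to expose its kernel and cokernel. The common input is the paper's relations $F=p^i\sfF$, $\sfF\sfV=\sfV\sfF=p$, the nilpotence of $p\sfF$ on $W_n\Omega^i_X$, and the nilpotence of $\sfV$ on $W_n\Omega^i_X$ (the latter because $\sfV^n=0$, as $W_n\Omega^i_X$ is a quotient of $W\Omega^i_X$ modulo $\sfV^n W\Omega^i_X$).

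In the case $i>r$, we factor
\begin{equation*}
F-p^r=-p^r\bigl(1-p^{i-r}\sfF\bigr).
\end{equation*}
Since $i-r\ge 1$, we rewrite $p^{i-r}\sfF=p^{i-r-1}\cdot(p\sfF)$ as a well-defined endomorphism of $W_n\Omega^i_X$; it is nilpotent because $p\sfF$ is. Hence $1-p^{i-r}\sfF$ is an automorphism, with inverse given by a finite geometric series, and so $\Ker(F-p^r)$ and $\Coker(F-p^r)$ coincide with those of multiplication by $p^r$. Both are therefore killed by~$p^r$.

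In the case $i<r$, we apply the dual calculation using $\sfV$. Combining $F=p^i\sfF$ with $\sfF\sfV=\sfV\sfF=p$ yields $F\sfV=\sfV F=p^{i+1}$ as endomorphisms of $W_n\Omega^i_X$, and hence
\begin{equation*}
(F-p^r)\sfV=\sfV(F-p^r)=p^{i+1}\bigl(1-p^{r-i-1}\sfV\bigr).
\end{equation*}
Since $\sfV$ is nilpotent on $W_n\Omega^i_X$, the factor $1-p^{r-i-1}\sfV$ is an automorphism. The first identity gives $p^{i+1}W_n\Omega^i_X\subset\im(F-p^r)$, so $\Coker(F-p^r)$ is killed by $p^{i+1}$. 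For the kernel, if $(F-p^r)x=0$ then the second identity yields $p^{i+1}(1-p^{r-i-1}\sfV)x=0$; since $1-p^{r-i-1}\sfV$ is an automorphism commuting with multiplication by $p^{i+1}$, we conclude that $x$ is $p^{i+1}$-torsion.

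The main delicacy to be handled is the interpretation of $F=p^i\sfF$ and $\sfF\sfV=p$ as identities of endomorphisms of $W_n\Omega^i_X$, given that $\sfF$ itself is only a morphism $W_{n+1}\Omega^i_X\to W_n\Omega^i_X$. Once this is set up correctly within Illusie's formalism~\cite{Il}, the argument produces the uniform bound $p^{\max(r,\,i+1)}$ on the exponent of $p$ killing both the kernel and the cokernel.
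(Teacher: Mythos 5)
Your proof is correct and takes essentially the same route as the paper: for $i>r$ it is the same factorization $F-p^r=p^r\,(p^{i-r}\sfF-1)$ with $p^{i-r}\sfF$ nilpotent, and for $i<r$ it rests on the same identities $F\sfV=\sfV F=p^{i+1}$ together with nilpotence of $\sfV$. The only (inessential) difference is that in the case $i<r$ the paper factors $F-p^r=F\,(1-p^{r-i-1}\sfV)$ and then bounds the kernel and cokernel of $F$, whereas you compose $F-p^r$ with $\sfV$ on either side; both yield the uniform bound $p^{\max(r,\,i+1)}$.
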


\begin{proof}
If $i> r$ write $F-p^r=p^r (p^{i-r}\cdot \sfF-1)$ and note that $p^{i-r}\cdot \sfF-1$ is invertible because $p^{i-r}\cdot \sfF$ is nilpotent.

If $i< r$ write $F-p^r=F(1-p^{r-i-1} \sfV)$. The operator $1-p^{r-i-1} \sfV$ is invertible because $ \sfV$ is nilpotent. Finally, the kernel and cokernel of 
$F:W_n\Omega^i_X\to W_n\Omega^i_X$ are killed by $p^{i+1}$ because $F\sfV=\sfV F=p^{i+1}$.
\end{proof}

The lemma implies that the complex \eqref{e:complex to compute} is canonically isomorphic to
\[
\BQ\otimes\underset{n}{\underset{\longleftarrow}\lim} R\Gamma (X_{\et},\cN_0\otimes_{\BZ_p}\Cocone (W_n\Omega^r_X\overset{F-p^r}\longrightarrow W_n\Omega^r_X ))[-r]
\]
and therefore to
\[
\BQ\otimes\underset{n}{\underset{\longleftarrow}\lim} R\Gamma (X_{\et},\cN_0\otimes_{\BZ_p}\Cocone (W_n\Omega^r_X\overset{\sfF-1}\longrightarrow W_n\Omega^r_X/\sfF (A_n^r ))[-r],
\]
where $A_n^r:=\Ker (W_{n+1}\Omega^r_X\epi W_n\Omega^r_X )$. Finally, Lemma 2 on p.~779 of \cite{CSS} tells us\footnote{To see this, note that the sheaf
$d\sfV^{n-1}W_n\Omega^{r-1}_X$ from Lemma 2 on p.~779 of \cite{CSS} is equal to $\sfF (A_n^r)$. This follows from the formula $A_n^r=\sfV^n W_{n+1}\Omega^r_X+d\sfV^n W_{n+1}\Omega^r_X$ (see Proposition~3.2 on p.568 of \cite{Il}).} that 
\[
\Cocone (W_n\Omega^r_X\overset{\sfF-1}\longrightarrow W_n\Omega^r_X/\sfF (A_n^r ))=W_n\Omega^r_{X,\log}\, .
\]
Thus we get the desired isomorphism \eqref{e:3Cocone}. This finishes the proof of Proposition~\ref{p:essentially single degree}.

\begin{ex}
If $X=(\BG_m)^n$, $N$ is the unit object of the tensor category $\FIsoc (X)$, and $0\le r<n$ 
then a direct computation shows that both $H^r$ and $H^{r+1}$ of the complex
\[
\Cocone (R\Gamma_{\crys} (X,N)\overset{F-p^r}\longrightarrow R\Gamma_{\crys} (X,N))
\]
are nonzero, and $H^{r+1}$ has infinite dimension over $\BQ_p$.
\end{ex}

\section{Some reformulations of Theorem~\ref{t:main}} \label{s:reformulations}
\subsection{The canonical decomposition corresponding to a big gap between the slopes}
Theorem~\ref{t:main} is clearly equivalent to the following
\begin{prop}  \label{p:reform1}
Let $M\in\FIsoc (X)$. Suppose that 
$a_i^\eta (M )-a_{i+1}^\eta (M )>1$ for some $i$. Then 
$M$ admits a decomposition 
\begin{equation}  \label{e:gap-decomp}
M=M_1\oplus M_2
\end{equation}
 such that the slopes of $M_1$ (resp.~$M_2$) at $\eta$ are the numbers $a_j^\eta (M )$ for $j\le i$ (resp. $j>i$). \qed
\end{prop}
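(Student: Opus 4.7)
The plan is to deduce the equivalence with Theorem~\ref{t:main} via a Krull--Schmidt decomposition of $M$. The direction (\ref{p:reform1}~$\Rightarrow$~\ref{t:main}) is immediate: if $M$ were indecomposable and exhibited a slope gap $>1$, the decomposition \eqref{e:gap-decomp} would split it non-trivially (both $M_1$ and $M_2$ have positive rank), a contradiction. The substance lies in showing (\ref{t:main}~$\Rightarrow$~\ref{p:reform1}).

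For the non-trivial direction, I would first decompose $M = \bigoplus_k N_k$ as a finite direct sum of indecomposables in $\FIsoc(X)$. Then, for each $N_k$, Theorem~\ref{t:main} forces all consecutive slope gaps of $N_k$ at $\eta$ to be $\le 1$. The key observation is that the multiset of slopes of $N_k$ at $\eta$ is a submultiset of that of $M$, so it contains no value strictly between $a_{i+1}^\eta(M)$ and $a_i^\eta(M)$ (there are none among the slopes of $M$ itself). Hence if some $N_k$ had both a slope $\ge a_i^\eta(M)$ and a slope $\le a_{i+1}^\eta(M)$, then the smallest of the former and the largest of the latter would be \emph{consecutive} slopes of $N_k$ differing by at least $a_i^\eta(M) - a_{i+1}^\eta(M) > 1$, contradicting Theorem~\ref{t:main}. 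So each $N_k$ has all of its slopes at $\eta$ either $\ge a_i^\eta(M)$ or $\le a_{i+1}^\eta(M)$. Taking $M_1$ (resp.~$M_2$) to be the direct sum of the $N_k$ of the first (resp.~second) type yields the desired decomposition, by additivity of slope multisets under direct sums.

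The only non-formal point will be the availability of Krull--Schmidt in $\FIsoc(X)$. This follows once one knows that the endomorphism ring $\End M$ of any object is a finite-dimensional $\BQ_p$-algebra (hence Artinian, so that every indecomposable summand has a local endomorphism ring). For $X$ smooth, irreducible and quasi-compact this finiteness is standard; in fact, via the fully faithful restriction functor of Theorem~\ref{t:fully faithful} it reduces to the corresponding fact at the generic point. I do not expect this to be a genuine obstacle, but it is the one place where external input is needed beyond Theorem~\ref{t:main} itself.
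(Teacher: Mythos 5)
Your proof is correct and is essentially the argument the paper leaves implicit (it states Proposition~\ref{p:reform1} as ``clearly equivalent'' to Theorem~\ref{t:main} with no written proof): decompose $M$ into indecomposables and group the summands according to which side of the gap their slopes at $\eta$ lie, using that no slope of a summand can fall strictly inside the gap. One small remark: you do not actually need Krull--Schmidt or finite-dimensionality of $\End M$ over $\BQ_p$ (the latter does hold, since $\FIsoc(X)$ is Tannakian) --- all that is used is the \emph{existence} of a finite decomposition into indecomposables, which follows by induction on the rank, as rank is additive and strictly positive on nonzero objects.
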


\begin{rem}
It is clear that in the  situation of Proposition~\ref{p:reform1} one has
\begin{equation}  \label{e:orthog}
\Hom (M_1,M_2)=\Hom (M_2,M_1)=0.
\end{equation}
This implies that the decomposition \eqref{e:gap-decomp} is unique. 
\end{rem}

\begin{prop}  \label{p:reform2}
Let $M\in\FIsoc (X)$. Suppose that $\End M$ has no non-trivial central idempotents. Then $a_i^\eta (M )-a_{i+1}^\eta (M )\le 1$ for all $i\in\{1,\ldots ,n-1\}$, where $n$ is the rank of $M$.
\end{prop}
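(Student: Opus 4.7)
The plan is to derive Proposition~\ref{p:reform2} as a formal consequence of Proposition~\ref{p:reform1}, with no extra input about $F$-isocrystals beyond the orthogonality already recorded in~\eqref{e:orthog}. The point is that the condition ``$\End M$ has no non-trivial central idempotents'' rules out not only decompositions of $M$ into indecomposables, but also any coarser decomposition $M = M_1 \oplus M_2$ with $\Hom(M_1,M_2) = \Hom(M_2,M_1) = 0$; and Proposition~\ref{p:reform1} supplies exactly such a decomposition whenever there is a slope gap bigger than $1$.

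First I would argue by contradiction: assume that $a_i^\eta(M) - a_{i+1}^\eta(M) > 1$ for some $i \in \{1,\ldots,n-1\}$. By Proposition~\ref{p:reform1}, $M$ admits a decomposition $M = M_1 \oplus M_2$ in which the slopes of $M_1$ at $\eta$ are the $a_j^\eta(M)$ for $j \le i$ and the slopes of $M_2$ at $\eta$ are the $a_j^\eta(M)$ for $j > i$. Since $1 \le i \le n-1$, both $M_1$ and $M_2$ are non-zero, so the projector $e \in \End M$ onto $M_1$ along $M_2$ is a non-trivial idempotent.

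Next I would check that $e$ is central. For any $f \in \End M$, write $f$ as a $2 \times 2$ matrix with respect to the decomposition $M = M_1 \oplus M_2$; the off-diagonal entries lie in $\Hom(M_1, M_2)$ and $\Hom(M_2, M_1)$, both of which vanish by~\eqref{e:orthog}. Hence $f$ is block-diagonal, which is exactly the statement that $f$ commutes with $e$. Thus $e$ is a non-trivial central idempotent of $\End M$, contradicting the hypothesis.

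I do not anticipate a real obstacle: the substantive content sits entirely inside Proposition~\ref{p:reform1} (equivalently, Theorem~\ref{t:main}), and the upgrade to Proposition~\ref{p:reform2} is the standard observation that a direct sum decomposition with vanishing cross-Hom's yields a central idempotent in the endomorphism ring.
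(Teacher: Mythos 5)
Your proof is correct and is essentially identical to the paper's: both argue by contradiction, invoke Proposition~\ref{p:reform1} to produce the decomposition $M=M_1\oplus M_2$, and use the vanishing of the cross-Hom's in~\eqref{e:orthog} to show the projector onto $M_1$ is a non-trivial central idempotent. Your write-up just spells out the block-matrix verification of centrality that the paper leaves implicit.
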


\begin{proof}
Suppose that $a_i^\eta (M )-a_{i+1}^\eta (M )>1$ for some $i$. Consider the corresponding decomposition~\eqref{e:gap-decomp}. Let $\pi\in\End M$ be the projection to $M_1$. Then $\pi$ is a non-trivial idempotent. It is central by \eqref{e:orthog}.
\end{proof}

\subsection{The categories $\FIsoc (X)\otimes_{\BQ_p}K$ and $\FIsocd (X)\otimes_{\BQ_p}\BQbar_p$}
\subsubsection{The definitions}
For $K$ a finite extension of $\BQ_p$, we define $\FIsoc (X)\otimes_{\BQ_p}K$ to be the category of objects of $\FIsoc (X)$ equipped with an action of $K$; define $\FIsocd (X)\otimes_{\BQ_p}K$ similarly. Set
$$\FIsoc (X)\otimes_{\BQ_p}\BQbar_p:=\underset{K}{\underset{\longrightarrow}\lim} \FIsoc (X)\otimes_{\BQ_p} K, \quad
\FIsocd (X)\otimes_{\BQ_p}\BQbar_p:=\underset{K}{\underset{\longrightarrow}\lim} \FIsocd (X)\otimes_{\BQ_p} K,$$ where $K$ runs through the set of all subfields of $\BQbar_p$ finite over $\BQ_p$. 
For an object $M$ in $\FIsoc (X)\otimes_{\BQ_p}K$ or in $\FIsocd (X)\otimes_{\BQ_p}\BQbar_p$, the slopes $a_i^x(M)$, $x\in X$, are defined similarly to \S\ref{sss:Slopes}.

\begin{rem}   \label{r:2Forg}
For $K$ a finite extension of $\BQ_p$, one has the forgetful functor
$$\Forg_{K/\BQ_p}:\FIsoc (X)\otimes_{\BQ_p}K\to\FIsoc (X).$$
If $M\in \FIsoc (X)$ has rank $n$ then $\Forg_{K/\BQ_p}(M)$ has rank $dn$, where $d:=[K:\BQ_p]$. The set of slopes of $\Forg_{K/\BQ_p}(M)$ is equal to that of $M$, but the multiplicity of each slope is multiplied by $d$.
\end{rem}

\begin{prop}   \label{p:remain valid}
Theorem~\ref{t:main} and Propositions~\ref{p:reform1}, \ref{p:reform2} remain valid for 
$\FIsoc (X)\otimes_{\BQ_p}K$, where $K$ is a finite extension of $\BQ_p$.
\end{prop}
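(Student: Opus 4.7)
The plan is to rerun the proof of Theorem~\ref{t:main} in the $K$-linear setting, verifying that each ingredient extends. First, Theorem~\ref{t:fully faithful} upgrades to the $K$-linear restriction functor $\FIsoc(X)\otimes_{\BQ_p}K \to \FIsoc(U)\otimes_{\BQ_p}K$ for free: a morphism between two $K$-objects is the same as a $\BQ_p$-linear morphism between the underlying objects that commutes with the $K$-action, and this commuting condition is both preserved and reflected by any fully faithful $\BQ_p$-linear functor. Combined with Lemma~\ref{l:indecomposability} applied to the abelian category $\FIsoc(X)\otimes_{\BQ_p}K$, this lets us shrink $X$ freely while keeping $M$ indecomposable.

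By Remark~\ref{r:2Forg}, the set of slopes of $M \in \FIsoc(X)\otimes_{\BQ_p}K$ at any point $x$ coincides with that of $\Forg_{K/\BQ_p}(M) \in \FIsoc(X)$ (only the multiplicities are scaled by $[K:\BQ_p]$), so semicontinuity of the Newton polygon carries over and we may reduce to the case where $M$ has constant Newton polygon. The slope filtration of $\Forg_{K/\BQ_p}(M)$ is canonical, hence $K$-stable, so it descends to a slope filtration of $M$ inside $\FIsoc(X)\otimes_{\BQ_p}K$. As in~\S\ref{s:proof of main}, Theorem~\ref{t:main} in this category then reduces to the vanishing of $\Ext^1_K(M_1, M_2)$ for $M_1, M_2 \in \FIsoc(X)\otimes_{\BQ_p}K$ isoclinic of slopes $s_1, s_2$ at every point with $s_1 - s_2 > 1$. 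The $K$-linear version of Lemma~\ref{l:Ext via cocone} is obtained by repeating the Picard-groupoid argument in $\Isoc(X)\otimes_{\BQ_p}K$, using $\fo_K$-lattices in place of $\BZ_p$-lattices; the resulting cocone complex is exactly the one appearing in Proposition~\ref{p:coh-vanishing}, and part~(a) of that proposition is already stated and proved in the $K$-linear setting.

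Propositions~\ref{p:reform1} and~\ref{p:reform2} then follow from the $K$-linear form of Theorem~\ref{t:main} by the same formal arguments given in~\S\ref{s:reformulations}, which are insensitive to the coefficient field. The only step requiring genuine verification is the $K$-linear descent of the slope filtration and of the Picard-groupoid computation of $\Ext^1_K$; both are straightforward consequences of canonicity and of the existence of $\fo_K$-lattices, so I do not anticipate any serious obstacle.
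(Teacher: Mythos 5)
Your argument is correct, but it takes a genuinely different and much heavier route than the paper. The paper does not rerun the proof of Theorem~\ref{t:main} with $K$-coefficients at all: it first proves Proposition~\ref{p:reform2} for $M\in\FIsoc(X)\otimes_{\BQ_p}K$ by applying the already established $\BQ_p$-version to $\Forg_{K/\BQ_p}(M)$ --- the point being that any central idempotent of $\End(\Forg_{K/\BQ_p}(M))$ commutes with the $K$-action, hence lies in $\End_K(M)$ and is central there, so the hypothesis transfers, while Remark~\ref{r:2Forg} says the generic slopes of $M$ and of $\Forg_{K/\BQ_p}(M)$ coincide --- and then Theorem~\ref{t:main} and Proposition~\ref{p:reform1} over $K$ follow formally from this $K$-linear Proposition~\ref{p:reform2}. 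This two-line reduction sidesteps exactly the trap that $\Forg_{K/\BQ_p}(M)$ need not be indecomposable when $M$ is, which is the reason the paper routes through the central-idempotent reformulation rather than through indecomposability. Your route avoids the same trap by brute force: you re-establish the $K$-linear full faithfulness, the descent of the slope filtration (via its functoriality), and the $K$-linear $\Ext^1$-vanishing, the last of which is indeed available since Proposition~\ref{p:coh-vanishing}(a) is already stated with $K$-coefficients. All of this works, and it has the mild virtue of producing the $K$-linear analogues of the intermediate statements (e.g.\ of Proposition~\ref{p:Ext-vanishing}) explicitly; but the Picard-groupoid recomputation you defer is unnecessary: since $K/\BQ_p$ is separable, $M_1$ is a $K$-linear direct summand of $K\otimes_{\BQ_p}\Forg_{K/\BQ_p}(M_1)$, so by adjunction $\Ext^1_K(M_1,M_2)$ is a direct summand of $\Ext^1(\Forg_{K/\BQ_p}(M_1),\Forg_{K/\BQ_p}(M_2))$, and the vanishing follows from Proposition~\ref{p:Ext-vanishing} as stated. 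In short: correct, but the paper's formal reduction via $\Forg_{K/\BQ_p}$ and Remark~\ref{r:2Forg} buys the same conclusion with none of the $K$-linear re-verification.
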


\begin{proof}
By Remark~\ref{r:2Forg}, Proposition~\ref{p:reform2} for $M\in\FIsoc (X)\otimes_{\BQ_p}K$ follows from Proposition~\ref{p:reform2} for 
$\Forg_{K/\BQ_p}(M)\in\FIsoc (X)$. Theorem~\ref{t:main} and Proposition~\ref{p:reform1} for 
$\FIsoc (X)\otimes_{\BQ_p}K$ follow from Proposition~\ref{p:reform2} for $M\in\FIsoc (X)\otimes_{\BQ_p}K$.
\end{proof}

\begin{cor}  \label{c:remain valid}
Theorem~\ref{t:main} and Propositions~\ref{p:reform1}, \ref{p:reform2} remain valid for 
$\FIsoc (X)\otimes_{\BQ_p}\BQbar_p$. \qed
\end{cor}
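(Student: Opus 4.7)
The strategy is to reduce each assertion to the corresponding statement in Proposition~\ref{p:remain valid} over a finite extension $K\subset\BQbar_p$ of $\BQ_p$. By definition of the filtered colimit, any object $M\in\FIsoc (X)\otimes_{\BQ_p}\BQbar_p$ comes from some $M_K\in\FIsoc (X)\otimes_{\BQ_p}K$ with $K$ finite over $\BQ_p$. Crucially, for each $x\in X$ the slopes $a_i^x(M)$ coincide with those of $M_K$, since the Newton polygon is computed in $\FIsoc (x_{\perf})$ and is unchanged by extending the coefficient field.

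I would first handle Proposition~\ref{p:reform1} in the $\BQbar_p$-setting. Suppose $a_i^\eta(M)-a_{i+1}^\eta(M)>1$; enlarge $K$ if necessary so that $M=M_K\otimes_K\BQbar_p$. Applying Proposition~\ref{p:reform1} to $M_K$ (valid over $K$ by Proposition~\ref{p:remain valid}) yields a decomposition $M_K=(M_K)_1\oplus (M_K)_2$ along the slope gap. Base change to $\BQbar_p$ then produces the decomposition $M=M_1\oplus M_2$ with the correct slope sets on each summand.

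Theorem~\ref{t:main} and Proposition~\ref{p:reform2} over $\BQbar_p$ follow formally from Proposition~\ref{p:reform1} over $\BQbar_p$, exactly as in the proof of Proposition~\ref{p:remain valid}. Namely, if $M$ is indecomposable in $\FIsoc(X)\otimes_{\BQ_p}\BQbar_p$, the decomposition of the preceding paragraph must be trivial, so there is no slope gap exceeding one; and if $\End M$ has no non-trivial central idempotents, then the projection onto $M_1$ would be central by \eqref{e:orthog} (a consequence of slope separation valid over any coefficient field), yielding a contradictory non-trivial central idempotent.

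The only point that requires care is the compatibility of the base-change functor $\FIsoc(X)\otimes_{\BQ_p}K\to\FIsoc(X)\otimes_{\BQ_p}\BQbar_p$ with the slope-gap decomposition (which is automatic, as the decomposition is canonical by the uniqueness remark after Proposition~\ref{p:reform1}) and with the formation of endomorphism algebras (which follows directly from the definition of $\FIsoc(X)\otimes_{\BQ_p}\BQbar_p$ as a filtered colimit). Neither constitutes a genuine obstacle, so this corollary is essentially a formal consequence of Proposition~\ref{p:remain valid}.
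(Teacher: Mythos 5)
Your argument is correct and is essentially the deduction the paper intends: the corollary is stated without a written proof precisely because every object of $\FIsoc (X)\otimes_{\BQ_p}\BQbar_p$ is defined over some finite extension $K$, where Proposition~\ref{p:remain valid} applies, and slopes, the slope-gap decomposition, and endomorphism algebras are compatible with the filtered colimit. The only difference is internal bookkeeping (you deduce Theorem~\ref{t:main} and Proposition~\ref{p:reform2} from Proposition~\ref{p:reform1} over $\BQbar_p$, whereas the paper's finite-level argument runs through Proposition~\ref{p:reform2}), which is immaterial since the three statements are formally equivalent.
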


\section{Proof of Theorem~\ref{t:l-adic}} \label{s:proof of l-adic}
Recall that $\BQbar$ denotes the algebraic closure of $\BQ$ in $\BQbar_{\ell}$. We fix a valuation $v:\BQbar^\times\to\BQ$ such that $v(p)=1$; slopes of algebraic $\BQbar_{\ell}$-sheaves are defined using $v$.

\subsection{Proof of Theorem~\ref{t:l-adic} for irreducible sheaves on curves}   \label{ss:irreducible on curves}
For any subfield $E\subset\BQbar$, let $E_v$ denote the completion of $E$ with respect to $v$. The union of the fields $E_v$ corresponding to all subfields $E\subset\BQbar$ finite over $\BQ$ is an algebraic closure of $\BQ_p$; we denote it by $\BQbar_p$.

Assume that $\dim X=1$ and $\E$ is irreducible. Then Theorem 4.4.1 of Abe's work \cite{A} provides an irreducible 
object $M\in \FIsocd (X)\otimes_{\BQ_p}\BQbar_p$ of rank $n$ such that for every $x\in |X|$, the characteristic polynomials of the geometric Frobenius acting on $M_{\bar x}$ and $\E_{\bar x}$ are equal to each other. 
Then the multiset of slopes of $\E$ at any $x\in |X|$ is equal to that of $M$. Define the numbers $a_i^\eta (\E )$ to be the slopes of $M$ at the generic point $\eta\in X$. Applying semicontinuity of the Newton polygon to $M$, we see that the numbers $a_i^\eta (\E )$ satisfy properties (i)--(ii) from Theorem~\ref{t:l-adic}.

To check (iii), recall that the functor $\FIsocd (X)\to\FIsoc (X)$ is fully faithful, so by Lemma~\ref{l:indecomposability}, $M$ is indecomposable as an object of $\FIsoc (X) \otimes_{\BQ_p}\BQbar_p$. It remains to apply Theorem~\ref{t:main} to $M$, which is possible by Corollary~\ref{c:remain valid}. \qed

\subsection{Proof of Theorem~\ref{t:l-adic} for  curves}
In \S\ref{ss:irreducible on curves} we proved Theorem~\ref{t:l-adic} assuming that $\E$ is irreducible. This immediately implies 
Theorem~\ref{t:l-adic}(i-ii) for any $\E$. Theorem~\ref{t:l-adic}(iii) for any indecomposable $\E$ will be proved in \S\ref{sss:indecomposable on curves}.

\subsubsection{A result of Deligne} As before, slopes are defined using a fixed valuation $v:\BQbar^\times\to\BQ$.

\begin{prop}   \label{p:slope estimates}
Let $X$ be a scheme of finite type over $\BF_p$; let $\pi$ denote the morphism $X\to\Spec\BF_p$.
Let $\E$ be a lisse $\BQbar_{\ell}$-sheaf on $X$.

(i) Suppose that $\E$ is algebraic in the sense of \S\ref{sss:algebraicity} and the slopes of $\E$ (with respect to some $p$-adic place of $\BQbar$) are in the interval $[r,s]$. Then for each $i$ the sheaf $R^i\pi_!\E$ is algebraic, and its slopes are in the interval 
\begin{equation}   \label{e:new interval}
[r+\max\{0,i-n\},s+\min\{i,n\}], \quad \mbox{where }n:=\dim X.
\end{equation}

(ii) If $X$ is smooth then the same statements hold for $R^i\pi_*\E$.
\end{prop}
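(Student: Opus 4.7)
The plan is to prove part (i) by induction on $n = \dim X$ and then to derive (ii) from (i) via Poincar\'e duality. Algebraicity in (i) follows from the Grothendieck--Lefschetz trace formula: the alternating sum $\sum_i (-1)^i \Tr(\Fr^m \mid R^i\pi_!\E)$ equals a sum of traces of $\Fr^m$ on the stalks $\E_{\bar x}$, which are algebraic by hypothesis; combined with either a weight-based separation or a parallel induction that isolates the contributions of different $R^i\pi_!\E$, this forces each Frobenius eigenvalue on $R^i\pi_!\E$ to lie in $\BQbar$.

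For the slope estimates in (i), the base case $n = 0$ is a direct verification: $X$ is a finite scheme, the only nonvanishing pushforward is $R^0\pi_!\E$, and it is the direct sum over $x \in |X|$ of the $\Gal(\overline{\BF_p}/\BF_p)$-representation induced from the $\Gal(\overline{\kappa(x)}/\kappa(x))$-representation $\E_{\bar x}$. A direct computation of Frobenius eigenvalues of such induced representations shows their $v$-values coincide with the slopes of $\E$ at the corresponding closed point of $X$, matching \eqref{e:new interval} since $\max\{0,i-n\} = \min\{i,n\} = 0$ when $i = n = 0$.

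For the inductive step with $n \geq 1$, factor $\pi$ through a dominant morphism $f : X \to Y$ with $\dim Y = n-1$ and generic fibers of dimension one (obtained, after restricting to a dense open if necessary, by generic projection or via an alteration). The Leray spectral sequence
\[
E_2^{p,q} = R^p \pi'_! R^q f_! \E \Rightarrow R^{p+q}\pi_!\E, \qquad \pi' : Y \to \Spec \BF_p,
\]
reduces the task to (a) bounding slopes of $R^q f_!\E$ on $Y$, which pointwise reduces to well-known slope estimates for cohomology of curves (cf.\ \cite[\S 2]{K}: for a non-proper smooth curve, $H^0_c$ vanishes, $H^1_c$ has slopes in $[r, s+1]$, and $H^2_c$ has slopes in $[r+1, s+1]$), combined with (b) applying the inductive hypothesis to $\pi'$. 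Careful accumulation of these estimates through the spectral sequence yields the claimed interval $[r + \max\{0, i-n\},\; s + \min\{i, n\}]$; the portion of $X$ or $Y$ not covered by the chosen open is handled by additional induction on dimension via the excision triangle.

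For (ii), use Poincar\'e duality on the smooth $n$-dimensional $X$: $R^i\pi_*\E \simeq (R^{2n-i}\pi_!\E^\vee)^\vee(-n)$. Since $\E^\vee$ has slopes in $[-s, -r]$, part (i) bounds the slopes of $R^{2n-i}\pi_!\E^\vee$; dualizing negates slopes and the Tate twist $(-n)$ adds $n$, and a short arithmetic check converts the output of (i) into exactly the interval \eqref{e:new interval} for $R^i\pi_*\E$ (both in the regime $i \leq n$ and $i \geq n$). The main obstacle is the inductive step: the sheaves $R^q f_!\E$ are merely constructible on $Y$, with slopes that may jump at special points, so one must either shrink $Y$ to a suitable open subset and iterate, or work systematically in $D^b_c(Y, \BQbar_\ell)$, extending the slope formalism to constructible sheaves in a manner compatible with pushforward --- orchestrating this bookkeeping so that the final bounds are tight is the delicate part of the argument.
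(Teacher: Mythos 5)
The paper's own proof of this proposition is essentially a citation: statement (i) is Deligne's integrality/divisibility theorem (Theorems 5.2.2 and 5.4 of \cite[expos\'e~XXI]{SGA7}), and (ii) is obtained from (i) by Verdier duality. Your derivation of (ii) is exactly that duality step, and your interval arithmetic (pass to $\E^\vee$, apply (i) in degree $2n-i$, negate slopes, add $n$ for the twist) checks out. For (i) you are in effect re-proving the cited theorem, and your d\'evissage --- fibering in curves, Leray for $f:X\to Y$ and $\pi'$, noetherian induction with excision for the complement of the chosen open --- is the skeleton of Deligne's own argument, so your route coincides with the one hiding behind the paper's citation rather than being a genuinely different one.

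Two points in your sketch do not hold up as written. First, the algebraicity mechanism: the Grothendieck--Lefschetz trace formula only controls the alternating sum, and ``weight-based separation'' is unavailable because an algebraic $\E$ carries no purity or mixedness hypothesis; the standard way to isolate the individual $H^i_c$ is the curve-level $L$-function argument (for $U$ a connected affine curve and $\E$ lisse one has $H^0_c=0$ and $H^2_c$ a Tate twist of coinvariants, so $\det(1-Ft\mid H^1_c)=L(U,\E,t)\cdot\det(1-Ft\mid H^2_c)$ has algebraic coefficients, and, after twisting so that $r=0$, $v$-integral ones), which then feeds into the Leray induction. Second, the ``well-known slope estimates for cohomology of curves'' you invoke are precisely the content of Deligne's theorem in the decisive case, and \cite[\S 2]{K} (Katz's slope filtration of $F$-crystals) is not a source for them; as written, the heart of (i) is assumed rather than proved. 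If you either prove the curve case by the $L$-function argument just indicated (with the upper bound obtained from the lower bound for $\E^\vee$ via Poincar\'e duality on the smooth curve and the surjection $H^1_c\twoheadrightarrow H^1$) or cite SGA~7~XXI directly for it, then the remaining bookkeeping --- the sheaves $R^qf_!\E$ being only constructible is handled by shrinking to a lisse locus and inducting on dimension --- does go through.
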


\begin{proof}
Statement (i) is a reformulation of Theorems 5.2.2 and 5.4 of \cite[expos\'e~XXI]{SGA7}, which are due to  Deligne. 
Statement (ii) follows by Verdier duality.
\end{proof}

\begin{ex}   
Let $X$ be the $n$-th power of a non-supersingular elliptic curve. Let $\E =(\BQbar_{\ell})_X$ and $r=s=0$. Then all integers from the interval 
\eqref{e:new interval} appear as slopes of $R^i\pi_*\E$.
\end{ex}

\begin{rem}
The slope estimates from Proposition~\ref{p:slope estimates} remain valid for overconvergent $F$-isocrystals and rigid cohomology instead of lisse $\BQbar_{\ell}$-sheaves and $\ell$-adic cohomology, see \cite[Thm.~5.4.1]{Ke3}.
\end{rem}

\begin{cor} \label{c:slope estimates}
Let $X$ be an irreducible smooth variety over $\BF_p$. Let $\E_1 ,\E_2$ be algebraic lisse $\BQbar_{\ell}$-sheaves on $X$. Suppose that one of the following assumptions holds:

(i) for some non-empty open $U\subset X$ all the slopes of $\E_1^*\otimes\E_2$ at all points of $|U|$ are $<-1$;

(ii) for some non-empty open $U\subset X$ all the slopes of $\E_1^*\otimes\E_2$ at all points of $|U|$ are $>0$.

\noindent Then $\Ext^1 (\E_1 ,\E_2 )=0$. 
\end{cor}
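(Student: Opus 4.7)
My plan is to identify $\Ext^1(\E_1,\E_2)$ with $H^1(X,\F)$, where $\F := \E_1^*\otimes\E_2$, pass to a well-chosen open $U$, and then kill $H^1(U,\F|_U)$ by running the Leray spectral sequence for the structure morphism and applying Proposition~\ref{p:slope estimates}(ii) to eliminate every Frobenius eigenvalue equal to $1$.

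First I would shrink $U$ so that the slopes of $\F|_U$ take a constant (hence bounded) set of rational values at every point of $|U|$, which is possible by the already-established parts (i)--(ii) of Theorem~\ref{t:l-adic}; call the resulting interval of slopes $[r,s]$. The hypothesis then becomes $r>0$ in case (ii) and $s<-1$ in case (i). Because $X$ is irreducible and smooth and $U\subset X$ is dense open, the map $\pi_1^\et(U)\twoheadrightarrow\pi_1^\et(X)$ is surjective, so restriction of lisse $\BQbar_\ell$-sheaves from $X$ to $U$ is fully faithful; any fully faithful exact functor between abelian categories is injective on $\Ext^1$ (a splitting of the restricted extension lifts to $X$ via full faithfulness on $\Hom$). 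Identifying $\Ext^1$ with $H^1$ in the usual way, we obtain an injection
\[
H^1(X,\F)\hookrightarrow H^1(U,\F|_U),
\]
so it suffices to show the right-hand side vanishes.

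Next I would apply the Leray spectral sequence for $\pi_U\colon U\to\Spec\BF_p$. Because the cohomological dimension of $\Gal(\overline{\BF}_p/\BF_p)=\hat\BZ$ is one, the sequence degenerates into
\[
0\to H^1(\BF_p,\pi_{U,*}\F|_U)\to H^1(U,\F|_U)\to H^0(\BF_p,R^1\pi_{U,*}\F|_U)\to 0.
\]
By Proposition~\ref{p:slope estimates}(ii) with $n=\dim X\geq 1$, the slopes of $\pi_{U,*}\F|_U$ lie in $[r,s]$, and the slopes of $R^1\pi_{U,*}\F|_U$ lie in $[r+\max\{0,1-n\},s+\min\{1,n\}]=[r,s+1]$. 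In case (ii) both intervals sit in $(0,\infty)$, and in case (i) both sit in $(-\infty,0)$ (this is where the hypothesis $s<-1$ is used, to force $s+1<0$). Either way no eigenvalue of the arithmetic Frobenius $\phi$ on either cohomology sheaf equals $1$, so $\phi-1$ acts invertibly on both, whence $H^0(\BF_p,-)=\ker(\phi-1)$ and $H^1(\BF_p,-)=\coker(\phi-1)$ vanish on both terms. This kills $H^1(U,\F|_U)$ and finishes the argument.

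The main obstacle is purely the slope bookkeeping in case (i): the shift by $+1$ appearing in the upper bound when passing from $R^0$ to $R^1$ is precisely why the hypothesis must be the strict inequality $<-1$ rather than merely $<0$. Beyond that, the argument is a standard Hochschild--Serre analysis combined with the formal fact that Galois cohomology of a finite field with coefficients in a representation on which Frobenius minus identity is invertible vanishes.
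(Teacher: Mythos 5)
Your argument is correct and is essentially the paper's proof: reduce to $U$ using injectivity of $\Ext^1(\E_1,\E_2)\to\Ext^1(j^*\E_1,j^*\E_2)$ (normality of $X$, i.e.\ surjectivity of $\pi_1^{\et}(U)\to\pi_1^{\et}(X)$), then kill $H^1(U,\E_1^*\otimes\E_2)$ via the Leray sequence over $\Spec\BF_p$ and Proposition~\ref{p:slope estimates}(ii) applied to $R^0\pi_*$ and $R^1\pi_*$. The only point to adjust is your preliminary shrinking of $U$ via Theorem~\ref{t:l-adic}(i)--(ii): this corollary is itself an ingredient in the proof of Theorem~\ref{t:l-adic}(iii) (first for curves, then in general), so invoking the theorem for arbitrary $X$ here creates an awkward forward dependence; it is also unnecessary, since a uniform bound $s<-1$ (resp.\ $r>0$) follows already from boundedness of the denominators of the slopes (Lemma~\ref{l:bounded denom}), after which the one-sided estimate of Proposition~\ref{p:slope estimates}(ii) suffices exactly as you use it.
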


\begin{proof}
Let $j:U\mono X$ be the embedding.  Let $\pi$ be the morphism $X\to\Spec\BF_p$.

 Since $X$ is normal, the map $\Ext^1 (\E_1 ,\E_2 )\to\Ext^1 (j^*\E_1 ,j^*E_2 )$ is injective. So we can assume that $U=X$. Then  
all the slopes of the sheaves
$$R^i\pi_*(\E_1^*\otimes\E_2), \quad i\in\{ 0,1\}$$
are non-zero  by Proposition~\ref{p:slope estimates}(ii). So $\Ext^1 (\E_1 ,\E_2 )=H^1(\Spec\BF_p ,R\pi_*(\E_1^*\otimes\E_2))=0$.
\end{proof}

\subsubsection{Proof of Theorem~\ref{t:l-adic}(iii) if $\dim X=1$}   \label{sss:indecomposable on curves}
The case where $\E$ is irreducible was treated in \S\ref{ss:irreducible on curves}. The case where $\E$ is indecomposable but not necessarily irreducible follows by Corollary~\ref{c:slope estimates}. \qed

\subsection{The case $\dim X>1$.}   \label{ss:arbitrary} 
\begin{lem} \label{l:bounded denom}
(i) There exists $N\in\BN$ such that  $a_i^x(\E )\in N^{-1}\BZ$ for all $x\in|X|$ and $i\in\{1,\ldots ,n\}$.

(ii) Suppose that $\E$ is irreducible and the rank 1 local system $\det\E$ has finite order. Let $E\subset\BQbar_\ell$ be a subfield finite over $\BQ$ such that for every $x\in |X|$ all the coefficients of the characteristic polynomial of the geometric Frobenius acting on $\E_{\bar x}$ belong to $E$ (such $E$ exists by  \cite[Thm.~3.1]{De2}). Then the 
number $N$ from statement (i) can be taken to be $n!\cdot [E_v:\BQ_p]$, where $E_v$ is the completion of $E$ with respect to the valuation $v:\BQbar^\times\to\BQ$ that was fixed at the beginning of \S\ref{s:proof of l-adic}.
\end{lem}

\begin{proof}
It suffices to prove statement (ii). Let $x\in |X|$. Using \cite[Prop.~2.17]{Dr} (or the Bertini argument from \cite[1.7-1.8]{De2}), one finds a smooth connected curve $C$ over $\BF_p$ and a morphism $f:C\to X$ such that $x\in f(C)$ and $f^*\E$ is irreducible.  Theorem 4.4.1 of Abe's work \cite{A} provides an irreducible 
object $M\in \FIsocd (C)\otimes_{\BQ_p}\BQbar_p$ of rank $n$ such that for every $c\in |C|$, the characteristic polynomials of the geometric Frobenius acting on $M_{\bar c}$ and $(f^*\E)_{\bar c}$ are equal to each other. It remains to show that all slopes of $M$ at each point of $C$ are in $(n!\cdot [E_v:\BQ_p])^{-1}\BZ$.

Note that the isomorphism class of $M$ is $\Gal (\BQbar_p/E_v)$-invariant. Because of the Brauer obstruction, this does not imply that $M\in \FIsocd (C)\otimes_{\BQ_p}E_v$; the obstruction is an element $\beta\in\Br (E_v)$. Note that $\det M$ has finite order and therefore slope $0$; so by \cite{Cr} we can think of $\det M$ as a representation of the fundamental group of $X$. Therefore $\det M\in \FIsocd (C)\otimes_{\BQ_p}E_v$. So $n\beta=0$. So if $L$ is a degree $n$ extension of $E_v$ then $\beta$ has zero image in 
$\Br (L)$, which means that $M\in  \FIsocd (C)\otimes_{\BQ_p}L$. Therefore for any $c\in |C|$ one has $M_c\in  \FIsoc (c)\otimes_{\BQ_p}L$.

Consider the slope decomposition $M_c=\bigoplus\limits_{r\in\BQ}(M_c)_r$. Let $\Forg_{L/\BQ_p}:\FIsoc (c)\otimes_{\BQ_p}L\to\FIsoc (c)$ be the forgetful functor from Remark~\ref{r:2Forg}. Then $\det \Forg_{L/\BQ_p}((M_c)_r)$ is a rank 1 object of $\FIsoc (c)$, so its slope is in $\BZ$. But this slope equals $rn\cdot [E_v:\BQ_p]\cdot\dim (M_c)_r$, so 
$rn\cdot [E_v:\BQ_p]\cdot\dim (M_c)_r\in\BZ$. If $0<\dim (M_c)_r<n$ this implies that $rn!\cdot [E_v:\BQ_p]\in\BZ$. If $\dim (M_c)_r=n$ then $\det (M_c)_r=\det M_c$ has slope~$0$, so $r=0$ and again $rn!\cdot [E_v:\BQ_p]\in\BZ$.
\end{proof}

\begin{lem}  \label{l:irreducibility}
For every $x\in |X|$ there exists a smooth connected curve $C$ over $\BF_p$ and a morphism $f:C\to X$ such that $x\in f(C)$ and $f^*\E$ is indecomposable.
\end{lem}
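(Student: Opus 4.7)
Plan: Represent $\E$ by the corresponding continuous representation $\rho: \pi_1(X, \bar x) \to GL(V)$ for some finite-dimensional $\BQbar_\ell$-vector space $V$, and let $G \subset GL(V)$ denote the Zariski closure of its image. Indecomposability of $\E$ is equivalent to $V$ being indecomposable as a $G$-module, i.e., to $\End_G(V)$ being a local $\BQbar_\ell$-algebra. My aim is to produce a smooth connected curve $C$ and a morphism $f: C \to X$ with $x \in f(C)$ such that the composite $\pi_1(C) \to \pi_1(X) \xrightarrow{\rho} GL(V)$ has Zariski-dense image in $G$. For such $(C,f)$ one then has $\End(f^*\E) = \End_G(V)$, which is local, so $f^*\E$ is indecomposable.

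To construct $C$, I would iterate a Bertini-type theorem for \'etale fundamental groups $\dim X - 1$ times. After shrinking $X$ to an affine open neighborhood of $x$, embed $X \hookrightarrow \BA^N_k$; at each step, choose a hyperplane $H$ through $x$, generic over a sufficiently large extension $\BF_{q'}/k$ and descended to $k$ via Lang-Weil density of $\BF_{q'}$-rational points in the Zariski-open good locus. The resulting section $X \cap H$ is smooth, irreducible, and contains a chosen lift of $x$; moreover, by a refined Bertini-type theorem for monodromy---in the spirit of Deligne's weak Lefschetz theorem and Katz's results on Lefschetz pencils---one can arrange that the induced map $\pi_1(X \cap H) \to \pi_1(X)$ has image whose Zariski closure under $\rho$ equals $G$. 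Iterating until the dimension drops to $1$ produces the required curve.

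The hard part is the simultaneous arrangement at each step of three conditions: smoothness/irreducibility of the section, passage through $x$, and preservation of the Zariski closure of monodromy. Smoothness/irreducibility and the monodromy condition are Zariski open on the parameter variety of hyperplanes, whereas passage through $x$ cuts out a linear subvariety; one must verify that among the hyperplanes through $x$, the ``monodromy-generic'' ones form a nonempty Zariski-open subset over a sufficiently large finite extension of $k$. This reduces to irreducibility of the relevant parameter family combined with Lang-Weil, but extracting the ``pointed'' version of Bertini-for-monodromy from the literature---rather than the more familiar unpointed statement---is the main technical ingredient one has to secure.
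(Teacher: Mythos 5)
Your first paragraph is correct and is the standard reduction: if $f^*\E$ has monodromy image whose Zariski closure is the same group $G$ as for $\E$, then $\End(f^*\E)=\End_G(V)=\End(\E)$, and indecomposability (equivalently, locality of the endomorphism algebra, or absence of nontrivial idempotents) is inherited. Be aware, though, that the paper proves the lemma in one line, by quoting Proposition~2.17 of Drinfeld's article [Dr], which is exactly a ``curve through a prescribed point preserving the monodromy'' statement; so what you are attempting is in effect a re-proof of that cited result, and this is where your argument has a genuine gap.

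The gap is twofold. First, the assertion that ``the monodromy condition is Zariski open on the parameter variety of hyperplanes'' is unjustified as stated: Zariski-density of $\rho(\pi_1(X\cap H))$ in $G$ is not a priori controlled by any algebraic condition on $H$. The standard fix is a Frattini-type reduction: $\Gamma:=\rho(\pi_1(X))$ is a compact $\ell$-adic analytic group, hence topologically finitely generated, and one shows there is an open normal subgroup $N\subset\Gamma$ (e.g.\ the Frattini subgroup of a pro-$\ell$ congruence subgroup) such that any closed subgroup surjecting onto $\Gamma/N$ equals $\Gamma$; only after this does the requirement become ``the pullback to the section of one fixed connected finite \'etale cover of $X$ stays connected,'' which is the kind of condition a Bertini/Hilbert-irreducibility theorem can see. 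Your proposal never performs this reduction. Second, even granting it, the pointed Bertini statement you need (hyperplane sections through $x$ that are smooth, irreducible, and keep a given connected cover connected, over a finite field and in characteristic $p$) is precisely the nontrivial content of the lemma, and you explicitly leave it to ``the literature'' without securing it; this is not routine (rational points of the good locus over the base field, transversality at $x$, strange-variety phenomena for linear systems through a fixed point in characteristic $p$). Incidentally, your remark about choosing $H$ ``generic over $\BF_{q'}$ and descended to $k$'' does not make sense -- a generic hyperplane over an extension cannot be descended -- but it is also unnecessary, since the lemma only asks for a curve over $\BF_p$, so you may work over any finite extension of the ground field. To close the argument you should either invoke [Dr, Prop.~2.17] directly (as the paper does) or combine your Frattini reduction with the Hilbert-irreducibility theorem for curves through prescribed points ([Dr, Thm.~2.15]); as written, the central ingredient is missing.
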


\begin{proof}
This is a consequence of \cite[Prop.~2.17]{Dr}.
\end{proof}

\begin{lem}    \label{l:boundedness}
For each $r\in \{ 1,\ldots , n\}$ the function
\begin{equation}   \label{e:sum to r}
 x\mapsto \sum\limits_{i=1}^r a_i^x(\E ), \quad x\in |X|
 \end{equation}
is bounded above.
\end{lem}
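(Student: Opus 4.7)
The plan is to reduce the global upper bound on $X$ to the already-proved curve case by pulling back along curves supplied by Lemma~\ref{l:irreducibility}. First I observe that by the paragraph preceding \S\ref{ss:arbitrary}, Theorem~\ref{t:l-adic} is fully established when $\dim X = 1$: parts (i)--(ii) hold for any algebraic lisse sheaf on a curve, and part (iii) holds for any indecomposable one. Combining (ii) with (iii) for curves, exactly as in the proof of Corollary~\ref{c:any x} (cf.\ Remark~\ref{r:weak estimate}), yields the inequality
\[
\sum_{i=1}^r a_i^{y}(\cG) - rA(\cG) \le r(n-r)/2
\]
for every $y\in |C|$, every smooth connected curve $C$ over $\BF_p$, and every indecomposable algebraic lisse $\BQbar_\ell$-sheaf $\cG$ on $C$ of rank $n$.

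Next I would show that the number $A(\E)$ is insensitive to pullback along morphisms $f\colon C\to X$. For this I use that $\det\E$ is a rank $1$ algebraic lisse $\BQbar_\ell$-sheaf and that by \cite[Prop.~1.3.4(i)]{De} some tensor power of $\det\E$ is the pullback of a rank $1$ sheaf on $\Spec \BF_p$; as recorded in \S\ref{sss:l-adic slopes}, this forces the slope of $\det\E$ to be the same rational number at every closed point, and this number is preserved under arbitrary pullback since Frobenius eigenvalues at $x'\in |C|$ lying above $x\in |X|$ are the $[k(x'):k(x)]$-th powers of Frobenius eigenvalues at $x$, while degrees multiply accordingly. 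Hence $A(f^*\E) = A(\E)$, and by the same degree-matching computation the full slope multiset of $f^*\E$ at $x'$ equals that of $\E$ at $x$.

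Now given any $x \in |X|$, Lemma~\ref{l:irreducibility} produces a smooth connected curve $C$ over $\BF_p$ and a morphism $f\colon C\to X$ such that $x \in f(C)$ and $f^*\E$ is indecomposable. Choose $x' \in |C|$ with $f(x')=x$. Applying the displayed inequality to $\cG = f^*\E$ at $y=x'$ and combining with the two invariances above gives
\[
\sum_{i=1}^r a_i^x(\E) \;=\; \sum_{i=1}^r a_i^{x'}(f^*\E) \;\le\; rA(f^*\E) + r(n-r)/2 \;=\; rA(\E) + r(n-r)/2,
\]
which is a uniform upper bound independent of $x$.

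The only genuinely non-trivial input is Lemma~\ref{l:irreducibility} (guaranteeing the existence of a curve along which $\E$ remains indecomposable); everything else is formal once one knows Theorem~\ref{t:l-adic}(iii) for curves. So I do not expect a serious obstacle here; the argument is essentially a packaging of the curve case together with the pullback invariance of $A(\E)$.
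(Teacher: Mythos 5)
Your argument follows the paper's own proof almost exactly: both reduce the statement to the curve case by means of Lemma~\ref{l:irreducibility} together with the inequality of Remark~\ref{r:weak estimate} (available because Theorem~\ref{t:l-adic} has already been proved for curves). Your bookkeeping with $A(\E)$ and its invariance under pullback replaces the paper's normalization step (twisting by a rank~$1$ local system on $\Spec\BF_p$, via \cite[Prop.~1.3.4(i)]{De}, so that $\sum_{i=1}^n a_i^x(\E)=0$); that difference is purely cosmetic.

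There is, however, one missing step. You apply Lemma~\ref{l:irreducibility} directly to $\E$, which in Lemma~\ref{l:boundedness} is only assumed algebraic, not indecomposable. But the conclusion of that lemma is impossible for a decomposable $\E$: a decomposition $\E=\E_1\oplus\E_2$ with both summands nonzero pulls back to a decomposition of $f^*\E$ with both summands of positive rank, so $f^*\E$ can never be indecomposable. Hence the lemma is only usable (and only meaningful) when $\E$ itself is indecomposable, which is precisely why the paper's proof begins with the reduction ``without loss of generality, $\E$ is indecomposable''. The fix is routine and should be included: write $\E$ as a direct sum of indecomposable summands $\E_1,\ldots,\E_k$ (each automatically algebraic, its Frobenius eigenvalues being among those of $\E$) of ranks $n_1,\ldots,n_k$, and observe that $\sum_{i=1}^r a_i^x(\E)$ equals the maximum, over $r_1+\cdots+r_k=r$ with $0\le r_j\le n_j$, of $\sum_j\sum_{i=1}^{r_j}a_i^x(\E_j)$; an upper bound for each summand therefore yields one for $\E$. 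With this reduction inserted, your proof coincides with the paper's.
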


\begin{proof}
Without loss of generality, one can assume that $\E$ is indecomposable. By \cite[Prop.~1.3.4(i)]{De}, after tensoring $\E$ by a rank 1 local system on $\Spec\BF_p$ one can also assume that $(\det\E)^{\otimes m}$ is trivial for some $m$, so $\sum\limits_{i=1}^n a_i^x(\E )=0$. In this situation $\sum\limits_{i=1}^r a_i^x(\E )\le r(n-r)/2$ by Lemma~\ref{l:irreducibility} and Remark~\ref{r:weak estimate} (the latter is applicable because we already proved  Theorem~\ref{t:l-adic} for curves).
\end{proof}

\begin{lem}    \label{l:2irreducibility}
Let $\cT$ denote the following topology on $|X|$: a subset $F\subset |X|$ is $\cT$-closed if and only if $F\cap |C|$ is closed for all curves $C\subset X$. Then $|X|$ is irreducible with respect to $\cT$. \end{lem}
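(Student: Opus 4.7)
The plan is to reduce Lemma~\ref{l:2irreducibility} to the geometric claim that any two closed points of $X$ lie on a common integral one-dimensional closed subscheme of $X$; granted this, the rest is a short topological argument.

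First I would observe that for any integral curve $C\subset X$, the subspace $|C|\subset|X|$ is irreducible in the topology induced by $\cT$. Indeed, every $\cT$-closed subset of $|X|$ meets $|C|$ in a Zariski-closed subset of $|C|$, the proper Zariski-closed subsets of $|C|$ are the finite subsets of closed points, and $|C|$ is infinite since $C$ is positive-dimensional and of finite type over the finite field $k$ of \S\ref{ss:l-adic}. Suppose, for contradiction, that $|X|=F_1\cup F_2$ with $F_1,F_2$ proper $\cT$-closed; pick $x\in|X|\setminus F_1$ and $y\in|X|\setminus F_2$, and take an integral curve $C\subset X$ containing both $x$ and $y$. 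The equality $|C|=(F_1\cap|C|)\cup(F_2\cap|C|)$ expresses $|C|$ as a union of two closed subsets of $|C|$, so by irreducibility one of them must equal all of $|C|$; this gives either $x\in F_1$ or $y\in F_2$, each contradicting the choice of $x,y$.

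The main obstacle is the curve-connectivity claim: any two closed points of $X$ lie on an integral curve $C\subset X$. In our setting $X$ is smooth of finite type over a finite field $k$, hence quasi-projective; after embedding $X$ into some projective space, I would argue by induction on $\dim X$, using a Bertini-type theorem to produce, at each step, an integral hypersurface section of $X$ passing through the two chosen points. Over the finite field $k$ the naive Bertini theorem fails, but Poonen's Bertini theorem over finite fields, with the two linear point-passing conditions imposed on the space of hypersurfaces of sufficiently high degree, supplies the required sections; iterating down to $\dim X=1$ finishes, since there one may simply take $C=X$.
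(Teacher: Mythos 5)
Your topological reduction is exactly the paper's argument: the paper also takes proper $\cT$-closed sets $F_1,F_2$, points $x_i\notin F_i$, an irreducible curve $C\subset X$ through both, and concludes since $|C|\cap F_i$ are finite while $|C|$ is infinite. The only real difference is how the key geometric input (two closed points of $X$ lie on an irreducible curve in $X$) is obtained: the paper does not prove it but cites Hilbert irreducibility in the form of \cite[Thm.~2.15(i)]{Dr}, whereas you propose to derive it by an induction on $\dim X$ using Bertini over the finite base field.

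In that sub-argument there is a gap as stated. Poonen's Bertini theorem over finite fields (even with Taylor/point-passing conditions) produces \emph{smooth} hypersurface sections, and for a merely quasi-projective $X$ smoothness does not give irreducibility: a smooth section of $X$ of high degree through your two points can be disconnected, with the two points on different components, so the induction does not go through with that tool alone. (The classical "smooth ample divisor is connected" argument needs a projective ambient variety, and passing to the projective closure $\overline X$ destroys the smoothness hypothesis at the boundary.) What you actually need is a Bertini \emph{irreducibility} theorem over finite fields — e.g.\ the Charles--Poonen theorem, which says that for irreducible quasi-projective $X$ of dimension $\ge 2$ the density of degree-$d$ hypersurfaces $H$ with $H\cap X$ irreducible tends to $1$; combining this with the fact that the density of hypersurfaces through the two given closed points stays bounded below by a positive constant as $d\to\infty$ yields the desired irreducible (after reducing, integral) curve, and then your induction and the topological argument are fine. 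Alternatively, you can simply quote the Hilbert-irreducibility statement of \cite[Thm.~2.15(i)]{Dr}, as the paper does.
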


\begin{proof}
Suppose that $F_1,F_2\subset |X|$ are $\cT$-closed and different from $|X|$. Choose $x_1,x_2\in|X|$ so that $x_i\not\in F_i$. By Hilbert irreducibility (e.g., by \cite[Thm.~2.15(i)]{Dr}), there exists an irreducible curve $C\subset X$ containing $x_1$ and $x_2$. Then the sets $|C|\cap F_i$ are finite., so $|C|\not\subset F_1\cup F_2$. Therefore $F_1\cup F_2\ne |X|$.
\end{proof}

\begin{proof}[Proof of Theorem~\ref{t:l-adic}]
By Lemmas~\ref{l:bounded denom} and \ref{l:boundedness}, the function \eqref{e:sum to r} has a maximal value $s_r$; let $U_r$ denote the set of all $x\in |X|$ for which this value is attained, and let $U:=\bigcap\limits_r U_r$. Define the numbers $a_i^\eta (\E )$ as follows: $a_1^\eta (\E ):=s_1$,  $a_i^\eta (\E ):=s_i-s_{i-1}$ for $i>1$. 

Let us prove that the numbers $a_i^\eta (\E )$ have the properties stated in Theorem~\ref{t:l-adic}. Each $U_r$ is clearly non-empty, and 
by \S\ref{sss:indecomposable on curves} it is $\cT$-open. So $U$ is non-empty and $\cT$-open by Lemma~\ref{l:2irreducibility}. 
For every $i<n$ one has $a_i^\eta (\E )\ge a_{i+1}^\eta (\E )$ because $a_j^\eta (\E )=a_j^x (\E )$ for all $x\in U$ and all $j$.
By construction, $s_r^x (\E )\le s_r^\eta (\E )$ for all $r$. Finally, if $\E$ is indecomposable then for $x\in U$ one has $a_i^x (\E )- a_{i+1}^x (\E )\le 1$: to see this, apply \S\ref{sss:indecomposable on curves} to the curve $C$ from Lemma~\ref{l:irreducibility}.

The uniqueness part of Theorem~\ref{t:l-adic}(i) follows from Lemma~\ref{l:2irreducibility}.
\end{proof}

\section{Slopes for automorphic representations of $GL(n)$}   \label{s:automorphic}
\subsection{Definition of slopes}  \label{ss:automorphic slopes}
\subsubsection{Some notation}   \label{sss:automorphic notation}
Suppose that $X$ is a smooth irreducible curve over $\BF_p$ (it is \emph{not} assumed to be projective).
The order of the residue field of $x\in |X|$ will be denoted by $q_x$. Let $F$ denote the field of rational functions on $X$ and $\BA_F$ its adele ring. Let $F_x$ denote the completion of $F$ at
$x\in|X|$ and $O_x\subset F_x$ its ring of integers.

We fix an algebraic closure  $\BQbar_p\supset\BQ_p$. Let $\BZbar_p$ denote the ring of integers of $\BQbar_p$. 
Let $v:\BQbar_p^\times\to\BQ$ denote the $p$-adic valuation normalized so that $v(p)=1$. For each $x\in |X|$, let $v_x:\BQbar_p^\times\to\BQ$ denote the $p$-adic valuation normalized so that $v_x(q_x)=1$ (so $v_x=v/\deg x$, where $\deg x:=\log_pq_x$).

\subsubsection{$p^{1/2}$ and the Satake parameter} \label{sss:sqrt}
We fix a square root of $p$ in $\BQbar_p$ and denote it by $p^{1/2}$.
For each $x\in |X|$ we set $q_x^{1/2}:=(p^{1/2})^{\deg x}$; we use this square root of $q_x$ in the definition of the Satake parameter of an unramified irreducible representation of $GL(n,F_v)$ over $\BQbar_p$.

\subsubsection{Slopes}   \label{sss:automorphic slopes}
Let $\pi$ be an irreducible admissible representation of $GL(n,\BA_F)$ over $\BQbar_p$. Let $S_\pi$ denote the set of all $x\in |X|$ such that $\pi_x$ is ramified.

For every $x\in |X|\setminus S_\pi$ the Satake parameter of $\pi_x$ 
is an $n$-tuple $(\gamma_1,\ldots ,\gamma_n )\in(\BQbar_p^\times)^n$ defined up to permutations. Set $a_i^x (\pi ):=v_x(\gamma_i )$. Note that $a_i^x (\pi )$ does not depend on the choice of $p^{1/2}$ in~\S\ref{sss:sqrt}. We order the numbers 
$a_i^x (\pi )\in\BQ$ so that $a_i^x (\pi )\ge a_{i+1}^x (\pi )$. These numbers will be called the \emph{slopes} of $\pi$ at $x$.

\subsubsection{Slopes and the central character} \label{sss:central character}
Let $\eta :\BA_F^{\times}\to\BQbar_p^{\times}$ denote the central character of $\pi$. Then for every $x\in |X|\setminus S_\pi$ and every 
$u\in F_x^{\times}$ one has
\begin{equation}   \label{e:central character}
v(\eta (u))=-v(|u|)\cdot\sum\limits_{i=1}^n a_i^x(\pi ),
\end{equation}
where $|u|\in\BQ^\times$ is the normalized absolute value. If $\eta$ is trivial on $F^\times\subset \BA_F^{\times}$ then there exists 
$c\in\BQ^\times$ such that $v(\eta (u))=c\cdot v(|u|)$ for all $u\in\BA_F^{\times}$. By \eqref{e:central character}, this implies that
the number $\sum\limits_{i=1}^n a_i^x(\pi )$ does not depend on $x\in |X|\setminus S_\pi$.

\subsection{The result}  \label{ss:automorphic result}
As before, let $\pi$ be an irreducible representation of $GL(n,\BA_F)$ over $\BQbar_p$. 

\begin{theorem}   \label{t:automorphic}
Suppose that $\pi$ is cuspidal automorphic. Then

(i) there exist rational numbers $a_i^\eta (\pi )$, $1\le i\le n$, such that $a_i^x (\pi )=a_i^\eta (\pi )$ for all but finitely many $x\in |X|$;

(ii) for all  $x\in |X|\setminus S_\pi$ and  $r\in \{ 1,\ldots ,n-1\}$ one has
\[
\sum\limits_{i=1}^r a_i^x(\pi )\le \sum\limits_{i=1}^r a_i^\eta (\pi );
\]

(iii) $a_i^\eta (\pi )-a_{i+1}^\eta (\pi )\le 1$ for all $i\in \{ 1,\ldots , n-1\}$.
\end{theorem}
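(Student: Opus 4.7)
The plan is to deduce Theorem~\ref{t:automorphic} from Theorem~\ref{t:l-adic} via the global Langlands correspondence for $GL(n)$ over function fields. First I would invoke L.~Lafforgue's theorem (extending Drinfeld's case $n=2$): the cuspidal automorphic representation $\pi$ corresponds to an irreducible lisse $\BQbar_\ell$-sheaf $\E_\pi$ on the open dense subset $U:=X\setminus S_\pi$, and the Frobenius eigenvalues of $\E_\pi$ at any $x\in|U|$ agree (as an unordered tuple) with the Satake parameters of $\pi_x$. Moreover, these eigenvalues are Weil numbers, in particular algebraic, so $\E_\pi$ is algebraic in the sense of \S\ref{sss:algebraicity}. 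Comparing the definitions of slopes in \S\ref{sss:l-adic slopes} and \S\ref{sss:automorphic slopes} (and noting $v_x=v/\deg x$) shows that $a_i^x(\E_\pi)=a_i^x(\pi)$ for every $x\in|U|$.

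Next I would apply Theorem~\ref{t:l-adic} to $\E_\pi$ on the smooth irreducible curve $U$ and set $a_i^\eta(\pi):=a_i^\eta(\E_\pi)$. Part~(ii) of Theorem~\ref{t:l-adic} then immediately yields part~(ii) of Theorem~\ref{t:automorphic}. For part~(i), the subset of $|U|$ on which $a_i^x(\E_\pi)=a_i^\eta(\E_\pi)$ for all $i$ is non-empty and, because $U$ is a curve, its complement in $|U|$ is finite; since $|X|\setminus|U|=S_\pi$ is also finite, this subset is cofinite in $|X|$, which gives~(i).

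For part~(iii), irreducibility of $\E_\pi$ (which is part of what L.~Lafforgue's correspondence produces) implies indecomposability, so Theorem~\ref{t:l-adic}(iii) applies and yields $a_i^\eta(\pi)-a_{i+1}^\eta(\pi)\le 1$ for all $i\in\{1,\ldots,n-1\}$.

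The main non-trivial input is the Langlands correspondence itself together with the algebraicity (indeed, the Weil-number property) of the Satake parameters; once these deep results are invoked, the passage to slopes of $\ell$-adic sheaves and the reduction to Theorem~\ref{t:l-adic} are essentially formal. A minor point to verify is that the choice of $p^{1/2}$ fixed in \S\ref{sss:sqrt} does not matter: a different square root modifies Satake parameters only by signs and so leaves the $p$-adic valuations $a_i^x(\pi)$ unchanged.
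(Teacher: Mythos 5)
Your argument is correct in substance, but it takes a different route from the paper. The paper's proof is a one-step appeal to Abe's $p$-adic Langlands correspondence (Theorem 4.2.2 of \cite{A}): $\pi$ corresponds to an irreducible object of $\FIsocd(X\setminus S_\pi)\otimes_{\BQ_p}\BQbar_p$, after which (i)--(ii) follow from semicontinuity of the Newton polygon and (iii) from Theorem~\ref{t:main} via Corollary~\ref{c:remain valid}; everything stays on the $p$-adic side, so there is no field-transfer issue. You instead go through L.~Lafforgue's $\ell$-adic correspondence and then quote Theorem~\ref{t:l-adic} (whose proof in the paper itself invokes Abe's companions), so the same deep inputs appear, just packaged differently; since you are on a curve and the sheaf is irreducible, you could equally cite the curve case of \S\ref{ss:irreducible on curves} directly. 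The one point you gloss over is that $\pi$ is a representation over $\BQbar_p$, while Lafforgue's theorem concerns representations over $\BQbar_\ell$: an arbitrary cuspidal $\pi$ over $\BQbar_p$ need not have algebraic Satake parameters (twist a representation with algebraic parameters by an unramified quasi-character $x\mapsto t^{\deg x}$ with $t$ transcendental), so before invoking Lafforgue you must twist $\pi$ by such a quasi-character to make it defined over $\BQbar$, fix an embedding $\BQbar\mono\BQbar_\ell$, and take the valuation on $\BQbar$ to be the restriction of $v$ from $\BQbar_p$ so that your identification $a_i^x(\E_\pi)=a_i^x(\pi)$ holds. This is harmless for the statement --- an unramified twist shifts all slopes by the same constant $v(t)$, leaving (i)--(iii) unchanged --- and it is exactly the maneuver the paper performs in its proof of Theorem~\ref{t:2automorphic}; but it should be said explicitly, since as written your claim that the Satake parameters are automatically Weil numbers is not true without the twist.
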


\begin{proof}
By Theorem 4.2.2 of Abe's work \cite{A}, $\pi$ corresponds (in the sense of Langlands) to some irreducible object of $\FIsocd (X\setminus S_\pi )\otimes_{\BQ_p}\BQbar_p$.
So statements (i)--(ii) hold by semicontinuity of the Newton polygon, and (iii) holds by Theorem~\ref{t:main} and Corollary~\ref{c:remain valid}.
\end{proof}

\begin{rem}   \label{r:Vincent's inequality}
Let $\pi$ be as in Theorem~\ref{t:automorphic}. Set $A(\pi):=\frac{1}{n}\cdot\sum\limits_{i=1}^n a_i^x(\pi )$; by \S\ref{sss:central character}, this number does not depend on $x$. Similarly to Corollary~\ref{c:any x}, Theorem~\ref{t:automorphic} implies that
\begin{equation}   \label{e:3any x}
\sum\limits_{i=1}^r a_i^x(\pi )-rA(\pi )\le r(n-r)/2 \quad \mbox{ for all } x\in x\in |X|\setminus S_\pi \mbox{ and } r\in \{ 1,\ldots , n-1\}.
\end{equation}
On the other hand, V.~Lafforgue explained in \cite{VLa} that the inequality \eqref{e:3any x} has an easy direct proof. We will recall it  in \S\ref{sss:Vincent's proof}.
\end{rem}

\begin{quest}    \label{q:direct proof}
Can Theorem~\ref{t:automorphic} be proved directly (i.e., without passing to $F$-isocrystals)?
\end{quest}

As already mentioned, the inequality \eqref{e:3any x} has a direct proof. If $n=2$ then formula~\eqref{e:3any x} means that 
$a_1^x (\pi )-a_2^x (\pi )\le 1 \mbox{ for all }x\in |X|\setminus S_\pi\,.$
So for $n=2$ the only question is whether statements (i)--(ii) of Theorem~\ref{t:automorphic} have a direct proof.

\subsection{Reformulation in terms of Hecke eigenvalues}   \label{ss:Hecke reformulation}
\subsubsection{Hecke eigenvalues in terms of the Satake parameter} \label{sss:Hecke&Langlands}
Let $\pi$ be a smooth representation of $GL(n,\BA_F)$ over $\BQbar_p$. Then for each $x\in |X|\setminus S_\pi$ one has 
the usual Hecke operators $T_i^x$, $1\le i\le n$, acting on the subspace of $GL(n,O_x)$-invariants of $\pi$. If $\pi$ is irreducible and admissible then $T_i^x$ acts as multiplication by some number $t_i^x (\pi )\in\BQbar_p$. The numbers $t_i^x (\pi )$ (where $x\in |X|\setminus S_\pi$ and $1\le i\le n$) are called \emph{Hecke eigenvalues}. 
Formula~(3.14) of \cite{Gr} (which goes back to T.~Tamagawa) tells us that
\begin{equation}   \label{e:Hecke&Langlands}
1+\sum_{i=1}^n (q_x^{1/2})^{-i(n-i)}t_i^x (\pi )z^i=\prod_{i=1}^n (1+\gamma_iz), 
\end{equation}
where $(\gamma_1,\ldots ,\gamma_n )$ is the Satake parameter of $\pi_x$, and $z$ is a variable.

\begin{lem}   \label{l:Hecke&slopes}
For all $r\in\{ 0,\ldots ,n\}$ and $x\in |X|\setminus S_\pi$ one has 
\begin{equation}   \label{e:Newt}
\sum\limits_{i=1}^{n-r} a_i^x (\pi )=v_x(t^x_n(\pi ))-\Newt^x_\pi (r),
\end{equation}
where $\Newt^x_\pi :\{ 0,1,\ldots ,n\}\to\BQ$ is the biggest convex function such that 
\begin{equation} \label{e:2Newt}
\Newt^x_\pi (0)=0,\quad\quad \Newt^x_\pi (r)\le v_x(t^x_{r}(\pi ))+ \frac{r(r-n)}{2} \mbox{ for }r\in\{ 1,\ldots ,n\}\,.
\end{equation}
\end{lem}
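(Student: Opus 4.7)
The plan is to expand both sides of \eqref{e:Hecke&Langlands} and reinterpret the inequality defining $\Newt_\pi^x$ in terms of the elementary symmetric functions of the Satake parameters. Comparing the coefficients of $z^r$ on the two sides of \eqref{e:Hecke&Langlands} gives, for $1\le r\le n$,
\[
(q_x^{1/2})^{-r(n-r)}\, t_r^x(\pi) = e_r(\gamma_1,\ldots,\gamma_n),
\]
where $e_r$ denotes the $r$-th elementary symmetric polynomial. Applying $v_x$ and using $v_x(q_x^{1/2})=1/2$ converts the right-hand bound in \eqref{e:2Newt} into
\[
\Newt_\pi^x(r) \le v_x\!\bigl(e_r(\gamma_1,\ldots,\gamma_n)\bigr).
\]
The case $r=n$ already gives $v_x(t_n^x(\pi))=\sum_{i=1}^n a_i^x(\pi)$, which yields \eqref{e:Newt} for $r=0$.

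Next I would recognize $\Newt_\pi^x$ as the \emph{Newton polygon} of the polynomial $P(z):=\prod_{i=1}^n(1+\gamma_i z)$. Indeed, the largest convex function of $r$ vanishing at $0$ and lying below each $v_x(e_r(\gamma))$ is, by definition, the lower convex hull of the points $\{(r,v_x(e_r(\gamma)))\}_{r=0}^{n}$ (with $e_0=1$), i.e.\ the Newton polygon of $P$.

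Finally, I invoke the classical Newton polygon theorem: the slopes of the Newton polygon of $P(z)$, read from left to right, are the negatives of the $v_x$-valuations of the roots of $P(z)$ in increasing order. Since the roots of $P$ are $-1/\gamma_i$, their valuations are $-a_i^x(\pi)$, so the slopes are $a_i^x(\pi)$ arranged in \emph{increasing} order, namely $a_n^x(\pi)\le a_{n-1}^x(\pi)\le\cdots\le a_1^x(\pi)$. Summing these slopes from the left gives
\[
\Newt_\pi^x(r) = \sum_{i=n-r+1}^{n} a_i^x(\pi),
\]
and subtracting from $v_x(t_n^x(\pi))=\sum_{i=1}^n a_i^x(\pi)$ yields \eqref{e:Newt}.

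There is no serious obstacle; the proof reduces to identifying $\Newt_\pi^x$ as a Newton polygon and reading off its slopes. The only bookkeeping point to be careful about is the normalization factor $(q_x^{1/2})^{-r(n-r)}$ in \eqref{e:Hecke&Langlands}, which is precisely what produces the term $r(r-n)/2$ in the definition \eqref{e:2Newt} and has to cancel correctly when one translates between $v_x(t_r^x(\pi))$ and $v_x(e_r(\gamma_1,\ldots,\gamma_n))$.
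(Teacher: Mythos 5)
Your proof is correct and is essentially the paper's argument: the paper's proof consists precisely of citing \eqref{e:Hecke&Langlands} together with the standard Newton-polygon relation between valuations of roots and coefficients (the reference to Neukirch), and your write-up simply carries out that computation explicitly, including the correct bookkeeping of the factor $(q_x^{1/2})^{-r(n-r)}$.
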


\begin{proof}
This follows from \eqref{e:Hecke&Langlands} and the usual relation (via Newton polygons) between the absolute values of the roots and coefficients of a polynomial over $\BQbar_p$ (e.g., see  \cite[Ch.~II, Prop.~6.3]{Neu}).
\end{proof}

\subsubsection{V.~Lafforgue's proof of \eqref{e:3any x}} \label{sss:Vincent's proof}
We think of $\pi$ as a subspace of the space of automorphic forms. Since $\pi$ is cuspidal, the automorphic forms from $\pi$ are compactly supported modulo the center of $GL(n,\BA )$.

After twisting $\pi$, we can assume that 
\begin{equation}  \label{e:unit central char}
\eta (\BA^{\times}_F)\subset\BZbar_p^{\times}\,, 
\end{equation}
where $\eta : \BA^{\times}_F\to\BQbar_p^{\times}$ is the central character of $\pi$. 
Let $N\subset\pi$ be the $\BZbar_p$-submodule of those automorphic forms from $\pi$ whose values belong to $\BZbar_p$. 
Then $N\otimes_{\BZbar_p}\BQbar_p=\pi$; this follows from \eqref{e:unit central char} because automorphic forms from $\pi$ are compactly supported modulo the center.
 For each $x\in |X|$ the submodule $N^{GL(n,O_x)}$ is stable under the Hecke operators at $x$.
  For any open subgroup $U\subset GL(n,\BA )$, the $\BZbar_p$-module $N^U$ has finite type. So 
 $t_i^x (\pi )\in\BZbar_p$ for all $i\in\{ 1,\ldots ,n\}$ and $x\in |X|\setminus S_\pi$.
 Moreover, $t_n^x (\pi )\in\BZbar_p^\times$ by \eqref{e:unit central char}. So the function $\Newt^x_\pi$ from Lemma~\ref{l:Hecke&slopes} satisfies the inequality $\Newt^x_\pi (r)\ge r(r-n)/2$,
and by~\eqref{e:Newt} we have $ \sum\limits_{i=1}^{n-r} a_i^x (\pi )=-\Newt^x_\pi (r)\le r(n-r)/2$.
This proves \eqref{e:3any x}. \qed

\subsubsection{Reformulation of Theorem~\ref{t:automorphic}  in terms of  Hecke eigenvalues}   \label{sss:Hecke reformulation}
Let $\pi$ be a  cuspidal automorphic representation of $GL(n,\BA_F)$. For each $x\in |X|\setminus S_\pi$ 
let $\Newt^x_\pi :\{ 0,1,\ldots ,n\}\to\BQ$ be the biggest convex function satisfying \eqref{e:2Newt}.
By Lemma~\ref{l:Hecke&slopes}, one can reformulate Theorem~\ref{t:automorphic} as follows:

(i) there exists a function $\Newt^\eta_\pi :\{ 0,1,\ldots ,n\}\to\BQ$ such that $\Newt^x_\pi=\Newt^\eta_\pi$ for almost all $x\in |X|$;

(ii) $\Newt^x_\pi\ge\Newt^\eta_\pi$ for all $x\in |X|\setminus S_\pi$;

(iii) $\Newt^\eta_\pi (r+1)-2\Newt^\eta_\pi (r)+\Newt^\eta_\pi (r-1)\le 1$ for all $r\in\{ 1,\ldots ,n-1\}$.

\begin{ex}   \label{e:PGL(3)}
Let $\pi$ be a cuspidal automorphic representation of $PGL(3,\BA_F)$. For $x\in |X|\setminus S_\pi$ and $i\in\{ 1,2,3\}$ set 
$c_i^x:=v_x(t_i^x (\pi ))$. As explained in \S\ref{sss:Vincent's proof}, it is clear that for all $x\in |X|\setminus S_\pi$ one has $c_1^x, c_2^x  \ge 0$ and $c_3^x=0$. According to Theorem~\ref{t:automorphic}(i,iii), for \emph{almost} all $x\in |X|\setminus S_\pi$ the point $(c_1^x, c_2^x)$ belongs to $A\cup B$, where
\[
A:=\{ (y_1,y_2)\in\BQ^2\,|\, y_1\ge \frac{1}{3}\,,\, y_2\ge \frac{1}{3}\}, \quad B:=\{ (y_1,y_2)\in\BQ^2\,|\, 0\le y_1/2\le y_2\le 2y_1\}.
\]
\end{ex}

\section{Lemmas on algebraic groups} \label{s:alggroups}
We fix an algebraically closed field $E$ of characteristic 0. All group schemes will be over $E$. All vector spaces and representations are assumed finite-dimensional and over $E$. 

\subsection{The group scheme $\widetilde\BG_m$}   \label{sss:tilde G_m}
Set $\widetilde\BG_m:=\Hom (\BQ,\BG_m)=\underset{n}{\underset{\longleftarrow}\lim}\Hom (n^{-1}\BZ , \BG_m )$. 
For each $n\in\BN$ one has 
\[
\Hom (n^{-1}\BZ , \BG_m )\simeq\Hom(\BZ ,\BG_m)=\BG_m.
\]
So for any algebraic group $H$ the set $\Hom (\widetilde\BG_m ,H)$ canonically identifies with the quotient of the product $\Hom (\BG_m ,H)\times\BN$ by the following equivalence relation: a pair $(f_1,n_1)\in\Hom (\BG_m ,H)\times\BN$ is equivalent to $(f_2,n_2)$ if $f_1^{n_2}=f_2^{n_1}$. Note that $\Hom (\widetilde\BG_m ,\BG_m )=\BQ$, so the weights of $\widetilde\BG_m$ are rational numbers.

\subsection{The small gaps condition}   \label{ss:small gaps}
\subsubsection{}   \label{sss:Vgaps}
We say that a $\widetilde\BG_m$-module $V$ has \emph{small gaps} if the gap between any consecutive weights of $\widetilde\BG_m$ in $V$ is $\le 1$.

\subsubsection{}   \label{sss:mugaps}
Let $G$ be a connected reductive group and $\check\Lambda_G^{+,\BQ}$ the set of its dominant rational coweights. We say that 
$\check\lambda\in\check\Lambda_G^{+,\BQ}$ has  \emph{small gaps} if $(\check\lambda ,\alpha_i)\le 1$ for every simple root $\alpha_i$ of $G$.
This is equivalent to the condition $\check\rho-\check\lambda\in\check\Lambda_G^{+,\BQ}$, where $\check\rho\in\check\Lambda^{+,\BQ}_G$ is one half of the sum of the positive coroots of $G$ (to see this, recall that $(\check\lambda ,\alpha_i)= 1$ for all $i$).

Now assume that $G$ is reductive but not necessarily connected. Let $G^\circ$ be the neutral connected component of $G$.
One has a canonical bijection between $\check\Lambda_{G^\circ}^{+,\BQ}$ and the set of $G^\circ$-conjugacy classes of elements 
$\mu\in\Hom(\widetilde\BG_m ,G)=\Hom(\widetilde\BG_m ,G^\circ )$; the class of $\mu$ in $\check\Lambda_G^{+,\BQ}$ will be denoted by $[\mu ]$. We say that $\mu :\widetilde\BG_m\to G$ has  \emph{small gaps} if $[\mu ]$ has small gaps. Note that if $G=GL(V)$ this is equivalent to \S\ref{sss:Vgaps}.

\subsection{The parabolics $\fp_\mu^\pm$ and $P_\mu^\pm$}  
Somewhat informally, the parabolics defined below are related to the ``big gaps" of $\mu\in\Hom (\widetilde\BG_m ,G)$. 

\subsubsection{The subalgebras $\fp_\mu^\pm\subset\fg$} \label{sss:parabolics setting}
Let $G$ be a reductive group and $G^\circ$ its neutral connected component.
Let $\mu\in\Hom (\widetilde\BG_m ,G)$.

Set $\fg :=\Lie (G)$. Then $\widetilde\BG_m$ acts on $\fg$ via $\mu$. Consider the weight decomposition
\[
\fg=\bigoplus_{r\in\BQ}\fg_r\,  
\]
corresponding to this $\widetilde\BG_m$-action.
Let $\fp_\mu^+$ (resp.  $\fp_\mu^-$) denote the Lie subalgebra of $\fg$ generated by the subspaces $\fg_r$ for $r\ge -1$ (resp.~$r\le 1$).

\begin{lem}  \label{l:fpmu}
(i) $\fp_\mu^+$ and $\fp_\mu^-$ are parabolic subalgebras of $\fg$ opposite to each other.

(ii) The Lie subalgebra $\fp_\mu^+\cap\fp_\mu^-\subset\fg$ is generated by the subspaces $\fg_r$ for $-1\le r\le 1$.

(iii) Let $T\subset G^\circ$ be a maximal torus containing $\mu (\widetilde\BG_m )$. Choose a basis $\alpha_i$, $i\in I$, in the root system of $(G^\circ,T)$ so that $(\alpha_i ,\mu )\ge 0$. Set $I_{\le 1}:=\{i\in I\,|\,(\alpha_i ,\mu )\le 1\}$. Then the Lie algebra $\fp_\mu^+$ is generated by $\Lie (T)$, the root spaces $\fg_{\alpha_i}$ for $i\in I$, and the spaces $\fg_{-\alpha_i}$ for $i\in I_{\le 1}$; the Lie algebra $\fp_\mu^-$ is generated by $\Lie (T)$, the root spaces $\fg_{-\alpha_i}$ for $i\in I$, and the spaces $\fg_{\alpha_i}$ for $i\in I_{\le 1}$; finally, the Lie algebra $\fp_\mu^+\cap\fp_\mu^-$ is generated by $\Lie (T)$ and the spaces $\fg_{\pm\alpha_i}$ for $i\in I_{\le 1}$.
\end{lem}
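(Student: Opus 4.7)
The plan is to prove part (iii) first and then deduce (i) and (ii) from it, since (iii) exhibits $\fp_\mu^\pm$ as concrete standard parabolics and the other assertions become formal consequences.

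The first step is to set up a root decomposition compatible with the $\widetilde\BG_m$-grading. Since $\mu$ factors through some $\BG_m$ by the description in \S\ref{sss:tilde G_m}, its image is a (possibly trivial) subtorus of $G^\circ$, hence lies in some maximal torus $T\subset G^\circ$; without loss of generality we may take $T$ as in the statement. With respect to $T$, write the root space decomposition $\fg=\Lie(T)\oplus\bigoplus_{\alpha\in\Phi}\fg_\alpha$. Because $\mu$ lands in $T$, the adjoint $\widetilde\BG_m$-action is trivial on $\Lie(T)$ and acts on $\fg_\alpha$ with weight $(\alpha,\mu)\in\BQ$. Thus the weight decomposition refines the root decomposition: $\fg_0=\Lie(T)\oplus\bigoplus_{(\alpha,\mu)=0}\fg_\alpha$ and $\fg_r=\bigoplus_{(\alpha,\mu)=r}\fg_\alpha$ for $r\neq 0$.

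Next I would establish (iii). Let $\fp$ denote the Lie subalgebra generated by $\Lie(T)$, the $\fg_{\alpha_i}$ for $i\in I$, and the $\fg_{-\alpha_i}$ for $i\in I_{\le 1}$. The inclusion $\fp\subset\fp_\mu^+$ is immediate: every listed generator sits in some $\fg_r$ with $r\ge -1$. For the reverse inclusion it suffices to show $\fg_\alpha\subset\fp$ whenever $(\alpha,\mu)\ge -1$. If $\alpha$ is a positive root, $\fg_\alpha$ is realized as an iterated bracket of the simple positive root spaces $\fg_{\alpha_i}$ by the standard theory of Chevalley generators. If $\alpha=-\sum_j n_j\alpha_j$ is a negative root with $n_j\in\BZ_{\ge 0}$, then the assumption $(\alpha,\mu)\ge -1$ together with $(\alpha_j,\mu)\ge 0$ forces $(\alpha_j,\mu)\le 1$ for every $j$ with $n_j>0$ (otherwise already $-n_j(\alpha_j,\mu)<-1$, contradicting $(\alpha,\mu)\ge -1$). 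Hence $\alpha$ belongs to the root subsystem generated by $\{\alpha_j\}_{j\in I_{\le 1}}$, and $\fg_\alpha$ is an iterated bracket of the $\fg_{-\alpha_j}$ with $j\in I_{\le 1}$, so lies in $\fp$. The arguments for $\fp_\mu^-$ and $\fp_\mu^+\cap\fp_\mu^-$ are entirely symmetric.

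Claims (i) and (ii) then fall out of (iii). The description in (iii) identifies $\fp_\mu^+$ with the standard parabolic of $\fg$ attached to the subset $I_{\le 1}\subset I$ (with respect to the Borel determined by $T$ and $\{\alpha_i\}$), while $\fp_\mu^-$ is the opposite standard parabolic for the same subset; this gives (i). For (ii), note that $\fp_\mu^+\cap\fp_\mu^-$ is the standard Levi subalgebra for $I_{\le 1}$, and the same support analysis shows that a root $\alpha$ satisfies $|(\alpha,\mu)|\le 1$ if and only if it belongs to the root subsystem generated by $\{\alpha_j\}_{j\in I_{\le 1}}$; hence the Levi coincides with the subalgebra generated by $\fg_r$ for $-1\le r\le 1$. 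The only delicate point is the negative-root step in (iii) — showing that the inequality $(\alpha,\mu)\ge -1$ combined with non-negativity of each $(\alpha_j,\mu)$ really confines the support of $\alpha$ to $I_{\le 1}$ — but this is elementary once the signs are written out.
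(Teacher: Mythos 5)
The paper's own proof of this lemma consists of the single line ``Statement (iii) is clear. Statements (i)--(ii) follow,'' so your write-up is exactly a fleshed-out version of the intended argument, and its core is correct: since $\mu$ factors through a $\BG_m$ landing in $T$, the weight decomposition refines the root decomposition with $\fg_\alpha$ sitting in weight $(\alpha,\mu)$; the sign analysis shows that a negative root $\alpha$ with $(\alpha,\mu)\ge -1$ (resp.\ a positive root with $(\alpha,\mu)\le 1$) has support in $I_{\le 1}$; and iterated brackets of simple root spaces then give (iii), from which (i) is immediate.

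One intermediate assertion in your deduction of (ii) is false, though easily repaired. The ``if'' direction of the claimed equivalence ``$|(\alpha,\mu)|\le 1$ if and only if $\alpha$ belongs to the root subsystem generated by $\{\alpha_j\}_{j\in I_{\le 1}}$'' does not hold: if $\alpha_1,\alpha_2$ are simple roots with $(\alpha_1,\mu)=(\alpha_2,\mu)=1$ (so both lie in $I_{\le 1}$) and $\alpha=\alpha_1+\alpha_2$ is a root, then $\alpha$ is supported on $I_{\le 1}$ but $(\alpha,\mu)=2$. This is precisely why part (ii) asserts that $\fp_\mu^+\cap\fp_\mu^-$ is \emph{generated} by the $\fg_r$ with $-1\le r\le 1$, not that it equals their span. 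The conclusion still follows from what you have already proved: the ``only if'' direction (your support analysis) shows that every $\fg_r$ with $-1\le r\le 1$ is contained in the Levi $\fl_{I_{\le 1}}=\fp_\mu^+\cap\fp_\mu^-$, and conversely the generators $\Lie(T)$ and $\fg_{\pm\alpha_j}$, $j\in I_{\le 1}$, of that Levi all lie in weight spaces $\fg_r$ with $-1\le r\le 1$, so the Levi is contained in the subalgebra they generate. Replace the false biconditional by this two-inclusion argument and the proof is complete.
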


\begin{proof}
Statement (iii) is clear. Statements (i)-(ii) follow.
\end{proof}

\subsubsection{The subgroups $P_\mu^\pm$ and $M_\mu$}   \label{sss:Pmu}
Let $G$ and $\mu$ be as in \S\ref{sss:parabolics setting}. The group $\pi_0(G)=G/G^\circ$ acts on $\check\Lambda_{G^\circ}^{+,\BQ}$. 
From now on \emph{we will assume that the  class $[\mu]\in\check\Lambda_{G^\circ}^{+,\BQ}$ is $(G/G^\circ)$-invariant.} This means that the $G^\circ$-conjugacy class of $\mu$ is equal to its $G$-conjugacy class or equivalently, that
\begin{equation}  \label{e:assumption}
G=G^\circ\cdot Z_G(\mu ),
\end{equation}
where $Z_G(\mu )$ is the centralizer of $\mu$ in $G$.

Set 
\[
P_\mu^+:=\{g\in G\,|\,\Ad_g (\fp_\mu^+)=\fp_\mu^+\}, \quad P_\mu^-:=\{g\in G\,|\,\Ad_g (\fp_\mu^-)=\fp_\mu^-\} , \quad M_\mu:=P_\mu^+\cap P_\mu^-.
\]
Then $\Lie (P_\mu^\pm )=\fp_\mu^\pm$. The groups $P_\mu^\pm\cap G^\circ$ and $M_\mu\cap G^\circ$ are connected. It is easy to check that 
\begin{equation}  \label{e:centralizer of mu}
Z_G(\mu)\subset M_\mu\, .
\end{equation}
Combining this with \eqref{e:assumption}, we see that the map $G\to\pi_0(G)$ induces isomorphisms
\[
\pi_0(P_\mu^\pm )\iso\pi_0(G),\quad \pi_0(M_\mu )\iso\pi_0(G).
\]

By Lemma~\ref{l:fpmu}(iii), $M_\mu=G$ if and only if $\mu$ has small gaps in the sense of \S\ref{sss:mugaps}.

\begin{ex}   \label{ex:GL}
Let $G=GL (V)$, where $V$ is a vector space. Let $\mu\in\Hom (\widetilde\BG_m ,G)$, then $\widetilde\BG_m$ acts on $V$. It is clear that the $\widetilde\BG_m$-module $V$ has a unique decomposition into a direct sum of $\widetilde\BG_m$-submodules $V_1,\ldots , V_m$ with the following properties:

(a) each of the the $\widetilde\BG_m$-modules $V_i$ has small gaps;

(b) if $r\in\BQ$ is a weight of $\widetilde\BG_m$ on $V_i$ and $r'\in\BQ$ is a weight of $\widetilde\BG_m$ on $V_{i+1}$ then $r-r'>1$.

\noindent In this situation the parabolic $P_\mu^+$ defined in \S\ref{sss:Pmu} is the stabilizer of the flag formed by the subspaces
$V_{\le j}:=\bigoplus\limits_{i\le j}V_i$, and $P_\mu^-$ is the stabilizer of the flag formed by the subspaces
$V_{\ge j}$.
\end{ex}

\begin{rem}   \label{r:functoriality}
Let $G$ and $\mu$ be as in \S\ref{sss:Pmu}. Let $G'$ be a reductive group and $\rho :G\to G'$ a homomorphism. We claim that 
$$\rho (P_\mu^\pm)\subset P_{\rho\circ\mu}^\pm \, .$$
Indeed, since $P_\mu^\pm =(P_\mu^\pm \cap G^\circ )\cdot Z_G(\mu )$ it suffices to check that
$\rho (P_\mu^\pm \cap G^\circ )\subset P_{\rho\circ\mu}^\pm$. This follows from the  inclusion $\rho (\fp_\mu^\pm )\subset \fp_{\rho\circ\mu}^\pm$, which holds by the definition of $\fp_\mu^\pm$.
\end{rem}

\subsubsection{Tannakian approach to $P_\mu^\pm$}   \label{sss:Tannakian}
Let $G$ and $\mu$ be as in \S\ref{sss:Pmu}. Then we have the parabolics $P_\mu^\pm$ defined in \S\ref{sss:Pmu}. On the other hand, for every representation $\rho :G\to GL(V)$ we have the parabolics 
$P_{\rho\circ\mu}^\pm\subset GL(V)$ corresponding to $\rho\circ\mu\in \Hom (\widetilde\BG_m ,GL(V))$ (see Example~\ref{ex:GL} for their explicit description).

\begin{lem}    \label{l:Tannakian}
$P_\mu^\pm=\bigcap\limits_{\rho\in\hat G}\rho^{-1} (P_{\rho\circ\mu}^\pm)$, where $\hat G$ is the set of isomorphism classes of irreducible representations of $G$. 
\end{lem}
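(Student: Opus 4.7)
The inclusion $P_\mu^\pm \subset \bigcap_{\rho\in\hat G} \rho^{-1}(P_{\rho\circ\mu}^\pm)$ is immediate from Remark~\ref{r:functoriality}, so I focus on the reverse inclusion, treating the $+$ case; the $-$ case will follow symmetrically by replacing highest-weight with lowest-weight vectors (or by passing to contragredients).

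The plan, when $G$ is connected reductive, is to exhibit a single irreducible representation already witnessing the reverse inclusion. I will choose a dominant integral weight $\lambda$ with $\langle\lambda,\alpha_i^\vee\rangle = 0$ for $\alpha_i\in I_{\le 1}$ and $\langle\lambda,\alpha_i^\vee\rangle > 0$ for $\alpha_i\notin I_{\le 1}$ (for instance, a sufficiently divisible positive combination of the fundamental weights $\omega_i$ with $i\notin I_{\le 1}$), take $V_\lambda$ to be the irreducible $G$-representation of highest weight $\lambda$, and set $\ell := \langle v_\lambda\rangle$ for a highest-weight vector $v_\lambda$. The argument will rest on two assertions. \emph{First}, that $\mathrm{Stab}_G(\ell)$ is the standard parabolic $P_{I_{\le 1}}$, which by Lemma~\ref{l:fpmu}(iii) equals $P_\mu^+$. \emph{Second}, that $\ell$ coincides with the top big-gap cluster subspace $V_1^{(V_\lambda)}$ of $V_\lambda$ under $\mu$. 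Granting both, any element of $P_{\rho_{V_\lambda}\circ\mu}^+$ stabilizes the top flag piece $V_1^{(V_\lambda)} = \ell$, and so its preimage under $\rho_{V_\lambda}$ lies in $\mathrm{Stab}_G(\ell) = P_\mu^+$, finishing the connected case.

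The first assertion is a standard Borel-Weil-type calculation: the conditions $\langle\lambda,\alpha_i^\vee\rangle = 0$ for $i\in I_{\le 1}$ translate into $f_{\alpha_i}v_\lambda = 0$, and a direct check shows $\mathrm{Stab}_G(\ell) = P_{I_{\le 1}}$ (any element involving a lowering operator outside $I_{\le 1}$ would produce a nonzero lower-weight component in $g v_\lambda$). The second assertion, which is the technical heart of the argument, follows from the observation that $f_\alpha v_\lambda = 0$ for \emph{every} positive root $\alpha$ supported in $I_{\le 1}$ (by iterated bracketing of the simple-root case), so every weight of $V_\lambda$ other than $\lambda$ has the form $\lambda - \gamma$ with $\gamma = \sum n_i\alpha_i$ having some coefficient $n_{i_0} \ge 1$ for an $\alpha_{i_0}\notin I_{\le 1}$; consequently its $\mu$-weight is at most $\langle\lambda,\mu\rangle - \langle\alpha_{i_0},\mu\rangle < \langle\lambda,\mu\rangle - 1$, using $\langle\alpha_{i_0},\mu\rangle > 1$. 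This shows $\langle\lambda,\mu\rangle$ forms a singleton cluster separated from the remaining weights by a gap exceeding $1$, so $V_1^{(V_\lambda)} = \ell$ as required.

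For disconnected $G$, I will use the decomposition $P_\mu^+ = (P_\mu^+\cap G^\circ)\cdot Z_G(\mu)$ from \S\ref{sss:Pmu}: given $g$ in the intersection, I factor $g = g_0 z$ with $g_0\in G^\circ$ and $z\in Z_G(\mu)\subset P_\mu^+$ (possible because the hypothesis that $[\mu]$ is $\pi_0(G)$-invariant gives $G = G^\circ \cdot Z_G(\mu)$), and reduce to proving $g_0 \in P_\mu^+ \cap G^\circ$ by applying the connected-case argument to a $G$-representation obtained from $V_\lambda$ via induction from the $G^\circ$-representation, or by summing over its $\pi_0(G)$-orbit (using again that $[\mu]$ is $\pi_0(G)$-invariant so that $I_{\le 1}$ itself is $\pi_0(G)$-stable). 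The main obstacle will be the verification of the second assertion, where I must rule out that some descendant weight of $v_\lambda$ in $V_\lambda$ lives too close to $\langle\lambda,\mu\rangle$; this is exactly where the annihilation $f_{\alpha_i}v_\lambda = 0$ for $\alpha_i \in I_{\le 1}$ enters, forcing every first (and hence every) descent to use a simple root contributing a weight drop greater than $1$.
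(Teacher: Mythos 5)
Your proposal is correct and is essentially the paper's own argument: both prove the nontrivial inclusion by taking an irreducible representation whose highest weight pairs to zero with exactly the simple coroots $\check\alpha_i$, $i\in I_{\le 1}$, observing that every other weight drops by more than $1$ against $\mu$ (so the highest-weight space is precisely the top piece of the flag of Example~\ref{ex:GL}), and identifying the stabilizer of that space with $P_\mu^+$. The only substantive difference is the disconnected case, which you leave as a sketch: instead of your factorization $g=g_0z$ and induction from $G^\circ$, the paper chooses the highest weight to be $(G/G^\circ)$-invariant (possible because $I_{\le 1}$ is stable under $G/G^\circ=Z_G(\mu)/Z_{G^\circ}(\mu)$), takes an irreducible $G$-module whose restriction to $G^\circ$ is isotypic of that weight, and concludes from $\Stab(V_\omega)\cap G^\circ=P_\mu^+\cap G^\circ$ that $\Stab(V_\omega)$ lies in the normalizer of $P_\mu^+\cap G^\circ$, which equals $P_\mu^+$.
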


\begin{proof}
For any $\rho\in\hat G$ one has $P_\mu^\pm\subset\rho^{-1} (P_{\rho\circ\mu}^\pm)$ by Remark~\ref{r:functoriality}.

Let us construct $\rho_+,\rho_-\in\hat G$ such that 
\begin{equation}   \label{e:desired inclusions}
P_\mu^+\supset\rho_+^{-1} (P_{\rho_+\circ\mu}^+) ,\quad
P_\mu^-\supset\rho_-^{-1} (P_{\rho_+\circ\mu}^-).
\end{equation}
Let $T$, $I$ and $I_{\le 1}$ be as in Lemma~\ref{l:fpmu}(iii). The subset $I_{\le 1}\subset I$ is stable under the action of the group
$G/G^\circ=Z_G(\mu)/Z_{G^\circ}(\mu)$. 
So there exists a $(G/G^\circ)$-invariant dominant weight $\omega$ of $G^\circ$ such that
\[
\{i\in I\,|\, (\omega ,\check\alpha_i)=0\}=I_{\le 1}\, .
\]
Let $\rho^\circ\in \widehat{G^\circ}$ have highest weight $\omega$. Then there exists an irreducible $G$-module $V$ whose restriction to $G^\circ$ is a multiple of $\rho^\circ$. We have a homomorphism $\rho_+:G\to GL(V)$ and the dual homomorphism $\rho_-:G\to GL(V^*)$. We claim that the inclusions \eqref{e:desired inclusions} hold.

Let us check this for $\rho_+$. 
Let $ V_\omega\subset V$ be the highest weight subspace (i.e., the maximal subspace on which $T$ acts by $\omega$), and let $\Stab (V_\omega)\subset G$ be the stabilizer of $V_\omega$ in $G$. Then 
$\rho_+^{-1} (P_{\rho_+\circ\mu}^+) \subset\Stab (V_\omega)$ because $(\omega-\omega' ,\mu)>1$ for any weight $\omega'$ of $T$ in $V_\omega$ such that $\omega'\ne\omega$. On the other hand, 
$\Stab (V_\omega)\cap G^\circ=P_\mu^+\cap G^\circ$, so $\Stab (V_\omega)$ is contained in the normalizer of
$P_\mu^+\cap G^\circ$ in $G$, which equals $P_\mu^+$.
\end{proof}

\begin{cor}   \label{c:small gaps via reps}
Let $G$ and $\mu$ be as in \S\ref{sss:Pmu}.  Suppose that every irreducible $G$-module has small gaps as a $\widetilde\BG_m$-module (with the $\widetilde\BG_m$-action being defined via $\mu:\widetilde\BG_m\to G$). Then $\mu$ has small gaps. \qed
\end{cor}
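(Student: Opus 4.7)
The plan is to directly combine Lemma~\ref{l:Tannakian} with the explicit description of $P_{\rho\circ\mu}^\pm$ for $GL(V)$ given in Example~\ref{ex:GL}.

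First, I recall from \S\ref{sss:Pmu} (just after Lemma~\ref{l:fpmu}(iii)) that $\mu$ has small gaps if and only if $M_\mu = G$, and this is in turn equivalent to $P_\mu^+ = G$ (since $P_\mu^+ \supset M_\mu$ and $M_\mu = P_\mu^+ \cap P_\mu^-$, one checks using Lemma~\ref{l:fpmu}(iii) that $P_\mu^+ = G \iff P_\mu^- = G \iff M_\mu = G$). So it suffices to prove $P_\mu^+ = G$ under the small-gaps hypothesis on every irreducible representation.

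Next, I apply Lemma~\ref{l:Tannakian}, which gives
\[
P_\mu^+ = \bigcap_{\rho\in\hat G}\rho^{-1}(P_{\rho\circ\mu}^+).
\]
Fix $\rho \in \hat G$ and view $V$ as a $\widetilde\BG_m$-module via $\rho\circ\mu$. By hypothesis, $V$ has small gaps in the sense of \S\ref{sss:Vgaps}. Then in the canonical decomposition $V = V_1\oplus\cdots\oplus V_m$ described in Example~\ref{ex:GL}, property (b) forces $m=1$, because if $m\ge 2$, the jump between a weight of $V_i$ and a weight of $V_{i+1}$ would exceed $1$, contradicting the small-gaps property of $V$. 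Hence the parabolic $P_{\rho\circ\mu}^+ \subset GL(V)$, being the stabilizer of the trivial flag $V_{\le 1} = V$, is all of $GL(V)$.

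Therefore $\rho^{-1}(P_{\rho\circ\mu}^+) = G$ for every $\rho\in\hat G$, so the intersection in Lemma~\ref{l:Tannakian} equals $G$. This shows $P_\mu^+ = G$, hence $\mu$ has small gaps. There is essentially no obstacle here: the corollary is a straightforward unwinding of definitions once Lemma~\ref{l:Tannakian} and Example~\ref{ex:GL} are in hand.
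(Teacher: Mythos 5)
Your proof is correct and is exactly the intended deduction: the paper states the corollary as an immediate consequence of Lemma~\ref{l:Tannakian}, namely that the small-gaps hypothesis on each irreducible $\rho$ forces $P_{\rho\circ\mu}^\pm=GL(V)$ (via Example~\ref{ex:GL}, or equivalently the equivalence noted at the end of \S\ref{sss:mugaps}), hence $P_\mu^\pm=G$, hence $M_\mu=G$, which by Lemma~\ref{l:fpmu}(iii) means $\mu$ has small gaps. Your extra observation that $P_\mu^+=G$ alone already suffices (since $\fp_\mu^+=\fg$ forces $I_{\le 1}=I$) is a harmless simplification of the same argument.
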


Let us note that the converse of Corollary~\ref{c:small gaps via reps} is also true (and easy to prove).

\begin{cor}  \label{c:partial functoriality}
Let $\rho: G\to G'$ be a homomorphism of reductive groups. In addition, assume that $G$ is semisimple.
If $\mu\in\Hom (\widetilde\BG_m ,G)$ has small gaps then so does $\rho\circ \mu\in\Hom (\widetilde\BG_m,G')$.
\end{cor}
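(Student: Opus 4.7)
The plan is to apply the Tannakian criterion of Corollary~\ref{c:small gaps via reps} to $G'$ and $\mu':=\rho\circ\mu$: it suffices to show that every irreducible $G'$-module has small gaps as a $\widetilde\BG_m$-module via $\mu'$. An irreducible $G'$-module $V'$, viewed as a $G$-module via $\rho$, has $\mu$-weights coinciding with its $\mu'$-weights, so the problem reduces to the following intermediate assertion: \emph{if $G$ is semisimple and $\mu:\widetilde\BG_m\to G$ has small gaps, then every $G$-module $V$ (not necessarily irreducible) has small gaps as a $\widetilde\BG_m$-module via $\mu$.}

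To prove this assertion I would use full reducibility (valid for semisimple $G$ in characteristic zero) to decompose $V=\bigoplus_i V_{\lambda_i}$ into irreducibles with highest weights $\lambda_i$, and denote by $m_i$ and $M_i$ the minimum and maximum $\mu$-weight of $V_{\lambda_i}$, respectively. The converse of Corollary~\ref{c:small gaps via reps}, noted as easy just after that corollary, guarantees that the $\mu$-weights of each $V_{\lambda_i}$ have consecutive gaps $\le 1$ inside $[m_i,M_i]$. The essential use of semisimplicity is that a dominant coweight $\mu$ of a semisimple group is a non-negative rational combination of the simple coroots; combining this with the fact that a dominant weight is a non-negative combination of fundamental weights, one obtains $M_i=\langle\lambda_i,\mu\rangle\ge 0$, and applying the same reasoning to $V_{\lambda_i}^{*}$ yields $m_i\le 0$. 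Hence $0\in[m_i,M_i]$ for every index~$i$.

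It remains a short combinatorial verification that the union of the $\mu$-weight sets of the summands $V_{\lambda_i}$ has consecutive gaps $\le 1$. Suppose to the contrary that the sorted union has a gap $s_{j+1}-s_j>1$. Then each interval $[m_i,M_i]$ must lie entirely in $(-\infty,s_j]$ or in $[s_{j+1},\infty)$, for otherwise the $\le 1$-density of the $\mu$-weights inside $V_{\lambda_i}$ would place a weight of $V_{\lambda_i}$ in the forbidden subinterval $(s_j,s_{j+1})$. Combining this dichotomy with $m_i\le 0\le M_i$, some index forces $s_j\ge 0$ and another forces $s_{j+1}\le 0$, contradicting $s_j<s_{j+1}$. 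I expect this final merging step to be the only place genuinely requiring care; semisimplicity of $G$ is indispensable, since it is exactly what places $0$ inside every interval $[m_i,M_i]$ and so links the irreducible summands together.
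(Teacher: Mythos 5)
Your argument is correct and essentially coincides with the paper's proof: both reduce via Corollary~\ref{c:small gaps via reps} (applied to $G'$) to showing that an arbitrary $G$-module has small gaps under $\mu$, handle each irreducible summand by the easy converse of that corollary, and use semisimplicity of $G$ to place $0$ between the smallest and largest $\mu$-weight of every summand so that the summands' weight ranges overlap (the paper deduces this from the vanishing of the sum of the weights, you from pairing dominant highest weights against a dominant representative of $[\mu]$ expressed in the simple coroots). The only point to add is the paper's opening reduction to connected $G$ and $G'$, which costs nothing because the small-gaps condition only involves the identity components, but which is needed to invoke Corollary~\ref{c:small gaps via reps} literally, since that statement presupposes the $\pi_0$-invariance hypothesis of \S\ref{sss:Pmu}.
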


\begin{proof}
We can assume that $G$ and $G'$ are connected. Applying Corollary~\ref{c:small gaps via reps} to $G'$, we reduce the proof to the case that $G'=GL(V)$.

Since $G$ is semisimple the biggest weight of $\widetilde\BG_m$ in any $G$-module is non-negative and the smallest one is non-positive (because the sum of all weights is zero). This allows us to reduce to the case that $\rho :G\to GL(V)$ is irreducible. This case is clear.
\end{proof}

\subsubsection{Elements of $\Hom (\widetilde\BG_m ,G)$ centralizing each other}   \label{sss:commuting 1-param subgroups}

Let $G$, $\mu$, $\fg$, and $\fg_r$ be as in \S\ref{sss:parabolics setting}. 
Let $G_0$ denote the centralizer of $\mu$ in $G^\circ$; then $\Lie (G_0)$ equals the weight space $\fg_0$. The group $G_0$ is connected and reductive (in fact, it is a Levi of $G^\circ$).

Now let $\nu\in\Hom (\widetilde\BG_m ,G_0)$. Then the product $\mu\cdot\nu$ is a homomorphism $\widetilde\BG_m\to G$. Set $\mu':=\mu\cdot\nu$.

\begin{lem} \label{l:commuting 1-param subgroups}
Suppose that $\nu\in\Hom (\widetilde\BG_m ,G_0)$  has small gaps (see \S\ref{sss:mugaps}) and $\nu (\widetilde\BG_m)\subset [G_0,G_0]$. Then

(i) $\fp_{\mu'}^\pm\supset\fp_\mu^\pm$;

(ii) if $\mu$ has small gaps then so does $\mu'$.
\end{lem}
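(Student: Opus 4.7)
The plan is to prove (i), from which (ii) follows formally: if $\mu$ has small gaps then $M_\mu=G$ and $\fp_\mu^\pm=\fg$ by the last paragraph of \S\ref{sss:Pmu}, whence (i) forces $\fp_{\mu'}^\pm=\fg$, so that $M_{\mu'}=G$ and $\mu'$ has small gaps. For (i), I will treat only $\fp_{\mu'}^+\supset\fp_\mu^+$; the other inclusion is symmetric. Since $\fp_\mu^+$ is generated as a Lie subalgebra by the $\mu$-weight spaces $\fg_r$ with $r\ge -1$, it suffices to show each such $\fg_r$ lies in $\fp_{\mu'}^+$. Because $G_0$ preserves the $\mu$-grading on $\fg$, each $\fg_r$ is a $G_0$-module; decomposing into irreducible $G_0$-summands reduces the problem to showing $W\subset\fp_{\mu'}^+$ for a single irreducible $G_0$-submodule $W\subset\fg_r$.

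The key inputs are two properties of $\nu$ acting on $W$. First, by the (easy) converse of Corollary~\ref{c:small gaps via reps} applied to $(G_0,\nu)$, the small-gaps hypothesis on $\nu$ implies that the $\nu$-weights on $W$ have small gaps. Second, the hypothesis $\nu(\widetilde\BG_m)\subset[G_0,G_0]$ puts $d\nu$ in the semisimple Lie algebra $[\fg_0,\fg_0]$; under any representation of a semisimple Lie algebra the image lands in $\mathfrak{sl}$, so $\mathrm{tr}(d\nu|_W)=0$ and the $\nu$-weights on $W$ sum to zero with multiplicity. In particular the extreme $\nu$-weights on $W$ satisfy $s_{\min}(W)\le 0\le s_{\max}(W)$.

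Next I choose a maximal torus $T\subset G_0$ containing both $\mu(\widetilde\BG_m)$ and $\nu(\widetilde\BG_m)$ and a Borel $B_0\subset G_0$ containing $T$ such that $\nu$ is $B_0$-dominant. For every simple root $\beta_j$ of $(G_0,B_0)$, dominance and small gaps of $\nu$ give $0\le(\beta_j,\nu)\le 1$, so the root vector $E_{-\beta_j}\in\fg_{-\beta_j}\subset\fg_0$ has $\mu$-weight $0$ and $\nu$-weight $-(\beta_j,\nu)\in[-1,0]$; hence its $\mu'$-weight lies in $[-1,0]$ and it is a generator of $\fp_{\mu'}^+$. Let $v_0\in W$ be the $B_0$-highest weight vector: since $\nu$ is $B_0$-dominant, its $\nu$-weight equals $s_{\max}(W)\ge 0$, so its $\mu'$-weight is $r+s_{\max}(W)\ge r\ge -1$, placing $v_0$ itself among the generators of $\fp_{\mu'}^+$. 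Because $W$ is irreducible under $G_0$, it is spanned by iterated commutators of $v_0$ with the root vectors $E_{-\beta_j}$, and each such iterated commutator lies in the Lie subalgebra $\fp_{\mu'}^+$ by induction on the number of commutators. Hence $W\subset\fp_{\mu'}^+$. The symmetric inclusion $\fp_{\mu'}^-\supset\fp_\mu^-$ is proved the same way, starting from the $B_0$-lowest weight vector of $W$ (whose $\mu'$-weight is $r+s_{\min}(W)\le r\le 1$) and raising via the $E_{\beta_j}$.

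The main obstacle is the inequality $s_{\max}(W)\ge 0$: without it the highest-weight vector $v_0$ need not lie among the generators of $\fp_{\mu'}^+$, and the entire induction collapses. This bound is precisely what the hypothesis $\nu(\widetilde\BG_m)\subset[G_0,G_0]$ provides through the trace-zero argument, while the small-gaps hypothesis on $\nu$ is what makes the lowering operators $E_{-\beta_j}$ qualify as generators of $\fp_{\mu'}^+$; both hypotheses are thus genuinely used.
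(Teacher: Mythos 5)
Your proposal is correct and follows essentially the same route as the paper: you prove (i) by showing each $\mu$-weight space $\fg_r$ with $r\ge -1$ lies in $\fp_{\mu'}^+$, using that vectors of non-negative $\nu$-weight are automatically there, that the small-gaps hypothesis on $\nu$ puts the relevant part of $\fg_0$ (your lowering operators $E_{-\beta_j}$) inside $\fp_{\mu'}^+$, and that $\nu(\widetilde\BG_m)\subset[G_0,G_0]$ kills the determinant character and so forces enough non-negative $\nu$-weights. The only difference is packaging: the paper shows that the $G_0$-submodule generated by $\fg_r\cap\fg^{\ge 0}$ is all of $\fg_r$ via a determinant argument on the quotient, whereas you decompose $\fg_r$ into irreducibles and descend from the highest-weight vector (your first ``key input'' about small gaps of the $\nu$-weights on $W$ is in fact never used) --- the same trace-zero mechanism in a different wrapper.
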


\begin{proof}
It suffices to prove (i).
We will show that $\fp_{\mu'}^+\supset\fp_\mu^+$ (the inclusion $\fp_{\mu'}^-\supset\fp_\mu^-$ is proved similarly). This is equivalent to proving that $\fp_{\mu'}^+\supset\fg_r$ for all $r\ge -1$.

It is clear that for every $r\ge -1$ one has $\fp_{\mu'}^+\supset\fg_r\cap\fg^{\ge 0}$, where $\fg^{\ge 0}\subset\fg$ is the sum of the non-negative weight spaces of the action $\widetilde\BG_m\overset{\nu}\longrightarrow G_0\to\Aut\fg$. Since $\nu$  has small gaps one has
 $\fp_{\mu'}^+\supset\fg_0$. So for each $r\ge -1$ one has $\fp_{\mu'}^+\supset V_r$, where $V_r\subset\fg_r$ is the $G_0$-submodule generated by $\fg_r\cap\fg^{\ge 0}$. It remains to show that $V_r=\fg_r$. Assume the contrary; then the composition
\[
\widetilde\BG_m\overset{\nu}\longrightarrow G_0\to\Aut(\fg/V_r)\overset{\det}\longrightarrow\BG_m
\]
is strictly negative, which contradicts the assumption $\nu (\widetilde\BG_m)\subset [G_0,G_0]$.
\end{proof}

\subsection{The key lemma} 
The goal of this subsection is to prove Lemma~\ref{l:key}, which will be used in the proof of 
Theorem~\ref{t:2l-adic}.

\begin{lem}    \label{l:H and Mmu}
Let $G$ and $\mu$ be as in \S\ref{sss:Pmu}. Let $H\subset G$ be a subgroup  with the following properties:

(i) $H\supset\mu (\widetilde\BG_m )$;

(ii) every indecomposable $H$-module has small gaps as a $\widetilde\BG_m$-module (with the $\widetilde\BG_m$-action being defined via 
$\mu:\widetilde\BG_m\to H$);

Then $H\subset M_\mu$, $Z_G(H)\subset M_\mu$, and 
\begin{equation}  \label{e:the 2 centers}
Z(M_\mu )\subset Z(Z_G(H)).
\end{equation}
\end{lem}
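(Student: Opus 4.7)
The plan is to prove the three assertions in sequence, with the bulk of the work going into the first inclusion $H\subset M_\mu$; once that is established, the other two statements follow easily.

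For the first inclusion, I would use the Tannakian characterization of $M_\mu$: intersecting the two identities of Lemma~\ref{l:Tannakian} gives $M_\mu = \bigcap_{\rho\in\hat G}\rho^{-1}(M_{\rho\circ\mu})$, so it suffices to prove $\rho(H)\subset M_{\rho\circ\mu}$ for every irreducible $G$-representation $\rho:G\to GL(V)$. Fix such a $\rho$ and view $V$ as an $H$-module. Apply Example~\ref{ex:GL} to $\rho\circ\mu:\widetilde\BG_m\to GL(V)$: there is a canonical decomposition $V=V_1\oplus\cdots\oplus V_m$ into $\widetilde\BG_m$-submodules, each of small gaps, such that consecutive blocks are separated by a $\widetilde\BG_m$-gap strictly greater than $1$; the subgroup $M_{\rho\circ\mu}\subset GL(V)$ is exactly the stabilizer of this decomposition. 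Now decompose $V$ into indecomposable $H$-submodules $V=W_1\oplus\cdots\oplus W_k$. By hypothesis (i), each $W_j$ is a $\widetilde\BG_m$-submodule, and by hypothesis (ii) its set of $\widetilde\BG_m$-weights has small gaps. Therefore every $W_j$ is contained in a single block $V_{i(j)}$; otherwise $W_j$ would itself carry a gap $>1$ between consecutive weights, contradicting (ii). Consequently the decomposition $V=\bigoplus_j W_j$ refines $V=\bigoplus_i V_i$, which shows that $H$ preserves the latter decomposition. Thus $\rho(H)\subset M_{\rho\circ\mu}$, and $H\subset M_\mu$ follows by the Tannakian formula above.

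For the second inclusion, I would argue more directly: since $\mu(\widetilde\BG_m)\subset H$ by (i), the centralizer $Z_G(H)$ is contained in $Z_G(\mu)$, and by \eqref{e:centralizer of mu} one has $Z_G(\mu)\subset M_\mu$. Hence $Z_G(H)\subset M_\mu$.

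The third assertion $Z(M_\mu)\subset Z(Z_G(H))$ is then formal. Elements of $Z(M_\mu)$ commute with everything in $M_\mu$; since $H\subset M_\mu$ by step~1, they commute with $H$, so they lie in $Z_G(H)$. Moreover, since $Z_G(H)\subset M_\mu$ by step~2, elements of $Z(M_\mu)$ also commute with every element of $Z_G(H)$, hence lie in its center.

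The main obstacle is the first step: the argument hinges on the rather pleasant compatibility between the $H$-indecomposable decomposition of a $G$-representation $V$ and the ``gap'' decomposition of $V$ as a $\widetilde\BG_m$-module, together with the Tannakian description of $M_\mu$ via Lemma~\ref{l:Tannakian}. Once one sees that small-gap hypothesis (ii) forces each indecomposable $H$-summand to fit inside one block of the gap decomposition, everything else is bookkeeping.
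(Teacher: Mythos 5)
Your proof is correct and follows essentially the same route as the paper: reduce $H\subset M_\mu$ to $\rho(H)\subset M_{\rho\circ\mu}$ for all representations $\rho$ via Lemma~\ref{l:Tannakian}, then get $Z_G(H)\subset Z_G(\mu)\subset M_\mu$ from \eqref{e:centralizer of mu}, and deduce \eqref{e:the 2 centers} formally. The only difference is that you spell out (via Example~\ref{ex:GL} and the indecomposable $H$-summands landing in single blocks) the step $\rho(H)\subset M_{\rho\circ\mu}$, which the paper asserts in one sentence as an immediate consequence of hypothesis (ii).
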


 Here $Z(M_\mu )$ is the center of $M_\mu$, and $Z(Z_G(H))$ is the center of the centralizer of $H$.

\begin{proof}
By property (ii), for every representation $\rho:G\to GL(V)$ one has $\rho (H)\subset M_{\rho\circ\mu}$. By Lemma~\ref{l:Tannakian}, this implies that $H\subset M_\mu$.

By property (i),  $Z_G(H)\subset Z_G(\mu )$. By \eqref{e:centralizer of mu}, $Z_G(\mu )\subset M_\mu$. So $Z_G(H)\subset M_\mu$.
This implies that  $Z(M_\mu )$ centralizes $Z_G(H)$. On the other hand, $Z(M_\mu )\subset Z_G(H)$ because $H\subset M_\mu$.
So $Z(M_\mu )\subset Z(Z_G(H))$.
\end{proof}

\begin{lem}     \label{l:key}
Let $G, \mu ,H$ be as in Lemma~\ref{l:H and Mmu}. In addition, assume that the inclusion 
$$\Hom (\BG_m ,Z(G))\subset\Hom (\BG_m ,Z (Z_G(H)))$$ is an equality. Then $\mu :\widetilde\BG_m\to G$ has small gaps.
\end{lem}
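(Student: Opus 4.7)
The plan is to argue by contradiction: assuming $\mu$ does not have small gaps, I would construct a rational cocharacter of $Z(M_\mu)$ that is not a cocharacter of $Z(G)$, which contradicts the hypothesis combined with Lemma~\ref{l:H and Mmu}.

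First I would fix a maximal torus $T\subset G^\circ$ containing $\mu(\widetilde\BG_m)$ and a basis $\{\alpha_i\}_{i\in I}$ of simple roots with $(\alpha_i,\mu)\ge 0$, as in Lemma~\ref{l:fpmu}(iii). If $\mu$ fails to have small gaps, the set $I_{>1}:=\{i\in I\,|\,(\alpha_i,\mu)>1\}$ is non-empty; since the standing hypothesis of \S\ref{sss:Pmu} makes $[\mu]$ invariant under $\pi_0(G)$, the set $I_{>1}$ is stable under the natural action of $\pi_0(G)$ on $I$. I would then choose a $\pi_0(G)$-orbit $J\subset I_{>1}$ and form the rational cocharacter
\[
\check\lambda:=\sum_{j\in J}\check\omega_j\in\Hom(\widetilde\BG_m,T),
\]
where $\check\omega_j$ is the fundamental coweight of $G^\circ$ dual to $\alpha_j$. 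By construction $\check\lambda$ is $\pi_0(G)$-invariant (because $\pi_0(G)$ permutes $\{\check\omega_j\}_{j\in J}$ along with $J$).

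Next I would verify two properties of $\check\lambda$. (a) It lies in $\Hom(\widetilde\BG_m,Z(M_\mu))$: by Lemma~\ref{l:fpmu}(iii) the Lie algebra of $M_\mu^\circ$ is generated by $\Lie(T)$ together with the root spaces $\fg_{\pm\alpha_k}$ for $k\in I_{\le 1}=I\setminus I_{>1}$, and $(\alpha_k,\check\lambda)=0$ for $k\in I_{\le 1}$, so the image of $\check\lambda$ centralizes $M_\mu^\circ$; being $\pi_0(G)$-invariant, it also commutes with representatives of $\pi_0(M_\mu)=\pi_0(G)$, hence lands in $Z(M_\mu)$. (b) It does not lie in $\Hom(\widetilde\BG_m,Z(G))$: for any $j\in J$ one has $(\alpha_j,\check\lambda)=1\ne 0$, so $\check\lambda$ is not even central in $G^\circ$.

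Finally I would combine these observations with the hypothesis: Lemma~\ref{l:H and Mmu} gives $Z(M_\mu)\subset Z(Z_G(H))$, and tensoring the hypothesis of Lemma~\ref{l:key} with $\BQ$ yields $\Hom(\widetilde\BG_m,Z(Z_G(H)))=\Hom(\widetilde\BG_m,Z(G))$; together these force $\check\lambda\in\Hom(\widetilde\BG_m,Z(G))$, contradicting (b). The point that needs the most care is property (a): one must ensure $\check\lambda$ factors through the center of the possibly disconnected group $M_\mu$ rather than merely through $Z(M_\mu^\circ)$, which is exactly why the sum is taken over a complete $\pi_0(G)$-orbit and why the standing hypothesis that $[\mu]$ is $\pi_0(G)$-invariant enters essentially.
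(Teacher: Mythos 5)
Your proposal is correct, and its skeleton is the same as the paper's: use Lemma~\ref{l:H and Mmu} together with the hypothesis to get $\Hom (\widetilde\BG_m ,Z(M_\mu ))=\Hom (\widetilde\BG_m ,Z(G))$, then contradict this by exhibiting a rational cocharacter of $Z(M_\mu )$ that is not central in $G$. The difference is only in the witness. The paper takes $\bar\mu$, the image of $\mu$ under $\Hom (\widetilde\BG_m ,M_\mu^\circ )\to\Hom (\widetilde\BG_m ,M_\mu^\circ/[M_\mu^\circ ,M_\mu^\circ ])\simeq\Hom (\widetilde\BG_m ,Z(M_\mu^\circ ))$ (equivalently, the average of $\mu$ over the Weyl group of $M_\mu$): being canonical in $\mu$, it is fixed by every automorphism preserving $\mu$, so $\bar\mu\in\Hom (\widetilde\BG_m ,Z(M_\mu ))$ follows at once from \eqref{e:assumption}, while non-centrality uses that $\bar\mu$ is strictly dominant with respect to $P_\mu^+$ when $M_\mu\ne G$. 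Your witness $\check\lambda=\sum_{j\in J}\check\omega_j$ makes non-centrality immediate, but the step you flag is stated a bit too quickly: invariance of $\check\lambda$ under the abstract action of $\pi_0(G)$ on rational coweights does not by itself imply that group elements representing the components of $M_\mu$ centralize $\check\lambda(\widetilde\BG_m )$. The supplement is short: by \eqref{e:assumption} each component of $M_\mu$ has a representative $g\in Z_G(\mu )$; $\Ad_g$ carries the $\mu$-adapted pair $(T,\{\alpha_i\})$ to another $\mu$-adapted pair, which some $h\in Z_{G^\circ}(\mu )\subset M_\mu^\circ$ carries back; since $\Ad_{hg}$ preserves the pair and fixes $\mu$, it acts by the abstract $\pi_0(G)$-action and hence fixes $\check\lambda$, and since $\check\lambda(\widetilde\BG_m)\subset Z(M_\mu^\circ )$ (your step (a)) is fixed pointwise by $\Ad_{h^{-1}}$, one gets $\Ad_g\circ\check\lambda=\Ad_{h^{-1}}\circ\check\lambda=\check\lambda$. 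With this supplement your argument is complete; the trade-off versus the paper's $\bar\mu$ is simply which half of the contradiction (centrality in $M_\mu$ versus non-centrality in $G$) is made trivial by the choice of witness.
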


\begin{proof}
By \eqref{e:the 2 centers}, the inclusion
$\Hom (\BG_m ,Z(G))\subset\Hom (\BG_m ,Z(M_\mu ))$ is an equality. Therefore 
\begin{equation}   \label{e:leading to contradiction}
\Hom (\widetilde\BG_m  ,Z(G))=\Hom (\widetilde\BG_m ,Z(M_\mu )).
\end{equation}

We have to prove that $M_\mu=G$. Assume the contrary.
It is clear that $\mu  (\widetilde\BG_m )\subset M_\mu^\circ$, where $M_\mu^\circ:=M_\mu\cap G^\circ$.
Let $\bar\mu\in\Hom (\widetilde\BG_m ,Z(M_\mu^\circ ))$ denote the image of $\mu\in \Hom (\widetilde\BG_m ,M_\mu^\circ )$ under the composition
\[
\Hom (\widetilde\BG_m ,M_\mu^\circ )\to\Hom (\widetilde\BG_m ,M_\mu^\circ /[M_\mu^\circ ,M_\mu^\circ ])\iso\Hom (\widetilde\BG_m ,Z(M_\mu^\circ )).
\]
To get a contradiction, we will show that $\bar\mu$ belongs to the r.h.s. of \eqref{e:leading to contradiction} but not to the l.h.s.

It is clear that $\bar\mu$ is invariant under all automorphisms of $G$ preserving $\mu$, so $Z_G(\bar\mu)\supset Z_G(\mu)$. 
Since $\bar\mu\in\Hom (\widetilde\BG_m ,Z(M_\mu^\circ ))$ we also have $Z_G(\bar\mu)\supset M_\mu^\circ$. So by \eqref{e:assumption}, we get $Z_G(\bar\mu)\supset M_\mu$. Therefore $\bar\mu\in\Hom (\widetilde\BG_m ,Z(M_\mu ))$.

The coweight $\mu$ is strictly dominant with respect to $P_\mu^+$ (i.e., $(\alpha ,\bar\mu)>0$ for all roots $\alpha$ such that $\fg_\alpha$ is contained in the Lie algebra of the unipotent radical of $P_\mu^+$). So $\bar\mu$ is strictly dominant with respect to~$P_\mu^+$ (indeed, one can get $\bar\mu$ from $\mu$ by averaging with respect to the action of the Weyl group of $M_\mu$). Since
we assumed that $M_\mu\ne G$, we get $\bar\mu (\widetilde\BG_m)\not\subset Z(G)$.
\end{proof}

\section{On Newton coweights for homomorphisms $\pi_1 (X)\to G(\BQbar_\ell)$, where $G$ is reductive}    
\label{s:l-adic for G}
The goal of this section is to generalize Theorem~\ref{t:l-adic} by replacing $GL(n)$ with an arbitrary reductive group.

We fix a universal cover $\tX\to X$ (this is a scheme which usually has infinite type over $\BF_p$). Set $\Pi:=\Aut (\tX /X)$. 

If one also chooses a geometric point $\xi$ of $X$ and a lift of $\xi$ to $\tX $ then $\Pi$ identifies with $\pi_1 (X,\xi )$.

\subsection{Frobenius elements}

Let $|\tX|$ denote the set of closed points of $\tX$. For each $\tilde x\in |\tX|$ we have the geometric Frobenius $\Fr_{\tilde x}\in\Pi$: this is
the unique automorphism of $\tX$ over $X$ whose restriction to $\{\tilde x\}$ equals the composition $\{\tilde x\}\overset{\varphi}\longrightarrow\{\tilde x\}\mono\tX$, where $\varphi :\{\tilde x\}\to\{\tilde x\}$ is the Frobenius morphism with respect to $x$ (this means that for any regular function $f$ on $\{\tilde x\}$ one has $\varphi*(f)=f^{q_x}$, where $q_x$ is the order of the residue field of $x$). 

The map $|\tX|\to\Pi$ defined by $\tilde x\mapsto \Fr_{\tilde x}$ is $\Pi$-equivariant (we assume that $\Pi$ acts on itself by conjugation). So for $x\in X$ one has the geometric Frobenius $\Fr_x\in\Pi$, which is defined only up to conjugacy.

\subsection{A class of homomorphisms $\Pi\to G(\BQbar_\ell)$}
Let $G$ be an algebraic group over $\BQbar_\ell$.

\begin{rem}   \label{r:algebraic elements}
Let $g\in G(\BQbar_\ell )$. Let $g_{\s}\in G (\BQbar_\ell )$ be the semisimple part of the Jordan decomposition of~$g$, and let 
$\langle g_{\s}\rangle$ be the smallest algebraic subgroup of $G$ containing $g_{\s}$. Then the following conditions are equivalent:

(i) the eigenvalues of the image of $g$ in any representation of $G$ are in $\BQbar$;

(ii) $\chi (g_{\s})\in\BQbar$ for all $\chi\in\Hom (\langle g_{\s}\rangle ,\BG_m)$. 
\end{rem}

\subsubsection{``Algebraic" homomorphisms $\Pi\to G(\BQbar_\ell)$}  \label{sss:Algebraic}
Let $\sigma :\Pi\to G(\BQbar_\ell )$ be a continuous homomorphism.\footnote{It is well known that the image of any continuous homomorphism $\Pi\to G( \BQbar_\ell )$ is defined over some \emph{finite} extension of $\BQ_\ell$ (e.g., see \cite[Prop.~3.2.2]{Dr2}).} Similarly to 
\S\ref{sss:algebraicity}, we say that $\sigma$ is \emph{``algebraic"} if for any $x\in |X|$ the element $\sigma (\Fr_x)\in G(\BQbar_\ell)$ (which is well defined up to conjugacy) satisfies the equivalent conditions of Remark~\ref{r:algebraic elements}.

\subsection{The slope homomorphisms}
Let $\BQbar$ be the algebraic closure of $\BQ$ in $\BQbar_\ell$. Fix a valuation $v:\BQbar^\times\to\BQ$ such that $v(p)=1$. Let $\widetilde\BG_m$ be the pro-torus over $\BQbar_\ell$ with group of characters $\BQ$ (see \S\ref{sss:tilde G_m}). 

\subsubsection{The homomorphism $\mu_{g,w}$}   \label{sss:mu_h}
If $g\in G(\BQbar_\ell )$ satisfies the equivalent conditions of Remark~\ref{r:algebraic elements} then each homomorphism 
$w:\BQbar^\times\to\BQ$ defines a homomorphism
\[
\Hom (\langle g_{\s}\rangle ,\BG_m)\to\BQ ,\quad\chi\mapsto w(\chi (g_{\s})),
\]
which is the same as a homomorphism $\mu_{g,w}:\widetilde\BG_m \to\langle g_{\s}\rangle\subset G$.

\subsubsection{The slope homomorphisms}   \label{sss:slope hom}
Let $\sigma :\Pi\to G(\BQbar_\ell )$ be a continuous homomorphism, which is ``algebraic" in the sense of \S\ref{sss:Algebraic}. Let $\tilde x\in |\tX |$, and let $x\in |X|$ be its image. Applying \S\ref{sss:mu_h} to $g=\sigma (\Fr_{\tilde x} )$ and  $w=v/(\deg x )$, one gets a homomorphism 
$\mu_{\tilde x}:\widetilde\BG_m \to G$. We call it the \emph{slope homomorphism} corresponding to $\tilde x$. (This agrees with the terminology of \cite[Thm.~1.8]{RR}.)

\subsection{The Newton coweights}    \label{ss:Newton coweights}
From now on we assume that $G$ is reductive.
Let $G^\circ $ be the neutral connected component of $G$. 
Let $\sigma :\Pi\to G(\BQbar_\ell )$ be a continuous homomorphism, which is ``algebraic" in the sense of \S\ref{sss:Algebraic}.

For each $\tilde x\in |\tX |$ one has the element
$\mu_{\tilde x}\in\Hom (\widetilde\BG_m ,G)=\Hom (\widetilde\BG_m ,G^\circ )$ defined in \S\ref{sss:slope hom}. Recall that the set of conjugacy classes of homomorphisms $\widetilde\BG_m\to G^\circ $ identifies with the set of dominant rational coweights of $G^\circ $, denoted by 
$\check\Lambda_{G^\circ} ^{+,\BQ}$. The class of $\mu_{\tilde x}$ in $\check\Lambda_{G^\circ} ^{+,\BQ}$ is denoted by $a^{\tilde x}(\sigma )$ and called the \emph{Newton coweight} of $\sigma$ at $\tilde x$.

The map $|\tX |\to\check\Lambda_{G^\circ} ^{+,\BQ}$  defined by $\tilde x\mapsto a^{\tilde x}(\sigma )$ is $\Pi$-equivariant, where $\Pi$ acts on 
$\check\Lambda_{G^\circ} ^{+,\BQ}$ via the composition $\Pi\to G\to G/{G^\circ} \to\Aut (\check\Lambda_{G^\circ} ^{+,\BQ})$. In particular, if $G={G^\circ} $ then $a^{\tilde x}(\sigma )$ depends only on the image of $\tilde x$ in~$|X|$, so we can write $a^x(\sigma )$ for $x\in |X|$.
In the case $G=GL(n)$  a homomorphism $\sigma :\Pi\to GL(n,\BQbar_\ell )$ defines a lisse $\BQbar_\ell$-sheaf~$\E$, and $a^x(\sigma )$ is just the collection of the slopes $a_i^x (\E )$ from \S\ref{sss:l-adic slopes}.

\subsection{A generalization of Theorem~\ref{t:l-adic}(i-ii)}
For any reductive group $G$, we equip $\check\Lambda_{G^\circ}^{+,\BQ}$ with the following partial order: $\check\lambda_1\le\check\lambda_2$ if 
$\check\lambda_2-\check\lambda_1$ is a linear combination of simple coroots with non-negative rational coefficients.  Equivalently, this means that $(\omega,\check\lambda_1)\le (\omega,\check\lambda_2)$ for all dominant weights of 
$G^\circ$.

\begin{prop}  \label{p:semicontinuity}
Let $G$ and $\sigma :\Pi\to G(\BQbar_\ell )$ be as in \S\ref{ss:Newton coweights}. In addition, assume that the composition 
$$\Pi\overset{\sigma}\longrightarrow G\to G/{G^\circ} $$
is surjective.

(i) There exists a unique element $a^\eta (\sigma )\in(\check \Lambda_{G^\circ} ^{+,\BQ})^{G/{G^\circ} }$ with the following property: let $U$ denote the set of all $x\in |X|$ such that $a^{\tilde x} (\sigma )= a^\eta (\sigma )$ for all $\tilde x\in |\tX |$ mapping to $x$, then $U$ is non-empty, and for any curve $C\subset X$ the subset $U\cap |C|$ is open in $|C|$.

(ii)  For all  $\tilde x\in |\tX |$ one has $a^{\tilde x}(\sigma )\le a^\eta (\sigma )$.
\end{prop}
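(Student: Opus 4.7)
My approach is to deduce Proposition~\ref{p:semicontinuity} from Theorem~\ref{t:l-adic}(i)--(ii) applied to the algebraic lisse $\BQbar_\ell$-sheaves $\E_\rho$ obtained from representations $\rho\colon G\to GL(V)$ via $\rho\circ\sigma$. The key translation, read off from \S\ref{ss:Newton coweights}, is that the multiset of slopes of $\E_\rho$ at $x\in|X|$ equals the multiset of weights of $V$ as a $\widetilde\BG_m$-module via $\rho\circ\mu_{\tilde x}$, for any $\tilde x\in|\tX|$ over $x$; in particular, dominance of $\mu_{\tilde x}$ implies that the top slope of $\E_\rho$ at $x$ equals $\max_{\omega'}(\omega',a^{\tilde x}(\sigma))$, where $\omega'$ ranges over the highest weights of the $G^\circ$-irreducible constituents of $\rho|_{G^\circ}$.

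\textbf{Reduction to the connected case.} Let $\pi\colon X'\to X$ be the finite etale Galois cover corresponding to $\sigma^{-1}(G^\circ)\subset\Pi$; its Galois group is $\Gamma=G/G^\circ$ by the surjectivity hypothesis, and $\sigma$ restricts to a continuous algebraic $\sigma'\colon\pi_1(X')\to G^\circ$. Comparing slope homomorphisms via the standard relation between $\Fr_{\tilde x}$ and the Frobenius $\Fr_{x'}\in\pi_1(X')$ (where $x'$ is the image of $\tilde x$ in $X'$) gives $a^{\tilde x}(\sigma)=a^{x'}(\sigma')$. Assuming the proposition for the connected group $G^\circ$ on $X'$, I obtain $a^\eta(\sigma')\in\check\Lambda_{G^\circ}^{+,\BQ}$ and a non-empty $\cT$-open $U'\subset|X'|$; the equivariance $a^{\gamma\cdot x'}(\sigma')=\gamma\cdot a^{x'}(\sigma')$ applied to any $x'\in U'\cap\gamma U'$ (non-empty by $\cT$-irreducibility, Lemma~\ref{l:2irreducibility}) forces $a^\eta(\sigma')\in(\check\Lambda_{G^\circ}^{+,\BQ})^\Gamma$, and then $U'$ is $\Gamma$-invariant. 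Set $a^\eta(\sigma):=a^\eta(\sigma')$ and $U:=\{x\in|X|:\pi^{-1}(x)\subset U'\}$; non-emptiness and $\cT$-openness of $U$ follow from those of $U'$ together with its $\Gamma$-invariance and the finiteness of $\pi$.

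\textbf{Connected case.} Assume $G=G^\circ$, so $a^{\tilde x}(\sigma)$ depends only on $x\in|X|$, denoted $a^x(\sigma)$. Fix a faithful representation $\rho_0\colon G\hookrightarrow GL(V_0)$ and apply Theorem~\ref{t:l-adic} to $\E_{\rho_0}$, producing a non-empty $\cT$-open $U_0\subset|X|$ on which the generic Newton polygon is attained. Faithfulness of $\rho_0$ makes the induced map of $\BQ$-cocharacter lattices injective, hence the map $\check\Lambda_G^{+,\BQ}\to\check\Lambda_{GL(V_0)}^{+,\BQ}$ finite-to-one, so the set $S$ of coweights yielding the generic multiset of weights on $V_0$ is finite. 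Any two distinct elements of $S$ are separated by pairing with some dominant weight $\omega$ of $G$, hence by the weight multiset of the irreducible $V_\omega$; intersecting $U_0$ with finitely many such $U_\rho$ (non-empty and $\cT$-open by Lemma~\ref{l:2irreducibility}) yields a $\cT$-open $U$ on which $a^x(\sigma)$ is a single element $a^\eta(\sigma)\in S$. For (ii), given a dominant $\omega$, Theorem~\ref{t:l-adic}(ii) applied to $\E_{V_\omega}$ gives $(\omega,a^x(\sigma))\le(\omega,a^\eta(\sigma))$; ranging over $\omega$ and using the partial order characterization of \S\ref{ss:Newton coweights} yields $a^x(\sigma)\le a^\eta(\sigma)$. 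Part (ii) in the general case then follows by pulling back via $\pi$.

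\textbf{Main obstacle.} The subtlest step is the $\Gamma$-invariance of the generic coweight $a^\eta(\sigma')$: since $\gamma$ is a scheme automorphism of $X'$, the translate $\gamma U'\subset|X'|$ is still $\cT$-open, so $U'\cap\gamma U'$ is non-empty by Lemma~\ref{l:2irreducibility}, and picking $x'$ in this intersection and combining $a^{x'}(\sigma')=a^\eta(\sigma')$ with the equivariance relation yields $\gamma\cdot a^\eta(\sigma')=a^\eta(\sigma')$. The other delicate ingredient is the finiteness of the fiber $S$ in the connected case, which rests on the injectivity of the $\BQ$-cocharacter-lattice map induced by a faithful representation; this is what makes the reduction to finitely many representations feasible. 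Uniqueness in (i) is immediate from Lemma~\ref{l:2irreducibility}.
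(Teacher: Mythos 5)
Your proposal is correct and follows essentially the same route as the paper: reduce to the connected case via the cover attached to $\sigma^{-1}(G^\circ)$ (your $\Gamma$-invariance argument via equivariance and Lemma~\ref{l:2irreducibility} just unpacks the paper's appeal to uniqueness), then pin down $a^\eta(\sigma)$ by applying Theorem~\ref{t:l-adic} to the sheaves attached to representations, using the identity $(\omega,a^x(\sigma))=a_1^x(\E_\omega)$. The only cosmetic difference is in the connected case: the paper fixes dominant weights $\omega_1,\dots,\omega_n$ that generate the dominant cone over $\BQ_{\ge 0}$ and takes $U=\bigcap_i U_i$ directly, whereas you use a faithful representation to confine $a^x(\sigma)$ to a finite set $S$ and then finitely many irreducibles separating $S$; both work, though you should note (as is implicit in your construction) that your intersection coincides with the full level set $U$ of the proposition, which is what the openness claim in (i) is about.
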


\begin{proof}
Let $\Pi_1:=\sigma^{-1}({G^\circ} )\subset\Pi$, and let $\sigma_1:\Pi_1\to {G^\circ} $ be the restricttion of $\sigma$. Let $X_1:=\tX/\Pi_1$ and $\eta_1\in X_1$ the generic point. We claim that it suffices to prove the proposition for $X_1, \Pi_1, \sigma_1$ instead of $X, \Pi , \sigma$. To show this, one only has to check that the rational coweight $a^{\eta_1} (\sigma_1 )$ provided by statement~(i) is $(G/{G^\circ} )$-invariant. Indeed, uniqueness in statement (i) implies 
$(\Pi/\Pi_1)$-invariance of $a^{\eta_1} (\sigma_1 )$, and $\Pi/\Pi_1=G/{G^\circ} $ by the surjectivity assumption.

So it suffices to prove the proposition in the case $G={G^\circ} $.

Uniqueness in (i) follows from Lemma~\ref{l:2irreducibility}. 
To construct $a^\eta (\sigma )$, we need the following observation. 

For each dominant weight $\omega$ of $G$, let $\rho_\omega$ be an irreducible representation of $G$ with highest weight $\omega$, and let $\E_\omega$ be the lisse $\BQbar_\ell$-sheaf on $X$ corresponding to $\rho_\omega\circ\sigma$. Then
\begin{equation}   \label{e:scalar product omega}
(\omega , a^x(\sigma ))=a_1^x(\E_\omega),
\end{equation}
where $a_1^x(\E_\omega)$ is the maximal slope of $\E_\omega$ at $x$.

Choose dominant weights  $\omega_1,\ldots ,\omega_n$ of $G$ so that each dominant weight of $G$ can be written as a linear combination of $\omega_i$'s with non-negative rational coefficients.
Let $U_i$ denote the set of all $x\in |X|$ such that $a_1^x(\E_{\omega_i} )=a_1^\eta (\E_{\omega_i} )$, where $a_1^\eta (\E_{\omega_i} )$ is as in Theorem~\ref{t:l-adic}. By Lemma~\ref{l:2irreducibility}, $\bigcap\limits_i U_i\ne\emptyset$. By \eqref{e:scalar product omega}, the numbers
\[
(\omega_i , a^x(\sigma )), \quad x\in \bigcap\limits_i U_i ,
\]
do not depend on $x$. So for $x\in\bigcap\limits_i U_i$ the coweight $a^x(\sigma )$ does not depend on $x$. Take $a^\eta (\sigma )$ to be this coweight. Property (i) is clear (note that $U=\bigcap\limits_i U_i )$. By \eqref{e:scalar product omega}, we have 
$(\omega_i, a^x(\sigma ))\le (\omega_i, a^\eta (\sigma ))$ for all $x\in |X|$ and all $i$. This is equivalent to (ii).
\end{proof}

\subsection{A generalization of Theorem~\ref{t:l-adic}(iii)}
\subsubsection{The group $Z_G(\sigma )$} \label{sss:centralizer}
Let $G$ and $\sigma :\Pi\to G(\BQbar_\ell )$ be as in Proposition~\ref{p:semicontinuity}. Let $Z_G(\sigma )\subset G$ denote the centralizer of 
$\sigma$ in $G$.

One can think of $Z_G(\sigma )$ as follows. The homomorphism $\sigma$ defines a $G$-local system $\E$ on $X$ (i.e., a $G$-torsor in the Tannakian category of lisse $\BQbar_\ell$-sheaves on $X$, see \S\ref{sss:torsors in Tannakian}). Then $Z_G(\sigma )=\Aut\E$.

\subsubsection{Formulation of the theorem} 
The center of any group $H$ will be denoted by $Z(H)$. It is clear that  $Z(G)\subset Z(Z_{G}(\sigma ))$, so
\begin{equation}   \label{e:theinclusion}
\Hom (\BG_m ,Z(G))\subset \Hom (\BG_m, Z(Z_G(\sigma ))).
\end{equation}

\begin{theorem}  \label{t:2l-adic}
Suppose that in the situation of Proposition~\ref{p:semicontinuity} the inclusion \eqref{e:theinclusion} is an equality. Then $a^\eta (\sigma )$ has small gaps  in the sense of \S\ref{sss:mugaps}.
\end{theorem}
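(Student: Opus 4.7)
The plan is to apply Lemma~\ref{l:key} with $\mu := \mu_{\tilde x}$ for a well-chosen $\tilde x \in |\tilde X|$ and with $H \subset G$ chosen to be the Zariski closure of $\sigma(\Pi)$ inside $G$.

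Concretely, pick any $x$ in the $\cT$-open set $U$ provided by Proposition~\ref{p:semicontinuity}(i) together with any lift $\tilde x \in |\tilde X|$ of $x$; set $\mu := \mu_{\tilde x}\colon \widetilde\BG_m \to G$. By Proposition~\ref{p:semicontinuity}(i) the conjugacy class $[\mu] = a^{\tilde x}(\sigma) = a^\eta(\sigma)$ is $(G/G^\circ)$-invariant, so the assumption \eqref{e:assumption} of \S\ref{sss:Pmu} is satisfied. Let $H$ be the Zariski closure of $\sigma(\Pi)$ in $G$; this is a closed algebraic subgroup of $G$ in which $\sigma(\Pi)$ is dense. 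Since $H$ is algebraic and $\sigma(\Fr_{\tilde x}) \in H$, the Jordan decomposition forces $\sigma(\Fr_{\tilde x})_{\s} \in H$ and hence $\mu(\widetilde\BG_m) \subset \langle \sigma(\Fr_{\tilde x})_{\s}\rangle \subset H$, verifying hypothesis~(i) of Lemma~\ref{l:H and Mmu}. Zariski density of $\sigma(\Pi)$ in $H$ gives $Z_G(H) = Z_G(\sigma)$, so the centralizer hypothesis of Lemma~\ref{l:key} coincides with the hypothesis of the theorem.

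The main step — and the principal obstacle — is to verify hypothesis~(ii) of Lemma~\ref{l:H and Mmu}: every indecomposable algebraic $H$-module $V$ has small gaps as a $\widetilde\BG_m$-module via $\mu$. Such a $V$ yields, by composition with $\sigma$, a lisse $\BQbar_\ell$-sheaf $\E_V$ on $X$; Zariski density identifies $\End_H V$ with the endomorphism algebra of $\E_V$ as a $\Pi$-module, so $V$ is indecomposable if and only if $\E_V$ is. By construction, the $\widetilde\BG_m$-weights of $V$ via $\mu$ are the $v_x$-valuations of the Frobenius eigenvalues of $\E_V$ at $x$, i.e., the slope multiset $\{a_i^x(\E_V)\}$. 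This multiset depends only on the conjugacy class $[\mu_{\tilde x}] = a^{\tilde x}(\sigma)$, so the equality $a^{\tilde x}(\sigma) = a^\eta(\sigma)$ on $U$, combined with Theorem~\ref{t:l-adic}(i) and Lemma~\ref{l:2irreducibility}, forces $\{a_i^x(\E_V)\} = \{a_i^\eta(\E_V)\}$. Theorem~\ref{t:l-adic}(iii) applied to the indecomposable $\E_V$ then gives $a_i^\eta(\E_V) - a_{i+1}^\eta(\E_V) \le 1$, which is exactly the small-gaps condition for $V$.

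With all hypotheses of Lemma~\ref{l:key} verified, one concludes that $\mu = \mu_{\tilde x}$ has small gaps, and hence so does $a^\eta(\sigma) = [\mu]$. The only delicate point is the step that matches sheaf-theoretic genericity of slopes with the coweight-level genericity of $a^\eta(\sigma)$ on $U$: for each individual $\E_V$ the generic-slope locus and the set $U$ are both $\cT$-open and non-empty, hence meet by Lemma~\ref{l:2irreducibility}; on the intersection the slope multiset of $\E_V$ equals both its generic value and the value determined by $a^\eta(\sigma)$, and this common value then propagates back over all of~$U$.
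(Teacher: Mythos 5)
Your overall architecture coincides with the paper's proof: take $H$ to be the Zariski closure of $\sigma(\Pi)$, take $\mu=\mu_{\tilde x}$ for $\tilde x$ lying over a point of the set $U$ of Proposition~\ref{p:semicontinuity}, and feed these into Lemma~\ref{l:key}. Your verifications that $[\mu]=a^\eta(\sigma)$ is $(G/G^\circ)$-invariant, that $\mu(\widetilde\BG_m)\subset H$, and that $Z_G(H)=Z_G(\sigma)$ are correct. The gap is in your verification of hypothesis (ii) of Lemma~\ref{l:H and Mmu}. You assert that the slope multiset $\{a_i^x(\E_V)\}$ of the sheaf attached to an $H$-module $V$ ``depends only on the conjugacy class $[\mu_{\tilde x}]=a^{\tilde x}(\sigma)$''. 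That is false for a general $H$-module: $a^{\tilde x}(\sigma)$ records only the $G^\circ$-conjugacy class of $\mu_{\tilde x}$, and two $G^\circ$-conjugate cocharacters landing in $H$ need not be conjugate by anything normalizing $H$, so they can act with different weight multisets on an $H$-module that does not extend to $G$ (already for $G=GL_2$ and $H$ the diagonal torus, $\mathrm{diag}(t^a,t^b)$ and $\mathrm{diag}(t^b,t^a)$ are $G$-conjugate but act by different weights on a character of $H$). Consequently the constancy of $\{a_i^x(\E_V)\}$ on $U$, and with it the ``propagation back over all of $U$'' in your last paragraph, is not established; without it, Theorem~\ref{t:l-adic}(iii) controls only the \emph{generic} slopes of $\E_V$, not the weights of $V$ under the particular $\mu_{\tilde x}$ you fixed, and Appendix~\ref{s:Dwork} shows that gaps at special points can genuinely exceed $1$.

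The statement you need is nevertheless true, and proving it is exactly the point the paper leaves terse. One way: every $H$-module $V$ is a subquotient of $W|_H$ for some $G$-module $W$. Since $W$ is a $G$-module, its slope multiset at $x$ \emph{is} determined by $a^{\tilde x}(\sigma)$, hence is constant on $U$ and equal to the generic slopes of $\E_W$ (intersect $U$ with the generic-slope locus of $\E_W$, nonempty by Lemma~\ref{l:2irreducibility}). The slope multisets at $x$ of the graded pieces of the corresponding filtration of $\E_W$ partition the slope multiset of $\E_W$ at $x$, and likewise generically; combining this with Theorem~\ref{t:l-adic}(i)--(ii) for each graded piece, an elementary dominance argument (compare multiplicities of the largest slope and induct) forces each piece, in particular $\E_V$, to have its generic slopes at every $x\in U$, after which Theorem~\ref{t:l-adic}(iii) gives the small-gaps property for $V$ via $\mu_{\tilde x}$. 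Alternatively, you can avoid quantifying over all $H$-modules: unwinding the proof of Lemma~\ref{l:H and Mmu} through Lemma~\ref{l:Tannakian}, only the indecomposable $H$-constituents of the two representations $\rho_\pm$ occurring there are needed, and one may simply choose $x$ in the nonempty (again by Lemma~\ref{l:2irreducibility}) intersection of $U$ with their finitely many generic-slope loci. With either repair, the rest of your argument goes through and reproduces the paper's proof.
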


The proof will be given in \S\ref{sss:proof2l-adic}. Similarly to Corollary~\ref{c:any x} and Remark~\ref{r:weak estimate}, one has the following 

\begin{cor} \label{c:2any x}
In the situation of Theorem~\ref{t:2l-adic} suppose that $G$ is semisimple. Then $a^x (\sigma )\le\check\rho$ for all $x\in |X|$. (As usual, 
$\check\rho\in\check\Lambda^{+,\BQ}_{G^\circ}$ is one half of the sum of the positive coroots of $G$.)  
\end{cor}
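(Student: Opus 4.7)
The proof should be a short deduction combining Theorem~\ref{t:2l-adic} with Proposition~\ref{p:semicontinuity}(ii), completely parallel in spirit to the argument from Theorem~\ref{t:main} + Corollary~\ref{c:any x} in the $GL(n)$ case. The plan is as follows.

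First I would unpack what Theorem~\ref{t:2l-adic} gives in the semisimple setting: $a^\eta(\sigma)$ has small gaps, which by definition means $\check\rho - a^\eta(\sigma) \in \check\Lambda^{+,\BQ}_{G^\circ}$, i.e., $\check\rho - a^\eta(\sigma)$ is a dominant rational coweight of $G^\circ$. Next, I would invoke the standard fact that when $G^\circ$ is semisimple, every dominant rational coweight lies in the positive cone (the $\BQ_{\ge 0}$-span of simple coroots). This is equivalent to the non-negativity of the entries of the inverse Cartan matrix of $G^\circ$, a classical computation on root systems. Consequently $\check\rho - a^\eta(\sigma) \ge 0$ in the partial order of \S\ref{ss:Newton coweights}.

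Then I would apply Proposition~\ref{p:semicontinuity}(ii), which states that $a^x(\sigma) \le a^\eta(\sigma)$ for every $x \in |X|$ (more precisely, for every $\tilde x \in |\tX|$, but the ordering class in $\check\Lambda^{+,\BQ}_{G^\circ}/(G/G^\circ)$ is all that matters for the inequality $a^x(\sigma) \le \check\rho$, since both $\check\rho$ and the partial order are $(G/G^\circ)$-invariant). Adding,
\[
\check\rho - a^x(\sigma) \;=\; \bigl(\check\rho - a^\eta(\sigma)\bigr) + \bigl(a^\eta(\sigma) - a^x(\sigma)\bigr),
\]
both terms on the right lying in the positive cone, so the left-hand side does too. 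This gives $a^x(\sigma) \le \check\rho$, as required.

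There is no real obstacle here; the only point to watch is the reduction to a statement purely about $G^\circ$ and the observation that $\check\rho$ is fixed by $G/G^\circ$, so the comparison makes sense at every $\tilde x \in |\tX|$ regardless of which preimage of $x$ one chooses. The semisimplicity hypothesis is essential: without it, a dominant coweight can have a nontrivial central component that is not a non-negative combination of simple coroots, and the argument breaks down exactly as in the $GL(n)$ case, where one has to subtract $A(M)$ before comparing to $\check\rho$.
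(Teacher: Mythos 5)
Your proposal is correct and is essentially the paper's own proof: small gaps means $\check\rho - a^\eta(\sigma)$ is dominant, semisimplicity of $G$ turns dominance into $\ge 0$ in the coroot order, and Proposition~\ref{p:semicontinuity}(ii) then gives $a^x(\sigma)\le a^\eta(\sigma)\le\check\rho$. The extra remarks on $(G/G^\circ)$-invariance are fine but not needed beyond the paper's terse three-line argument.
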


\begin{proof} 
By Theorem~\ref{t:2l-adic}, $\check\rho -a^\eta (\sigma )$ is dominant. Since $G$ is semisimple, this implies that $\check\rho -a^\eta (\sigma )\ge 0$.
It remains to use Proposition~\ref{p:semicontinuity}(ii).
\end{proof}

\begin{rem}
If $G$ has type $A_n$, Corollary~\ref{c:2any x} is due to V.~Lafforgue (see Remark~\ref{r:weak estimate}). 
If $G$ has type $B_n$ or $C_n$, Corollary~\ref{c:2any x} easily follows from V.Lafforgue's result. This does not seem to be the case for other groups $G$.
\end{rem}

\subsubsection{The case $G=GL(n)$} 
In this case Theorem~\ref{t:2l-adic} is equivalent to Theorem~\ref{t:l-adic}(iii). To deduce it from Theorem~\ref{t:l-adic}(iii), assume that 
$a_i^\eta (\sigma )-a_{i+1}^\eta (\sigma )>1$ for some $i$. Then Theorem~\ref{t:l-adic}(iii) implies that the representation $\sigma$ admits a decomposition $\sigma=\sigma_1\oplus\sigma_2$ such that the slopes of $\sigma_1$ (resp.~$\sigma_2$) at $\eta$ are the numbers $a_j^\eta (\sigma )$ for $j\le i$ (resp. $j>i$). This decomposition is unique, so it is preserved by automorphisms of~$\sigma$. So it yields a homomorphism 
$\BG_m\to Z(\Aut\sigma )=Z(Z_{GL(n)}(\sigma ))$ whose image is not contained in $Z(GL(n))$.

\subsubsection{Proof of Theorem~\ref{t:2l-adic}} \label{sss:proof2l-adic}
 Let $H\subset G$ be the Zariski closure of~$\sigma (\Pi )$. Let $U$ be as in Proposition~\ref{p:semicontinuity}. Fix $\tilde x\in |\tX |\times_{|X|}U$ and set $\mu:=\mu_{\tilde x}$, where $\mu_{\tilde x}:\widetilde\BG_m \to G$ is the slope homomorphism (see \S\ref{sss:slope hom}). Let us check that $\mu$ and $H$ satisfy the conditions of Lemma~\ref{l:key}.
 
 Let $[\mu ]\in\check\Lambda_{G^\circ} ^{+,\BQ}$ be the class of $\mu$. Since $\tilde x\in U$ we have
 \begin{equation}   \label{e:class of mu generic}
 [\mu ]=a^{\tilde x} (\sigma )=a^\eta (\sigma ).
 \end{equation}
By Proposition~\ref{p:semicontinuity}, this  implies that $[\mu ]$ is $(G/G^\circ)$-invariant, as required in \S\ref{sss:Pmu}.

Clearly $H\supset\mu (\widetilde\BG_m )$. We have $Z_G(\sigma )=Z_G(H)$, so \eqref{e:theinclusion} means that
\[
\Hom (\BG_m ,Z(G))=\Hom (\BG_m, Z(Z_G(H)).
\]
Finally, by Theorem~\ref{t:l-adic}(iii), every indecomposable $H$-module has small gaps as a $\widetilde\BG_m$-module 
(if the $\widetilde\BG_m$-action is defined via $\mu$).

Applying Lemma~\ref{l:key}, we see that $\mu$ has small gaps. By \eqref{e:class of mu generic}, this means that $a^\eta (\sigma )$ has small gaps.
 \qed

\subsection{A corollary related to elliptic Arthur parameters} \label{ss:Arthur}

\subsubsection{A class of homomorphisms $\Pi\times SL(2,\BQbar_\ell)\to G(\BQbar_\ell )$}   \label{sss:A class of}
We will be considering homomorphisms 
$\psi :\Pi\times SL(2,\BQbar_\ell)\to G(\BQbar_\ell )$ satisfying the following conditions:

(i) the restriction of $\psi$ to $\Pi$ is continuous and ``algebraic" in the sense of \S\ref{sss:Algebraic};

(ii) the restriction of $\psi$ to $SL(2,\BQbar_\ell)$ is a homomorphism of algebraic groups;

(iii) the map $\Pi\to G/G^\circ$ induced by $\psi$ is surjective;

(iv) the centralizer of $\psi$ in $G$ is finite modulo the center of $G$.

\subsubsection{Relation to Arthur parameters}   \label{sss:Arthur parameters}
Suppose that $\dim X=1$ and that $G$ is equipped with a splitting $G/{G^\circ}\to G$.  According to 
\cite[\S 12.2.2]{VLa2}, a homomorphism $\psi$ satisfying (i)-(iii) is called an \emph{Arthur parameter} if the restriction of $\psi$ to $\Pi$ becomes pure of weight $0$ when composed with any representation of $G$.  An Arthur parameter satisfying (iv) is said to be \emph{elliptic} (or \emph{discrete}, see the Appendix of \cite{BC}). 

\subsubsection{The homomorphism $\phi_\psi :\Pi\to G(\BQbar_\ell )$}  \label{sss:Langlands}
Let $\tau :\Pi\to\BQ_{\ell}^\times$ be the cyclotomic character. Fix a square root $\tau^{1/2}:\Pi\to\BQbar_{\ell}^\times$ whose restriction to the geometric part of $\Pi$ is trivial. Given $\psi$ as in \S\ref{sss:A class of}, define a homomorphism $\phi_\psi :\Pi\to G(\BQbar_\ell )$ by
\[
\phi_{\psi}(\gamma)=\psi (\gamma,\xi (\tau^{1/2}(\gamma ))),
\]
where $\xi:\BG_m\to SL(2)$ is the homomorphism
\begin{equation}   \label{e:xi}
\xi (t):=\begin{pmatrix} t  & 0 \\
 0 & t^{-1}
 \end{pmatrix}.
\end{equation}

Let us note that if $\psi$ is an Arthur parameter (see \S\ref{sss:Arthur parameters}) then $\phi_{\psi}$ is called the \emph{Langlands parameter associated to $\psi$.}
 
\begin{cor}   \label{c:Arthur}
Let $\psi :\Pi\times SL(2,\BQbar_\ell)\to G(\BQbar_\ell )$ be a homomorphism satisfying conditions (i)-(iv) of \S\ref{sss:A class of}. Then 
$a^\eta (\phi_\psi )$ has small gaps in the sense of \S\ref{sss:mugaps}.
\end{cor}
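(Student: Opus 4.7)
The plan is to verify the hypotheses of Lemma~\ref{l:key} for $G$, $\mu := \mu_{\tilde x}^{\phi_\psi}$, and $H := \overline{\psi(\Pi\times SL(2,\BQbar_\ell))}$, for a suitably chosen $\tilde x\in |\tX|$. First I check that $\phi_\psi$ fits the framework of \S\ref{ss:Newton coweights}: it is continuous; it is ``algebraic'' in the sense of \S\ref{sss:Algebraic} because $\phi_\psi(\Fr_{\tilde x})_\s = \psi(\Fr_{\tilde x},1)_\s\cdot\psi(1,\xi(q_x^{1/2}))$ is a product of two commuting semisimple elements whose eigenvalues in any $G$-representation lie in $\BQbar$ (the first by condition~(i) of \S\ref{sss:A class of}, the second because $q_x^{1/2}\in\BQbar$); and the map $\Pi\to G/G^\circ$ induced by $\phi_\psi$ agrees with that induced by $\psi|_\Pi$ (hence is surjective by condition~(iii)), because $SL(2)$ is connected. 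I then pick $\tilde x$ inside the intersection (non-empty by Lemma~\ref{l:2irreducibility}) of the open sets of genericity from Proposition~\ref{p:semicontinuity} applied to $\phi_\psi$ and to $\psi|_\Pi$, so that $\mu_{\tilde x}^{\phi_\psi}$ represents $a^\eta(\phi_\psi)$ and $\mu_{\tilde x}^{\psi|_\Pi}$ represents $a^\eta(\psi|_\Pi)$.

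The key structural identity, read off the factorization of $\phi_\psi(\Fr_{\tilde x})_\s$ above, is the decomposition of slope homomorphisms
\[
\mu_{\tilde x}^{\phi_\psi} \;=\; \mu_{\tilde x}^{\psi|_\Pi} \;+\; \tfrac{1}{2}\mu_S,
\]
where $\mu_S\in\Hom(\BG_m,G)$ is $t\mapsto\psi(1,\xi(t))$ and the two summands commute, taking values in a common maximal torus (since $\Pi$ and $SL(2)$ commute in $\psi$); the factor $\tfrac{1}{2}$ arises from $v_x(q_x^{1/2})=1/2$. In particular $\mu(\widetilde\BG_m)\subset H$, since $\phi_\psi(\Fr_{\tilde x})\in H$, $H$ is closed under Jordan decomposition, and the smallest algebraic subgroup containing $\phi_\psi(\Fr_{\tilde x})_\s$ then lies in $H$.

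The main work is to verify the small-gaps hypothesis in Lemma~\ref{l:key} for the subgroup $H$. Set $A:=\overline{\psi(\Pi\times\{1\})}$ and $B:=\psi(\{1\}\times SL(2,\BQbar_\ell))$; these are commuting subgroups generating $H$, so in particular $B$ is normal in $H$ and conjugation by any element of $H$ acts on $B$ through $B$ itself. Consequently every algebraic $H$-module $V$ decomposes $H$-equivariantly along its $B$-isotypic components; an $H$-indecomposable $V$ is therefore of the form $V = V'\otimes W$ with $W$ an irreducible $SL(2)$-module (of some dimension $k$) and $V'$ the multiplicity space, on which $A$ acts. By the Schur-lemma computation $\End_H(V)\simeq\End_A(V')\otimes\End_{SL(2)}(W)=\End_A(V')$, the $A$-module $V'$ is itself indecomposable; equivalently, the lisse sheaf on $X$ associated to $V'$ via $\psi|_\Pi$ is indecomposable. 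Theorem~\ref{t:l-adic}(iii), combined with the choice of $\tilde x$, gives that the multiset of $\mu_{\tilde x}^{\psi|_\Pi}$-weights on $V'$ has consecutive gaps $\le 1$; the weights of $\tfrac{1}{2}\mu_S$ on $W$ form the arithmetic progression $-\tfrac{k-1}{2},\ldots,\tfrac{k-1}{2}$ of gap exactly $1$. An elementary Minkowski-sum argument for multisets (if two finite multisets of real numbers each have consecutive gaps $\le 1$, then so does their sum-multiset) shows the $\mu$-weights of $V = V'\otimes W$ have consecutive gaps $\le 1$.

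The third hypothesis is immediate: $Z_G(H) = Z_G(\psi(\Pi\times SL(2))) = Z_G(\psi)$, which by condition~(iv) of \S\ref{sss:A class of} is finite modulo $Z(G)$; hence $Z(Z_G(H))^\circ = Z(G)^\circ$ and the inclusion $\Hom(\BG_m, Z(G))\subset\Hom(\BG_m, Z(Z_G(H)))$ is an equality. Lemma~\ref{l:key} then yields that $\mu$ has small gaps, i.e.\ $a^\eta(\phi_\psi)$ has small gaps. The main obstacle in this plan is the $H$-module analysis in the third paragraph: both the reduction of $H$-indecomposable modules to the form $V'\otimes W$ (which relies on $B$ being normal in $H$ and on the Schur computation of $\End_H$) and the Minkowski-sum statement about multiset weights, though elementary, require careful bookkeeping with multiplicities.
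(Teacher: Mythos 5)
Your proof is essentially correct, but it takes a genuinely different route from the paper. The paper's proof is a two-step reduction: it first applies Theorem~\ref{t:2l-adic} to $\sigma:=\psi|_\Pi$ alone (the centralizer hypothesis being checked via $\im (\psi|_{SL(2)})\subset Z_G(\sigma)$, whence $Z(Z_G(\sigma))\subset Z_G(\psi)$), concluding that $a^\eta(\sigma)$ has small gaps; it then passes from $\sigma$ to $\phi_\psi$ by Lemma~\ref{l:commuting 1-param subgroups}(ii), taking $\nu=\psi|_{SL(2)}\circ\xi^{1/2}$, which has small gaps by $\fs\fl(2)$-theory (or Corollary~\ref{c:partial functoriality}) and lands in $[G_0,G_0]$ because $SL(2)$ is semisimple. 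You instead apply Lemma~\ref{l:key} once, directly to $H=\overline{\im\psi}$ and to the slope homomorphism of $\phi_\psi$ itself, and you verify the indecomposable-module hypothesis by hand: the decomposition $V\simeq V'\otimes W$ of an indecomposable $H$-module (using that $A=\overline{\sigma(\Pi)}$ and $B=\psi(SL(2))$ commute and generate $H$), Theorem~\ref{t:l-adic}(iii) for the indecomposable sheaf attached to $V'$, the explicit $\xi$-weights on $W$, and the (correct, elementary) fact that a Minkowski sum of two multisets with consecutive gaps $\le 1$ again has gaps $\le 1$. Your route buys a self-contained argument that bypasses Theorem~\ref{t:2l-adic} as a black box as well as Lemma~\ref{l:commuting 1-param subgroups} and the Bourbaki input, and it simplifies the centralizer step ($Z_G(H)=Z_G(\psi)$ on the nose); the paper's route buys brevity by reusing Theorem~\ref{t:2l-adic} and by doing the ``twist by $\tfrac12\xi$'' at the Lie-algebra level, avoiding your module-theoretic bookkeeping. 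Your sign ($v_x(\tau^{1/2}(\Fr_{\tilde x}))=-\tfrac12$ rather than $+\tfrac12$) is harmless since the $\xi$-weights on an irreducible $SL(2)$-module are symmetric.

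One step deserves to be made explicit: when you invoke ``Theorem~\ref{t:l-adic}(iii) combined with the choice of $\tilde x$'' for $V'$, Theorem~\ref{t:l-adic}(iii) controls the \emph{generic} slopes of the sheaf attached to $V'$, while you need the slopes at your chosen $x$; genericity of the $G$-valued Newton coweights of $\phi_\psi$ and $\sigma$ does not formally determine the slope multiset of an arbitrary $A$-module at $x$, since $G^\circ$-conjugate cocharacters of $A$ need not have equal weight multisets on $A$-modules. This is repaired by observing that Lemma~\ref{l:H and Mmu} only uses the hypothesis for indecomposable constituents of restrictions of $G$-modules, for which the total slope multiset at a $G$-generic point is generic and a convexity (majorization) argument forces each constituent's slopes to be generic as well; note that the paper's own proof of Theorem~\ref{t:2l-adic} in \S\ref{sss:proof2l-adic} leaves exactly the same step implicit, so this is a point of detail rather than a defect of your approach. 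You should also record, for the hypotheses of \S\ref{sss:Pmu}, that $[\mu]$ is $\pi_0(G)$-invariant; this follows from Proposition~\ref{p:semicontinuity}(i) applied to $\phi_\psi$, whose surjectivity hypothesis you have already checked.
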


\begin{proof}
Let $\sigma :\Pi\to G(\BQbar_\ell )$ and $f:SL(2)\to G$ be the restrictions of $\psi$. Then 
$\im f\subset Z_G(\sigma )$, so
\[
Z(Z_G(\sigma ))=Z_G(\sigma )\cap Z_G(Z_G(\sigma ))\subset Z_G(\sigma )\cap Z_G(f)=Z_G(\psi).
\]
Therefore condition (iv) of \S\ref{sss:A class of} implies that $\Hom (\BG_m, Z(Z_G(\sigma )))=\Hom (\BG_m ,Z(G))$. 
So by Theorem~\ref{t:2l-adic}, $a^\eta (\sigma )$ has small gaps. 

To deduce from this that $a^\eta (\phi_\psi )$ has small gaps, we will apply Lemma~\ref{l:commuting 1-param subgroups}. 
Let $\mu:\widetilde\BG_m \to G$ be as in~\S\ref{sss:proof2l-adic}; we already know that $\mu$ has small gaps. Let $G_0$ be the centralizer of 
$\mu$ in $G^\circ$. The homomorphism $\psi$ maps $SL(2)$ to $G_0$. Let $\xi$ be as in \eqref{e:xi}, and let $\nu:\widetilde\BG_m \to G_0$ be the composition of $\xi^{1/2}:\widetilde\BG_m \to SL(2)$ and $\psi |_{SL(2)}:SL(2)\to G_0$. By the theory of $\fs\fl (2)$-triples, $\nu$ has small gaps (see \cite[\S VIII.11, Prop.~5]{Bo}); this also follows from  Corollary~\ref{c:partial functoriality}.
Since $SL(2)$ is semisimple, we have  $\nu (\widetilde\BG_m)\subset [G_0, G_0]$. It remains to apply 
Lemma~\ref{l:commuting 1-param subgroups}(ii).
\end{proof}

\section{An analog of Theorem~\ref{t:main} for arbitrary reductive groups}  \label{s:FIsoc_G}

\subsection{Generalities on Tannakian categories}
References: \cite{DM,De90,De3}.

\subsubsection{$G$-torsors in Tannakian categories}   \label{sss:torsors in Tannakian}
If $\cT$ is a Tannakian category over a field $E$ and $G$ is an algebraic group over $E$, then a \emph{$G$-torsor in $\cT$} is an exact tensor functor $\Rep (G)\to\cT$, where $\Rep (G)$ is the tensor category of finite-dimensional representions of $G$. If $\cT$ is the category of vector spaces this is the usual notion of $G$-torsor. A $GL(n)$-torsor in any Tannakian category $\cT$ is the same as an  $n$-dimensional object of  $\cT$.

For any $G$, all $G$-torsors in $\cT$ form a groupoid enriched over the category of $E$-schemes. So for any group scheme $H$ over $E$ there is a notion of $H$-action on a $G$-torsor.

\subsubsection{Example} \label{sss:torsor with action}
A $G$-torsor in $\Rep (H)$ is the same as a usual $G$-torsor equipped with $H$-action. So if $E$ is algebraically closed then $G$-torsors in $\Rep (H)$ are classified by conjugacy classes of homomorphisms $H\to G$.

Let us apply this for $H=\widetilde\BG_m$, where $\widetilde\BG_m$ is 
as in \S\ref{sss:tilde G_m}. If $G$ is a connected reductive group
and $E$ is algebraically closed then the set of conjugacy classes of homomorphisms $\widetilde\BG_m\to G$ identifies with the set of dominant rational coweights of $G$, denoted by $\check\Lambda_G^{+,\BQ}$, so $G$-torsors in $\Rep (\widetilde\BG_m)$ are classified by 
$\check\Lambda_G^{+,\BQ}$.

More generally, let $\cT$ be a Tannakian category over any field $E$ and $\E$ be a $G$-torsor in $\cT$ equipped with a $\widetilde\BG_m$-action. Then one defines the class of $\E$ in $\check\Lambda_G^{+,\BQ}$ to be the class of $F(\E )$, where $F$ is any fiber functor on $\cT$ over an algebraically closed extension of $E$.

\subsection{$G$-torsors in $\FIsoc (X)\otimes_{\BQ_p}\BQbar_p$} Clearly $\FIsoc (X)$ and $\FIsoc (X)\otimes_{\BQ_p}\BQbar_p$ are Tannakian categories over $\BQ_p$ and $\BQbar_p$, respectively.

\subsubsection{Grading by slopes}   \label{sss:pointwise grading}
For any $x\in X$, let $x_{\perf}$ denote the spectrum of the perfection of the residue field of $x$. The Tannakian category $\FIsoc (x_{\perf})$ has a canonical $\BQ$-grading by slopes. The pullback of $M\in\FIsoc (X)\otimes_{\BQ_p}\BQbar_p$ to $x_{\perf}$ will be denoted by $M_x$.

\subsubsection{Definition of $\FIsoc_G (X)$}
Let $G$ be an algebraic group over $\BQbar_p$. The groupoid of $G$-torsors in the Tannakian category $\FIsoc (X)\otimes_{\BQ_p}\BQbar_p$ (see \S\ref{sss:torsors in Tannakian}) will be denoted by $\FIsoc_G (X)$. In \cite{RR} objects of $\FIsoc_G (X)$ are called \emph{$F$-isocrystals with $G$-structure.}

\subsubsection{Examples} \label{sss:examples of torsors}
(i) A $GL(n)$-torsor in $\FIsoc (X)\otimes_{\BQ_p}\BQbar_p$ is the same as an  $n$-dimensional object of  $\FIsoc (X)\otimes_{\BQ_p}\BQbar_p$.

(ii) If $G$ is finite then by Proposition~\ref{p:Crew}, a $G$-torsor in $\FIsoc (X)\otimes_{\BQ_p}\BQbar_p$ is the same as a $G$-torsor on $X_{\et}$.

\subsubsection{Newton coweights if $G$ is connected}    \label{sss:crys-Newton}
Let $G$ be a connected reductive group over $\BQbar_p$ and $\E\in\FIsoc_G (X)$. Let $x\in X$. Since $\FIsoc (x_{\perf})$ is $\BQ$-graded, the $G$-torsor $\E_x$ is equipped with an action of $\widetilde\BG_m:=\Hom (\BQ ,\BG_m)$. So by \S\ref{sss:torsor with action}, it has a class 
$a^x (\E )\in\check\Lambda_G^{+,\BQ}$. Following \cite{RR,Ko2}, we call it the \emph{Newton coweight} of $\E_x$ (or the Newton coweight of $\E$ at $x$). If $G=GL(n)$ then $\E $ is just an $n$-dimensional object of $\FIsoc (X)\otimes_{\BQ_p}\BQbar_p$ and $a^x (\E )$ is the collection of its slopes $a_i^x(\E )$, $1\le i\le n$.

\subsubsection{Newton coweights without assuming connectedness of $G$}    \label{sss:disconnected-crys-Newton}
We fix a universal cover $\tX\to X$ and set $\Pi:=\Aut (\tX /X)$. 

Now let $G$ be a not necessarily connected reductive group over $\BQbar_p$; let $G^\circ$ be its neutral connected component. An object 
$\E\in\FIsoc_G (X)$ defines an object $\bar\E\in\FIsoc_{\pi_0(G)} (X)$, which is just a $\pi_0(G)$-torsor on $X_{\et}$, see \S\ref{sss:examples of torsors}(ii). Fix a trivialization of the pullback of $\bar\E$ to $\tX$. Then the torsor $\bar\E$ is described by a homomorphism 
\begin{equation}   \label{e:Pi to pi_0}
\Pi\to \pi_0 (G).
\end{equation}
Let $U$ be its kernel. The pullback of $\bar\E$ to 
$\tX/U$ is trivialized, so the pullback of $\E$ to $\tX/U$ comes from a $G^\circ$-torsor. Applying \S\ref{sss:crys-Newton} to this $G^\circ$-torsor on $\tX/U$, we get a map $\tX/U\to \check\Lambda_{G^\circ}^{+,\BQ}$. The corresponding map $\tX\to \check\Lambda_{G^\circ}^{+,\BQ}$ is denoted by
\begin{equation}   \label{e:disconnected-crys-Newton}
\tilde x\mapsto a^{\tilde x} (\E ), \quad \tilde x\in\tX .
\end{equation}
The composition of \eqref{e:Pi to pi_0} and the homomorphism $\pi_0 (G)\to\Aut\check\Lambda_{G^\circ}$ defines an action of $\Pi$ on $\Lambda_{G^\circ}^{+,\BQ}$. The map \eqref{e:disconnected-crys-Newton} is $\Pi$-equivariant.

Let $\eta,\tilde\eta$ denote the generic points of $X,\tX$. We write $a^{\eta} (\E )$ instead of 
$a^{\tilde \eta} (\E )$. By $\Pi$-equivariance of \eqref{e:disconnected-crys-Newton}, we have
\begin{equation}   \label{e:Pi-invariance}
a^{\eta} (\E )\in (\Lambda_{G^\circ}^{+,\BQ})^\Pi.
\end{equation}

\subsubsection{Remark}   \label{sss:Newton in terms of Crew}
Here is a reformulation of the definition of $a^{\tilde x} (\E )$ in terms of \S\ref{ss:crys fund group} of Appendix~\ref{s:Crew}. An object $\E\in\FIsoc_G (X)$ equipped with a trivialization of the pullback of $\bar\E$ to $\tX$ is the same as a homomorphism $\sigma :\pi_1^{\FIsoc}(X)\to G$ up to $G^\circ$-conjugation. We also have a homomorphism $\nu_{\tilde x}:\widetilde\BG_m\to\pi_1^{\FIsoc}(X)$ defined up to 
conjugation by $(\pi_1^{\FIsoc}(X))^\circ$, see the end of \S\ref{sss:Some homs}. Set 
$\mu_{\tilde x}:=\sigma\circ\nu_{\tilde x}\in\Hom (\widetilde\BG_m ,G^\circ )$.
Then  $a^{\tilde x} (\E )\in\check\Lambda_{G^\circ}^{+,\BQ}$ is the conjugacy class of $\mu_{\tilde x}$.

\subsubsection{Semicontinuity}
The set $\check\Lambda_G^{+,\BQ}$ is equipped with the following partial order: $\check\lambda_1\le\check\lambda_2$ if $\check\lambda_2-\check\lambda_1$ is a linear combination of simple coroots with non-negative rational coefficients. 
The map \eqref{e:disconnected-crys-Newton} is \emph{lower-semicontinuous}. It suffices to check this if $G$ is connected; in this case this is Theorem~3.6 of \cite{RR} (as explained in \cite{RR}, it immediately follows from semicontinuity of usual Newton polygons, see \cite[Cor.~2.3.2]{K}). 
In particular, lower semicontinuity implies that
\begin{equation}   \label{e:special&generic}
a^{\tilde x} (\E )\le a^\eta (\E) \quad \mbox { for all }\tilde x\in\tX .
\end{equation}

Let us note that the map \eqref{e:disconnected-crys-Newton} takes finitely many values. It suffices to check this for 
$G=GL(n)$; in this case it follows from \eqref{e:special&generic} and boundedness of the denominators of the slopes.

\begin{theorem}   \label{t:G-version of main}
Let $G$ be a reductive group and $\E\in\FIsoc_G (X)$. Suppose that

(i) the map \eqref{e:Pi to pi_0} is surjective;

(ii) the inclusion
\begin{equation}   
\Hom (\BG_m ,Z(G))\subset \Hom (\BG_m, Z(\Aut\E))
\end{equation}
is an equality (here $Z(G)$ and $Z(\Aut\E)$ are the centers of $G$ and $\Aut\E$, respectively).

Then $a^\eta (\E )$ has small gaps in the sense of \S\ref{sss:mugaps}.
\end{theorem}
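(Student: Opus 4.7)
The argument will closely mirror the proof of Theorem~\ref{t:2l-adic} given in \S\ref{sss:proof2l-adic}, with the Tannakian formalism of Appendix~\ref{s:Crew} substituting for the group-theoretic fundamental group used in the $\ell$-adic setting. The plan is to isolate a subgroup $H \subset G$ and a cocharacter $\mu : \widetilde\BG_m \to G$ satisfying the hypotheses of Lemma~\ref{l:key}; the conclusion of that lemma, combined with the identity $[\mu] = a^\eta(\E)$, will then give the desired small-gaps property.

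First, using the Tannakian fundamental group $\pi_1^{\FIsoc}(X)$ recalled in Appendix~\ref{s:Crew}, represent $\E$ as a homomorphism $\sigma : \pi_1^{\FIsoc}(X) \to G$ (well-defined up to $G^\circ$-conjugation, once the trivialization of $\bar\E$ on $\tX$ from \S\ref{sss:disconnected-crys-Newton} has been fixed). Let $H \subset G$ denote the Tannakian image of $\sigma$, i.e., the smallest algebraic subgroup of $G$ through which $\sigma$ factors. Standard Tannakian formalism yields the two properties that will be needed below: the induced tensor functor $\Rep(H) \to \FIsoc(X) \otimes_{\BQ_p} \BQbar_p$ is fully faithful, and $Z_G(H) = \Aut(\E)$.

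Next, choose a point $\tilde x \in \tX$ so that simultaneously the $G^\circ$-conjugacy class of $\mu_{\tilde x}$ equals $a^\eta(\E)$ and the $H^\circ$-conjugacy class of $\mu_{\tilde x}$ (viewed as a cocharacter of $H$) equals its generic value on $\tX$. Both loci are non-empty open subsets of $\tX$ by lower-semicontinuity of the respective Newton coweights and the finiteness of their image sets (cf.~the end of \S\ref{sss:disconnected-crys-Newton}), so their intersection is non-empty. Set $\mu := \mu_{\tilde x}$; by Remark~\ref{sss:Newton in terms of Crew}, $\mu$ factors through $H$ as $\mu = \sigma \circ \nu_{\tilde x}$. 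We now verify the hypotheses of Lemma~\ref{l:key}. The $\pi_0(G)$-invariance of $[\mu] = a^\eta(\E)$ required in \S\ref{sss:Pmu} is exactly \eqref{e:Pi-invariance}, and the inclusion $\mu(\widetilde\BG_m) \subset H$ is automatic. For any indecomposable $H$-module $V$, the associated object $V_\E \in \FIsoc(X) \otimes \BQbar_p$ is indecomposable by Lemma~\ref{l:indecomposability} applied to the fully faithful embedding $\Rep(H) \hookrightarrow \FIsoc(X) \otimes \BQbar_p$, so Corollary~\ref{c:remain valid} yields small gaps for its slopes at $\eta$, and these slopes coincide with the weights of $\widetilde\BG_m$ on $V$ via $\mu$ precisely because $\tilde x$ lies in the $H$-generic locus. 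Finally, using $Z_G(H) = \Aut(\E)$, assumption (ii) of the theorem rewrites as $\Hom(\BG_m, Z(G)) = \Hom(\BG_m, Z(Z_G(H)))$. Lemma~\ref{l:key} then gives that $\mu$ has small gaps, which is equivalent to $a^\eta(\E)$ having small gaps.

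The main obstacle lies in justifying the two Tannakian assertions for $H$: full faithfulness of $\Rep(H) \to \FIsoc(X) \otimes \BQbar_p$ on the Tannakian subcategory generated by $\E$, and the identification $Z_G(H) = \Aut(\E)$. Both are formal consequences of the definition of the Tannakian image, but they require careful invocation of the Tannakian description of $\FIsoc(X) \otimes \BQbar_p$ from Appendix~\ref{s:Crew}. A secondary technical point is that the slope equality at $\tilde x$ versus $\eta$ must hold uniformly over all indecomposable $H$-modules; this is automatic, since those slopes depend only on the $H^\circ$-conjugacy class of $\mu_{\tilde x}$ in $H$, and the finite Newton stratification for $H$-torsors makes this class agree with its generic value on a non-empty open subset of $\tX$.
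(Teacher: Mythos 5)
Your proof is correct and follows essentially the same route as the paper's: realize $\E$ as a homomorphism $\sigma:\pi_1^{\FIsoc}(X)\to G$, set $H=\im\sigma$, and verify the hypotheses of Lemma~\ref{l:key} for $H$ and the slope cocharacter, using Theorem~\ref{t:main} (via Corollary~\ref{c:remain valid} and Lemma~\ref{l:indecomposability}) for indecomposable $H$-modules and $\Aut\E=Z_G(H)$ to translate hypothesis (ii). The only divergence is that your selection of a point in the intersection of the two generic Newton loci is an unnecessary detour borrowed from the $\ell$-adic proof: the paper simply takes $\mu:=\mu_{\tilde\eta}$ at the generic point $\tilde\eta\in\tX$ itself, so that $[\mu]=a^\eta(\E)$ holds by definition and the agreement of slopes at the chosen point with those at $\eta$ needs no semicontinuity argument.
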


\begin{proof}
We will mimic the proof of Theorem~\ref{t:2l-adic} given in \S\ref{sss:proof2l-adic}. 

Our $\E$ corresponds to a homomorphism  $\sigma :\pi_1^{\FIsoc}(X)\to G$, where $\pi_1^{\FIsoc}(X)$ is the group defined in \S\ref{ss:crys fund group} of Appendix~\ref{s:Crew}. Let $H:=\im (\sigma)\subset G$. 

In \S\ref{sss:Newton in terms of Crew} we defined $\mu_{\tilde x}\in\Hom (\widetilde\BG_m ,G^\circ )$ for each $\tilde x\in\tX$. Let 
$\mu:=\mu_{\tilde\eta}$, where $\tilde\eta\in\tX$ is the generic point. Then  $a^{\eta} (\E )$ is equal to the conjugacy class
$[\mu]\in\check\Lambda_{G^\circ}^{+,\BQ}$.

It remains to check that $\mu$ and $H$ satisfy the conditions of Lemma~\ref{l:key}. 

Combining \eqref{e:Pi-invariance} and assumption (i) of the theorem, we see that $[\mu]$ is $\pi_0(G)$-invariant, as required in \S\ref{sss:Pmu}.
Clearly $H\supset\mu (\widetilde\BG_m )$. Assumption (ii) of the theorem means that
\[
\Hom (\BG_m ,Z(G))=\Hom (\BG_m, Z(Z_G(H))).
\]
Finally, by Theorem~\ref{t:main} combined with Corollary~\ref{c:remain valid}, every indecomposable $H$-module has small gaps as a $\widetilde\BG_m$-module (if the $\widetilde\BG_m$-action is defined via $\mu$).
\end{proof}

The interested reader can formulate and prove an analog of Corollary~\ref{c:Arthur} for $\FIsoc_G (X)$.

\section{Newton weights of automorphic representations of reductive groups}   \label{s:2automorphic}

\subsection{Notation}  \label{sss:2automorphic notation}
We keep the notation of \S\ref{sss:automorphic notation}-\ref{sss:sqrt}. In particular, 
$v:\BQbar_p^\times\to\BQ$ is the $p$-adic valuation normalized so that $v(p)=1$, and for each $x\in |X|$ we define $v_x:\BQbar_p^\times\to\BQ$ by $v_x:=v/\deg x$.

Just as in \S\ref{s:l-adic for G}, we fix a universal cover $\tX\to X$ and set $\Pi:=\Aut (\tX /X)$. For each $\tilde x\in |\tX|$, one has the geometric Frobenius $\Fr_{\tilde x}\in\Pi$.

\subsection{The groups $G$, $\check G$, and $\null^LG$}
\subsubsection{The group scheme $G$}
Let $G$ be a smooth group scheme over $X$ with connected reductive fibers. If $A$ is a ring equipped with a morphism $\Spec A\to X$ we write $G(A):=\Mor_X(\Spec A,G)$ (this can be applied for rings $F$, $\BA_F$, or $O_x$, $x\in |X|$).

\subsubsection{Weights and coweights}   \label{sss:weights coweights}
Let $\Lambda_G$ (resp.~$W$) be the group of weights (resp.~ Weyl group) of $G\times_X\tX$.
The group $\Pi$ acts on $\Lambda_G$ and $W$. As usual, let $\check\Lambda_G$ be the group dual to $\Lambda_G$. 

Let $\Lambda_G^{\BQ}:=\Lambda_G\otimes\BQ$, and let $\Lambda_G ^{+,\BQ}\subset\Lambda_G^\BQ$ be the dominant cone; one has the usual bijection $\Lambda_G ^{+,\BQ}\iso (\Lambda_G\otimes\BQ)/W$.
The set $\Lambda_G^{+,\BQ}$ is equipped with the following partial order: $\lambda_1\le\lambda_2$ if 
$\lambda_2-\lambda_1$ is a linear combination of simple roots with non-negative rational coefficients. 

We say that 
$\lambda\in\Lambda_G^{+,\BQ}$ has  \emph{small gaps} if $(\lambda ,\check\alpha_i)\le 1$ for every simple coroot $\check\alpha_i$.

\subsubsection{The groups $\check G$ and $\null^LG$}
Let $\check G$ be the split reductive group over $\BQ$ canonically associated to the root datum dual to that of~$G\times_X\tX$ (in particular, $\Lambda_{\check G}=\check\Lambda_G$). 

Let $\Pi':=\Ker (\Pi\to\Aut\Lambda_G)$, then $\Pi/\Pi'$ acts on $\check G$. Set 
$\null^LG:=(\Pi/\Pi')\ltimes \check G$; this is the \emph{Langlands dual} of $G$.

\subsubsection{The variety $[\null^LG ]$}
Define $[\null^LG ]$ to be the GIT quotient of $\null^LG $ by the conjugation action of $\check G$ (i.e., $[\null^LG]$ is the spectrum of the algebra of those regular functions on $\null^LG$ that are invariant under $\check G$-conjugation). This is a variety equipped with an action of $\Pi/\Pi'$ and a $(\Pi/\Pi')$-equivariant map $[\null^LG ]\to \Pi/\Pi'$. Moreover, the action of each element $\gamma\in\Pi/\Pi'$ on its preimage in $[\null^LG ]$ is trivial.

\subsection{Satake parameters}  \label{ss:Satake}
Let $x\in |X|$, and let $E$ be an algebraically closed field of characteristic $0$. 
Unramified irreducible representations of $G(F_x)$ over $E$ are classified by elements of a certain set
$\Sat_x (E)$, whose elements are called \emph{Satake parameters} at $x$.

$\Sat_x (E)$ is defined as follows. Let $[\tX ]_x$ denote the preimage of $x$ in $[\tX ]$. We have a canonical map 
\[
[\tX ]_x\to\Pi/\Pi', \quad\tilde x\mapsto \overline{\Fr}_{\tilde x},
\]
where $\overline{\Fr}_{\tilde x}$ is the image of $\Fr_{\tilde x}$ in $\Pi/\Pi'$. An element of $\Sat_x (E)$ is a  lift of this map to a $\Pi$-equivariant map 
$[\tX ]_x\to [\null^LG ] (E)$. (To specify such a map $[\tX ]_x\to [\null^LG ](E)$, it suffices to specify the image of a single point $\tilde x\in[\tX ]_x$, and this image can be any element of $[\null^LG ](E)$ over $\overline{\Fr}_{\tilde x}$.)

It is well known \cite{Lan,Mo,Sp} that for every $\tilde x\in |\tX |_x$ one has a canonical bijection 
\[
\Sat_x (E)\iso \Hom (\check\Lambda_G^{\Fr_{\tilde x}}, E^\times )/W^{\Fr_{\tilde x}}, 
\]
where $\check\Lambda_G^{\Fr_{\tilde x}}$ and $W^{\Fr_{\tilde x}}$ are the invariants of $\Fr_{\tilde x}$ acting on $\check\Lambda_G$ and $W$. 

\subsection{Newton weights}   \label{sss:automorphic Newton}
Let $\pi$ be an irreducible admissible representation of $G(\BA_F)$ over $\BQbar_p$, which is unramified at each $x\in |X|$  (i.e., the subspace of $G(O_x)$-invariant vectors is not zero). We will associate to $\pi$ a $\Pi$-equivariant map 
\begin{equation} \label{e:Newton adelic}
|\tX |\to\Lambda_G ^{+,\BQ}, \quad \tilde x\mapsto a^{\tilde x}(\pi).
\end{equation}
The element $a^{\tilde x}(\pi )\in\Lambda_G ^{+,\BQ}$ will be called the \emph{Newton weight} of $\pi$ at $\tilde x$. 

To define the map \eqref{e:Newton adelic}, it suffices to define its restriction to $|\tX |_x$ for each $x\in |X|$. We define it to be the composition 
\[
[\tX ]_x\to [\null^LG ](\BQbar_p)\to \Lambda_G ^{+,\BQ}
\]
where the first map is the Satake parameter of $\pi_x$ (see \S\ref{ss:Satake}), and the second map is defined as follows.\footnote{The definition given below is similar to \S\ref{sss:mu_h}.} An element of $[\null^LG ](\BQbar_p)$ is a 
$\check G(\BQbar_p)$-conjugacy class of semisimple elements
$g\in\null^LG (\BQbar_p)$. For such $g$, let $\langle g\rangle$ be the smallest algebraic subgroup of 
$\null^L G\otimes\BQbar_p$ containing 
$g$; then the homomorphism
\[
\Hom (\langle g\rangle ,\BG_m)\to\BQ ,\quad\chi\mapsto v_x(\chi (g))
\]
defines a homomorphism $\widetilde\BG_m \to\langle g\rangle\subset\null^L  G\otimes\BQbar_p$, where  $\widetilde\BG_m$ is the pro-torus 
over $\BQbar_p$ with group of characters $\BQ$. Thus an element of 
$[\null^LG ](\BQbar_p)$ defines a $\check G(\BQbar_p)$-conjugacy class of homomorphisms 
$\widetilde\BG_m\to\null^LG\otimes_{\BQ_p}\BQbar_p$, which is the same as an element of 
$\check\Lambda_{\check G}^{+,\BQ}=\Lambda_G ^{+,\BQ}$.

\begin{rem}  \label{r:automorphic Newton}
The above definition of $a^{\tilde x}(\pi)$ is equivalent to the following one. The Satake parameter of $\pi_x$ is an element of 
$\Hom (\check\Lambda_G^{\Fr_{\tilde x}}, \BQbar_p^\times )/W^{\Fr_{\tilde x}}$. Now consider the composition
\begin{equation}   \label{e:thecomp}
\Hom (\check\Lambda_G^{\Fr_{\tilde x}}, \BQbar_p^\times )\to \Hom (\check\Lambda_G^{\Fr_{\tilde x}}, \BQ )=
(\Lambda_G^{\BQ})_{\Fr_{\tilde x}}\iso(\Lambda_G^{\BQ})^{\Fr_{\tilde x}}\mono \Lambda_G^{\BQ},
\end{equation}
where the first map is induced by $v_x:\BQbar_p^\times\to\BQ$ and the isomorphism 
$(\Lambda_G^{\BQ})_{\Fr_{\tilde x}}\iso(\Lambda_G^{\BQ})^{\Fr_{\tilde x}}$ is the averaging operator, i.e., the inverse of the tautological map
$(\Lambda_G^{\BQ})^{\Fr_{\tilde x}}\iso(\Lambda_G^{\BQ})_{\Fr_{\tilde x}}\,$. The composition \eqref{e:thecomp} is $W^{\Fr_{\tilde x}}$-equivariant, so it induces a map $ \Hom (\check\Lambda_G^{\Fr_{\tilde x}}, \BQbar_p^\times )/W^{\Fr_{\tilde x}}\to  \Lambda_G^{\BQ}/W=\Lambda_G ^{+,\BQ}$. Applying this map to the Satake parameter of $\pi_x$, one gets an element of $\Lambda_G ^{+,\BQ}$. This is $a^{\tilde x}(\pi)$. 
\end{rem}

Let us note that the definition of the Satake parameter depends on the choice of $p^{1/2}$ (see \S\ref{sss:sqrt}), but $a^{\tilde x}(\pi )$ does not depend on this choice.

\subsection{The case of a torus}  \label{ss:torus}
Let $T$ be a torus over $X$. Let $\eta :T(\BA_F)\to\BQbar_p^\times$ be a homomorphism with open kernel, which is unramified at every point of $|X|$. 

\begin{lem}
Let $X':=\tX/\Pi'$, where $\Pi':=\Ker (\Pi\to\Aut\Lambda_T)$. Let $F'$ be the field of rational functions on $X'$. Let $\tilde x\in |\tX |$ and $x'\in X'$ the image of~$\tilde x$. Let $\check\lambda\in\check\Lambda_T$. Then
\begin{equation}   \label{e:Newton toric}
(a^{\tilde x}(\eta ),\check\lambda)=f_{\check\lambda}(\varpi_{x'})/\deg x',
\end{equation}
where $f_{\check\lambda}:\BA_{F'}^\times\to\BQ$ is the composition
\[
\BA_{F'}^\times\overset{\check\lambda}\longrightarrow T(\BA_{F'})\overset{\Norm}\longrightarrow T(\BA_F)\overset{\eta}\longrightarrow
\BQbar_p^\times\overset{v}\longrightarrow\BQ
\]
and $\varpi_{x'}$ is a uniformizer in $(F'_{x'})^\times\subset\BA_{F'}^\times$. 
\end{lem}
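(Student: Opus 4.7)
The plan is to reduce the identity to an explicit calculation with the norm map at the single place of $F'$ above $x$. The first step is to observe that $F'_{x'}/F_x$ is an unramified extension of local fields whose Galois group is canonically identified with the cyclic subgroup $\langle\bar\Fr_{\tilde x}\rangle\subset\Pi/\Pi'$ of order $d_0:=[\kappa(x'):\kappa(x)]=\deg x'/\deg x$. Accordingly, I would take $\varpi_{x'}$ to be a uniformizer $\varpi_x\in F_x^\times$, viewed as an idele in $\BA_{F'}^\times$ concentrated at the place $x'$. Then $\check\lambda(\varpi_{x'})\in T(\BA_{F'})$ is supported at $x'$, so only $\Norm_{F'_{x'}/F_x}$ contributes to $\Norm(\check\lambda(\varpi_{x'}))\in T(\BA_F)$.

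Working in the identification $T(F'_{x'})=\check\Lambda_T\otimes_{\BZ}(F'_{x'})^\times$ (legitimate because $T$ splits over $F'$), and using that $\varpi_x$ is fixed by $\Gal(F'_{x'}/F_x)$, the norm becomes
\[
\Norm_{F'_{x'}/F_x}(\check\lambda(\varpi_x))=\check\mu(\varpi_x),\qquad \check\mu:=\sum_{i=0}^{d_0-1}\bar\Fr_{\tilde x}^{\,i}(\check\lambda)\in\check\Lambda_T^{\bar\Fr_{\tilde x}}.
\]
The cocharacter $\check\mu$ is $\bar\Fr_{\tilde x}$-invariant, so it descends to a cocharacter of $T$ over $F_x$ and $\check\mu(\varpi_x)\in T(F_x)$ makes intrinsic sense.

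Next I would invoke the concrete description of the Newton weight for a torus. Specializing Remark~\ref{r:automorphic Newton} to $G=T$ (where $W=1$) and using the standard identification $T(F_x)/T(O_x)\iso\check\Lambda_T^{\bar\Fr_{\tilde x}}$, $\check\nu(\varpi_x)\mapsto\check\nu$, one obtains
\[
(a^{\tilde x}(\eta),\check\nu)=v(\eta(\check\nu(\varpi_x)))/\deg x\qquad \text{for every }\check\nu\in\check\Lambda_T^{\bar\Fr_{\tilde x}}.
\]
Applied with $\check\nu=\check\mu$, this gives $f_{\check\lambda}(\varpi_{x'})=v(\eta(\check\mu(\varpi_x)))=(a^{\tilde x}(\eta),\check\mu)\cdot\deg x$.

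Finally, by the same remark $a^{\tilde x}(\eta)\in(\Lambda_T^\BQ)^{\bar\Fr_{\tilde x}}$, so the pairing is $\bar\Fr_{\tilde x}$-invariant in its second argument and
\[
(a^{\tilde x}(\eta),\check\mu)=\sum_{i=0}^{d_0-1}(a^{\tilde x}(\eta),\bar\Fr_{\tilde x}^{\,i}\check\lambda)=d_0\cdot(a^{\tilde x}(\eta),\check\lambda).
\]
Combining this with $\deg x'=d_0\deg x$ yields the asserted formula. I do not anticipate any serious obstacle; the only point requiring care is bookkeeping—confirming that the two factors of $d_0$, one arising from the norm and one from averaging over the $\bar\Fr_{\tilde x}$-orbit of $\check\lambda$, combine with $1/\deg x'$ exactly as claimed.
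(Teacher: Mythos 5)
Your proposal is correct and is essentially the paper's own argument: the paper's proof is just "follows from Remark~\ref{r:automorphic Newton}," and your computation is the natural unwinding of that remark for a torus — the local norm at $x'$ turns $\check\lambda(\varpi_{x'})$ into $\check\mu(\varpi_x)$ with $\check\mu=\sum_{i=0}^{d_0-1}\bar\Fr_{\tilde x}^{\,i}\check\lambda$, the Satake parametrization of unramified characters via $T(F_x)/T(O_x)\simeq\check\Lambda_T^{\Fr_{\tilde x}}$ gives $(a^{\tilde x}(\eta),\check\nu)=v(\eta(\check\nu(\varpi_x)))/\deg x$ for invariant $\check\nu$, and $\Fr_{\tilde x}$-invariance of $a^{\tilde x}(\eta)$ converts $(a^{\tilde x}(\eta),\check\mu)$ into $d_0\,(a^{\tilde x}(\eta),\check\lambda)$, with the two factors of $d_0$ cancelling against $\deg x'=d_0\deg x$. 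The only cosmetic omission is the (immediate) remark that both sides are unchanged if the uniformizer is multiplied by a unit, which is needed since the lemma allows an arbitrary $\varpi_{x'}$; this follows from $\eta$ being unramified.
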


\begin{proof}
Follows from Remark~\ref{r:automorphic Newton}.
\end{proof}

\begin{cor} \label{c:a(eta)}
If $\eta$ is trivial on $T(F)$ then $a^{\tilde x}(\eta )$ does not depend on $\tilde x\in |\tX |$.
\end{cor}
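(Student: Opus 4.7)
The plan is to verify the corollary coordinate-by-coordinate: it suffices to show that $(a^{\tilde x}(\eta),\check\lambda)$ is independent of $\tilde x$ for every cocharacter $\check\lambda\in\check\Lambda_T$, because $\check\Lambda_T$ spans $\check\Lambda_T\otimes\BQ$ and thus separates points of $\Lambda_T^{+,\BQ}=\Lambda_T^{\BQ}$. By the preceding lemma, this pairing equals $f_{\check\lambda}(\varpi_{x'})/\deg x'$, so I want to show that this quotient does not depend on $x'\in |X'|$.

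First, I will check that $f_{\check\lambda}:\BA_{F'}^\times\to\BQ$ vanishes both on $F'^\times$ and on $O_y^\times$ for every place $y$ of $F'$, including places of the projective completion lying outside $X'$. Triviality on $F'^\times$ is direct: $\check\lambda$ sends $F'^\times$ into $T(F')$, the norm map sends $T(F')$ into $T(F)$, and $\eta$ is trivial on $T(F)$ by hypothesis. For the triviality on units, the open-kernel assumption on $\eta$ together with compactness of $T(O_y)$ forces $\eta(T(O_y))$ to be a finite subgroup of $\BQbar_p^\times$, hence to consist of torsion elements, on which $v$ vanishes; since $v$ takes values in the torsion-free group $\BQ$, this holds regardless of whether $\eta$ is itself unramified at $y$. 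This last point is the only mildly delicate step, needed to handle the case when $X$ (and hence $X'$) is not assumed projective.

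Consequently $f_{\check\lambda}$ factors through the idele class group mod units, i.e.\ through $\Pic(\bar X')$, where $\bar X'$ is the smooth projective completion of $X'$. The Jacobian of $\bar X'$ has finitely many points over the (finite) constant field of $F'$, so $\Pic^0(\bar X')$ is finite, while the degree map has image a nonzero subgroup of $\BZ$; tensoring with $\BQ$ gives $\Pic(\bar X')\otimes\BQ\iso\BQ$ via $\deg$. Therefore the induced map $\Pic(\bar X')\to\BQ$ is $c_{\check\lambda}\cdot\deg$ for some $c_{\check\lambda}\in\BQ$. Applying this to the class of the divisor $[x']$, one gets $f_{\check\lambda}(\varpi_{x'})=c_{\check\lambda}\cdot\deg x'$, hence $(a^{\tilde x}(\eta),\check\lambda)=c_{\check\lambda}$, independent of $\tilde x$. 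Varying $\check\lambda$ completes the proof.

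There is no real obstacle here; essentially everything reduces to a standard class-field-theoretic fact (``any $\BQ$-valued unramified Hecke character of a function field is a rational multiple of the degree''), together with the observation that the $v$-valuation of a Hecke character forces automatic unramifiedness at all places because $\BQ$ is torsion-free.
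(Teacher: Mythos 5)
Your proof is correct and follows essentially the same route as the paper: the paper also reduces to showing that $f_{\check\lambda}$ is proportional to $\deg$ (being trivial on $(F')^\times$) and then reads off independence of $x'$ from the formula \eqref{e:Newton toric}. Your extra steps — killing the unit groups via compactness, the open kernel of $\eta$, and torsion-freeness of $\BQ$, then invoking finiteness of $\Pic^0$ — are exactly the class-field-theoretic content the paper's one-line proof leaves implicit, so this is an elaboration rather than a different argument.
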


\begin{proof}
Let us show that the r.h.s. of \eqref{e:Newton toric} does not depend on $x'\in |X'|$. Indeed,
the homomorphism $f_{\check\lambda}:\BA_{F'}^\times\to\BQ$ is trivial on $(F')^\times$, so it is proportional to 
$\deg :\BA_{F'}^\times\to\BZ$.
\end{proof}

In the situation of Corollary~\ref{c:a(eta)} we will write $a(\eta )$ instead of $a^{\tilde x}(\eta )$.

\subsection{V.~Lafforgue's estimate} \label{sss:VLafforgue p-adic G}
Let $G$ be a split reductive group. Let $\pi$ be a cuspidal automorphic irreducible representation of $G (\BA_F)$ over 
$\BQbar_p$, which is unramified at each point of $|X|$. Let $Z^\circ$ be the neutral connected component of the center of $G$, and let 
$\eta : Z^\circ (\BA_F)\to\BQbar_p^\times$ be the central character of $\pi$.
According to \cite[Prop.~2.1]{VLa}, 
\begin{equation}
a^{\tilde x}(\pi )\le a(\eta)+\rho \quad \mbox{ for all } \tilde x\in |\tX |,
\end{equation}
where $\rho$ is one half of the sum of the positive roots of $G$ and $a(\eta)\in\Lambda_{Z^\circ}^\BQ\subset\Lambda_{G}^\BQ$ is defined at the end of~\S\ref{ss:torus}.

Probably the method of \cite{VLa} should work even without assuming $G$ to be split (this is discussed in  \S 5~of~\cite{VLa}).

\subsection{The result}  \label{ss:2automorphic result}
As before, let $\pi$ be an irreducible representation of $G(\BA_F)$ over~$\BQbar_p$, which is unramified at all points 
$x\in |X|$.

\begin{theorem}   \label{t:2automorphic}
Suppose that $\pi$ is cuspidal automorphic. Then

(i) there exists an element $a^\eta (\pi)\in (\Lambda_G ^{+,\BQ})^\Pi$ such that the set $\{\tilde x\in |\tX | \;|\;a^{\tilde x} (\pi )\ne a^\eta (\pi)\}$ has finite image in $|X|$;

(ii) for all  $\tilde x\in |X| $ one has $a^{\tilde x} (\pi )\le a^\eta (\pi)$;

(iii) Conjecture 12.7 of \cite{VLa2} would imply that $a^\eta (\pi)$ has small gaps in the sense of \S\ref{sss:weights coweights}.
\end{theorem}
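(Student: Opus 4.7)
The plan is to deduce all three parts from V.~Lafforgue's results in \cite{VLa2}, combined with Proposition~\ref{p:semicontinuity} and Corollary~\ref{c:Arthur}. Specifically, the main theorem of \cite{VLa2} attaches to the cuspidal automorphic representation $\pi$ (after fixing an embedding $\BQbar_{\ell}\hookrightarrow\BQbar_p$) a continuous ``semisimple Langlands parameter''
\[
\sigma : \Pi \to \null^LG(\BQbar_p),
\]
whose semisimple part $\sigma(\Fr_{\tilde x})_{\s}$ at each unramified place corresponds, via the bijection recalled in \S\ref{ss:Satake}, to the Satake parameter of $\pi_x$. Moreover, $\sigma$ is ``algebraic'' in the sense of \S\ref{sss:Algebraic}, since the Hecke eigenvalues of a cuspidal automorphic representation are algebraic.

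I would first verify, by comparing the definition of $a^{\tilde x}(\pi)$ in \S\ref{sss:automorphic Newton} with that of $a^{\tilde x}(\sigma)$ in \S\ref{ss:Newton coweights}, that both quantities are obtained by the same recipe: apply the valuation $v_x$ to $\sigma(\Fr_{\tilde x})_{\s}$ and interpret the resulting homomorphism out of the character lattice as an element of $\Lambda_G^{+,\BQ} = \check\Lambda_{\check G}^{+,\BQ}$. This yields the identity $a^{\tilde x}(\pi) = a^{\tilde x}(\sigma)$ for every $\tilde x\in|\tX|$. Parts (i) and (ii) then follow at once by applying Proposition~\ref{p:semicontinuity} to $\sigma$, with the role of $G$ played by $\null^LG$; the surjectivity hypothesis of that proposition is automatic, because $\null^LG/\check G = \Pi/\Pi'$ and the composition $\Pi \to \null^LG \to \Pi/\Pi'$ is the canonical projection.

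For part (iii), Conjecture~12.7 of \cite{VLa2} predicts that, since $\pi$ is cuspidal, the parameter $\sigma$ arises as $\phi_\psi$ (in the sense of \S\ref{sss:Langlands}) for an \emph{elliptic} Arthur parameter $\psi : \Pi\times SL(2,\BQbar_p) \to \null^LG(\BQbar_p)$; equivalently, $\psi$ satisfies conditions (i)--(iv) of \S\ref{sss:A class of}. Granting this, Corollary~\ref{c:Arthur} applied to $\psi$ (again with $G$ there replaced by $\null^LG$) shows that $a^\eta(\phi_\psi)$ has small gaps. Since $a^\eta(\phi_\psi) = a^\eta(\sigma) = a^\eta(\pi)$, this proves (iii).

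The main technical point, which I would expect to be essentially supplied by \cite{VLa2}, is the unramified local-global compatibility matching $\sigma(\Fr_{\tilde x})$ with the Satake parameter of $\pi_x$; once this is in hand, the remainder of the argument for (i)--(ii) is a formal application of Proposition~\ref{p:semicontinuity}, and (iii) reduces to invoking Corollary~\ref{c:Arthur}. The only strictly conjectural input enters in~(iii), through the single step of upgrading $\sigma$ to an elliptic Arthur parameter $\psi$. A minor subtlety is the cyclotomic half-twist appearing in the definition of $\phi_\psi$ in \S\ref{sss:Langlands}: this must be reconciled with the normalization of Satake parameters via the square root $p^{1/2}$ fixed in \S\ref{sss:sqrt}, but since the quantities $a^{\tilde x}(\pi)$ and $a^{\tilde x}(\sigma)$ are defined using the same valuation $v_x$ and are independent of the choice of $p^{1/2}$, this causes no difficulty.
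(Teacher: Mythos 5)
Your proposal follows the same route as the paper's proof: use Theorem~12.3 of \cite{VLa2} to produce a continuous parameter $\sigma$ lifting $\Pi\to\Pi/\Pi'=\null^LG/\check G$ and compatible with the Satake parameters of $\pi$ at every place, identify $a^{\tilde x}(\pi)$ with $a^{\tilde x}(\sigma)$, deduce (i)--(ii) from Proposition~\ref{p:semicontinuity}, and deduce (iii) from Conjecture~12.7 of \cite{VLa2} together with Corollary~\ref{c:Arthur}.

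There is, however, one genuine gap: your claim that ``the Hecke eigenvalues of a cuspidal automorphic representation are algebraic,'' which you use to conclude that $\sigma$ is ``algebraic'' in the sense of \S\ref{sss:Algebraic}. This is false in general: twisting a cuspidal $\pi$ by a quasi-character of $(G/[G,G])(\BA_F)/(G/[G,G])(F)$ with transcendental values yields a cuspidal representation whose Satake parameters do not lie in $\BQbar$ (the failure is already visible for a torus). Algebraicity is needed at two places in your argument: first, $\pi$ is given with $\BQbar_p$-coefficients while Lafforgue's theorem is an $\ell$-adic statement, so $\pi$ (equivalently, its Hecke eigenvalues) must be moved into the $\BQbar_\ell$-setting --- your abstract embedding $\BQbar_\ell\hookrightarrow\BQbar_p$ does not accomplish this (it is not continuous, and Proposition~\ref{p:semicontinuity} is in any case a statement about continuous homomorphisms into $\null^LG(\BQbar_\ell)$); second, the comparison $a^{\tilde x}(\pi)=a^{\tilde x}(\sigma)$ and the application of Proposition~\ref{p:semicontinuity} only make sense when the Frobenius/Satake data lie in $\BQbar$, where the fixed valuation $v$ is defined. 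The paper repairs this at the outset: it first twists $\pi$ by a quasi-character of $(G/[G,G])(\BA_F)/(G/[G,G])(F)$ so that $\pi$ is defined over $\BQbar\subset\BQbar_p$, and then embeds $\BQbar$ into $\BQbar_\ell$. Your write-up would also need the (easy) observation that this twist is harmless: by Corollary~\ref{c:a(eta)} it shifts every $a^{\tilde x}(\pi)$ by one and the same element of $\Lambda^{\BQ}_{G/[G,G]}\subset\Lambda^{\BQ}_G$, which pairs to zero with all coroots, so none of (i)--(iii) is affected. With this preliminary reduction inserted, your argument coincides with the paper's.
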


Conjecture 12.7 of \cite{VLa2} is a variant of Arthur's conjecture \cite{Ar}. 

\begin{proof}
Let $G^{ab}:=[G,G]$. After twisting $\pi$ by a quasi-character of $G^{\ab}(\BA)/G^{\ab}(F)$, we can assume that $\pi$ is defined over the subfield $\BQbar\subset\BQbar_p$. Choose a prime $\ell\ne p$ and an algebraic closure $\BQbar_\ell$ of $\BQ_\ell$ equipped with a homomorphism $\BQbar\mono\BQbar_\ell$. Note that $\BQbar$ is equipped with a distinguished $p$-adic valuation (namely, the restriction of $v:\BQbar_p^\times\to\BQ$).

By Theorem 12.3 of \cite{VLa2}, the canonical epimorphism $\Pi\to\Pi/\Pi'=\null^LG/\check G$ can be lifted to
a continuous homomorphism $\sigma :\Pi\to\null^LG(\BQbar_{\ell})$ which is compatible with $\pi$ in the sense of Langlands; this means that for each $x\in |X|$ the element of $\Sat_x (\BQbar_{\ell})$ corresponding to the map
\[
\tilde x\mapsto\sigma (\Fr_{\tilde x}), \quad \tilde x\in|\tX |_x
\]
is equal to the Satake parameter of $\pi_x$. Applying Proposition~\ref{p:semicontinuity} to $\sigma$, one gets statements (i)-(ii). 

Conjecture 12.7 of \cite{VLa2} would imply that $\sigma$ can be chosen to be the Langlands parameter corresponding to an elliptic Arthur parameter (see \S\ref{sss:Arthur parameters}-\ref{sss:Langlands}). Applying Corollary~\ref{c:Arthur} to such $\sigma$, one gets statement (iii).
\end{proof}

\appendix

\section{Slopes for some hypergeometric local systems} \label{s:Dwork}
The main goal of this Appendix is to prove Proposition~\ref{p:counterexample}, which 
provides  the counterexamples promised in \S\ref{ss:counterex} and Remark~\ref{r:not guaranteed}. In \S\ref{ss:Magma} we explain how to produce more examples using a computer.

We will work over $\BF_p$; in particular, $\BG_m$ denotes the multiplicative group over $\BF_p$.

\subsection{A class of hypergeometric local systems}
\subsubsection{The fields $E$ and $E_\lambda$}
Let $E\subset\BQ_p$ be the subfield generated by the roots of unity of order $p-1$. We fix a non-Archimedean place $\lambda$ of $E$ not dividing $p$. Let $E_\lambda$ denote the completion.

Let $\tau:\BF_p^\times\to E^\times$ denote the Teichm\"uller character. We have a canonical isomorphism
\[
\BZ/(p-1)\BZ\iso\Hom (\BF_p^\times ,E^\times ), \quad j\mapsto\tau^j\, .
\]

\subsubsection{The local systems $\E_{\mathbf c}$}  \label{sss:Katz sheaf}
Let $\rho_1,\ldots,\rho_n$ and $\chi_1,\ldots,\chi_n$ be two unordered lists of characters $\BF_p^\times\to E^\times$ such that $\rho_i\ne\chi_j$ for all $i$ and $j$. Given these data, N.Katz defines in \cite[Ch.~8]{K90} a certain lisse $E_\lambda$-sheaf on $\BG_m\setminus\{ 1\}$ of rank $n$, which he calls hypergeometric; he gives a brief summary  in \cite[\S 4]{Ka2}.

We will need only the case that $\chi_1=\ldots =\chi_n=1$. In this case the input data is an unordered list $\mathbf c$ of elements $c_1,\ldots ,c_n\in\{1,\ldots ,p-2\}$. We set $\rho_i:=\tau^{c_i}$, where $\tau$ is the Teichm\"uller character; note that $\rho_i\ne 1$ for all $i$. Katz defines the corresponding local system $\E_{\mathbf c}$ on $\BG_m\setminus\{ 1\}$ as follows. Let $\cL_i$ denote the lisse $E_\lambda$-sheaf on $\BG_m$ corresponding to $\rho_i$ in the usual sense (the ``trace of geometric Frobenius" function on $\BF_{p^m}^\times$ corresponding to $\cL_i$ equals the composition of $\rho_i$ with the norm map $\BF_{p^m}^\times\to\BF_p^\times$). Let $\cF_i$ denote the sheaf on $\BG_m$ defined by $\cF_i:=j_!f^*\cL_i$, where $f:\BG_m\setminus\{ 1\}\mono\BG_m$ is the map $x\mapsto 1-x$ and $j:\BG_m\setminus\{ 1\}\mono\BG_m$ is the natural embedding. Finally, consider the object of the derived category of sheaves on $\BG_m\setminus\{ 1\}$ defined by
\begin{equation}  \label{e:E_c}
\E_{\mathbf c}:=j^*(\cF_1\star_!\ldots\star_!\cF_n)[n-1], 
\end{equation}
where $\star_!$ denotes multiplicative convolution with compact support. 
Katz proves the following statements.

\begin{theorem}   \label{t:Katz}
(i) $\E_{\mathbf c}$ is a lisse $E_\lambda$-sheaf on $\BG_m\setminus\{ 1\}$ of rank $n$. It is irreducible; moreover, 
$\E_{\mathbf c}\otimes_{E_\lambda}\bar E_\lambda$ is geometrically irreducible.

(ii) The canonical morphism $\cF_1\star_!\ldots\star_!\cF_n\to\cF_1\star\ldots\star\cF_n$ is an isomorphism (here $\star$ denotes multiplicative convolution without compact support).

(iii) $\E_{\mathbf c}$ is tamely ramified at $0,1,\infty\in\BP^1$.

(iv)  $\E_{\mathbf c}$ is pure of weight $n-1$.

(v) Set $N=n(n-1)/2$; then the rank $1$ local system $(\det\E_{\mathbf c})(N)$ has finite order.
\end{theorem}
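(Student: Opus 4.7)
The plan is to identify our $\E_{\mathbf c}$ with the specialization, at $\chi_1=\cdots=\chi_n=1$, of the general hypergeometric sheaf $\Hyp(!,\psi;\rho_i;\chi_j)$ constructed in \cite[Ch.~8]{K90}, and then to read off (i)--(v) from Katz's structural theorems for $\Hyp$. The disjointness hypothesis $\rho_i\ne\chi_j$ specializes here to the statement that each $\rho_i$ is nontrivial, which is the most favorable case of Katz's theorems; in particular, all the ``resonances'' that complicate the general discussion in \cite{K90} are absent.

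For (i), I would first verify that $\cF_1\star_!\cdots\star_!\cF_n$ is lisse of generic rank $n$ on $\BG_m\setminus\{1\}$: each $\cF_i$ has generic rank $1$, and because $\rho_i\ne 1$ the cohomologies $H^0_c(\BG_m,\cF_i)$ and $H^2_c(\BG_m,\cF_i)$ both vanish, so an inductive fiber-dimension count on the iterated multiplication map $\BG_m^n\to\BG_m$ gives rank $n$ and no further singularities. Irreducibility on $\BG_m\setminus\{1\}$ then follows from Katz's general irreducibility criterion for $\Hyp$, and is insensitive to enlarging the coefficient field to $\bar E_\lambda$. For (ii), the same vanishing of $H^0_c$ and $H^2_c$ forces the cone of $\cF_1\star_!\cdots\star_!\cF_n\to\cF_1\star\cdots\star\cF_n$ to be zero by induction on $n$. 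For (iii), each $\rho_i\colon\BF_p^\times\to E^\times$ has order dividing $p-1$, so $\cL_i$ is tame; and $j_!$, $f^*$, and convolution of tame sheaves all preserve tameness, which handles the three singular points $0,1,\infty$ simultaneously.

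For (iv), each $\cL_i$ is pointwise pure of weight $0$; Deligne's theorem on weights for $!$-convolution, combined with the identification of $\star_!$ and $\star_*$ from (ii) (which upgrades the weight inequality to an equality in the relevant cohomological degree), shows that $\E_{\mathbf c}$ is pure of weight $n-1$. For (v), I would invoke Katz's explicit determinant formula for $\Hyp$: $\det\E_{\mathbf c}$ is a rank-$1$ Kummer sheaf attached to the character $\prod_i\rho_i$, Tate-twisted by $-N$ with $N=n(n-1)/2$; twisting back by $(N)$ produces a rank-$1$ sheaf corresponding to a character of $\BF_p^\times$, which necessarily has finite order. The main obstacle I anticipate is not conceptual but bibliographic: carefully translating Katz's conventions (sign of the Tate twist in his determinant formula, normalizations of $\star_!$ versus ``middle'' convolution, and the role of the additive character $\psi$ that drops out in our specialization) to the notation used in this appendix.
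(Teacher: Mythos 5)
Your proposal follows essentially the same route as the paper: the theorem is obtained by identifying $\E_{\mathbf c}$ with (the restriction to $\BG_m\setminus\{1\}$ of) Katz's hypergeometric sheaf with all $\chi_i$ trivial and reading off (i)--(iv) from parts (1), (4), (5), (8) of \cite[Thm.~8.4.2]{K90} and (v) from parts (1a)--(1b) of \cite[Thm.~8.12.2]{K90}. The ``bibliographic'' obstacle you flag is exactly what the paper's accompanying comments address: $\E_{\mathbf c}$ is Katz's $\psi$-independent sheaf $\cH^{can}$ rather than $\cH$ (they differ by a rank-one local system on $\Spec\BF_p$), the identification being checked on trace functions after reducing to $n=1$ via convolution identities, and this twist has to be tracked when deducing (iv) and (v).
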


The theorem is proved in \cite[Ch.~8]{K90}: statements (i)-(iv) correspond to parts (1), (4), (5), and (8) of  \cite[Thm.~8.4.2]{K90}, and statement (v) corresponds to parts (1a)-(1b) of  \cite[Thm.~8.12.2]{K90}. Some technical issues are explained in \S\ref{sss:Katznotation} below.

\begin{rem}   
Katz also describes the local monodromies of $\E_{\mathbf c}$ at $0,1,\infty\in\BP^1$. In particular, he proves that the local monodromy at 
$0\in\BP^1$ is maximally unipotent and the eigenvalues of the local monodromy at $\infty$ correspond to the characters $\rho_1,\ldots,\rho_n$.
\end{rem}

\begin{rem}   \label{r:duality}
Theorem~\ref{t:Katz}(ii) implies that the local system dual to $\E_{\mathbf c}$ is isomorphic to $\E_{\mathbf c'}(n-1)$, where $\mathbf c'$ is the $n$-uple formed by the numbers $c'_i:=p-1-c_i$.
\end{rem}

The next subsection should be skipped by the reader unless he wants to check that Theorem~\ref{t:Katz} is indeed proved in \cite[Ch.~8]{K90}.

\subsubsection{Some comments on  \cite{K90} and  \cite{Ka2}}   \label{sss:Katznotation}
In  \cite[\S 8.2.2]{K90} Katz introduces a complex of sheaves on $\BG_m$ denoted by $\Hyp (!, \psi;\chi\mbox{'s};\rho\mbox{'s })$ (here $\psi$ is a non-trivial additive character of $\BF_p$). Its multiplicative translate by 
$\lambda\in\BG_m$ is denoted in \cite[\S 8.2.13]{K90} by $\Hyp_\lambda(!, \psi; \chi\mbox{'s}; \rho\mbox{'s})$.
In \cite[Thm.~8.4.2]{K90} he sets $\cH_\lambda (!, \psi; \chi\mbox{'s}; \rho\mbox{'s}):=\Hyp_\lambda (!, \psi; \chi\mbox{'s}; \rho\mbox{'s})[-1]$ and shows that $\cH_\lambda$ is a sheaf.

In \cite[\S 4]{Ka2} Katz considers only $\lambda=1$ and writes $\cH$ instead of $\cH_1$. On p.98 of \cite{Ka2} he introduces a sheaf 
$\cH^{can} (\chi\mbox{'s}; \rho\mbox{'s})$, which is a tensor product of $\cH$ and a rank $1$ local system on $\Spec\BF_p$; unlike $\cH$,
the sheaf $\cH^{can}$ does not depend on $\psi$. On p.99 of \cite{Ka2} he writes a formula\footnote{By \cite[\S 8.2.3]{K90}, it suffices to prove this formula for $n=1$. This can be done by comparing the  ``trace of Frobenius" functions. } for $\cH^{can}$ which does not involve $\psi$ at all. This formula shows that if the $\chi_i$'s are trivial then the restriction of $\cH^{can} (\chi\mbox{'s}; \rho\mbox{'s})$ to $\BG_m\setminus\{ 1\}$ is the sheaf that we denote by $\E_{\mathbf c}$.

To deduce Theorem~\ref{t:Katz}(iv-v)  from the results of \cite{K90}, one has to take into account that our $\E_{\mathbf c}$ corresponds to $\cH^{can}$, while Theorems~8.4.2 and 8.12.2 of \cite{K90} are formulated in terms of $\cH$; one also has to keep in mind that in our situation the $\chi_i$'s are trivial.

\subsubsection{The crystalline companions of $\E_{\mathbf c}$}   \label{sss:Miyatani}
According to K.~Miyatani \cite{Mi}, for any $\mathbf c$ as in \S\ref{sss:Katz sheaf}  there exists an irreducible object $M_{\mathbf c}\in\FIsocd (\BG_m\setminus \{ 1\})$ such that for every closed point $x\in\BG_m\setminus \{ 1\}$ the Frobenius characteristic polynomial of $(M_{\mathbf c})_x$ is equal to that of $(\E_{\mathbf c})_x$ (the word ``equal" makes sense because the coefficients of the Frobenius characteristic polynomial of $(\E_{\mathbf c})_x$ are in the number field $E$, which is a subfield of $\BQ_p$). The construction of $M_{\mathbf c}$ given in \cite[\S 3.2]{Mi} is parallel to the one from \S\ref{sss:Katz sheaf}.

\subsection{The counterexamples}
For any $x\in\bar\BF_p^\times\setminus \{ 1\}$ the characteristic polynomial of the geometric Frobenius (with respect to the field $\BF_p (x)\subset\bar\BF_p$) acting on the stalk $(\E_{\mathbf c})_x$ has coefficients in $E$.
Since $E\subset\BQ_p$ we can talk about the slopes of $\E_{\mathbf c}$ at $x$. We denote them by $a_i^x(\E_{\mathbf c} )$, where $a_1^x(\E_{\mathbf c})\ge\ldots\ge a_n^x(\E_{\mathbf c})$.

\begin{prop}  \label{p:counterexample}
Assume that $p\ge 5$. Let $n=3$ and $\mathbf c=(c_1,c_2,c_3)$, where $c_1=1$, $c_2=p-2$, and $c_3$ is any element of $\{ 1,\ldots ,p-2\}$ different from $(p-1)/2$. Let $\E_{\mathbf c}$ be the lisse $E_\lambda$-sheaf  of rank~3 on $\BG_m\setminus\{ 1\}$  defined in \S\ref{sss:Katz sheaf}. Then there is a unique $x\in\BF_p^\times\setminus\{ 1\}$ such that $a_1^x(\E_{\mathbf c})-a_2^x(\E_{\mathbf c})>1$;
namely, $x$ is the residue class of $-(2c_3)^{-1}$. 
\end{prop}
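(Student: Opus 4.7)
The slopes $a_i^x$ at any $x \in \BF_p^\times \setminus \{1\}$ satisfy $\sum_i a_i^x = 3$ (by Theorem~\ref{t:Katz}(v) applied to $\det \E_{\mathbf c}$) and lie in $[0,2]$ (by purity, Theorem~\ref{t:Katz}(iv), combined with Deligne's integrality bound on slopes of pure sheaves). Writing the characteristic polynomial of $\Fr_x$ as $T^3 - s_1(x) T^2 + s_2(x) T - s_3(x)$, so that $v(s_3(x)) = 3$, the Newton-polygon condition $a_1^x - a_2^x > 1$ translates into an explicit inequality on $v(s_1(x))$ and $v(s_2(x))$ (generically $2v(s_2(x)) < v(s_1(x)) + 2$).

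The plan is first to compute $s_1(x)$ and $s_2(x)$ from the trace formula for the convolution~\eqref{e:E_c}:
\begin{equation*}
s_1(x) = \sum_{\substack{y_1 y_2 y_3 = x \\ y_i \in \BF_p \setminus \{0,1\}}} \tau(1 - y_1)\, \tau^{-1}(1 - y_2)\, \tau^{c_3}(1 - y_3),
\end{equation*}
and recovering $s_2(x)$ from $\Tr(\Fr_x^2)$ via Newton's identities (applied to the analogous sum over $\BF_{p^2}$-points). The choice $\rho_1 \rho_2 = 1$ is essential here: substituting $y_1 y_2 = u$ collapses the inner double sum to a single Kloosterman-type sum $J(u) = \sum_{y} \tau(y(1-y)/(y-u))$ over $y \in \BF_p \setminus \{0, 1, u\}$, so that $s_1(x) = \sum_u \tau^{c_3}(1 - x/u)\, J(u)$. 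Classical Gauss-sum technology (Stickelberger on $p$-adic valuations, plus Jacobi sum and Hasse-Davenport identities) should then give piecewise-explicit control on $v(J(u))$ and hence on $v(s_1(x))$ and $v(s_2(x))$.

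Next, I would track these valuations as $x$ varies and show that the Newton-polygon inequality for $a_1^x - a_2^x > 1$ holds at exactly one rational point. The explicit value $x_0 = -(2c_3)^{-1}$ should emerge as the unique zero of the polynomial obstruction controlling the dominant contribution to $s_1$, $s_2$; the hypothesis $c_3 \neq (p-1)/2$ is precisely the condition ensuring $x_0 \neq 1$, so that $x_0$ lies in the domain $\BG_m \setminus \{1\}$ of $\E_{\mathbf c}$.

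The main obstacle is the $p$-adic valuation analysis of $J(u)$ and of its weighted average against $\tau^{c_3}(1 - x/u)$: one needs enough precision not only to exhibit one exceptional $x$ but also to rule out any others. This is where the bulk of the Gauss-sum combinatorics lies, and where the specific form $-(2c_3)^{-1}$ must be extracted rather than merely verified a posteriori.
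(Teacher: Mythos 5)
Your plan stops exactly where the actual work begins, so as it stands there is a genuine gap. Everything after ``Classical Gauss-sum technology \dots should then give piecewise-explicit control'' is a hope, not an argument: you never compute $v(s_1(x))$ or $v(s_2(x))$, never exhibit the mechanism that singles out $x_0=-(2c_3)^{-1}$, and you say yourself that extracting this value is the main obstacle. Moreover the proposed route is doubtful, not just unexecuted: Stickelberger and Hasse--Davenport control valuations of Gauss and Jacobi sums, but $s_1(x)$ is a hypergeometric character sum at a varying argument $x$, and its $p$-adic valuation as a function of $x$ (which is precisely the content of the proposition) does not reduce in any routine way to valuations of Gauss sums; the same applies, worse, to the $\BF_{p^2}$-trace you need for $s_2$. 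One more small inaccuracy: the hypothesis $c_3\ne (p-1)/2$ is not only what makes $x_0\ne 1$; one also has to know that $a_1^{x_0}(\E_{\mathbf c})=2$, i.e.\ that $x_0$ is not an exceptional point for the dual sheaf $\E_{\mathbf c'}$ (Remark~\ref{r:duality}), and it is a separate (if ultimately equivalent) check that this too is guaranteed by $c_3\ne (p-1)/2$.

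The paper's proof shows that only mod~$p$ information is needed, and gets it by an elementary Chevalley--Warning-type computation rather than Gauss-sum analysis (Proposition~\ref{p:key formula}): expanding $\sum (1-y_1)^{\tilde c_1}(1-y_2)^{\tilde c_2}(1-y_3)^{\tilde c_3}$ over $y_1y_2y_3=x$ in $\BF_{p^m}$, only the ``diagonal'' monomials survive, and Lucas' congruence for binomial coefficients gives $\bar P_x^{(m)}(t)=1-N_{\BF_{p^m}/\BF_p}(u_{\mathbf c}(x))\,t$ with $u_{\mathbf c}(X)=\sum_r(-1)^{nr}\binom{c_1}{r}\cdots\binom{c_n}{r}X^r$ (here, since $p\ge 5>n$, the reduction of the characteristic polynomial is already pinned down by its logarithmic derivative). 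For $\mathbf c=(1,p-2,c_3)$ this polynomial is linear, $u_{\mathbf c}(X)\equiv 1+2c_3X$, so $a_3^x>0$ exactly at its root $x_0=-(2c_3)^{-1}$ and $a_3^x=0$ elsewhere; combining with $\sum_i a_i^x=3$, the bound $a_1^x\le 2$ coming from duality and integrality, and $u_{\mathbf c'}(x_0)\ne 0$, the gap $a_1^x-a_2^x>1$ occurs precisely at $x_0$ (Corollary~\ref{c:slopes for any n}). So your global setup (slope sum, slope bounds, Newton-polygon translation) matches the paper, but the computational heart of the proof is absent, and the tool you propose for it is both heavier and less likely to succeed than the direct mod~$p$ computation.
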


The proof will be given in \S\ref{ss:counterexample proof}.

The local systems $\E_{\mathbf c}$ from Proposition~\ref{p:counterexample} are the
counterexamples promised in Remark~\ref{r:not guaranteed}, and their crystalline companions (see \S\ref{sss:Miyatani}) are the counterexamples promised in \S\ref{ss:counterex}.

\subsection{The key computation} 
Just as in \S\ref{sss:Katz sheaf}, let $n$ be any positive integer and $\mathbf c:=(c_1,\ldots ,c_n)$, where $c_i\in \{ 1,\ldots ,p-2\}$. 
Given $m\in\BN$ and $x\in\BF_{p^m}^\times\setminus \{ 1\}$ set 
$$P_x^{(m)}(t):=\det (1-F_xt,(\E_{\mathbf c})_x),$$ 
where $(\E_{\mathbf c})_x$ is the stalk of $\E_{\mathbf c}$ at $x\in\bar\BF_p^\times\setminus \{ 1\}$ and $F_x$ is the geometric Frobenius of the field $\BF_{p^m}$. By~\eqref{e:E_c}, the coefficients of the polynomial $P_x^{(m)}$ belong to the ring of integers $O_E$, which is a subring of $\BZ_p$. Let 
$\bar P_x^{(m)}$ be the image of $P_x^{(m)}$ in $\BZ_p [t]/p\BZ_p [t]=\BF_p [t]$. 

\begin{prop}  \label{p:key formula}
(i) One has
\begin{equation}  \label{e:log derivative}
-\frac{d}{dt}\log \bar P_x^{(m)}(t)=\alpha_x^{(m)} (1-\alpha_x^{(m)}t)^{-1}, \quad \alpha_x^{(m)}:=N_{\BF_{p^m}/\BF_p} (u_{\mathbf c}(x)),
\end{equation}
where $N_{\BF_{p^m}/\BF_p}:\BF_{p^m}\to\BF_p$ is the norm map and $u_{\mathbf c}\in\BF_p[X] $ is the following polynomial:
\begin{equation}   \label{e:thepolynomial}
u_{\mathbf c}(X):=\sum_{r\ge 0}(-1)^{nr}\binom{c_1}{r}\cdot\ldots\cdot\binom{c_n}{r}X^r .
\end{equation}

(ii) If $p>n$ then $\bar P_x^{(m)}(t)=1-\alpha_x^{(m)}t$.
\end{prop}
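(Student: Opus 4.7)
My plan is to reduce both (i) and (ii) to the single mod-$p$ congruence
\[
\Tr(F_x^k \mid (\E_{\mathbf c})_x) \equiv (\alpha_x^{(m)})^k \pmod p \quad (k \geq 1).
\]
Given the congruence, summing $\sum_k (\cdot)\, t^{k-1}$ and invoking the Newton identity $-\frac{d}{dt}\log \bar P_x^{(m)} = \sum_k \Tr(F_x^k \mid (\E_{\mathbf c})_x)\, t^{k-1}$ yields (i) directly. For (ii), the congruence forces $\bar P_x^{(m)}$ to vanish at $t = (\alpha_x^{(m)})^{-1}$, so $(1 - \alpha_x^{(m)} t) \mid \bar P_x^{(m)}$ in $\overline{\BF_p}[t]$; the quotient then has zero logarithmic derivative, hence lies in $\overline{\BF_p}[t^p]$, and since its degree is at most $n - 1 < p$ it must equal $1$.

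For the congruence itself, I would first apply Grothendieck--Lefschetz together with proper base change to the definition \eqref{e:E_c}. Writing $M := mk$, the $n$-fold multiplicative convolution and the shift $[n-1]$ yield
\[
\Tr(F_x^k \mid (\E_{\mathbf c})_x) = (-1)^{n-1}\!\!\sum_{\substack{y_1 \cdots y_n = x \\ y_i \in \BF_{p^M}^\times}} \prod_{i=1}^n \tau^{c_i}\bigl(N_{\BF_{p^M}/\BF_p}(1 - y_i)\bigr),
\]
with the convention that summands with some $y_i = 1$ vanish because $\cF_i = j_!(\cdot)$. Modulo $p$, the Teichm\"uller character $\tau$ reduces to the identity on $\BF_p$, and in characteristic $p$ the norm collapses to a single power: $N_{\BF_{p^M}/\BF_p}(1-y) = (1-y)^{s_M}$ with $s_M := (p^M-1)/(p-1)$. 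Matters reduce to the combinatorial identity in $\overline{\BF_p}$
\[
S_M(x) := \sum_{y_1 \cdots y_n = x}\, \prod_{i=1}^n (1-y_i)^{c_i s_M} \;=\; (-1)^{n-1}\, u_{\mathbf c}(x)^{s_M}.
\]

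I would prove this identity by multiplicative Fourier transform on $\BF_{p^M}^\times$. For each character $\chi_a(y) = y^a$, binomial expansion of $(1-y)^{c_i s_M}$ and character orthogonality ($\sum_{y \in \BF_{p^M}^\times} y^j \equiv -1 \pmod p$ if $(p^M-1) \mid j$, else zero) give
\[
\widehat{f_i}(\chi_a) = -(-1)^{j_0} \binom{c_i s_M}{j_0},
\]
where $j_0 = j_0(a) \in \{0, 1, \ldots, p^M - 2\}$ represents $-a \bmod (p^M - 1)$; the bound $c_i s_M < p^M - 1$ (from $c_i \leq p - 2$) ensures exactly one $j$ contributes. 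Lucas' theorem, applied to the base-$p$ expansion $c_i s_M = c_i(1 + p + \cdots + p^{M-1})$ and the digit decomposition $j_0 = \sum_{k=0}^{M-1} j_{0,k} p^k$, converts $\binom{c_i s_M}{j_0}$ into $\prod_k \binom{c_i}{j_{0,k}}$. Fourier inversion (with $(p^M - 1)^{-1} \equiv -1 \pmod p$) followed by regrouping the inverse transform by digits turns $S_M(x)$ into the factored form
\[
S_M(x) = (-1)^{n-1} \prod_{k=0}^{M-1} u_{\mathbf c}(x^{p^k}),
\]
and the Frobenius relation $u_{\mathbf c}(x^{p^k}) = u_{\mathbf c}(x)^{p^k}$ (valid because $u_{\mathbf c} \in \BF_p[X]$) collapses this product to $(-1)^{n-1} u_{\mathbf c}(x)^{s_M}$. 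The tower formula then yields $u_{\mathbf c}(x)^{s_M} = N_{\BF_{p^M}/\BF_p}(u_{\mathbf c}(x)) = (\alpha_x^{(m)})^k$ for $x \in \BF_{p^m}^\times$, completing the chain.

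The main technical obstacle will be the bookkeeping in the Fourier-inversion step: carefully tracking the sign $(-1)^{n j_0}$ and its digit decomposition $\prod_k (-1)^{n j_{0,k}}$, which requires separating the case $p$ odd (where $(-1)^{p^k} = -1$ in $\overline{\BF_p}$ produces the digit factorization) from $p = 2$ (where all signs are trivial). One must also verify that cases with $j_{0,k} > c_i$, for which Lucas' binomial vanishes, are absorbed cleanly into the polynomial $u_{\mathbf c}$ without introducing phantom contributions. These points are routine but require attention to detail.
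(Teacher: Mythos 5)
Your proposal is correct and takes essentially the same route as the paper: reduce (ii) to (i) via the degree bound $n<p$, write the mod-$p$ trace of $F_x^k$ as the complete sum over $\{y_1\cdots y_n=x\}$ using $\tau(y)\equiv y$ and $N_{\BF_{p^M}/\BF_p}(z)=z^{1+p+\cdots+p^{M-1}}$, and evaluate it using the vanishing of power sums over $\BF_{p^M}^\times$ together with Lucas' theorem applied to the base-$p$ digits of $c_i(1+p+\cdots+p^{M-1})$. Your multiplicative Fourier-transform packaging is only a cosmetic variant of the paper's direct expansion, which records the same digit-regrouping step as the polynomial identity $u_{\tilde{\mathbf c}}(X)=\prod_{j}u_{\mathbf c}(X)^{p^j}$ in $\BF_p[X]$.
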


Later we will show that the assumption $p>n$ in statement (ii) is unnecessary, see Corollary~\ref{c:slopes for any n}.

The proof given below is somewhat similar to that of the Chevalley-Warning theorem.
 
\begin{proof}
$\bar P_x^{(m)}$ is a polynomial in $t$ of degree $\le n$, so it can be reconstructed from its logarithmic derivative if $p>n$. Therefore it suffices to prove (i). 

For $m\in\BN$ and $x\in \BF_p^\times$ set 
\[
V_x^{(m)}:=\{(x_1,\ldots ,x_n)\in (\BF_{p^m}^\times)^n\,|\,x_1\cdot\ldots\cdot x_n=x\},
\]
\[
S^{(m)}(x):=\sum_{\,\,V_x^{(m)} }(1-x_1)^{\tilde c_1}\cdot\ldots\cdot (1-x_n)^{\tilde c_n}, 
\]
where $\tilde c_i:=c_i (1+p+\ldots +p^{m-1})$.
By formula~\eqref{e:E_c}, statement (i) is equivalent to the following one: for any $m\in\BN$ and $x\in \BF_{p^m}^\times$ one has 
$S^{(m)}(x)=(-1)^{n-1} \alpha_x^{(m)}$.

Note that  $\tilde c_i<p^m-1$ for all $i$. Also note that if $r_1,\ldots r_n\in\{0,1,\ldots ,p^m-2\}$ then the sum 
\[
\sum\limits_{V_x^{(m)}}\prod\limits_{i=1}^n x_i^{r_i}
\]
is non-zero only if $r_1=\ldots=r_n$; in this  case it equals $(-1)^{n-1}x^r$, where $r:=r_1=\ldots=r_n$. So 
$S^{(m)}(x)=u_{ \tilde{ \mathbf c}}(x)$, where $u_{ \tilde{ \mathbf c}}$ is defined by formula~\eqref{e:thepolynomial} with $\mathbf c$ replaced by $\tilde{ {\mathbf c}}:=(\tilde{c}_1, \dots, \tilde{c}_n)$.  On the other hand, 
\[
\alpha_x^{(m)}:=N_{\BF_{p^m}/\BF_p} (u_{\mathbf c}(x))=\prod\limits_{j=0}^{m-1}u_{\mathbf c}(x)^{p^j}.
\]
So it remains to prove the following identity in $\BF_p[X]$:
\[
u_{ \tilde{ \mathbf c}}(X)=\prod\limits_{j=0}^{m-1}u_{\mathbf c}(X)^{p^j}. 
\]
To show this, use the following well known property of binomial coefficients: if $N=\sum\limits_{j=0}^{m-1}N_jp^j$ and 
$r=\sum\limits_{j=0}^{m-1}r_jp^j$, where $N_j\, ,r_j\in\{ 0,1,\ldots , p-1\}$, then 
\[
\binom{N}{r}\equiv\prod\limits_{j}\binom{N_j}{r_j}\mod p.
\]
This property follows from the identity $(1+X)^N=\prod\limits_{j}(1+X^{p^j})^{N_j}$ in $\BF_p [X]$.
\end{proof}

\begin{cor}   \label{c:slopes for any n}
Let $n>1$ and $\mathbf c:=(c_1,\ldots ,c_n)$, where $c_i\in \{ 1,\ldots ,p-2\}$. 
The equality 
\begin{equation} \label{e:P_x}
\bar P_x^{(m)}(t)=1-\alpha_x^{(m)}t
\end{equation}
 from Proposition~\ref{p:key formula} holds without assuming that $p>n$. The slopes $a_i^x(\E_{\mathbf c} )$, 
$x\in\bar\BF_p^\times\setminus\{ 1\}$, have the following properties:

(a) $a_i^x(\E_{\mathbf c} )+a_{n+1-i}^x(\E_{\mathbf {c'}} )=n-1$, where $\mathbf c'$ is as in Remark~\ref{r:duality};

(b) $\sum\limits_{i=1}^n a_i^x(\E_{\mathbf c} )=n(n-1)/2$;

(c) $a_n^x(\E_{\mathbf c} )\ge 0$, $a_{n-1}^x(\E_{\mathbf c} )> 0$,  and
\[
a_n^x(\E_{\mathbf c} )> 0 \Leftrightarrow u_{\mathbf c}(x)=0,
\]
where $u_{\mathbf c}$ is the polynomial defined by~\eqref{e:thepolynomial}.

(d) $a_1^x(\E_{\mathbf c} )\le n-1$, $a_2^x(\E_{\mathbf c} )< n-1$, and
\[
a_{1}^x(\E_{\mathbf c} )< n-1 \Leftrightarrow u_{\mathbf c'}(x)=0,
\]
where $\mathbf c'$ is as in Remark~\ref{r:duality}.

(e) For all but finitely many $x\in\bar\BF_p^\times\setminus\{ 1\}$ one has $a_i^x(\E_{\mathbf c} )=n-i$.
\end{cor}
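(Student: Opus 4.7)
The plan is to derive all six claims from Proposition~\ref{p:key formula}(i) together with Theorem~\ref{t:Katz}(iv)--(v), Remark~\ref{r:duality}, Miyatani's crystalline companion (\S\ref{sss:Miyatani}), and our main Theorem~\ref{t:main}. Statement (a) follows from Remark~\ref{r:duality}: the identification $\E_{\mathbf c}^{*}\cong\E_{\mathbf{c'}}(n-1)$ means that the Frobenius eigenvalues of $\E_{\mathbf{c'}}$ at $x$ are $q_x^{n-1}/\gamma_i$, and passing to $p$-adic valuations gives the claimed slope identity. Statement (b) is immediate from Theorem~\ref{t:Katz}(v): $(\det\E_{\mathbf c})(N)$ is of finite order with $N=n(n-1)/2$, so $\det\E_{\mathbf c}$ has constant slope $N=\sum_i a_i^x(\E_{\mathbf c})$.

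The heart of the argument is to identify the generic slopes as $a_i^\eta=n-i$. Three ingredients enter. First, Miyatani (\S\ref{sss:Miyatani}) supplies an irreducible $M_{\mathbf c}\in\FIsocd(\BG_m\setminus\{1\})\otimes_{\BQ_p}\BQbar_p$ whose pointwise Frobenius characteristic polynomials agree with those of $\E_{\mathbf c}$; full faithfulness of $\FIsocd\to\FIsoc$ (Theorem~\ref{T:fully faithful1}) combined with Lemma~\ref{l:indecomposability} makes $M_{\mathbf c}$ indecomposable in $\FIsoc\otimes_{\BQ_p}\BQbar_p$, and then Theorem~\ref{t:main} (via Corollary~\ref{c:remain valid}) provides the generic gap bound $a_i^\eta-a_{i+1}^\eta\le 1$. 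Second, $u_{\mathbf c}$ has only finitely many zeros, so $\alpha_x^{(m)}$ is nonzero at the generic point; Proposition~\ref{p:key formula}(i) then forces the coefficient of $t$ in $\bar P_x^{(m)}$ to equal $-\alpha_x^{(m)}\ne 0$, exhibiting a unit Frobenius eigenvalue and hence $a_n^\eta=0$. Third, (b) gives $\sum_i a_i^\eta=n(n-1)/2$. Combining, the gap bound together with $a_n^\eta=0$ yields $a_i^\eta\le a_n^\eta+(n-i)=n-i$, and the sum identity forces equality throughout.

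With the generic slopes in hand the remaining items are mechanical. Lower semicontinuity of Newton polygons (\cite[Cor.~2.3.2]{K}) combined with constancy of the total slope yields $\sum_{i=r+1}^n a_i^x\ge\sum_{i=r+1}^n a_i^\eta$ for every $x$; taking $r=n-2$ gives $a_{n-1}^x+a_n^x\ge 1$, so $a_{n-1}^x\ge 1/2>0$, which is the first half of (c). The characterization ``$a_n^x>0\iff u_{\mathbf c}(x)=0$'' follows from Proposition~\ref{p:key formula}(i): $a_n^x>0$ means $\bar P_x^{(m)}$ has no nonzero root, equivalently $\alpha_x^{(m)}=N_{\BF_{p^m}/\BF_p}(u_{\mathbf c}(x))=0$, equivalently $u_{\mathbf c}(x)=0$. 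Taking $r=2$ gives $a_1^x+a_2^x\le 2n-3$, hence $a_2^x\le n-3/2<n-1$, establishing (d); its characterization follows from (c) by the duality (a). The formula \eqref{e:P_x} is then unconditional: (c) forces $\deg\bar P_x^{(m)}\le 1$, and Proposition~\ref{p:key formula}(i) pins it down as $1-\alpha_x^{(m)}t$. Finally, (e) is the Newton stratification on the curve $\BG_m\setminus\{1\}$: the locus where the Newton polygon equals the generic one is Zariski open and dense, with finite complement. The main obstacle lies in the middle paragraph, specifically in invoking Miyatani's irreducibility result for the crystalline companion and transferring it---along with the characteristic-polynomial identity---through full faithfulness and Corollary~\ref{c:remain valid} so that Theorem~\ref{t:main} can actually be applied to deduce the generic gap bound for $\E_{\mathbf c}$.
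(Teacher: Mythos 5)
Your proof is correct and follows the same overall strategy as the paper: (a) from Remark~\ref{r:duality}, (b) from Theorem~\ref{t:Katz}(v), then identification of the generic slopes $(n-1,n-2,\ldots,0)$ from a gap bound plus the mod-$p$ formula of Proposition~\ref{p:key formula}(i), and finally semicontinuity to control all closed points. The difference is where the gap bound comes from: the paper simply quotes its Theorem~\ref{t:l-adic}(ii)--(iii) (whose proof on curves rests on Abe's companion theorem), whereas you invoke Miyatani's explicit crystalline companion $M_{\mathbf c}$ from \S\ref{sss:Miyatani} and apply Theorem~\ref{t:main} directly (via Theorem~\ref{T:fully faithful1}, Lemma~\ref{l:indecomposability}, Corollary~\ref{c:remain valid}) together with Newton-polygon semicontinuity for isocrystals; this is legitimate and somewhat more self-contained, and it also lets you get $a_1^\eta=n-1$ from the sum and the gap bound rather than from duality. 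The paper orders the argument differently -- it proves (e) first, then uses (b), (e) and Theorem~\ref{t:l-adic}(ii) to get $a_{n-1}^x>0$ for all $x$, hence $\deg\bar P_x^{(m)}\le 1$ and \eqref{e:P_x}, hence (c) and (d) -- but your order reaches the same conclusions. Two small points you should make explicit: (i) to pass from ``some Frobenius eigenvalue is a $p$-adic unit'' to $a_n^\eta=0$ (and to get the $a_n^x\ge 0$ part of (c)) you need all slopes to be nonnegative, which holds because the coefficients of $P_x^{(m)}$ lie in $O_E\subset\BZ_p$, so the eigenvalues are algebraic integers; (ii) in characteristic $p$ the vanishing of $-\frac{d}{dt}\log\bar P_x^{(m)}$ only forces $\bar P_x^{(m)}\in\BF_p[t^p]$, so the equivalence ``$\bar P_x^{(m)}$ has no nonzero root $\Leftrightarrow\alpha_x^{(m)}=0$'' is valid only after one knows $\deg\bar P_x^{(m)}\le 1$, which your inequality $a_{n-1}^x>0$ supplies -- so only the order of those two sentences needs adjusting, not the substance.
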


\begin{proof}
Statement (a) follows from Remark~\ref{r:duality}. Statement (b) follows from Theorem~\ref{t:Katz}(v). By~\eqref{e:E_c}, the eigenvalues of the geometric Frobenius acting on the stalks of $\E_{\mathbf c}$ are algebraic integers, so 
\begin{equation} \label{e: ge 0}
 a_n^x(\E_{\mathbf c} )\ge 0 \quad \mbox{ for all }x. 
\end{equation} 

Let us prove (e). The polynomial $u_{\mathbf c}$ is non-zero (in fact, its constant term equals 1). So formula~\eqref{e:log derivative} implies that for all but finitely many $x$ one has $a_n^x(\E_{\mathbf c} )=0$. By (a), this implies that $a_1^x(\E_{\mathbf c} )=n-1$ for almost all $x$. On the other hand, combining Theorem~\ref{t:l-adic}(iii) with Theorem~\ref{t:Katz}(i), we see that $a_i^x(\E_{\mathbf c})-a_{i+1}^x(\E_{\mathbf c})\le 1$ for almost all $x$. Statement (e) follows.

By (b),(e), and Theorem~\ref{t:l-adic}(ii), $a_{n-1}^x(\E_{\mathbf c} )+a_n^x(\E_{\mathbf c} )>0$ for all $x$. Since $a_{n-1}^x(\E_{\mathbf c} )\ge a_n^x(\E_{\mathbf c} )$, we see that $a_{n-1}^x(\E_{\mathbf c} )>0$ for all $x$. This means that all polynomials $\bar P_x^{(m)}$ have degree $\le 1$. Combining this with~\eqref{e:log derivative}, we get formula~\eqref{e:P_x}.

Statement (c) follows from \eqref{e: ge 0} and \eqref{e:P_x}. Statement (d) follows from (a) and~(c). 
\end{proof}

\subsection{Proof of Proposition~\ref{p:counterexample}}  \label{ss:counterexample proof}
We apply Corollary~\ref{c:slopes for any n}. In the situation of Proposition~\ref{p:counterexample} the polynomial $u_{\mathbf c}$ defined by~\eqref{e:thepolynomial} has degree $1$. Its unique root $x_0$ equals $-(2c_3)^{-1}$, which is not equal to~$1$ and not a root of $u_{\mathbf c'}$. So by Corollary~\ref{c:slopes for any n}(c,d) we have 
\[
a_3^{x_0}(\E_{\mathbf c} )>0, \quad a_3^{x}(\E_{\mathbf c} )=0 \mbox{ for } x\ne x_0\, ,
\]
\[
a_1^{x_0}(\E_{\mathbf c} )=2, \quad a_1^{x}(\E_{\mathbf c} )\le 2  \mbox{ for all } x\, .
\]
By Corollary~\ref{c:slopes for any n}(b), $a_1^{x}(\E_{\mathbf c} )+a_2^{x}(\E_{\mathbf c} )+a_3^{x}(\E_{\mathbf c} )=3$. Thus $a_1^{x_0}(\E_{\mathbf c} )=2$ and 
$a_2^{x_0}(\E_{\mathbf c} )<1$; on the other hand, if $x\ne x_0$ then $a_1^x(\E_{\mathbf c} )+a_2^x(\E_{\mathbf c} )=3$ and $a_1^{x}(\E_{\mathbf c} )\le 2$. 
So $a_1^x(\E_{\mathbf c} )-a_2^x(\E_{\mathbf c} )>1$ if and only if $x=x_0$.
 \qed

\subsection{Using a computer to produce more examples} \label{ss:Magma}
In the situation of Proposition~\ref{p:counterexample} the field generated by the coefficients of the Frobenius characteristic polynomials is not equal to $\BQ$. One can produce examples of sheaves $\E_{\mathbf c}$ such that this field equals $\BQ$ and 
$a_i^x(\E_{\mathbf c})-a_{i+1}^x(\E_{\mathbf c})>1$ for some $i$ and~$x$. However, this requires taking $n \geq 4$, which seems to necessitate the use of computer calculations.

For example\footnote{The sheaf $\E_{\mathbf c}$ from this example is related to the Dwork pencil of quintic threefolds, see \cite[Thm.~5.3]{Ka2}.
}, take $n=4$ and
\[
c_i=i\cdot (p-1)/5, \quad 1\le i\le 4,
\]
where $p \equiv 1 \mod 5$ (so $\rho_1,\dots,\rho_4$ are the primitive characters of order 5). Note that by Remark~\ref{r:duality} one has
\begin{equation}  \label{e:selfdual}
\E_{\mathbf c}^*\simeq\E_{\mathbf c}(3).
\end{equation}
For $x\in\BF_p^\times\setminus\{ 1\}$ write
\[
\det (1-F_xt,(\E_{\mathbf c})_x)=\sum_{i=0}^4 b_i^x t^i, \quad b_i^x\in\BZ, \, b_0^x=1.
\]
Using Proposition~\ref{p:key formula}(ii) and a simple computer search, one easily identifies pairs $(p,x)$ for which $p | b_1^x$; 
this also ensures that $p | b_2^x$ by the same Proposition~\ref{p:key formula}(ii). Using \eqref{e:selfdual}, one sees that for such pairs $(p,x)$ the slopes are $\left( \frac{5}{2}, \frac{5}{2}, \frac{1}{2}, \frac{1}{2}\right)$ provided that $b_2^x$ is not divisible by $p^2$. One cannot check this non-divisibility using Proposition~\ref{p:key formula}(ii), but a related $p$-adic calculation
is implemented in the ``hypergeometric motives" package\footnote{To specify a rank $n$ hypergeometric local system for \textsc{Magma}, one enters two sequences of rational numbers of length $n$ (they correspond to the eigenvalues of the local monodromy at $0$ and $\infty$). In our situation the sequences are $[0,0,0,0]$ and $[1/5,2/5,3/5,4/5]$.} of the \textsc{Magma} computer algebra system (see \cite{Mag}).
For example, \textsc{Magma} confirms that for $p=31$ and $x$ equal to $4$ or $17$, the slopes equal  $\left( \frac{5}{2}, \frac{5}{2}, \frac{1}{2}, \frac{1}{2}\right)$.

\section{Recollections on the Tannakian categories $\FIsoc (X)$ and $\FIsocd (X)$ }  \label{s:Crew}

In this Appendix we recall some results of R.~Crew \cite{Crew-mono}. 

\subsection{Notation}
\subsubsection{}
We fix a universal cover $\tX\to X$ and set $\Pi:=\Aut (\tX /X)$.

\subsubsection{}
The category of finite-dimensional vector spaces over a field $E$ is denoted by $\Vect_E$.
The category of finite-dimensional representations of $G$ is denoted by $\Rep_E(G)$.  

\subsubsection{}   \label{sss:equiv objects}
If a finite group $\Gamma$ acts on a Tannakian category $\cT$ then $\Gamma$-equivariant objects of $\cT$ form a Tannakian category; we denote it by $\cT^\Gamma$. For instance, $(\Vect_E)^\Gamma=\Rep_E (\Gamma )$.

\subsection{Isotrivial $F$-isocrystals}
\subsubsection{Definition of isotriviality}
An object $M\in\FIsoc (X)$ is said to be \emph{trivial} if it is isomorphic to a direct sum of several copies of the unit object. An object 
$M\in\FIsoc (X)$ is said to be \emph{isotrivial} if its pullback to $\tX /U$ is trivial for some open subgroup $U\subset\Pi$. Isotrivial objects form a Tannakian subcategory\footnote{Following \cite[\S 2.3.5]{An},  by a \emph{Tannakian subcategory} of a Tannakian category $\cT$ we mean a strictly full subcategory $\cT'\subset\cT$ stable under tensor products, direct sums, dualization, and passing to subobjects.} of $\FIsoc (X)$.

\subsubsection{}
Let $U\subset\Pi$ be an open normal subgroup. Then $\Pi/U$ acts on $\tX /U$ and therefore on $\FIsoc (\tX /U)$. So we have the Tannakian category $\FIsoc (\tX /U)^{\Pi/U}$ (see \S\ref{sss:equiv objects}).

\begin{lem}   \label{l:isotriv}
Pullback from $X$ to $\tX /U$ defines a tensor equivalence
\begin{equation}  \label{e:etale descent}
\FIsoc (X)\iso\FIsoc (\tX /U)^{\Pi/U}.
\end{equation}
\end{lem}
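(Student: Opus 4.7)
The plan is to recognize this as an instance of étale descent for the tensor category $\FIsoc$ along the finite étale Galois cover $f\colon \tX/U \to X$ with Galois group $\Gamma := \Pi/U$. First I would verify that the functor is well-defined: the pullback $f^*M$ of any $M\in\FIsoc(X)$ carries canonical descent data which, under the identification $(\tX/U)\times_X(\tX/U)\cong \Gamma\times(\tX/U)$ induced by the Galois action, translates to a $\Gamma$-equivariant structure on $f^*M$. This produces the tensor functor appearing in \eqref{e:etale descent}.

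For full faithfulness I would reduce to a statement about $H^0_{\crys}$. Given $M,N\in\FIsoc(X)$, set $K:=\HOM(M,N)\in\FIsoc(X)$; then $\Hom_{\FIsoc(X)}(M,N)$ is the Frobenius-invariant part of $H^0_{\crys}(X,K)$, and similarly over $\tX/U$. So it suffices to check the Galois-descent identity $H^0_{\crys}(X,K)\iso H^0_{\crys}(\tX/U,f^*K)^\Gamma$. Choosing a smooth formal $W(k)$-lift $\sX$ of $X$ with Frobenius lift, the finite étale cover $\tX/U\to X$ lifts uniquely to a Galois étale cover $\widetilde{\sX}\to\sX$ with group $\Gamma$, and the desired identity follows from classical Galois descent applied to flat sections of modules with integrable connection on the generic fibre $\widetilde{\sX}_{\BQ}$.

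For essential surjectivity I would invoke étale descent for $\Isoc$ (after which the $F$-structure descends essentially for free, as in \S\ref{sss:concrete}): a $\Gamma$-equivariant object of $\Isoc(\tX/U)$ amounts to effective descent data for $\Isoc$ along the finite étale Galois cover $f$, and so comes from an object of $\Isoc(X)$. Concretely, on the formal lift $\sX$ this is once again classical Galois descent for modules with integrable connection; a Frobenius structure on the pulled-back object that is compatible with the $\Gamma$-action then descends uniquely to a Frobenius structure on $X$, producing the required preimage in $\FIsoc(X)$.

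The hard part will be stating cleanly the étale-descent property of $\Isoc$ invoked in the last step. This is either cited from foundational treatments in which $\Isoc$ is shown to form a stack in the étale topology, or proved by the hands-on route sketched above; in either formulation, the crucial input is that any finite étale cover of $X$ lifts uniquely to a finite étale cover of a chosen smooth formal lift $\sX$, reducing the descent to a statement about modules with integrable connection on $\widetilde{\sX}_{\BQ}$ that is handled by Grothendieck's descent formalism.
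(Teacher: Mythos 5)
Your argument is essentially the paper's: its entire proof of this lemma is the single line ``Follows from etale descent for $F$-isocrystals,'' which is exactly the descent statement you unfold (the stack property of $\Isoc$/$\FIsoc$ for the \'etale topology, as in \'Etesse's work cited as [Et] elsewhere in the paper). The only caution in your hands-on variant is that a global smooth formal lift $(\sX,\phi)$ need not exist unless $X$ is affine, so that route should be carried out Zariski-locally (or replaced by the citation of the stack property, as you yourself suggest).
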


\begin{proof}
Follows from etale descent for $F$-isocrystals.
\end{proof}

\begin{rem}
Note that $\FIsoc (\tX /U)^{\Pi/U}\supset(\Vect_{\BQ_p})^{\Pi/U}=\Rep_{\BQ_p}(\Pi/U)$. It is clear that the 
equivalence~\eqref{e:etale descent} identifies $\Rep_{\BQ_p}(\Pi/U)$ with the full subcategory of those objects of 
$\FIsoc (X)$ whose pullback to $\tX$ is trivial.
\end{rem}

\begin{rem}   \label{r:isotrivial smooth}
By the previous remark, the Tannakian subcategory of isotrivial objects of $\FIsoc (X)$ canonically identifies with 
$\Rep^{\smooth}_{\BQ_p}(\Pi)$, where $\Rep^{\smooth}_{\BQ_p}(\Pi)$ is the Tannakian category of smooth finite-dimensional representations of $\Pi$ over $\BQ_p$, i.e., 
\begin{equation}   \label{e:smooth as lim}
\Rep^{\smooth}_{\BQ_p}(\Pi):=\underset{U}{\underset{\longrightarrow}\lim} \Rep_{\BQ_p} (\Pi/U).
\end{equation}
So we get a fully faithful embedding $\Rep^{\smooth}_{\BQ_p}(\Pi)\mono \FIsoc (X )$ and therefore a fully faithful embedding 
\begin{equation} \label{e:isotrivial smooth}
\Rep^{\smooth}_{\BQbar_p}(\Pi)\mono \FIsoc (X )\otimes_{\BQ_p}\BQbar_p\, , 
\quad\quad \Rep^{\smooth}_{\BQbar_p}(\Pi):=\Rep^{\smooth}_{\BQ_p}(\Pi)\otimes_{\BQ_p}\BQbar_p ,
\end{equation}
whose essential image is a Tannakian subcategory of $\FIsoc (X )\otimes_{\BQ_p}\BQbar_p$.
\end{rem}

\subsection{Crew's theorem on unit-root $F$-isocrystals}   \label{ss:unit-root}
Recall that an object $M\in\FIsoc (X)$ is said to be \emph{unit-root} (or \emph{etale}) if all its slopes at all points of $X$ (or equivalently, at the generic point $\eta\in X$) are zero. The Tannakian subcategory of unit-root objects of 
$\FIsoc (X)$ is denoted by $\FIsoc_{\et} (X)$. All isotrivial $F$-isocrystals are unit-root.

Let $\Rep^{\cont}_{\BQ_p}(\Pi)$ denote the Tannakian category of \emph{continuous} representations of $\Pi$ in finite-dimensional vector spaces over $\BQ_p$. It contains  $\Rep^{\smooth}_{\BQ_p}(\Pi)$ as a Tannakian subcategory. In Remark~\ref{r:isotrivial smooth} we defined a tensor equivalence between $\Rep^{\smooth}_{\BQ_p}(\Pi)$ and the category of isotrivial $F$-isocrystals. According to \cite[Thm.~2.1]{Cr}, it \emph{extends to a canonical tensor equivalence} 
\[
\Rep^{\cont}_{\BQ_p}(\Pi)\iso\FIsoc_{\et} (X).
\]

\subsection{Crew's characterization of the Tannakian subcategory $\Rep^{\smooth}_{\BQ_p}(\Pi)\subset\FIsoc (X)$}
By \eqref{e:smooth as lim}, $\Rep^{\smooth}_{\BQ_p}(\Pi)$ is a union of an increasing family of Tannakian categories of the form $\Rep_{\BQ_p}(\Gamma)$, where $\Gamma$ is a finite group. On the other hand, one has the following result. 

\begin{prop}   \label{p:Crew}
Let $\Gamma$ be a finite group. Then any tensor functor $\Rep_{\BQ_p}(\Gamma )\to\FIsoc (X)$ factors through
$\Rep^{\smooth}_{\BQ_p}(\Pi)\subset\FIsoc (X)$. Moreover, any tensor functor $\Rep_{\BQbar_p}(\Gamma )\to\FIsoc (X)\otimes_{\BQ_p}\BQbar_p$ factors through $\Rep^{\smooth}_{\BQbar_p}(\Pi)\subset\FIsoc (X)\otimes_{\BQ_p}\BQbar_p$. 
\end{prop}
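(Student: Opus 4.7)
The plan is to reduce to Crew's theorem (\S\ref{ss:unit-root}) in two steps: first, show that any such tensor functor $F$ lands in the unit-root subcategory $\FIsoc_{\et}(X)$; second, use finiteness of $\Gamma$ to upgrade the resulting continuous $\Pi$-representations to smooth ones.

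For the first step, fix $V\in\Rep_{\BQ_p}(\Gamma)$ and a closed point $x\in|X|$. Since $\Gamma$ is a finite group, its set $\hat\Gamma$ of isomorphism classes of irreducible $\BQ_p$-representations is finite (the category $\Rep_{\BQ_p}(\Gamma)$ is semisimple by Maschke's theorem), and each tensor power $V^{\otimes n}$ is a direct sum of elements of $\hat\Gamma$. Applying $F$, the isocrystal $F(V)^{\otimes n}=F(V^{\otimes n})$ is a direct sum of objects drawn from the \emph{fixed} finite collection $\{F(W):W\in\hat\Gamma\}$, so as $n$ varies the slopes of $F(V)^{\otimes n}$ at $x$ remain in a fixed finite subset of $\BQ$. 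But these slopes are exactly the $n$-fold sums of slopes of $F(V)$ at $x$; if $s_{\max}$ denotes the maximal slope of $F(V)$ at $x$, then $ns_{\max}$ appears as a slope of $F(V)^{\otimes n}$ for every $n$, forcing $s_{\max}\le 0$. Symmetrically $s_{\min}\ge 0$, so every slope of $F(V)$ at $x$ vanishes and $F(V)\in\FIsoc_{\et}(X)$.

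For the second step, Crew's equivalence $\FIsoc_{\et}(X)\simeq\Rep^{\cont}_{\BQ_p}(\Pi)$ turns $F$ into a tensor functor $\widetilde F:\Rep_{\BQ_p}(\Gamma)\to\Rep^{\cont}_{\BQ_p}(\Pi)$, which by Tannakian formalism is the datum of a $\Gamma$-torsor in $\Rep^{\cont}_{\BQ_p}(\Pi)$. The underlying $\Gamma$-torsor in $\Vect_{\BQ_p}$ is automatically trivial, so the data boils down to a continuous homomorphism $\rho:\Pi\to\Gamma$; concretely, $\rho$ can be read off from the $\Pi$-representation $\widetilde F(\BQ_p[\Gamma])$, whose image commutes with the right $\Gamma$-action and is compatible with the Hopf algebra structure, hence lands in the group-like elements $\Gamma\subset\BQ_p[\Gamma]^\times$. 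Since $\Gamma$ is finite and discrete, $\ker\rho$ is an open subgroup of $\Pi$, so every object in the essential image of $\widetilde F$ has $\Pi$-action factoring through the finite quotient $\Pi/\ker\rho$, and therefore lies in $\Rep^{\smooth}_{\BQ_p}(\Pi)$. The $\BQbar_p$-statement follows verbatim after extending coefficients, using the extension $\FIsoc_{\et}(X)\otimes_{\BQ_p}\BQbar_p\simeq\Rep^{\cont}_{\BQbar_p}(\Pi)$ of Crew's equivalence and the observation that $\Gamma(\BQbar_p)=\Gamma$.

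The main technical point requiring care is the Tannakian identification of a tensor functor out of $\Rep(\Gamma)$ with a continuous homomorphism into $\Gamma$, together with the extension of Crew's equivalence to $\BQbar_p$-coefficients; the slope-boundedness argument of step 1 is direct and presents no difficulty.
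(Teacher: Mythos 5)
Your overall strategy (first force the image into the unit-root subcategory, then invoke Crew's equivalence and finiteness of $\Gamma$) is exactly the paper's. Your Step 1 is correct and is a pleasant elementary substitute for the paper's argument: the paper pulls back along geometric points $\Spec L\to X$, identifies $\FIsoc(\Spec L)\otimes_{\BQ_p}\BQbar_p$ with $\Rep_{\BQbar_p}(\widetilde\BG_m)$ and uses that every homomorphism $\widetilde\BG_m\to\Gamma$ is trivial, while your growth argument ($ns_{\max}$ is a slope of $F(V)^{\otimes n}$, which is a sum of objects from a fixed finite list) extracts the same conclusion from nothing more than compatibility of the slope grading with $\otimes$ and $\oplus$. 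One cosmetic point: run that argument at the generic point (or at an arbitrary point of $X$) rather than only at closed points, since unit-root is defined by vanishing of the slopes at all points, equivalently at $\eta$; nothing in your argument uses that $x$ is closed, so this is a one-word fix.

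Step 2, however, contains a genuine error for the $\BQ_p$-coefficient statement: the claim that ``the underlying $\Gamma$-torsor in $\Vect_{\BQ_p}$ is automatically trivial'' is false. A tensor functor $\Rep_{\BQ_p}(\Gamma)\to\Vect_{\BQ_p}$ is an arbitrary fiber functor, i.e.\ an arbitrary $\Gamma$-torsor over $\Spec\BQ_p$, and for a finite constant group these are classified by $H^1(\Gal(\BQbar_p/\BQ_p),\Gamma)$, which is nontrivial already for $\Gamma=\BZ/2$. Such nontrivial torsors really do occur in your situation: compose a non-neutral fiber functor $\omega_A(V)=(V\otimes A)^{\Gamma}$ (with $A$ a nontrivial $\Gamma$-Galois algebra over $\BQ_p$) with the embedding of $\Vect_{\BQ_p}$ into $\FIsoc(X)$ as constant isocrystals; the resulting tensor functor satisfies the proposition, but there is no homomorphism $\rho:\Pi\to\Gamma$ with your description, and your ``group-like elements'' computation presupposes the trivialization you have not produced. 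The conclusion can still be rescued without trivializing: $A:=\widetilde F(\cO(\Gamma))$ is a finite \'etale commutative algebra object of $\Rep^{\cont}_{\BQ_p}(\Pi)$, so $\Pi$ acts on it through the finite group of algebra automorphisms, hence with open kernel, and since every object of $\Rep_{\BQ_p}(\Gamma)$ embeds into a direct sum of copies of $\cO(\Gamma)$, every object in the image of $\widetilde F$ lies in $\Rep^{\smooth}_{\BQ_p}(\Pi)$. Alternatively, do as the paper does: prove the $\BQbar_p$-statement first (over an algebraically closed field the torsor is trivial, or one uses the pro-algebraic hull $\hat\Pi$ with $\Rep^{\cont}_{\BQ_p}(\Pi)=\Rep_{\BQ_p}(\hat\Pi)$, so that tensor functors correspond to homomorphisms $\hat\Pi\to\Gamma$, i.e.\ continuous homomorphisms $\Pi\to\Gamma$, which have open kernel) and then deduce the $\BQ_p$-statement, noting that an object of $\FIsoc(X)$ whose image in $\FIsoc(X)\otimes_{\BQ_p}\BQbar_p$ is isotrivial is itself isotrivial. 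As written, your order of deduction ($\BQ_p$ first, then ``verbatim'' over $\BQbar_p$) makes the false triviality claim load-bearing, so this step needs to be repaired along one of these lines.
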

\begin{proof}
It suffices to prove the second statement. Let $\Phi:\Rep_{\BQbar_p}(\Gamma )\to\FIsoc (X)\otimes_{\BQ_p}\BQbar_p$ be a tensor functor and $\im \Phi$  its essential image. We will first prove that
\begin{equation}   \label{e:contained in unit-root}
\im \Phi\subset \FIsoc_{\et} (X)\otimes_{\BQ_p}\BQbar_p \, .
\end{equation}
Proving this amounts to showing that for any algebraically closed field $L$ and any morphism $\alpha :\Spec L\to X$
one has $\im (\alpha^*\circ \Phi )\subset \FIsoc_{\et} (\Spec L)\otimes_{\BQ_p}\BQbar_p$. Note that 
$\FIsoc (\Spec L)\otimes_{\BQ_p}\BQbar_p \simeq\Rep_{\BQbar_p}(\widetilde\BG_m)$, where $\widetilde\BG_m$ is the pro-torus with group of characters $\BQ$. So $\alpha^*\circ \Phi$ is a tensor functor $\Rep_{\BQbar_p}(\Gamma )\to \Rep_{\BQbar_p}(\widetilde\BG_m )$. It corresponds to a homomorphism $\widetilde\BG_m\to\Gamma$. It remains to note that all such homomorphisms are trivial.

Thus we have proved \eqref{e:contained in unit-root}. So by \S\ref{ss:unit-root}, it remains to show that any tensor functor 
\begin{equation}   \label{e:our tens funct}
\Rep_{\BQbar_p}(\Gamma )\to \Rep^{\cont}_{\BQ_p}(\Pi)\otimes_{\BQ_p}\BQbar_p
\end{equation}
factors through $\Rep^{\smooth}_{\BQbar_p}(\Pi)$. Define an affine group scheme $\hat\Pi$ over $\BQ_p$ as follows: 
$\hat\Pi$ is the projective limit of all affine algebraic group schemes $H$ over $\BQ_p$ equipped with a continuous homomorphism $\Pi\to H(\BQ_p )$ with Zariski-dense image. Then $\Rep^{\cont}_{\BQ_p}(\Pi)=\Rep_{\BQ_p}(\hat\Pi)$. So a tensor functor \eqref{e:our tens funct} corresponds to a homomorphism $\hat\Pi\to\Gamma$ or equivalently, to a continuous homomorphism $\Pi\to\Gamma$. The latter has open kernel, which finishes the proof.
\end{proof}

\subsection{The group $\pi_1^{\FIsoc} (X)$}   \label{ss:crys fund group}
Warning: the group $\pi_1^{\FIsoc} (X)$ defined below is \emph{different} from (but closely related to) the group denoted by $\pi_1^{\FIsoc} (X)$ in formula (2.5.4) on p.~446 of \cite{Crew-mono}. 

Set 
$$\FIsoc (\tX ):=\underset{U}{\underset{\longrightarrow}\lim} \FIsoc (\tX /U),$$
where $U$ runs through the set of open subgroups of $\Pi$. Fix a fiber functor 
$\tilde\xi :\FIsoc (\tX )\otimes_{\BQ_p}\BQbar_p\to\Vect_{\BQbar_p}$. The existence of $\tilde\xi$ is guaranteed by a general theorem of Deligne~\cite{De3}; one can also construct $\tilde\xi$ by choosing a closed point $\tilde x\in\tX$ and a fiber functor on $\FIsoc (\tilde x )\otimes_{\BQ_p}\BQbar_p$.

Let $\xi :\FIsoc (X )\otimes_{\BQ_p}\BQbar_p\to\Vect_{\BQbar_p}$ be the composition of $\tilde\xi$ with the pullback functor from $X$ to $\tX$. We set $\pi_1^{\FIsoc} (X):=\Aut \xi$; this is an affine group scheme over $\BQbar_p$, and one has a canonical equivalence 
$\FIsoc (X )\otimes_{\BQ_p}\BQbar_p \iso\Rep_{\BQbar_p} (\pi_1^{\FIsoc} (X))$.
The fully faithful functor \eqref{e:isotrivial smooth}
defines a canonical epimorphism 
\begin{equation}   \label{e:crys to usual}
\pi_1^{\FIsoc} (X)\epi\Pi. 
\end{equation}

For any open subgroup $U\subset\Pi$ one has a similar group $\pi_1^{\FIsoc} (\tX /U)$ equipped with an epimorphism
$\pi_1^{\FIsoc} (\tX /U)\epi U$. If $U\subset U'$ then pullback from $\tX/U'$ to $\tX/U$ defines a homomorphism
$\pi_1^{\FIsoc} (\tX /U)\to\pi_1^{\FIsoc} (\tX /U')$ over $\Pi$. In particular, one has a canonical homomorphism
\begin{equation}  \label{e:pi_1 to pi_1}
\pi_1^{\FIsoc} (\tX /U)\to\pi_1^{\FIsoc} (X).
\end{equation}
\begin{prop}    \label{p:pi_1 of cover}
The homomorphism \eqref{e:pi_1 to pi_1} identifies $\pi_1^{\FIsoc} (\tX /U)$ with $\pi_1^{\FIsoc} (X)\times_{\Pi}U$.
\end{prop}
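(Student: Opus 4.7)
The plan is to first treat the case where $U$ is open and \emph{normal} in $\Pi$---where Lemma~\ref{l:isotriv} applies---and then bootstrap to arbitrary open $U$ by choosing a normal refinement.

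For the normal case, set $\Gamma:=\Pi/U$. Tensoring Lemma~\ref{l:isotriv} with $\BQbar_p$ over $\BQ_p$ gives a tensor equivalence
\[
\FIsoc(X)\otimes_{\BQ_p}\BQbar_p \iso \bigl(\FIsoc(\tX/U)\otimes_{\BQ_p}\BQbar_p\bigr)^{\Gamma}.
\]
The fiber functor $\tilde\xi$ factors through $\FIsoc(\tX/U)$, yielding a fiber functor $\xi_U$ on $\FIsoc(\tX/U)\otimes_{\BQ_p}\BQbar_p$; under the above equivalence, $\xi$ corresponds to the forgetful fiber functor $(V,\rho)\mapsto\xi_U(V)$ on the $\Gamma$-equivariant category. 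I would invoke Tannakian Galois descent for a finite group acting on a neutral Tannakian category (Deligne's theorem on the fundamental group of the equivariant category with respect to the forgetful fiber functor) to produce a short exact sequence
\begin{equation}   \label{e:planSES}
1\to\pi_1^{\FIsoc}(\tX/U)\to\pi_1^{\FIsoc}(X)\to\Gamma\to 1,
\end{equation}
in which the injection is the canonical map \eqref{e:pi_1 to pi_1}. To identify the quotient $\pi_1^{\FIsoc}(X)\epi\Gamma$ with the composite $\pi_1^{\FIsoc}(X)\overset{\eqref{e:crys to usual}}{\epi}\Pi\epi\Gamma$, I would observe that both surjections have the same kernel---namely the Tannakian dual of the full sub-category of $\FIsoc(X)\otimes_{\BQ_p}\BQbar_p$ whose pullback to $\tX/U$ is trivial, which by Remark~\ref{r:isotrivial smooth} is exactly the essential image of $\Rep_{\BQbar_p}(\Gamma)$. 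This identifies $\pi_1^{\FIsoc}(\tX/U)$ with the preimage of $U$ in $\pi_1^{\FIsoc}(X)$, i.e.\ with $\pi_1^{\FIsoc}(X)\times_{\Pi}U$.

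For a general open $U\subset\Pi$, I would choose an open subgroup $W\subset U$ normal in $\Pi$ (possible since $\Pi$ is profinite and $U$ is open); $W$ is then also normal in $U$. Applying the normal case both to the pair $W\subset\Pi$ (base $X$) and to $W\subset U$ (base $\tX/U$, whose universal cover is $\tX$ with Galois group $U$), I obtain compatible canonical isomorphisms
\[
\pi_1^{\FIsoc}(\tX/W)\iso\pi_1^{\FIsoc}(X)\times_{\Pi}W\quad\text{and}\quad\pi_1^{\FIsoc}(\tX/W)\iso\pi_1^{\FIsoc}(\tX/U)\times_{U}W.
\]
These fit into a commutative diagram whose rows are the short exact sequences
\[
1\to\pi_1^{\FIsoc}(\tX/W)\to\pi_1^{\FIsoc}(\tX/U)\to U/W\to 1,
\]
\[
1\to\pi_1^{\FIsoc}(X)\times_{\Pi}W\to\pi_1^{\FIsoc}(X)\times_{\Pi}U\to U/W\to 1,
\]
and whose middle vertical arrow is the factorization of the canonical map $\pi_1^{\FIsoc}(\tX/U)\to\pi_1^{\FIsoc}(X)$ through the fiber product. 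The two displayed isomorphisms above show that this vertical arrow is an isomorphism on kernels and the identity on quotients, hence is itself an isomorphism by the five lemma.

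The main obstacle will be the Tannakian Galois descent in the normal case---verifying that the forgetful fiber functor on $(\FIsoc(\tX/U)\otimes_{\BQ_p}\BQbar_p)^{\Gamma}$ indeed produces the short exact sequence \eqref{e:planSES} with the expected quotient map to $\Gamma$. Once this is in hand, the reduction from general $U$ to normal $U$ is a short five-lemma diagram chase.
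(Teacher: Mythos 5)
Your argument is correct and is essentially the paper's: the paper likewise chooses an open subgroup $V\subset U$ normal in $\Pi$ and uses Lemma~\ref{l:isotriv} (finite \'etale descent, i.e.\ your Tannakian Galois descent step) to obtain the exact sequences $0\to\pi_1^{\FIsoc}(\tX/V)\to\pi_1^{\FIsoc}(X)\to\Pi/V\to 0$ and $0\to\pi_1^{\FIsoc}(\tX/V)\to\pi_1^{\FIsoc}(\tX/U)\to U/V\to 0$. Your explicit identification of the quotient maps and the concluding five-lemma comparison are exactly what the paper compresses into ``the proposition follows.''
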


\begin{proof}
Choose an open normal subgroup $V\subset\Pi$ so that $V\subset U$.
Lemma~\ref{l:isotriv} implies that the sequences
\[
0\to \pi_1^{\FIsoc} (\tX /V)\to\pi_1^{\FIsoc} (X)\to\Pi/V\to 0,
\]
\[
0\to \pi_1^{\FIsoc} (\tX /V)\to\pi_1^{\FIsoc} (\tX /U)\to U/V\to 0
\]
are exact. The proposition follows. 
\end{proof}

\begin{prop}  \label{p:kernel connected}
The kernel of the canonical epimorphism $\pi_1^{\FIsoc} (X)\epi\Pi$ is connected. In other words, 
$\Ker (\pi_1^{\FIsoc} (X)\epi\Pi)=((\pi_1^{\FIsoc} (X))^\circ$.
\end{prop}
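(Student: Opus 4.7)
The plan is to show that every surjection from $H := \pi_1^{\FIsoc}(X)$ onto a finite group scheme factors through the given epimorphism $\pi\colon H \twoheadrightarrow \Pi$. Combined with the observation that $\Pi$, viewed as a pro-finite group scheme over $\BQbar_p$, is pro-\'etale, this will identify $\Pi$ with the maximal pro-\'etale quotient $H/H^\circ$ of $H$, whence $\Ker \pi = H^\circ$.

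First, I would record the easy inclusion $H^\circ \subset \Ker \pi$: since $\Pi$ is pro-\'etale, $\pi(H^\circ)$ is a connected subgroup scheme of a totally disconnected scheme, hence trivial. For the reverse inclusion, since every pro-\'etale affine group scheme over the algebraically closed field $\BQbar_p$ is the cofiltered limit of its finite constant group scheme quotients, it suffices to show that every surjection $\varphi\colon H \twoheadrightarrow \Gamma$ onto a finite (constant) group scheme $\Gamma$ factors through $\pi$.

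By Tannakian duality, $\varphi$ corresponds to a fully faithful exact tensor embedding $\Rep_{\BQbar_p}(\Gamma) \mono \Rep_{\BQbar_p}(H) = \FIsoc(X) \otimes_{\BQ_p}\BQbar_p$. By Proposition~\ref{p:Crew}, this embedding factors through the Tannakian subcategory $\Rep^{\smooth}_{\BQbar_p}(\Pi) \subset \FIsoc(X) \otimes_{\BQ_p}\BQbar_p$ of Remark~\ref{r:isotrivial smooth}, whose Tannakian group is precisely $\Pi$. Passing to Tannakian groups converts this categorical factorization into the desired factorization $\varphi\colon H \overset{\pi}{\to} \Pi \to \Gamma$. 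The substance of the argument is thus entirely contained in Proposition~\ref{p:Crew}, with the remainder being the standard Tannakian dictionary between affine group schemes over $\BQbar_p$ and their categories of finite-dimensional representations; I do not anticipate any serious obstacle.
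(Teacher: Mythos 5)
Your proposal is correct and is essentially the paper's own argument: the paper's one-line proof is precisely that, by Proposition~\ref{p:Crew}, every morphism from $\pi_1^{\FIsoc}(X)$ to a finite group factors through $\Pi$, which forces $\Ker(\pi_1^{\FIsoc}(X)\epi\Pi)=(\pi_1^{\FIsoc}(X))^\circ$. You have merely made explicit the standard Tannakian/pro-\'etale bookkeeping (identifying $\Pi$ with the maximal pro-constant quotient) that the paper leaves implicit.
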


\begin{proof}
By Proposition~\ref{p:Crew}, every morphism from $\pi_1^{\FIsoc} (X)$ to a finite group factors through~$\Pi$.
\end{proof}

\subsubsection{Some homomorphisms $\widetilde\BG_m\to\pi_1^{\FIsoc} (X)$}  \label{sss:Some homs}
Let $x\in X$, and let $x_{\perf}$ denote the spectrum of the perfection of the residue field of $x$. The Tannakian category $\FIsoc (x_{\perf})$ has a canonical $\BQ$-grading by slopes, so we have a canonical central homomorphism $\widetilde\BG_m\to\pi_1^{\FIsoc}(x_{\perf})$. Composing it with the homomorphism
\begin{equation}   \label{e:pi1 to pi1}
\pi_1^{\FIsoc}(x_{\perf})\to\pi_1^{\FIsoc}(X),
\end{equation}
we get a homomorphism
\begin{equation}    \label{e:tGm to pi1}
\widetilde\BG_m\to\pi_1^{\FIsoc}(X);
\end{equation}
note that the homomorphisms \eqref{e:pi1 to pi1}-\eqref{e:tGm to pi1} are defined only up to $\pi_1^{\FIsoc}(X)$-conjugacy.

Let $U\subset\Pi$ be an open subgroup and suppose that we fix $x'\in\tX/U$ such that $x'\mapsto x$. Applying the previous procedure to 
$(\tX/U ,x')$ instead of $(X,x)$, one gets a homomorphism \eqref{e:tGm to pi1} defined up to $\pi_1^{\FIsoc}(\tX/U)$-conjugacy.

Passing to the limit with respect to $U$, we get for each $\tilde x\in\tX$ a homomorphism
\begin{equation}   \label{e:refined}
\nu_{\tilde x}:\widetilde\BG_m\to\pi_1^{\FIsoc}(X)
\end{equation}
defined up to conjugation by elements of the group $\Ker (\pi_1^{\FIsoc} (X)\epi\Pi)$. This group is the neutral connected component
$(\pi_1^{\FIsoc} (X))^\circ$, see Proposition~\ref{p:kernel connected}.

\subsection{A general lemma on $\Rep_E(G)$}
\begin{lem}   \label{l:Tannakian nonsense}
Let $f:H\to G$ be a homomorphism of affine group schemes over a field $E$ and $f^*:\Rep_E (G)\to\Rep_E  (H)$ the corresponding tensor functor.

(i) $f^*$ is fully faithful if and only if every regular function on $G/f(H)$ is constant. 

(ii) $f$ is an epimorphism\footnote{By definition, ``epimorphism" means that $f(H)$ equals $G$ as a scheme. If $E$ has characteristic $0$ this just means that $f$ is surjective.} if and only if $f^*$ has the following property: for every $V\in\Rep_E (G)$ every $H$-submodule of $V$ is a 
$G$-submodule.
\end{lem}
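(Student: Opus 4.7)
The plan is to handle the two parts independently. For (i), I would first note that since $\Hom_G(V, W) \cong (V^* \otimes W)^G$ and analogously for $H$ (acting via $f$), full faithfulness of $f^*$ is equivalent to the assertion that $U^G = U^H$ for every $U \in \Rep_E(G)$. Given this reformulation, for the direction $(\Rightarrow)$ I would apply the equality $U^G = U^H$ to finite-dimensional $G$-subrepresentations of $\cO(G)$ under the right regular action; their union exhausts $\cO(G)$, and the right-$G$-invariants in $\cO(G)$ are precisely the constants $E$. The hypothesis then forces the right-$f(H)$-invariants $\cO(G)^{f(H),r} = \cO(G/f(H))$ to equal $E$, i.e., every regular function on $G/f(H)$ is constant. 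For $(\Leftarrow)$, given $v \in V^H$ and $\lambda \in V^*$, the matrix coefficient $g \mapsto \lambda(g \cdot v)$ is a right-$f(H)$-invariant element of $\cO(G)$ and therefore constant by hypothesis, so $\lambda(g v) = \lambda(v)$ for all $g \in G$; varying $\lambda$ gives $v \in V^G$.

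For (ii), the direction $(\Rightarrow)$ is essentially automatic: if $f$ is an epimorphism then $f(H) = G$ as a scheme, so any subspace stable under the $H$-action (which factors through $f(H)$) is automatically stable under $G$. For the converse I would argue the contrapositive: assuming $f$ is not an epimorphism, the scheme-theoretic image $K := f(H) \subsetneq G$ is a proper closed subgroup scheme, and Chevalley's theorem produces a finite-dimensional $V \in \Rep_E(G)$ together with a line $L \subset V$ whose scheme-theoretic stabilizer in $G$ is exactly $K$. Then $L$ is $H$-stable (since the $H$-action on $V$ factors through $K$) but not $G$-stable (as $K \ne G$).

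The only subtlety worth flagging is that $H$ and $G$ are arbitrary affine (possibly pro-algebraic) group schemes, so I need Chevalley's theorem and the filtration of $\cO(G)$ by finite-dimensional subrepresentations in this generality. Both reduce to the algebraic case by passing to a suitable finite-type quotient of $G$ through which the relevant representations factor, and neither should pose a serious obstacle.
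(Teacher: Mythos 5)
Your proof is correct, and part (ii) takes a genuinely different route from the paper. For (i) you and the paper rely on the same mechanism: the paper packages it in one line as the identification $\Hom_H(V,W)\cong\Hom_G(V\otimes W^*,L)$ with $L=\cO(G/f(H))$ (Frobenius reciprocity for restriction/induction), whereas you unwind that adjunction by hand, proving the forward direction on finite-dimensional subrepresentations of the right regular representation of $G$ on $\cO(G)$ and the converse via matrix coefficients $g\mapsto\lambda(gv)$ of an $H$-invariant vector; same content, your version just makes the two implications explicit. For (ii) the difference is substantive: to show that the submodule condition forces $f(H)=G$, the paper first notes that the hypothesis automatically extends to infinite-dimensional (locally finite) $G$-modules, then takes $V=\cO(G)$ with the left translation action and $W\subset V$ the ideal of $f(H)$; $G$-stability of this ideal gives $f(H)=G$ directly, using nothing beyond local finiteness of comodules. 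You instead argue the contrapositive via Chevalley's theorem, realizing the proper closed subgroup scheme $K=f(H)$ as the exact stabilizer of a line in a finite-dimensional representation. This works, but it imports a nontrivial theorem and, since $G$ is only assumed affine (possibly pro-algebraic), needs the reduction you flag: one must check that $K$ has proper image $K_i$ in some finite-type quotient $G_i$ (choose a nonzero element of the ideal of $K$ and a $G_i$ whose coordinate ring contains it), apply the scheme-theoretic Chevalley theorem to $K_i\subset G_i$, and pull the resulting representation back to $G$. The paper's trick buys brevity and complete insensitivity to finiteness and characteristic hypotheses; your argument costs the Chevalley input and this bookkeeping, but yields the slightly stronger geometric statement that the image is cut out as the exact stabilizer of a line.
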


Probably the lemma is  well known: e.g., statement (ii) is [DM,Prop.2.21 (a)]. We give a proof for completeness.

\begin{proof}
(i) Full faithfulness means that for every $V,W\in\Rep (G)$ the map
\begin{equation}   \label{e:res to H}
\Hom_G (V,W)\to\Hom_H(V,W)
\end{equation}
is an isomorphism. Let $L$ denote the space of regular functions on $G/f(H)$, then the map \eqref{e:res to H} is just the map
\[
\Hom_G(V\otimes W^*,E)\to\Hom_G(V\otimes W^*,L)
\]
induced by the inclusion $E\mono L$. So the map \eqref{e:res to H} is an isomorphism if and only if $L=E$.

(ii) It suffices to prove the ``if" statement. Suppose that for every $V\in\Rep_E (G)$ every $H$-submodule $W\subset V$ is a 
$G$-submodule. Then this is true even if $V$ is an infinite-dimensional $G$-module. Take $V$ to be the space of regular functions on $G$ (on which $G$ acts by left translations), and let $W\subset V$ be the ideal of $f(H)$. We see that $W$ is a $G$-submodule, which means that $f(H)=G$.
\end{proof}

\begin{ex}   \label{ex:parabolic}
Let $G$ be a connected reductive group over a field $E$ and $P\subset G$ a parabolic subgroup. By Lemma~\ref{l:Tannakian nonsense}(i), the restriction functor $\Rep_E (G)\to\Rep_E  (P)$ is fully faithful, so one can think of $\Rep_E (G)$ as a full subcategory of $\Rep_E  (P)$. However, if $P\ne G$ this subcategory is \emph{not closed under passing to subobjects}: this follows from Lemma~\ref{l:Tannakian nonsense}(ii) or by direct inspection. So if $P\ne G$ then $\Rep_E (G)$ is \emph{not a Tannakian subcategory} of  $\Rep_E (P)$ in the sense of \cite[\S 2.3.5]{An} (although it is a full subcategory which is a Tannakian category). 
\end{ex}

\subsection{Overconvergent $F$-isocrystals and the group $\pi_1^{\FIsocd} (X)$}
Overconvergent $F$-isocrystals form a Tannakian category $\FIsocd (X)$. It is equipped with a tensor functor $\FIsocd (X)\to\FIsoc (X)$. As already mentioned in \S\ref{sss:conv vs overconv}, this functor is known to be fully faithful, so we view $\FIsocd (X)$ as a full subcategory of $\FIsoc (X)$.

\subsubsection{Warning} \label{sss:not Tannakian subcategory}
It often happens that the full subcategory $\FIsocd (X)\subset\FIsoc (X)$ is not closed with respect to passing to subobjects (see \cite[Rem.~5.12]{Ke6}). In this case $\FIsocd (X)$ is \emph{not a Tannakian subcategory} of  $\FIsoc (X)$ in the sense of 
\cite[\S 2.3.5]{An} (this is similar to Example~\ref{ex:parabolic}).

\subsubsection{The embedding $\Rep^{\smooth}_{\BQ_p}(\Pi)\mono \FIsocd (X )$}
The essential image of the fully faithful functor $\Rep^{\smooth}_{\BQ_p}(\Pi)\mono \FIsoc (X )$ from Remark~\ref{r:isotrivial smooth} is contained in $\FIsocd (X )$ (by etale descent for overconvergent $F$-isocrystals, see \cite[Thm.~1]{Et}).
So the essential image of the functor \eqref{e:isotrivial smooth} is contained in $\FIsocd (X )\otimes_{\BQ_p}\BQbar_p$.

Proposition~\ref{p:Crew} remains valid if one replaces $\FIsoc$ by $\FIsocd$ (this follows from Proposition~\ref{p:Crew} itself).

\subsubsection{The homomorphism $\pi_1^{\FIsoc} (X)\to \pi_1^{\FIsocd} (X)$}

Let 
\[
\tilde\xi :\FIsoc (\tX )\otimes_{\BQ_p}\BQbar_p\to\Vect_{\BQbar_p} \quad \mbox { and }\quad \xi :\FIsoc (X )\otimes_{\BQ_p}\BQbar_p\to\Vect_{\BQbar_p}\]
be as in \S\ref{ss:crys fund group}. Recall that $\pi_1^{\FIsoc} (X):=\Aut \xi$.
Let $\xi^\dagger :\FIsocd (X )\otimes_{\BQ_p}\BQbar_p\to\Vect_{\BQbar_p}$ be the restriction of~$\xi$, and set 
\[
\pi_1^{\FIsocd} (X):=\Aut \xi^\dagger.
\]
Both $\pi_1^{\FIsoc} (X)$ and $\pi_1^{\FIsocd} (X)$ are affine group schemes over $\BQbar_p$. 
Restriction from $\FIsoc (X )\otimes_{\BQ_p}\BQbar_p$ to 
 $\FIsocd (X )\otimes_{\BQ_p}\BQbar_p$ defines a canonical homomorphism 
 \begin{equation}   \label{e:FIsoc to FIsocd}
 \pi_1^{\FIsoc} (X)\to \pi_1^{\FIsocd} (X). 
 \end{equation}
 One has a canonical equivalence 
\[
\FIsocd (X )\otimes_{\BQ_p}\BQbar_p \iso\Rep_{\BQbar_p} (\pi_1^{\FIsocd} (X)).
\]

By \S\ref{sss:not Tannakian subcategory}, the homomorphism \eqref{e:FIsoc to FIsocd} is not always surjective. But it has the following weaker property.

\begin{lem}   \label{l:quasisurjectivity}
Let $H$ denote the image of the homomorphism \eqref{e:FIsoc to FIsocd}. Then every regular function on $\pi_1^{\FIsocd} (X)/H$ is constant.
\end{lem}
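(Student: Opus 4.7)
The plan is to derive the lemma directly from the full faithfulness of $\FIsocd(X) \to \FIsoc(X)$ (Theorem \ref{T:fully faithful1}, applied with $U = X$, which also appears implicitly via the discussion in \S\ref{sss:conv vs overconv}) together with the Tannakian criterion in Lemma \ref{l:Tannakian nonsense}(i).

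First I would upgrade the fully faithful embedding $\FIsocd(X) \mono \FIsoc(X)$ to a fully faithful embedding after tensoring with $\BQbar_p$:
\[
\FIsocd(X) \otimes_{\BQ_p} \BQbar_p \mono \FIsoc(X) \otimes_{\BQ_p} \BQbar_p .
\]
This is a routine check: $\Hom$-spaces on both sides are direct limits over finite subextensions $K \subset \BQbar_p$ of the corresponding $\Hom$-spaces in the categories $\FIsocd(X) \otimes_{\BQ_p} K$ and $\FIsoc(X) \otimes_{\BQ_p} K$. For each such $K$, an object of either category with $K$-action is, by the forgetful functor $\Forg_{K/\BQ_p}$ (cf.\ Remark \ref{r:2Forg}), an object of the ambient $\BQ_p$-category together with an action of $K$; since Theorem \ref{T:fully faithful1} gives the equality $\Hom_{\FIsoc}(M,N) = \Hom_{\FIsocd}(M,N)$ for $M,N \in \FIsocd(X)$, the analogous equality for the $K$-linear $\Hom$-spaces follows by taking $K$-invariants of the action of $K \otimes K$ on both sides.

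Next, I unwind the definitions: the inclusion $\FIsocd(X) \otimes_{\BQ_p} \BQbar_p \hookrightarrow \FIsoc(X) \otimes_{\BQ_p} \BQbar_p$ is, under the equivalences
\[
\FIsocd(X) \otimes_{\BQ_p} \BQbar_p \simeq \Rep_{\BQbar_p}(\pi_1^{\FIsocd}(X)), \qquad \FIsoc(X) \otimes_{\BQ_p} \BQbar_p \simeq \Rep_{\BQbar_p}(\pi_1^{\FIsoc}(X)),
\]
identified with the restriction functor $\Rep_{\BQbar_p}(\pi_1^{\FIsocd}(X)) \to \Rep_{\BQbar_p}(\pi_1^{\FIsoc}(X))$ induced by the homomorphism \eqref{e:FIsoc to FIsocd}. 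The previous paragraph says that this restriction functor is fully faithful.

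Applying Lemma \ref{l:Tannakian nonsense}(i) to the homomorphism \eqref{e:FIsoc to FIsocd}, full faithfulness is equivalent to the statement that every regular function on $\pi_1^{\FIsocd}(X)/H$ is constant, where $H$ is the image of \eqref{e:FIsoc to FIsocd}. This is exactly the desired conclusion. There is no serious obstacle; the only mildly delicate point is the descent of full faithfulness through the base change $\BQ_p \rightsquigarrow \BQbar_p$, which as noted reduces to a straightforward $\Hom$-space computation using the forgetful functors $\Forg_{K/\BQ_p}$.
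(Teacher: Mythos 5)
Your proposal is correct and is essentially the paper's own argument: the paper proves this lemma by simply invoking Lemma~\ref{l:Tannakian nonsense}(i), with the full faithfulness of the restriction functor $\FIsocd (X)\otimes_{\BQ_p}\BQbar_p\to\FIsoc (X)\otimes_{\BQ_p}\BQbar_p$ (coming from Theorem~\ref{T:fully faithful1}) taken as understood. You merely spell out the routine base-change step from $\BQ_p$ to $\BQbar_p$ that the paper leaves implicit, and that step is handled correctly.
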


\begin{proof}
Follows from Lemma~\ref{l:Tannakian nonsense}(i).
\end{proof}

\subsubsection{The epimorphism $\pi_1^{\FIsocd} (X)\epi\Pi$}
The fully faithful functor $$\Rep^{\smooth}_{\BQbar_p}(\Pi)\mono \FIsocd (X )\otimes_{\BQ_p}\BQbar_p$$ induces an epimorphism 
$\pi_1^{\FIsocd} (X)\epi\Pi$, whose composition with \eqref{e:FIsoc to FIsocd} is equal to the epimorphism~\eqref{e:crys to usual}.

\begin{prop}
(i) The kernel of the canonical epimorphism $\pi_1^{\FIsocd} (X)\epi\Pi$ is connected.

(ii) For any open subgroup $U\subset\Pi$, the canonical homomorphism $\pi_1^{\FIsocd} (\tX /U)\to\pi_1^{\FIsocd} (X)$ induces an isomorphism 
$\pi_1^{\FIsocd} (\tX /U)\iso\pi_1^{\FIsocd} (X)\times_{\Pi}U$.
\end{prop}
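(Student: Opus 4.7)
The plan is to mimic the proofs of Proposition~\ref{p:pi_1 of cover} and Proposition~\ref{p:kernel connected}, which handled the analogous statements for $\pi_1^{\FIsoc}$. Both arguments transfer to the overconvergent setting with minimal modification, using the two ingredients that were explicitly flagged as available for $\FIsocd$: (a) \'etale descent for overconvergent $F$-isocrystals, cited above from \cite[Thm.~1]{Et}, and (b) the analogue of Proposition~\ref{p:Crew} for $\FIsocd$, also stated in the excerpt.

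For part (i), I would argue exactly as in the proof of Proposition~\ref{p:kernel connected}. A homomorphism from $\pi_1^{\FIsocd}(X)$ to a finite group $\Gamma$ is the same as a tensor functor $\Rep_{\BQbar_p}(\Gamma)\to\FIsocd(X)\otimes_{\BQ_p}\BQbar_p$. By the $\FIsocd$-version of Proposition~\ref{p:Crew}, any such functor factors through $\Rep^{\smooth}_{\BQbar_p}(\Pi)\subset\FIsocd(X)\otimes_{\BQ_p}\BQbar_p$. Translating back, the homomorphism $\pi_1^{\FIsocd}(X)\to\Gamma$ factors through $\Pi$, so $\pi_1^{\FIsocd}(X)\epi\Pi$ has no nontrivial finite quotient of its kernel, i.e.\ the kernel is connected.

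For part (ii), I would first establish the $\FIsocd$-analogue of Lemma~\ref{l:isotriv}: for any open normal subgroup $V\subset\Pi$, pullback yields a tensor equivalence $\FIsocd(X)\iso\FIsocd(\tX/V)^{\Pi/V}$. This is exactly \'etale descent for overconvergent $F$-isocrystals \cite[Thm.~1]{Et}. Given this, one has short exact sequences
\begin{equation*}
0\to \pi_1^{\FIsocd}(\tX/V)\to\pi_1^{\FIsocd}(X)\to\Pi/V\to 0, \qquad
0\to \pi_1^{\FIsocd}(\tX/V)\to\pi_1^{\FIsocd}(\tX/U)\to U/V\to 0
\end{equation*}
for any open normal $V\subset U\subset\Pi$, and the claim $\pi_1^{\FIsocd}(\tX/U)\iso\pi_1^{\FIsocd}(X)\times_\Pi U$ follows by the same diagram chase as in Proposition~\ref{p:pi_1 of cover}.

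I do not expect any real obstacle: both steps reduce to invoking results already stated (or cited) in the excerpt, and the bookkeeping is identical to the convergent case. The only mild subtlety is that $\FIsocd(X)\subset\FIsoc(X)$ is in general not closed under subobjects (see~\S\ref{sss:not Tannakian subcategory}), so one cannot simply restrict along the homomorphism~\eqref{e:FIsoc to FIsocd}; however, neither argument requires this, and \'etale descent plus the overconvergent version of Proposition~\ref{p:Crew} give exactly what is needed.
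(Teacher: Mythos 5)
Your proposal is correct and is essentially the paper's own argument: the paper proves this proposition simply by noting it "can be proved similarly to Propositions~\ref{p:pi_1 of cover}--\ref{p:kernel connected}," which is exactly what you carry out, using \'etale descent for overconvergent $F$-isocrystals \cite[Thm.~1]{Et} and the $\FIsocd$-version of Proposition~\ref{p:Crew} as the two inputs. The paper only adds the remark that part (i) can alternatively be deduced from Proposition~\ref{p:kernel connected} itself, since Lemma~\ref{l:quasisurjectivity} gives surjectivity of $\pi_0(\pi_1^{\FIsoc}(X))\to\pi_0(\pi_1^{\FIsocd}(X))$; your route avoids the map \eqref{e:FIsoc to FIsocd} altogether, which is equally fine.
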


\begin{proof}
The proposition can be proved similarly to Propositions~\ref{p:pi_1 of cover}-\ref{p:kernel connected}. Statement (i) also follows from 
Proposition~\ref{p:kernel connected} because by Lemma~\ref{l:quasisurjectivity}, the homomorphism 
$\pi_0(\pi_1^{\FIsoc} (X))\to\pi_0( \pi_1^{\FIsocd} (X))$ is surjective. 
\end{proof}

\bibliographystyle{ams-alpha}

\end{document}